\documentclass{amsart}
\usepackage{amssymb
,amsthm
,amsmath
,amscd
,mathtools
}
\usepackage[all]{xy}
\usepackage[top=30truemm,bottom=30truemm,left=25truemm,right=25truemm]{geometry}

\newcommand{\A}{\mathbb{A}}
\newcommand{\Z}{\mathbb{Z}}
\newcommand{\R}{\mathbb{R}}
\newcommand{\C}{\mathbb{C}}
\newcommand{\F}{\mathbb{F}}
\renewcommand{\H}{\mathbb{H}}

\renewcommand{\AA}{\mathcal{A}}
\newcommand{\LL}{\mathcal{L}}
\newcommand{\HH}{\mathcal{H}}
\newcommand{\VV}{\mathcal{V}}
\newcommand{\TT}{\mathcal{T}}
\newcommand{\Sc}{\mathcal{S}}

\newcommand{\oo}{\mathfrak{o}}
\newcommand{\z}{\mathfrak{z}}

\newcommand{\cent}{\mathrm{Cent}}
\newcommand{\disc}{\mathrm{disc}}
\newcommand{\cusp}{\mathrm{cusp}}
\newcommand{\ab}{\mathrm{ab}}
\newcommand{\id}{\mathrm{id}}
\newcommand{\im}{\mathrm{Im}}
\newcommand{\re}{\mathrm{Re}}
\newcommand{\Irr}{\mathrm{Irr}}
\newcommand{\unit}{\mathrm{unit}}
\newcommand{\temp}{\mathrm{temp}}
\newcommand{\gen}{\mathrm{gen}}
\newcommand{\Ind}{\mathrm{Ind}}
\newcommand{\Res}{\mathrm{Res}}
\newcommand{\Int}{\mathrm{Int}}
\newcommand{\tr}{\mathrm{tr}}
\newcommand{\fin}{\mathrm{fin}}
\newcommand{\Hom}{\mathrm{Hom}}
\newcommand{\Gal}{\mathrm{Gal}}
\newcommand{\Lie}{\mathrm{Lie}}
\newcommand{\Frob}{\mathrm{Frob}}
\newcommand{\WD}{{\it WD}}
\newcommand{\Ad}{\mathrm{Ad}}
\newcommand{\vol}{\mathrm{vol}}
\newcommand{\even}{{\rm even}}
\newcommand{\odd}{{\rm odd}}

\newcommand{\Sp}{\mathrm{Sp}}
\newcommand{\SL}{\mathrm{SL}}
\newcommand{\GL}{\mathrm{GL}}
\newcommand{\Mp}{\mathrm{Mp}}
\newcommand{\Oo}{\mathrm{O}}
\newcommand{\SO}{\mathrm{SO}}
\newcommand{\Sym}{\mathrm{Sym}}

\newcommand{\iif}{&\quad&\text{if }}
\newcommand{\other}{&\quad&\text{otherwise}}
\newcommand{\resp}{resp.~}
\newcommand{\bs}{\backslash}
\newcommand{\ep}{\varepsilon}
\newcommand{\lam}{\lambda}
\newcommand{\Lam}{\Lambda}
\renewcommand{\1}{{\bf 1}}
\newcommand{\bphi}{\underline{\phi}}
\newcommand{\bpsi}{\underline{\psi}}

\newcommand{\pair}[1]{\langle #1 \rangle}
\newcommand{\half}[1]{\frac{#1}{2}}
\newcommand{\norm}[1]{\| #1 \|}
\newcommand{\cl}[1]{\widetilde{#1}}
\newcommand{\ch}[1]{\check{#1}}

\newtheorem{thm}{Theorem}[section]
\newtheorem{lem}[thm]{Lemma}
\newtheorem{prop}[thm]{Proposition}
\newtheorem{cor}[thm]{Corollary}
\newtheorem{rem}[thm]{Remark}
\newtheorem{conj}[thm]{Conjecture}
\newtheorem{hyp}[thm]{Hypothesis}
\newtheorem{des}[thm]{Desideratum}

\makeatletter
\def\iddots{\mathinner{\mkern1mu\raise\p@
	\hbox{.}\mkern2mu\raise4\p@\hbox{.}\mkern2mu
	\raise7\p@\vbox{\kern7\p@\hbox{.}}\mkern1mu}}
\def\adots{\mathinner{\mkern2mu\raise\p@\hbox{.}
 \mkern2mu\raise4\p@\hbox{.}\mkern1mu
 \raise7\p@\vbox{\kern7\p@\hbox{.}}\mkern1mu}}
\makeatother

\allowdisplaybreaks

\title[Even orthogonal groups]{On the local Langland correspondence \\and Arthur conjecture
\\ for Even Orthogonal Groups}
\author{Hiraku Atobe \and Wee Teck Gan}
\date{}
\address{Department of mathematics, Kyoto University, Kitashirakawa-Oiwake-cho, Sakyo-ku, Kyoto, 606-8502, Japan}
\email{atobe@math.kyoto-u.ac.jp}
\address{Department of Mathematics, National University of Singapore, 10 Lower Kent Ridge Road, Singapore 119076}
\email{matgwt@nus.edu.sg}

\setcounter{tocdepth}{1}

\begin{document}
\maketitle

\begin{abstract}
In this paper, we highlight and state precisely the local Langlands correspondence 
for quasi-split $\Oo_{2n}$ established by Arthur.
We give two applications: Prasad's conjecture and Gross--Prasad conjecture for $\Oo_n$.
Also, we discuss the Arthur conjecture for $\Oo_{2n}$, and
establish the Arthur multiplicity formula for $\Oo_{2n}$.
\end{abstract}

\tableofcontents
\section{Introduction}
In his long-awaited book \cite{Ar}, 
Arthur obtained a classification of irreducible representations of 
quasi-split symplectic and special orthogonal groups over local fields of characteristic $0$  
(the local Langlands correspondence LLC) 
as well as a description of the automorphic discrete spectrum of these groups 
over number fields (the Arthur conjecture). 
He proved these results by establishing the twisted endoscopic transfer of 
automorphic representations from these classical groups to $\GL_N$ 
by exploiting the stabilization of the twisted trace formula of $\GL_N$ 
(which has now been completed by Waldspurger and M{\oe}glin). 
However, for the quasi-split special even orthogonal groups $\SO_{2n}$, 
the result is not as definitive as one hopes.  
More precisely, for a $p$-adic field $F$, 
Arthur only gave a classification of the irreducible representations of $\SO_{2n}(F)$ 
up to conjugacy by $\Oo_{2n}(F)$, instead of by $\SO_{2n}(F)$.  
Likewise, over a number field $\F$, 
he does not distinguish between a square-integrable automorphic representation $\pi$ 
and its twist by the outer automorphism corresponding to an element of 
$\Oo_{2n}(\F) \smallsetminus \SO_{2n}(\F)$.
\par

The reason for this less-than-optimal result for $\SO_{2n}$ is that, 
for the purpose of the twisted endoscopic transfer, 
it is more natural to work with the orthogonal groups $\Oo_{2n}$ instead of $\SO_{2n}$.  
In fact, Arthur has obtained in \cite[Theorems 2.2.1, 2.2.4]{Ar} 
a full classification of the irreducible representations of $\Oo_{2n}(F)$. 
It is from this that he deduced the weak LLC for 
$\SO_{2n}(F)$ alluded to above. 
Indeed, the weak LLC for $\SO_{2n}(F)$ is equivalent to 
the classification of irreducible representations of $\Oo_{2n}(F)$ 
modulo twisting by the determinant character $\det$. 
\par

Unfortunately, this rather complete result for $\Oo_{2n}(F)$ was not highlighted in \cite{Ar}. 
One possible reason is that $\Oo_{2n}$ is a disconnected linear algebraic group 
and so does not fit in the framework of the classical Langlands program; 
for example, one does not have a systematic definition of the $L$-group of 
a disconnected reductive group 
and so one does not have the notion of Langlands parameters. 
In choosing to stick to the context of the Langlands program, 
Arthur has highlighted the results for $\SO_{2n}$ instead. 
However, it has been observed that a suitable $L$-group for $\Oo_{2n}$ is the group $\Oo_{2n}(\C)$
and an $L$-parameter for $\Oo_{2n}(F)$ should be an orthogonal representation of 
the Weil--Deligne group $\WD_F$. 
A precise statement to this effect seems to be first formulated in the paper \cite{P} of D. Prasad. 
\par

The main goal of this paper is to highlight and state precisely the local Langlands correspondence 
for quasi-split $\Oo_{2n}(F)$ established by Arthur in \cite{Ar} using this notion of $L$-parameters, 
and to establish various desiderata of this LLC. 
We also formulate the natural extension of the LLC to the pure inner forms 
(using Vogan $L$-packets \cite{V}).  
The statements can be found in Desiderata \ref{desO} and \ref{propO}.  
We especially note the key role played by the local intertwining relation in Hypothesis \ref{Hypo}.
This local intertwining relation was established in \cite{Ar} for quasi-split groups 
but is conjectural for pure inner forms.  
\par

Our main motivation for formulating a precise LLC for $\Oo_{2n}$ is that 
the representations of $\Oo_{2n}(F)$ arise naturally in various context, 
such as in the theory of theta correspondence. 
If one wants to describe the local theta correspondence for the dual pair 
$\Oo_{2n}(F)  \times \Sp_{2m}(F)$, 
one would need a classification of irreducible representations of $\Oo_{2n}(F)$. 
Thus, this paper lays the groundwork needed for our paper \cite{AG} 
in which we determine the local theta lifting of tempered representations 
in terms of the local Langlands correspondence.
Having described the LLC for $\Oo_{2n}$, we give two applications:
\par

\begin{itemize}
\item (Prasad's conjecture) 
We complete the results in the first author's PhD thesis \cite{At}, 
in which the local theta correspondences for the almost equal rank dual pairs  
$\Oo_{2n}(F) \times \Sp_{2n}(F)$ and $\Oo_{2n}(F) \times \Sp_{2n-2}(F)$ were 
determined in terms of the weak LLC for $\Oo_{2n}(F)$.   
In particular, we describe these theta correspondences completely 
in terms of the LLC for $\Oo_{2n}(F)$, 
thus completing the proof of Prasad's conjecture (Conjectures \ref{P O} and \ref{P Sp}). 
The result is contained in Theorem \ref{main1}.

\item (Gross--Prasad conjecture for $\Oo_n$)  
In \cite{At}, these theta correspondences were used to prove 
the Fourier--Jacobi case of the local Gross--Prasad (GP) conjecture for 
symplectic-metaplectic groups, 
by relating the Fourier--Jacobi case with the Bessel case of (GP) for $\SO_n$ 
(which has been established by Waldspurger). 
For this purpose, the weaker version of Prasad's conjecture 
(based on the weak LLC for $\Oo_{2n}$) is sufficient. 
Now that we have the full Prasad's conjecture, 
we use the Fourier--Jacobi case of (GP) to prove a version of (GP) for $\Oo_n$.  
In other words, 
we shall extend and establish the Gross-Prasad conjecture to the context of orthogonal groups
(Conjecture \ref{GPO}).
The result is contained in Theorem \ref{main2}.
\end{itemize}
\par

These results are related in a complicated manner.
The following diagram is a summary of the situation:
\[
\xymatrix{
\text{\fbox{$\substack{\text{LLC}(\Oo_{2n})\\[1pt] \text{(Arthur)}}$}} 
 \ar@{=>}[rrrr] \ar@{=>}[ddd] \ar@{=>}[ddr]&&&&
\text{\fbox{$\substack{\text{Weak LLC}(\SO_{2n})\\[1pt] \text{(Arthur)}}$}} 
\ar@{=>}[lld] \ar@{=>}[ddd]\\
&&
\text{\fbox{$\substack{\text{GP}(\SO)\\[1pt]\text{(Waldspurger)}}$}} 
\ar@{=>}[rd]&&\\
&\text{\fbox{$\substack{\text{GP}(\Oo)\\[1pt]\text{(This paper)}}$}} 
&&\text{\fbox{$\substack{\text{GP}(\Sp \text{-} \Mp)\\[1pt]\text{(Atobe)}}$}} 
\ar@{=>}[ll]&\\
\text{\fbox{$\substack{\text{P}(\Oo_{2n}, \Sp_{2n})\\[1pt]\text{(This paper)}}$}} 
\ar@{=>}[d] \ar@{=>}[ur]&&&&
\text{\fbox{$\substack{\text{Weak P}(\Sp_{2n-2}, \Oo_{2n})\\[1pt]\text{(Atobe)}}$}} 
\ar@{=>}[ul] \ar@{=>}[d] \ar@{=>}[lllld]\\
\text{\fbox{$\substack{\text{P}(\Sp_{2n-2}, \Oo_{2n})\\[1pt]\text{(This paper)}}$}} 
&&&&
\text{\fbox{$\substack{\text{Weak P}(\Oo_{2n}, \Sp_{2n})\\[1pt]\text{(Atobe)}}$}} 
\ar@{=>}[llllu]
}
\]
Here, 
\begin{itemize}
\item
LLC($\Oo_{2n}$) means the LLC for $\Oo_{2n}$, which has been established by Arthur 
\cite[Theorems 2.2.1, 2.2.4]{Ar};
\item
Weak LLC($\SO_{2n}$) means the weak LLC for $\SO_{2n}$ or $\Oo_{2n}$;
\item
Weak P$(\Sp_{2n-2}, \Oo_{2n})$ means the weaker version of Prasad's conjecture for 
$(\Sp_{2n-2}, \Oo_{2n})$, which was proven in \cite[\S 7]{At};
\item
Weak P($\Oo_{2n}, \Sp_{2n}$) means the weaker version of Prasad's conjecture for 
$(\Oo_{2n}, \Sp_{2n})$, which follows from Weak P$(\Sp_{2n-2}, \Oo_{2n})$ (see \cite[\S 5.5]{At});
\item
P($\Oo_{2n}, \Sp_{2n}$) means Prasad's conjecture for $(\Oo_{2n}, \Sp_{2n})$
(Conjecture \ref{P O}), which follows from Weak P($\Oo_{2n}, \Sp_{2n}$) and Hypothesis \ref{Hypo}
(see Theorem \ref{main1});
\item
P($\Sp_{2n-2}, \Oo_{2n}$) means Prasad's conjecture for $(\Sp_{2n-2}, \Oo_{2n})$
(Conjecture \ref{P Sp}), which follows from 
Weak P($\Oo_{2n}, \Sp_{2n}$) and P($\Oo_{2n}, \Sp_{2n}$)
(see Theorem \ref{thmPSp});
\item
GP($\SO$) means the local Gross--Prasad (GP) conjecture for special orthogonal groups, 
which was established by Waldspurger \cite{W2}, \cite{W3}, \cite{W4}, \cite{W5};
\item
GP($\Sp$-$\Mp$) means the local Gross--Prasad (GP) conjecture for 
symplectic-metaplectic groups, which was proven in \cite[Theorem 1.3]{At}
by using GP($\SO$) and Weak P($\Sp_{2n-2}, \Oo_{2n}$);
\item
GP($\Oo$) means the local Gross--Prasad (GP) conjecture for orthogonal groups
(Conjecture \ref{GPO}), which follows from 
GP($\Sp$-$\Mp$) and P($\Oo_{2n}, \Sp_{2n}$) (see Theorem \ref{main2}).
\end{itemize}
\par

Finally, we discuss how the Arthur conjecture for the automorphic discrete spectrum of 
$\SO_{2n}$ over a number field $\F$ can be extended to an analogous statement for $\Oo_{2n}$. 
This is implicitly in \cite{Ar}, but not precisely formulated. 
In particular, we describe the automorphic discrete spectrum of $\Oo_{2n}$ 
in terms of local and global Arthur packets, 
and establish the Arthur multiplicity formula (Theorem \ref{main3}). 
Also we show that the tempered part of the automorphic discrete spectrum of $\Oo_{2n}$ 
has multiplicity $1$ (Proposition \ref{mult1}, see also Remark \ref{temp}). 
This should lay the groundwork for a more precise study of 
the global theta correspondence for symplectic-orthogonal dual pairs. 

\subsection*{Acknowledgments}
The authors are grateful to Professor Atsushi Ichino for his helpful comments
and in particular for ringing \cite[Theorem 2.2.4]{Ar} to their attention.
We also thank Dipendra Prasad for telling us his results (\cite{P3}, \cite{P4}).
The first author is supported by JSPS KAKENHI Grant Number 26-1322.
The second author is partially supported by 
a Singapore government MOE Tier 2 grant R-146-000-175-112.
The paper was completed when the second author visited Kyoto University in December 2015
as Distinguished Visiting Project Professor 
(Japan Gateway: Kyoto University Top Global Program) 
of Center for the Promotion of Interdisciplinary Education and Research. 
The second author would like to thank Atsushi Ichino 
for his kind invitation and Kyoto University for its generous support.

\subsection*{Notations}
Let $F$ be a non-archimedean local field with characteristic zero, 
$\oo$ be the ring of integers of $F$,
$\varpi$ be a uniformizer, 
$q$ be the number of elements in the residue class field $\oo/\varpi\oo$ 
and $|\cdot|_F$ be the normalized absolute value on $F$ so that $|\varpi|_F=q^{-1}$.
We denote by $W_F$ and $\WD_F=W_F\times\SL_2(\C)$ 
the Weil and Weil--Deligne groups of $F$, respectively.
Fix a non-trivial additive character $\psi$ of $F$. 
For $c\in F^\times$, 
we define an additive character $\psi_c$ or $c\psi$ of $F$ by
\[
\psi_c(x)=c\psi(x)=\psi(cx).
\]
We set $\chi_c=(\cdot,c)$ to be the quadratic character of $F^\times$ 
associated to $c\in F^\times/F^{\times2}$.
Here, $(\cdot,\cdot)$ is the quadratic Hilbert symbol of $F$.
For a totally disconnected locally compact group $G$, 
we denote the set of equivalence classes of irreducible smooth
representations of $G$ by $\Irr(G)$.
If $G$ is the group of $F$-points of a linear algebraic group over $F$,
we denote by $\Irr_\temp(G)$ (\resp $\Irr_\disc(G)$) the subset of $\Irr(G)$
of classes of irreducible tempered representations
(\resp irreducible discrete series representations).
For a topological group $H$,
we define the component group of $H$ by $\pi_0(H)=H/H^\circ$, 
where $H^\circ$ is the identity component of $H$.
The Pontryagin dual (i.e., the character group)
of a finite abelian group $A$ is denoted by $A^D$ or $\widehat{A}$.
\par

\section{Quasi-split orthogonal groups}
In this section, we summarize facts about quasi-split orthogonal groups and their representations.

\subsection{Orthogonal spaces}\label{Quadratic}
Let $V=V_{2n}$ be a vector space of dimension $2n$ over $F$ and
\[
\pair{\cdot,\cdot}_V \colon V\times V \rightarrow F
\]
be a non-degenerate symmetric bilinear form.
We take a basis $\{e_1, \dots, e_{2n}\}$ of $V$, and define the discriminant of $V$ by
\[
\disc(V) = (-1)^{n} \det((\pair{e_i,e_j}_V)_{i,j}) \bmod F^{\times2} \in F^\times/ F^{\times2}.
\]
Let $\chi_V = (\cdot, \disc(V))$ be the character of $F^\times$ associated with $F(\sqrt{\disc(V)})$.
We call $\chi_V$ the discriminant character of $V$.
The orthogonal group $\Oo(V)$ associated to $V$ is defined by
\[
\Oo(V) = \{g \in \GL(V)\ |\ \text{$\pair{gv,gv'}_V = \pair{v,v'}_V$ for any $v,v' \in V$}\}.
\]
\par

Fix $c, d \in F^\times$.
Let 
\[
V_{(d,c)} = F[X]/(X^2-d)
\]
be a $2$-dimensional vector space equipped with a bilinear form
\[
(\alpha,\beta)\mapsto \pair{\alpha,\beta}_{V_{(d,c)}}\coloneqq c\cdot\tr(\alpha\overline{\beta}),
\]
where $\beta\mapsto\overline{\beta}$ is the involution on $F[X]/(X^2-d)$
induced by $a+bX\mapsto a-bX$.
This involution is regarded as an element $\epsilon \in \Oo(V_{(d,c)})$.
The images of $1, X \in F[X]$ in $V_{(d,c)}$ are denoted by $e, e'$, respectively.
\par

For $n>1$, we say that $V_{2n}$ is associated to $(d,c)$ if
\[
V_{2n} \cong V_{(d,c)} \oplus \H^{n-1}
\]
as orthogonal spaces, 
where $\H$ is the hyperbolic plane, i.e., 
$\H=Fv_i+Fv_i^*$ with $\pair{v_i,v_i}=\pair{v_i^*,v_i^*}=0$ and $\pair{v_i,v_i^*}=1$.
Note that $\disc(V_{2n}) = d \bmod F^{\times2}$.
The orthogonal group $\Oo(V_{2n})$ is quasi-split, and
any quasi-split orthogonal group can be obtained in this way.
Note that $V_{2n} = V_{(d,c)} \oplus \H^{n-1} \cong V_{2n}' = V_{(d',c')} \oplus \H^{n-1}$ 
as orthogonal spaces if and only if
$d \equiv d' \bmod F^{\times2}$ so that $E \coloneqq F(\sqrt{d})=F(\sqrt{d'})$ and
$c \equiv c' \bmod N_{E/F}(E^\times)$.

\subsection{Generic representations}\label{sec gen}
Suppose that $n>1$ in this subsection.
Let $V=V_{2n}$ be an orthogonal space associated to $(d,c)$.
We set
\[
X_k=Fv_1+\dots+Fv_k
\quad\text{and}\quad
X_k^*=Fv_1^*+\dots+Fv_k^*
\]
for $1\leq k\leq n-1$.
We denote by $B_0=TU_0$ the $F$-rational Borel subgroup of $\SO(V_{2n})$ stabilizing the complete flag
\[
0\subset \pair{v_1}\subset \pair{v_1,v_2}\subset\dots\subset\pair{v_1,\dots,v_{n-1}}=X_{n-1},
\]
where $T$ is the $F$-rational torus stabilizing the lines $Fv_i$ for $i=1,\dots,n-1$.
We identify $\Oo(V_{(d,c)})$ as a subgroup of $\Oo(V_{2n})$ which fixes $\H^{n-1}$.
Via the canonical embedding $\Oo(V_{(d,c)}) \hookrightarrow \Oo(V_{2n})$, 
we regard $\epsilon$ as an element in $\Oo(V_{2n})$.
Note that $\epsilon$ depends on $(d,c)$.
We define a generic character $\mu_{c}$ of $U_0$ by
\[
\mu_{c}(u)=\psi(\pair{uv_2,v_1^*}_V+\dots+\pair{uv_n,v_{n-2}^*}_V+\pair{ue,v_{n-1}^*}_V).
\]
Note that $\epsilon$ normalizes $U_0$ and fixes $\mu_c$.
\par

Put $E=F(\sqrt{d})$.
If $c' \in cN_{E/F}(E^\times)$, then 
we have an isomorphism 
$V_{2n} = V_{(d,c)} \oplus \H^{n-1} \rightarrow V'_{2n} = V_{(d,c')} \oplus \H^{n-1}$, 
and so that we obtain an isomorphism
\[
f \colon \Oo(V_{2n}) \rightarrow \Oo(V'_{2n}).
\]
Moreover, we can take an isomorphism $f \colon \Oo(V_{2n}) \rightarrow \Oo(V'_{2n})$ such that
$f(B_0) = B'_0$, $f(T) = T'$ and $f(\Oo(V_{(d,c)})) = \Oo(V_{(d,c')})$, 
where $B'_0 = T'U'_0$ and $T'$ are
the Borel subgroup and maximal torus of $\SO(V'_{2n})$, respectively, defined as above.
Let $\mathcal{F}$ be the set of such isomorphisms.
Then the group $T' \rtimes \pair{\epsilon'} \cong \Oo(V_{(d,c')}) \times (F^\times)^{n-1}$ acts on 
$\mathcal{F}$ by
\[
(t' \cdot f) (g) = t'f(g)t'^{-1}
\]
for $t' \in T' \rtimes \pair{\epsilon'}$ and $g \in \Oo(V_{2n})$.
Here, $\epsilon' \in \Oo(V_{(d,c')})$ is an analogue to $\epsilon \in \Oo(V_{(d,c)})$.
Since $n>1$, this action of $T' \rtimes \pair{\epsilon'}$ is transitive.
\par

Choosing $f \in \mathcal{F}$, 
we regard $\mu_{c'}$ as a generic character of $U_0$ by
\[
U_0 \xrightarrow{f} U'_0 \xrightarrow{\mu_{c'}} \C^\times.
\]
Note that the $T$-orbit of $\mu_{c'}$ is independent of the choice of $f$
since $\epsilon'$ fixes $\mu_{c'}$.
\par

We consider a $4$-tuple $(V, B_0, T, \mu)$, where
\begin{itemize}
\item
$V = V_{2n}$ is an orthogonal space associated to some $(d,c)$;
\item
$B_0$ is an $F$-rational Borel subgroup of $\SO(V)$;
\item
$T$ is a maximal $F$-torus contained in $B_0$;
\item
$\mu$ is a generic character of $U_0$, 
where $U_0$ is the unipotent radical of $B_0$.
\end{itemize}
We say that two tuples $(V,B_0,T, \mu)$ and $(V',B'_0,T', \mu')$ is equivalent 
if the following conditions hold:
\begin{enumerate}
\item
there exists an isomorphism $V \rightarrow V'$ as orthogonal spaces, 
which induces a group isomorphism $f \colon \Oo(V) \rightarrow \Oo(V')$;
\item
$f(B_0) = B'_0$ and $f(T) = T'$ (so that $f(U_0) = U'_0$);
\item
there exists $t \in T$ such that $\mu' \circ f = \mu \circ \Int(t)$.
\end{enumerate}
\begin{prop}
Fix $d \in F^{\times}/F^{\times2}$.
For $c \in F^\times$, we associate the $4$-tuple
$(V, B_0, T, \mu)$, where
\begin{itemize}
\item
$V = V_{2n}$ is an orthogonal space associated to $(d,c)$;
\item
$B_0$ and $T$ are as above;
\item
$\mu = \mu_c$.
\end{itemize}
Then the map $c \mapsto (V,B_0,T, \mu)$ gives a canonical bijection (not depending on $\psi$)
\[
F^\times/F^{\times2} \rightarrow 
\{\text{equivalence classes of tuples $(V, B_0, T, \mu)$ with $\disc(V) = d$}\}.
\]
\end{prop}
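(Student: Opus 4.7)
The plan has three parts: verifying that $c\mapsto[(V,B_0,T,\mu_c)]$ (i) descends to $F^\times/F^{\times 2}$, (ii) is surjective, and (iii) is injective. For (i), given $c'=c\lambda^2$ with $\lambda\in F^\times$, multiplication by $\lambda^{-1}$ gives an orthogonal isomorphism $V_{(d,c)}\to V_{(d,c\lambda^2)}$ (since $c'\tr((\lambda^{-1}\alpha)\overline{(\lambda^{-1}\beta)})=c'\lambda^{-2}\tr(\alpha\bar\beta)=c\tr(\alpha\bar\beta)$), which extends by the identity on $\H^{n-1}$ to an isomorphism $\phi\colon V_{2n}\to V_{2n}'$ with $\phi(B_0)=B_0'$, $\phi(T)=T'$, and $\phi(e)=\lambda^{-1}e'$. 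A direct substitution in the defining formula shows $\mu_{c\lambda^2}\circ f=\mu_c\circ\Int(t_\lambda)$, where $f=\Int(\phi)\colon\Oo(V_{2n})\to\Oo(V_{2n}')$ and $t_\lambda=\mathrm{diag}(\lambda,\ldots,\lambda)$ lies in the split part of $T$; this witnesses the required equivalence.

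For (ii) and (iii), let $E=F(\sqrt d)$. After choosing an isomorphism $V\cong V_{(d,c_0)}\oplus\H^{n-1}$ for some $c_0$ and conjugating by $\SO(V_{2n})$ to standardise $(B_0,T)$, we reduce to classifying the $T$-orbits of generic characters of the standard $U_0$ and comparing them with $\mu_c$ for $c$ ranging over $c_0\cdot N_{E/F}(E^\times)$. The simple root subgroups decompose as $U_{\alpha_1},\ldots,U_{\alpha_{n-2}}$ (each one-dimensional over $F$) together with the last subgroup $U_{\alpha_{n-1}}\cong V_{(d,c_0)}=E$; a calculation on the explicit elements $u_\beta\in U_{\alpha_{n-1}}$ ($\beta\in E$) yields $t u_\beta t^{-1}=u_{t_{n-1}\beta}$ for $t=\mathrm{diag}(t_1,\ldots,t_{n-1})$ in the split part of $T$, and $\eta u_\beta\eta^{-1}=u_{\eta\beta}$ for $\eta\in E^{(1)}\subset T$. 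Using the $n-2$ ratios $t_{i+1}^{-1}t_i$, one first normalises the restrictions of $\mu$ to $U_{\alpha_1},\ldots,U_{\alpha_{n-2}}$ to agree with those of $\mu_{c_0}$; the residual stabilizer is $\{(s,\ldots,s):s\in F^\times\}\times E^{(1)}$. The remaining restriction is a non-trivial additive character $\alpha\mapsto\psi(\tr(\gamma\alpha))$ of $E$ for some $\gamma\in E^\times$, on which this residual torus acts by scaling $\gamma$ through the subgroup $F^\times\cdot E^{(1)}=\{x\in E^\times:N_{E/F}(x)\in F^{\times 2}\}$. The orbit space is therefore $E^\times/(F^\times\cdot E^{(1)})\cong N_{E/F}(E^\times)/F^{\times 2}$, which is exactly the fibre of $F^\times/F^{\times 2}\twoheadrightarrow F^\times/N_{E/F}(E^\times)$ over the class of $c_0$; as $c_0$ varies, these fibres assemble into the bijection claimed by the proposition.

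The principal obstacle is the explicit computation of the combined action of the split and anisotropic parts of $T$ on the character of $U_{\alpha_{n-1}}$, and the identification $F^\times\cdot E^{(1)}=\{x\in E^\times:N_{E/F}(x)\in F^{\times 2}\}$. Once this is in hand, compatibility with the classification of even orthogonal spaces of discriminant $d$ by $F^\times/N_{E/F}(E^\times)$, together with the fact that $\epsilon$ fixes each $\mu_c$ (so that the action of $T\rtimes\langle\epsilon\rangle$ adds no freedom beyond that of $T$), makes both surjectivity and injectivity fall out.
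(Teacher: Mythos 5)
Your argument is correct, but it takes a genuinely more self-contained route than the paper. The paper disposes of the heart of the matter in one stroke by quoting \cite[\S 12]{GGP}: for $V$ associated to $(d,c)$, the map $c'\mapsto \mu_{c'}\circ f$ ($f\in\mathcal{F}$) is a bijection from $cN_{E/F}(E^\times)/F^{\times2}$ onto the set of $T$-orbits of generic characters of $U_0$; it then only has to note that $(V,B_0,T,\mu_{c'}\circ f)$ is equivalent to $(V',B'_0,T',\mu_{c'})$, and that two tuples on a fixed $(V,B_0,T)$ are equivalent exactly when the characters lie in the same $T$-orbit (the $\epsilon$-part contributing nothing, since $\epsilon$ fixes the characters in question). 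You instead re-derive the quoted orbit parametrization by hand: normalising the character on the first $n-2$ simple root spaces, identifying the residual torus $\{(s,\dots,s)\}\times E^{(1)}$ and its scaling action on the last root space $V_{(d,c_0)}$, and computing $E^\times/(F^\times\cdot E^{(1)})\cong N_{E/F}(E^\times)/F^{\times2}$; you also verify descent to $F^\times/F^{\times2}$ directly, which the paper gets for free from the GGP statement. What your computation buys is independence from the reference and an explicit invariant: the $T$-orbit of a generic character $\beta\mapsto\psi(\tr_{E/F}(\gamma\beta))$ on the last piece is recorded by $N_{E/F}(\gamma)$ modulo $F^{\times2}$, which in particular makes the asserted independence of $\psi$ visible (rescaling $\psi$ by $a\in F^\times$ rescales $\gamma$ by $a$, changing the norm by a square) --- a point worth stating explicitly, since it is part of the proposition. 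Two small things to tidy: in the split case $d\in F^{\times2}$ you should read $E$ throughout as the \'etale algebra $F[X]/(X^2-d)\cong F\times F$ (so the ``last subgroup'' is the sum of the two final simple root spaces of the $D_n$-system, and $E^{(1)}$ and $N_{E/F}$ are taken for this algebra), as your identification of it with $V_{(d,c_0)}$ already suggests; and when comparing $\mu_{c'}$ with characters of the fixed group $\SO(V_{2n})$ one must transport it through some $f\in\mathcal{F}$, which is why the resulting invariant is $N_{E/F}(\gamma)\equiv c'c_0 \bmod F^{\times2}$ and the orbits sweep out exactly the fibre $c_0N_{E/F}(E^\times)/F^{\times2}$, as you claim.
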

\begin{proof}
Let $V = V_{2n}$ be an orthogonal space associated to $(d,c)$.
By \cite[\S12]{GGP}, the map $c'\mapsto \mu_{c'} \circ f$ for $f \in \mathcal{F}$ 
gives a well-defined bijection 
\[
cN_{E/F}(E^\times)/F^{\times2}\rightarrow \text{\{$T$-orbits of generic characters of $U_0$\}}, 
\]
where $E=F(\sqrt{d})$.
For $c' \in cN_{E/F}(E^\times)$, let $V' = V'_{2n}$ be an orthogonal space associated to $(d,c')$.
Then two tuples $(V,B_0,T,\mu_{c'} \circ f)$ and $(V',B'_0,T',\mu_{c'})$
are equivalent each other.
This implies that the map
\[
F^\times \rightarrow 
\{\text{equivalence classes of tuples $(V, B_0, T, \mu)$ with $\disc(V) = d$}\}.
\]
is surjective.
Also, we note that for a generic character $\mu$, 
two tuples $(V,B_0,T, \mu_c)$ and $(V,B_0,T, \mu)$ are equivalent 
if and only if $\mu = \mu_c \circ \Int(t)$ for some $t \in T$.
This implies that the above map induces the bijection
\[
F^\times/F^{\times2} \rightarrow 
\{\text{equivalence classes of tuples $(V, B_0, T, \mu)$ with $\disc(V) = d$}\},
\]
as desired.
\end{proof}

\begin{rem}
Let $(V, \pair{\cdot, \cdot}_V)$ be an orthogonal space associated to $(d,c)$.
Fix $a \in F^\times$.
We define a new orthogonal $(V', \pair{\cdot, \cdot}_{V'})$ by 
$V'=V$ as vector spaces and by
\[
\pair{x,y}_{V'} = a\cdot \pair{x,y}_V.
\]
Then $(V', \pair{\cdot, \cdot}_{V'})$ is associated to $(d,ac)$.
As subgroup of $\GL(V) = \GL(V')$, we have identifications
\[
\Oo(V) = \Oo(V')
\quad\text{and}\quad
\SO(V) = \SO(V').
\]
These identifications preserve $F$-rational Borel subgroups and maximal $F$-tori.
Moreover, the generic character $\mu_c$ of a maximal unipotent subgroup of $\SO(V)$
transfers $\mu_{ac}$.
More precisely, see \cite[Appendix A.5]{At}.
\end{rem}

Since $\epsilon'$ stabilizes $\mu_{c'}$, 
we can extend $\mu_{c'}$ to $U'=U'_0 \rtimes \pair{\epsilon'}$.
There are exactly two such extensions $\mu_{c'}^\pm \colon U' \rightarrow \C^\times$ 
which are determined by
\[
\mu_{c'}^\pm(\epsilon')=\pm1.
\]
We say that an irreducible smooth representation $\sigma$ of $\Oo(V_{2n})$
is $\mu_{c'}^\pm$-generic if
\[
\Hom_{f^{-1}(U')}(\sigma,\mu_{c'}^\pm)\not=0
\]
for some $f \in \mathcal{F}$.
For $\sigma_0 \in \Irr(\SO(V_{2n}))$, 
the $\mu_{c'}$-genericity is defined similarly.
The $\mu_{c'}^{\pm}$-genericity and the $\mu_{c'}$-genericity are independent of the choice of $f$.
Note that $f^{-1}(U'_0) = U_0$ for $f \in \mathcal{F}$, and
\[
\dim_\C(\Hom_{U_0}(\sigma_0,\mu_{c'})) \leq 1
\]
for $\sigma_0 \in \Irr(\SO(V_{2n}))$.

\begin{lem}\label{gen}
Let $\sigma_0 \in \Irr(\SO(V_{2n}))$.
\begin{enumerate}
\item
Assume that $\sigma_0$ can be extended to $\Oo(V_{2n})$.
Then there are exactly two such extensions.
Moreover, the following are equivalent:
\begin{enumerate}
\item[(A)]$\sigma_0$ is $\mu_{c'}$-generic;
\item[(B)]
exactly one of two extensions is $\mu_{c'}^+$-generic but not $\mu_{c'}^-$-generic,
and the other is $\mu_{c'}^-$-generic but not $\mu_{c'}^+$-generic.
\end{enumerate}

\item
Assume that $\sigma_0$ can not be extended to $\Oo(V_{2n})$.
Then $\sigma = \Ind_{\SO(V_{2n})}^{\Oo(V_{2n})}(\sigma_0)$ is irreducible.
Moreover, the following are equivalent:
\begin{enumerate}
\item[(A)]$\sigma_0$ is $\mu_{c'}$-generic;
\item[(B)]
$\sigma$ is both $\mu_{c'}^+$-generic and $\mu_{c'}^-$-generic.
\end{enumerate}

\end{enumerate}
\end{lem}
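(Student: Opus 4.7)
The plan is to reduce both parts of the lemma to the multiplicity-one property $\dim\Hom_{U_0}(\sigma_0,\mu_{c'})\le 1$ recorded just before the statement. Fix once and for all an element $f\in\mathcal{F}$ and write $\tilde\epsilon:=f^{-1}(\epsilon')\in\Oo(V_{2n})$, so that $f^{-1}(U')=U_0\rtimes\langle\tilde\epsilon\rangle$ and the characters $\mu_{c'}^\pm\circ f$ restrict to $\mu_{c'}$ on $U_0$ and to $\pm 1$ on $\tilde\epsilon$. The essential input, noted explicitly in the text, is that $\tilde\epsilon$ normalizes $U_0$ and fixes $\mu_{c'}$ under conjugation.

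For part (1), any two extensions of $\sigma_0$ to $\Oo(V_{2n})$ differ by the character $\det$ of $\Oo(V_{2n})/\SO(V_{2n})$, giving exactly two extensions $\sigma^+$ and $\sigma^-:=\sigma^+\otimes\det$. Assume $\sigma_0$ is $\mu_{c'}$-generic and pick a nonzero $\ell\in\Hom_{U_0}(\sigma_0,\mu_{c'})$. A short computation using that $\tilde\epsilon$ normalizes $U_0$ and fixes $\mu_{c'}$ shows that $\ell\circ\sigma^+(\tilde\epsilon)$ is again $\mu_{c'}$-equivariant, so by multiplicity one $\ell\circ\sigma^+(\tilde\epsilon)=\lambda\,\ell$ with $\lambda\in\C^\times$; the relation $\tilde\epsilon^{2}=1$ then forces $\lambda=\pm 1$. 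Thus $\ell$ itself is a $\mu_{c'}^{\lambda}$-Whittaker functional for the extension $\sigma^+$, and since $\sigma^-(\tilde\epsilon)=\det(\tilde\epsilon)\sigma^+(\tilde\epsilon)=-\sigma^+(\tilde\epsilon)$, the same $\ell$ is a $\mu_{c'}^{-\lambda}$-Whittaker functional on $\sigma^-$. Combined with the converse (restrict any $\mu_{c'}^{\pm}$-Whittaker functional to $U_0$), this proves (A)$\Leftrightarrow$(B).

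For part (2), the non-existence of an extension is equivalent, by Clifford theory, to $\sigma_0\not\cong\sigma_0^{\tilde\epsilon}$, which is precisely Mackey's irreducibility criterion for $\sigma=\Ind_{\SO(V_{2n})}^{\Oo(V_{2n})}\sigma_0$. Since $f^{-1}(U')\cdot\SO(V_{2n})=\Oo(V_{2n})$ with $f^{-1}(U')\cap\SO(V_{2n})=U_0$, the decomposition into $(f^{-1}(U'),\SO(V_{2n}))$-double cosets is trivial and Mackey's formula gives $\Res_{f^{-1}(U')}\sigma\cong\Ind_{U_0}^{f^{-1}(U')}(\sigma_0|_{U_0})$. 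Frobenius reciprocity then yields
\[
\Hom_{f^{-1}(U')}(\sigma,\mu_{c'}^\pm)\cong\Hom_{U_0}(\sigma_0,\mu_{c'})
\]
for both choices of sign, so $\sigma$ is $\mu_{c'}^+$-generic if and only if it is $\mu_{c'}^-$-generic if and only if $\sigma_0$ is $\mu_{c'}$-generic.

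I do not anticipate any serious obstacle. The only delicate point is confirming that $\tilde\epsilon$ genuinely fixes $\mu_{c'}$ under conjugation on $U_0$, which is used throughout both parts; this relies on the explicit formula for $\mu_{c'}$ together with the fact that $\epsilon'$ acts trivially on the hyperbolic summands $\H^{n-1}$ of $V_{2n}$ and by $\pm 1$ on $V_{(d,c')}$, and is essentially recorded in the remark preceding the lemma. Once this is verified, the rest is a routine application of Frobenius reciprocity and the uniqueness of Whittaker models for $\SO(V_{2n})$.
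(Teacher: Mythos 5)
Your proposal is correct, and its second half is essentially the paper's argument in a slightly reorganized form: the paper computes $\Hom_{f^{-1}(U')}(\Ind_{\SO(V_{2n})}^{\Oo(V_{2n})}\sigma_0,\mu_{c'}^{\pm}\circ f)\cong\Hom_{U_0}(\sigma_0,\mu_{c'}\circ f)$ by two applications of Frobenius reciprocity together with the identity $\Ind_{f^{-1}(U')}^{\Oo(V_{2n})}(\mu_{c'}^{\pm}\circ f)|\SO(V_{2n})\cong\Ind_{U_0}^{\SO(V_{2n})}(\mu_{c'}\circ f)$, while you obtain the same isomorphism by Mackey restriction of the induced representation followed by one Frobenius reciprocity; the content is identical and it settles part (2). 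Where you genuinely diverge is part (1): the paper stays with $\Ind_{\SO(V_{2n})}^{\Oo(V_{2n})}\sigma_0=\sigma_1\oplus\sigma_2$ and argues on the two-dimensional space $\Hom_{U_0}(\Ind\sigma_0|\SO(V_{2n}),\mu_{c'}\circ f)$, whose two summands are $f^{-1}(\epsilon')$-stable eigenspaces and must carry the eigenvalues $+1$ and $-1$ (since both signs occur by the Frobenius computation), whereas you work directly on the extension $\sigma^{+}$: multiplicity one forces $\ell\circ\sigma^{+}(\tilde\epsilon)=\lambda\ell$ with $\lambda=\pm1$, so $\sigma^{+}$ is $\mu_{c'}^{\lambda}$-generic and $\sigma^{-}=\sigma^{+}\otimes\det$ is $\mu_{c'}^{-\lambda}$-generic. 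Both routes rest on exactly the same inputs (Clifford theory, the fact that $\tilde\epsilon$ normalizes $U_0$ and fixes $\mu_{c'}$, and $\dim_{\C}\Hom_{U_0}(\sigma_0,\mu_{c'})\le 1$); yours buys a more direct treatment of (1) that never mentions the induced representation, at the cost of having to spell out two small points you only gesture at: the exclusivity in (B) (a $\mu_{c'}^{-\lambda}$-functional on $\sigma^{+}$ would restrict to a nonzero multiple of $\ell$ on $U_0$, whose $\tilde\epsilon$-eigenvalue is $\lambda$, a contradiction), and the legitimacy of fixing a single $f\in\mathcal{F}$ throughout, which is covered by the paper's remark that $\mu_{c'}^{\pm}$-genericity is independent of the choice of $f$.
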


\begin{proof}
The first assertions of (1) and (2) follow from the Clifford theory.
It is easy that (B) implies (A) in both (1) and (2).
\par

We show (A) implies (B).
Let $\sigma_0 \in \Irr(\SO(V_{2n}))$  be a $\mu_{c'}$-generic representation, i.e.,
\[
\dim_\C(\Hom_{U_0}(\sigma_0,\mu_{c'} \circ f)) = 1.
\]
for some $f \in \mathcal{F}$.
By the Frobenius reciprocity, we have
\begin{align*}
\Hom_{f^{-1}(U')}(\Ind_{\SO(V_{2n})}^{\Oo(V_{2n})}(\sigma_0), \mu_{c'}^\pm \circ f)
&\cong 
\Hom_{\Oo(V_{2n})}(\Ind_{\SO(V_{2n})}^{\Oo(V_{2n})}(\sigma_0), 
\Ind_{f^{-1}(U')}^{\Oo(V_{2n})}(\mu_{c'}^\pm \circ f))\\
&\cong 
\Hom_{\SO(V_{2n})}(\sigma_0, \Ind_{f^{-1}(U')}^{\Oo(V_{2n})}(\mu_{c'}^\pm \circ f)|\SO(V_{2n}))\\
&\cong 
\Hom_{\SO(V_{2n})}(\sigma_0, \Ind_{U_0}^{\SO(V_{2n})}(\mu_{c'} \circ f))
\cong 
\Hom_{U_0}(\sigma_0, \mu_{c'} \circ f).
\end{align*}
In particular, if $\sigma = \Ind_{\SO(V_{2n})}^{\Oo(V_{2n})}(\sigma_0)$ is irreducible, 
then $\sigma$ is both $\mu_{c'}^+$-generic and $\mu_{c'}^-$-generic.
This shows that (A) implies (B) in (2).
If $\Ind_{\SO(V_{2n})}^{\Oo(V_{2n})}(\sigma_0) \cong \sigma_1 \oplus \sigma_2$, then
\[
\Hom_{U_0}(\Ind_{\SO(V_{2n})}^{\Oo(V_{2n})}(\sigma_0)|\SO(V_{2n}),\mu_{c'} \circ f)
\cong 
\Hom_{U_0}(\sigma_1|\SO(V_{2n}),\mu_{c'} \circ f) \oplus 
\Hom_{U_0}(\sigma_2|\SO(V_{2n}),\mu_{c'} \circ f),
\]
and $\Hom_{U_0}(\sigma_i|\SO(V_{2n}),\mu_{c'} \circ f)$ is $f^{-1}(\epsilon')$-stable for $i=1,2$.
Hence this subspace is an eigenspace of $f^{-1}(\epsilon)$.
Since both $\pm1$ are  eigenvalues of $f^{-1}(\epsilon)$ in 
$\Hom_{U_0}(\Ind_{\SO(V_{2n})}^{\Oo(V_{2n})}(\sigma_0),\mu_{c'} \circ f)$, 
exactly one of $\sigma_1$ and $\sigma_2$ is $\mu_{c'}^+$-generic, and
the other is $\mu_c^-$-generic.
This shows that (A) implies (B) in (1).
\end{proof}

\subsection{Unramified representations}\label{sec.unram}
Let $V=V_{2n}$ be an orthogonal space associated to $(d,c)$.
We say that $\Oo(V_{2n})$ (or $\SO(V_{2n})$) is unramified if $c, d \in \oo^\times$.
In this subsection, we assume this condition.
Recall that $V_{2n}$ has a decomposition
\[
V_{2n} = Fv_1 + \dots Fv_{n-1} + V_{(d,c)} + Fv_1^* + \dots + Fv_{n-1}^*
\]
with $V_{(d,c)} = Fe + Fe'$.
We set
\[
\left\{
\begin{aligned}
&v_0 = \half{e+u^{-1}e'},\ v_0^* = \frac{e-u^{-1}}{2c}	\iif d=u^2,\\
&\text{$\oo_E$ to be the ring of integers of $E= F(\sqrt{d})\cong V_{(d,c)}$}	
\iif d \not\in \oo^{\times2}.
\end{aligned}
\right.
\]
Note that $\pair{v_0,v_0}_{V}=\pair{v_0^*,v_0^*}_{V}=0$ and $\pair{v_0,v_0^*}_{V}=1$.
Let $L_{2n}$ be the $\oo$-lattice of $V_{2n}$ defined by
\[
L_{2n}=\left\{
\begin{aligned}
&\oo v_1 + \dots + \oo v_{n-1} + \oo v_0 + \oo v_0^* + \oo v_1^* + \dots + \oo v_{n-1}^*
\iif d \in \oo^{\times2},\\
&\oo v_1 + \dots + \oo v_{n-1} + \oo_E + \oo v_1^* + \dots + \oo v_{n-1}^*
\iif d \not\in \oo^{\times2}.
\end{aligned}
\right.
\]
Let $K$ be the maximal compact subgroup of $\Oo(V_{2n})$ which preserves the lattice $L_{2n}$.
Note that $K$ contains $\epsilon$ and satisfies
\[
K = K_0 \rtimes \pair{\epsilon}, 
\]
where $K_0 = K \cap \SO(V_{2n})$ is a maximal compact subgroup of $\SO(V_{2n})$.
\par

Let $\sigma \in \Irr(\Oo(V_{2n}))$ and $\sigma_0 \in \Irr(\SO(V_{2n}))$.
We say that $\sigma$ (\resp $\sigma_0$) is unramified (with respect to $K$ (\resp $K_0$))
if $\sigma$ (\resp $\sigma_0$) has a nonzero $K$-fixed (\resp $K_0$-fixed) vector.
In this case, it is known that $\dim(\sigma^K)=\dim(\sigma_0^{K_0})=1$.

\begin{lem}\label{unram}
Let $\sigma_0 \in \Irr(\SO(V_{2n}))$ be an unramified representation.
Then $\Ind_{\SO(V_{2n})}^{\Oo(V_{2n})}(\sigma_0)$ has a unique irreducible unramified constituent.
\end{lem}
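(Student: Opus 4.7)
The plan is to compute the dimension of $\Ind_{\SO(V_{2n})}^{\Oo(V_{2n})}(\sigma_0)^K$ directly and show it equals $1$, which forces uniqueness when the induced representation is reducible (and gives existence in either case).

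First, I would use the fact that $\Oo(V_{2n}) = \SO(V_{2n}) \sqcup \SO(V_{2n}) \cdot \epsilon$ to identify the space of $\pi \coloneqq \Ind_{\SO(V_{2n})}^{\Oo(V_{2n})}(\sigma_0)$, as a vector space, with $V_{\sigma_0} \oplus V_{\sigma_0}$ via $f \mapsto (f(1), f(\epsilon))$. Next I would trace through the $K$-action on this model. Since $\epsilon \in K$ normalizes $K_0$, for $k \in K_0$ both components get acted on by $\sigma_0$ of an element of $K_0$ (the second via the automorphism $\Int(\epsilon)$), so the $K_0$-invariants are precisely $V_{\sigma_0}^{K_0} \oplus V_{\sigma_0}^{K_0}$, which is two-dimensional by the hypothesis that $\sigma_0$ is unramified. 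Finally, using $\epsilon^2 = 1$, the action of $\epsilon$ sends $(f(1), f(\epsilon))$ to $(f(\epsilon), f(1))$, i.e., swaps the two summands, so the $\epsilon$-invariants on $V_{\sigma_0}^{K_0} \oplus V_{\sigma_0}^{K_0}$ form the diagonal and are one-dimensional. Hence $\dim \pi^K = 1$.

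To conclude, I would split into the two cases coming from Clifford theory (as already used in Lemma \ref{gen}). If $\pi$ is irreducible, then the calculation already shows $\pi$ itself is the unique irreducible unramified constituent. If instead $\pi = \sigma_1 \oplus \sigma_2$, then $\sigma_1^K \oplus \sigma_2^K$ is one-dimensional, so exactly one of the two summands is unramified (with one-dimensional space of $K$-fixed vectors) and the other has no nonzero $K$-fixed vector, yielding the unique irreducible unramified constituent.

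I do not anticipate any real obstacle: the argument is essentially Frobenius reciprocity plus bookkeeping for the $\Z/2$ swap action, relying only on $\epsilon \in K$, $\epsilon^2 = 1$, and the one-dimensionality of $\sigma_0^{K_0}$. The only point needing a little care is the verification that $\epsilon$ normalizes $K_0$ (so that the second copy of $V_{\sigma_0}$ really contributes a one-dimensional $K_0$-invariant subspace), which is immediate from the explicit decomposition $K = K_0 \rtimes \langle \epsilon \rangle$ recorded in Section \ref{sec.unram}.
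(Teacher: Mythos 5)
Your proposal is correct and takes essentially the same approach as the paper: the paper's proof also rests on the observation that evaluation at $1$ identifies $\Ind_{\SO(V_{2n})}^{\Oo(V_{2n})}(\sigma_0)^K$ with the one-dimensional space $\sigma_0^{K_0}$ (which is exactly what your two-component computation with the swap by $\epsilon$ establishes), and then treats the irreducible and reducible cases. The only cosmetic difference is in the reducible case, where the paper realizes the two summands on the space $\VV$ of $\sigma_0$ and notes that $\sigma_1(\epsilon)=-\sigma_2(\epsilon)$ acts by opposite signs on the line $\VV^{K_0}$, whereas you deduce the same conclusion directly from $\dim \pi^K = 1$; the two arguments are equivalent.
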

\begin{proof}
Note that the map
\[
\Ind_{\SO(V_{2n})}^{\Oo(V_{2n})}(\sigma_0)^K \rightarrow \sigma_0^{K_0},\ f \mapsto f(1)
\]
is a $\C$-linear isomorphism.
Hence the assertion holds if $\Ind_{\SO(V_{2n})}^{\Oo(V_{2n})}(\sigma_0)$ is irreducible.
Now suppose that $\Ind_{\SO(V_{2n})}^{\Oo(V_{2n})}(\sigma_0)$ is reducible.
Then it decomposes into direct sum
\[
\Ind_{\SO(V_{2n})}^{\Oo(V_{2n})}(\sigma_0) \cong \sigma_1 \oplus \sigma_2.
\]
We may assume that $\sigma_1$ and $\sigma_2$ are realized on 
the same space $\VV$ as $\sigma_0$.
Since $\sigma_i(\epsilon)$ preserve the one dimension subspace $\VV^{K_0}$, 
we have $\sigma_i(\epsilon) = \pm \id$ on $\VV^{K_0}$.
Since $\sigma_1(\epsilon) = -\sigma_2(\epsilon)$, 
exactly one $i \in\{1,2\}$ satisfies that $\sigma_i(\epsilon) = +1$.
Then $\sigma_i$ is the unique irreducible unramified constituent of 
$\Ind_{\SO(V_{2n})}^{\Oo(V_{2n})}(\sigma_0)$.
\end{proof}

\section{Local Langlands correspondence for $\SO(V_{2n})$ and $\Oo(V_{2n})$}
In this we explain the LLC for $\SO(V_{2n})$ and $\Oo(V_{2n})$. 

\subsection{Orthogonal representations of $\WD_F$ and its component groups}
Let $M$ be a finite dimensional vector space over $\C$.
We say that a homomorphism $\phi \colon \WD_F \rightarrow \GL(M)$
is a representation of $\WD_F = W_F \times \SL_2(\C)$ if
\begin{itemize}
\item
$\phi(\Frob_F)$ is semi-simple, where $\Frob_{F}$ is a geometric Frobenius element in $W_{F}$;
\item
the restriction of $\phi$ to $W_F$ is smooth;
\item
the restriction of $\phi$ to $\SL_2(\C)$ is algebraic.
\end{itemize}
We call $\phi$ tempered if the image of $W_F$ is bounded.
\par

We say that $\phi$ is orthogonal
if there exists a non-degenerate bilinear form 
$B\colon M \times M \rightarrow \C$ such that
\[
\left\{
\begin{aligned}
&B(\phi(w)x,\phi(w)y) = B(x,y),\\
&B(y,x) = B(x,y)
\end{aligned}
\right.
\]
for $x,y\in M$ and $w\in \WD_F$.
In this case, $\phi$ is equivalent to its contragredient $\phi^\vee$.
More precisely, see \cite[\S 3]{GGP}.
\par

For an irreducible representation $\phi_0$ of $\WD_F$, 
we denote the multiplicity of $\phi_0$ in $\phi$ by $m_\phi(\phi_0)$.
We can decompose 
\[
\phi=m_1\phi_1+\dots+m_s\phi_s+\phi'+\phi'^\vee,
\]
where $\phi_1,\dots, \phi_s$ are distinct irreducible orthogonal representations
of $\WD_F$, $m_i=m_\phi(\phi_i)$, 
and $\phi'$ is a sum of irreducible representations of $\WD_F$
which are not orthogonal.
We say that a parameter $\phi$ is discrete if
$m_i=1$ for any $i=1,\dots, s$ and $\phi'=0$, i.e.,
$\phi$ is a multiplicity-free sum of irreducible orthogonal representations of $\WD_F$. 
\par

For a representation $\phi$ of $\WD_F$, 
the $L$-factor and the $\ep$-factor associated to $\phi$, which are defined in \cite{T},
are denoted by $L(s,\phi)$ and $\ep(s,\phi,\psi)$, respectively.
If $(\phi,M)$ is an orthogonal representation
with $\WD_F$-invariant symmetric bilinear form $B$, 
then we define the adjoint $L$-function $L(s,\phi,\Ad)$ associated to $\phi$ to be the $L$-function associated to 
\[
\Ad \circ \phi \colon \WD_F \rightarrow \GL(\Lie(\mathrm{Aut}(M,B))).
\]
We say that $\phi$ is generic if $L(s,\phi,\Ad)$ is regular at $s=1$.
Note that $\Ad \circ \phi \cong \wedge^2 \phi$ since $B$ is symmetric.
Hence the adjoint $L$-function $L(s,\phi,\Ad)$ is equal to
the exterior square $L$-function $L(s, \phi, \wedge^2) = L(s, \wedge^2 \phi)$.
\par

Let $\phi$ be a representation of $\WD_F = W_F \times \SL_2(\C)$.
We denote the inertia subgroup of $W_F$ by $I_F$.
We say that $\phi$ is unramified if $\phi$ is trivial on $I_F \times \SL_2(\C)$.
In this case, $\phi$ is a direct sum of unramified characters of $W_F^{\ab} \cong F^\times$.
\par

Let $(\phi,M)$ be an orthogonal representation of $\WD_F$
with invariant symmetric bilinear form $B$.
Let 
\[
C_\phi=\{g \in \GL(M)\ |\ 
\text{$B(gx,gy)=B(x,y)$ for any $x, y \in M$,
and 
$g \phi(w) = \phi(w) g$ for any $w \in \WD_F$}
\}
\]
be the centralizer of $\im(\phi)$ in $\mathrm{Aut}(M,B) \cong \Oo(\dim(M),\C)$.
Also we put
\[
C_\phi^+ = C_\phi \cap \SL(M).
\]
Finally, we define the large component group $A_\phi$ by
\[
A_\phi = \pi_0(C_\phi).
\]
The image of $C_\phi^+$ under the canonical map $C_\phi \rightarrow A_\phi$
is denoted by $A_\phi^+$, and called the component group of $\phi$.
By \cite[\S4]{GGP}, $A_\phi$ and $A_\phi^+$ are described explicitely as follows:
\par

Let $\phi=m_1\phi_1+\dots+m_s\phi_s+\phi'+\phi'^\vee$ be an orthogonal representation as above.
Then we have
\[
A_\phi=\bigoplus_{i=1}^{s}(\Z/2\Z) a_i \cong (\Z/2\Z)^s.
\]
Namely, $A_\phi$ is a free $\Z/2\Z$-module of rank $s$ and $\{a_1, \dots, a_s\}$
is a basis of $A_\phi$ with $a_i$ associated to $\phi_i$.
For $a=a_{i_1}+\dots+a_{i_k} \in A_\phi$ with $1\leq i_1 < \dots < i_k \leq s$, 
we put
\[
\phi^{a}=\phi_{i_1} \oplus \dots \oplus \phi_{i_k}.
\]
Also, we put 
\[
z_\phi \coloneqq \sum_{i=1}^s m_\phi(\phi_i)\cdot a_i 
=\sum_{i=1}^s m_i \cdot a_i \in A_\phi.
\]
This is the image of $-\1_M \in C_\phi$.
We call $z_\phi$ the central element in $A_\phi$.
The determinant map $\det \colon \GL(M) \rightarrow \C^\times$ gives a homomorphism
\begin{align*}
\det \colon A_\phi \rightarrow \Z/2\Z, \quad
\sum_{i=1}^s \ep_i a_i \mapsto \sum_{i=1}^s \ep_i \cdot \dim(\phi_i) \pmod2,
\end{align*}
where $\ep_i\in\{0,1\} = \Z/2\Z$.
Then we have $A_\phi^+=\ker(\det)$.
\par

By \cite[\S4]{GGP},
for each $c \in F^\times$, we can define a character $\eta_{\phi,c}$ of $A_\phi$ by
\[
\eta_{\phi,c}(a)=\det(\phi^a)(c).
\]
Note that $\eta_{\phi,c}(z_\phi)=1$ if and only if $c \in \ker(\det(\phi))$.

\subsection{$L$-group and $L$-parameters of $\SO(V_{2n})$}\label{Lpara}
Let $V_{2n}$ be an orthogonal space associated to $(d,c)$ for some $c,d \in F^\times$.
We put $E=F(\sqrt{d})$.
Then the Langlands dual group of $\SO(V_{2n})$ is the complex Lie group $\SO(2n,\C)$.
We use 
\[
J= \begin{pmatrix}
&&1\\
&\iddots&\\
1&&
\end{pmatrix}
\]
to define $\Oo(2n,\C)$, i.e., 
$\Oo(2n,\C) = \{g \in \GL_{2n}(\C)\ |\ {}^t g J g = J\}$.
We denote the $L$-group of $\SO(V_{2n})$ by ${}^L(\SO(V_{2n})) = \SO(2n,\C) \rtimes W_F$.
The action of $W_F$ on the dual group $\SO(2n,\C)$ factors through $W_F/W_E \cong \Gal(E/F)$.
If $E \not=F$, i.e., $\SO(V_{2n})$ is not split, then
the generator $\gamma \in \Gal(E/F)$ acts on $\SO(2n,\C)$ by
the inner automorphism of 
\[
\epsilon = 
\begin{pmatrix}
\1_{n-1}&&&\\
&0&1&\\
&1&0&\\
&&&\1_{n-1}
\end{pmatrix}
\in \Oo(2n,\C).
\]
Hence by $\gamma \mapsto \epsilon$, we have the homomorphism
\[
{}^L (\SO(V_{2n})) = \SO(2n,\C) \rtimes W_F \twoheadrightarrow 
\SO(2n, \C) \rtimes \Gal(E/F) \xrightarrow{\cong} \Oo(2n,\C).
\]
On the other hand, if $E=F$, i.e., $\SO(V_{2n})$ is split, then
$W_F$ acts on $\SO(2n,\C)$ trivially so that we have the homomorphism
\[
{}^L (\SO(V_{2n})) = \SO(2n,\C) \rtimes W_F \twoheadrightarrow 
\SO(2n, \C)  \hookrightarrow \Oo(2n,\C).
\]
\par

An $L$-parameter of $\SO(V_{2n})$ is an admissible homomorphism
\[
\bphi \colon \WD_F \rightarrow {}^L(\SO(V_{2n})) = \SO(2n,\C) \rtimes W_F.
\]
We put
\[
\Phi(\SO(V_{2n})) = \{\text{$\SO(2n,\C)$-conjugacy classes of $L$-parameters of $\SO(V_{2n})$}\}.
\]
For an $L$-parameter $\bphi \colon \WD_F \rightarrow {}^L(\SO(V_{2n}))$, 
by composing with the above map ${}^L(\SO(V_{2n})) \rightarrow \Oo(2n,\C)$, 
we obtain a homomorphism
\[
\phi \colon \WD_F \rightarrow \Oo(2n,\C).
\]
We may regard $\phi$ as an orthogonal representation of $\WD_F$.
Note that $\det(\phi) = \chi_V$ is the discriminant character of $V_{2n}$.
The map $\bphi \mapsto \phi$ gives an identification
\[
\Phi(\SO(V_{2n})) =
\{
\phi \colon \WD_F \rightarrow \Oo(2n,\C) \ |\ \det(\phi) = \chi_V\}/(\text{$\SO(2n,\C)$-conjugacy}).
\]
Namely, we may regard $\Phi(\SO(V_{2n}))$ as the set of $\SO(M)$-conjugacy classes of
orthogonal representations $(\phi,M)$ of $\WD_F$ with $\dim(M)=2n$ and $\det(\phi) = \chi_V$.
\par

We denote the subset of $\Phi(\SO(V_{2n}))$ consisting of 
$\SO(M)$-conjugacy classes of tempered (\resp discrete, generic) representations $(\phi,M)$
by $\Phi_\temp(\SO(V_{2n}))$ (\resp $\Phi_\disc(\SO(V_{2n}))$, $\Phi_\gen(\SO(V_{2n}))$).
Then we have a sequence
\[
\Phi_\disc(\SO(V_{2n})) \subset \Phi_\temp(\SO(V_{2n})) \subset \Phi_\gen(\SO(V_{2n})).
\]
We define $\Phi^\epsilon(\SO(V_{2n}))$ to be the subset of $\Phi(\SO(V_{2n}))$ consisting of $\phi$ 
which contains an irreducible orthogonal representation of $\WD_F$ with odd dimension.
We put $\Phi_*^\epsilon(\SO(V_{2n}))=\Phi^\epsilon(\SO(V_{2n})) \cap \Phi_*(\SO(V_{2n}))$
for $* \in \{\disc, \temp,\gen\}$.

\subsection{Local Langlands correspondence for $\SO(V_{2n})$}\label{LLC SO}
Let $V_{2n}$ be an orthogonal space associated to $(d,c)$ for some $c,d \in F^\times$.
The discriminant character is denoted by $\chi_V \coloneqq \chi_d$.
We set $V_{2n}'$ to be the orthogonal space such that
\[
\dim(V_{2n}')=2n
\quad\text{and}\quad
\disc(V_{2n}')=\disc(V_{2n})
\]
but $V_{2n}' \not\cong V_{2n}$ as orthogonal spaces.
Such $V_{2n}'$ exists uniquely up to isomorphisms
unless $n=1$ and $d \in F^{\times2}$
By a companion space of $V_{2n}$, we mean $V_{2n}$ or $V_{2n}'$.
\par

Now we describe the desiderata for the local Langlands correspondence for $\SO(V_{2n})$. 
\begin{des}[LLC for $\SO(V_{2n})$]\label{des}
Let $V_{2n}$ be an orthogonal space associated to $(d,c)$, 
and $\chi_V = (\cdot, d)$ be the discriminant character of $V_{2n}$.
\begin{enumerate}
\item
There exists a canonical surjection
\[
\bigsqcup_{V_{2n}^\bullet}
\Irr(\SO(V_{2n}^\bullet)) \rightarrow \Phi(\SO(V_{2n})).
\]
where $V_{2n}^\bullet$ runs over all companion spaces of $V_{2n}$.
For $\phi \in \Phi(\SO(V_{2n}))$,
we denote by $\Pi_\phi^0$ the inverse image of $\phi$ under this map, 
and call $\Pi_\phi^0$ the $L$-packet of $\phi$.

\item
We have
\[
\bigsqcup_{V_{2n}^\bullet}
\Irr_*(\SO(V_{2n}^\bullet)) = \bigsqcup_{\phi \in \Phi_*(\SO(V_{2n}))}\Pi_\phi^0
\]
for $* \in \{\temp, \disc\}$.

\item
For each $c' \in F^\times$, there exists a suitable bijection
\[
\iota_{c'} \colon \Pi_\phi^0 \rightarrow \widehat{A_{\phi}^+}.
\]

\item
For $\sigma_0 \in \Pi_\phi^0$ and $c' \in F^\times$, the following are equivalent:
\begin{itemize}
\item
$\sigma_0 \in \Irr(\SO(V_{2n}))$;
\item
$\iota_{c'}(\sigma_0)(z_\phi) = \chi_V(c'/c)$.
\end{itemize}

\end{enumerate}
\end{des}
\par

Note that $(\phi,M) \in \Phi(\SO(V_{2n}))$ is not an equivalence class
but an $\SO(M)$-conjugacy class, 
Because of this difference, Desideratum \ref{des} has not been established.
Arthur \cite{Ar} has established LLC for $\Oo(V_{2n})$, and 
deduced a weaker version of Desideratum \ref{des} as follows.
\par

We introduce an equivalence relation $\sim_\epsilon$ on $\Irr(\SO(V_{2n}^\bullet))$.
Choose an element $\epsilon$ in $\Oo(V_{2n}^\bullet)$ such that $\det(\epsilon)=-1$.
For $\sigma_0 \in \Irr(\SO(V_{2n}^\bullet))$, 
we define its conjugate $\sigma_0^\epsilon$ by 
$\sigma_0^\epsilon(g)=\sigma_0(\epsilon^{-1} g \epsilon)$.
Then the equivalence relation $\sim_\epsilon$ on $\Irr(\SO(V_{2n}^\bullet))$ is defined by
\[
\sigma_0 \sim_\epsilon \sigma_0^\epsilon.
\]
The canonical map $\Irr(\SO(V_{2n}^\bullet)) \rightarrow \Irr(\SO(V_{2n}^\bullet))/\sim_\epsilon$
is denoted by $\sigma_0 \mapsto [\sigma_0]$.
We say that $[\sigma_0] \in \Irr(\SO(V_{2n}^\bullet))/\sim_\epsilon$
is tempered (\resp discrete, $\mu_{c'}$-generic, unramified) 
if so is some (and hence any) representative $\sigma_0$.
\par

Also, we introduce an equivalence relation $\sim_\epsilon$ on $\Phi(\SO(V_{2n}))$.
For $\phi, \phi' \in \Phi(\SO(V_{2n}))$, we write $\phi \sim_\epsilon \phi'$
if $\phi$ is $\Oo(2n,\C)$-conjugate to $\phi'$, i.e., $\phi$ is equivalent to $\phi'$
as representations of $\WD_F$.
The equivalence class of $\phi$ is also denoted by $\phi$.
\par

The desiderata for the weaker version of the local Langlands correspondence for $\SO(V_{2n})$
is described as follows: 
\begin{des}[Weak LLC for $\SO(V_{2n})$]\label{desSO}
Let $V_{2n}$ be an orthogonal space associated to $(d,c)$,
and $\chi_V = (\cdot, d)$ be the discriminant character of $V_{2n}$.
\begin{enumerate}
\item
There exists a canonical surjection
\[
\bigsqcup_{V_{2n}^\bullet}
\Irr(\SO(V_{2n}^\bullet))/\sim_\epsilon \rightarrow \Phi(\SO(V_{2n}))/\sim_\epsilon.
\]
where $V_{2n}^\bullet$ runs over all companion spaces of $V_{2n}$.
For $\phi \in \Phi(\SO(V_{2n}))/\sim_\epsilon$,
we denote by $\Pi_\phi^0$ the inverse image of $\phi$ under this map, 
and call $\Pi_\phi^0$ the $L$-packet of $\phi$.

\item
We have
\[
\bigsqcup_{V_{2n}^\bullet}
\Irr_*(\SO(V_{2n}^\bullet))/\sim_\epsilon = 
\bigsqcup_{\phi \in \Phi_*(\SO(V_{2n}))/\sim_\epsilon}\Pi_\phi^0
\]
for $* \in \{\temp, \disc\}$.

\item
The following are equivalent:
\begin{itemize}
\item
$\phi \in \Phi^\epsilon(\SO(V_{2n}))/\sim_\epsilon$;
\item
some $[\sigma_0] \in \Pi_\phi^0$ satisfies $\sigma_0^\epsilon \cong \sigma_0$;
\item
all $[\sigma_0] \in \Pi_\phi^0$ satisfy $\sigma_0^\epsilon \cong \sigma_0$.
\end{itemize}
Here, $\Phi^\epsilon(\SO(V_{2n}))/\sim_\epsilon$ is the subset of $\Phi(\SO(V_{2n}))/ \sim_\epsilon$
consisting of $\phi$ which contains an irreducible orthogonal representation of $\WD_F$ with odd dimension.

\item
For each $c' \in F^\times$, there exists a bijection (not depending on $\psi$)
\[
\iota_{c'} \colon \Pi_\phi^0 \rightarrow \widehat{A_{\phi}^+},
\]
which satisfies the endoscopic and twisted endoscopic character identities.

\item
For $[\sigma_0] \in \Pi_\phi^0$ and $c' \in F^\times$, the following are equivalent:
\begin{itemize}
\item
$\sigma_0 \in \Irr(\SO(V_{2n}))$;
\item
$\iota_{c'}([\sigma_0])(z_\phi) = \chi_V(c'/c)$.
\end{itemize}

\item
Assume that $\phi=\phi_\tau+\phi_0+\phi_\tau^\vee$, where 
$\phi_0$ is an element in $\Phi_\temp(\SO(V_{2n_0}))/\sim_\epsilon$ and 
$\phi_\tau$ is an irreducible tempered representation of $\WD_F$
which corresponds to $\tau \in \Irr_\temp(\GL_k(F))$ with $n=n_0+k$.
Then the induced representation 
\[
\Ind_P^{\SO(V_{2n})}(\tau \otimes \sigma_0)
\]
is a multiplicity-free direct sum of tempered representations of $\SO(V_{2n})$,
where $P$ is a parabolic subgroup of $\SO(V_{2n})$ with
Levi subgroup $M_P=\GL_k(F) \times \SO(V_{2n_0})$ and
$\sigma_0$ is a representative of an element in $\Pi_{\phi_0}^0$.
The $L$-packet $\Pi_\phi^0$ is given by
\[
\Pi_\phi^0 = \{[\sigma]\ |\ 
\text{$\sigma \subset \Ind_P^{\SO(V_{2n})}(\tau \otimes \sigma_0)$ 
for some $[\sigma_0] \in \Pi_{\phi_0}^0$}\}.
\]
Moreover if $\sigma \subset \Ind_P^{\SO(V_{2n})}(\tau \otimes \sigma_0)$, 
then $\iota_{c'}([\sigma])|A_{\phi_0}^+ = \iota_{c'}([\sigma_0])$ for $c' \in F^\times$.

\item
Assume that 
\[
\phi=\phi_{\tau_1}|\cdot|_F^{s_1} + \dots +\phi_{\tau_r}|\cdot|_F^{s_r} + \phi_0
+(\phi_{\tau_1}|\cdot|_F^{s_1} + \dots +\phi_{\tau_r}|\cdot|_F^{s_r})^\vee,
\]
where $\phi_0$ is an element in $\Phi_\temp(\SO(V_{2n_0}))/\sim_\epsilon$, 
$\phi_{\tau_i}$ is an irreducible tempered representation of $\WD_F$
which corresponds to $\tau_i \in \Irr_\temp(\GL_{k_i}(F))$ with $n=n_0+k_1+\dots+k_r$
and $s_i$ is a real number with $s_1 \geq \dots \geq s_r>0$.
Then the $L$-packet $\Pi_\phi^0$ consists of the equivalence classes of
the unique irreducible quotients $\sigma$ of 
the standard modules
\[
\Ind_P^{\SO(V_{2n})}(\tau_1|\det|_F^{s_1} \otimes \dots \otimes \tau_r|\det|_F^{s_r} 
\otimes \sigma_0),
\]
where $\sigma_0$ runs over representatives of elements of $\Pi_{\phi_0}^0$
and $P$ is a parabolic subgroup of $\SO(V_{2n})$ with
Levi subgroup $M_P=\GL_{k_1}(F) \times \dots \times \GL_{k_r}(F) \times \SO(V_{2n_0})$.
Moreover if $\sigma$ is the unique irreducible quotient of 
$\Ind_P^{\SO(V_{2n})}(\tau_1|\det|_F^{s_1} \otimes \dots \otimes \tau_r|\det|_F^{s_r} 
\otimes \sigma_0)$,
then $\iota_{c'}([\sigma])|A_{\phi_0}^+ = \iota_{c'}([\sigma_0])$ for $c' \in F^\times$.
\end{enumerate}
\end{des}

In this paper, we take the position that 
the stabilization of the twisted trace formula used in \cite{Ar} is complete.
See the series of papers
\cite{Stab1}, \cite{Stab2}, \cite{Stab3}, \cite{Stab4}, \cite{Stab5}, 
\cite{Stab6}, \cite{Stab7}, \cite{Stab8}, \cite{Stab9} and \cite{Stab10}
of Waldspurger and M{\oe}glin--Waldspurger, 
and papers of Chaudouard--Laumon \cite{CL1} and \cite{CL2}.
Then the following theorem holds.

\begin{thm}[\cite{Ar}]\label{LLC-SO}
Let $V_{2n}$ be an orthogonal space associated to $(d,c)$.
Put $E=F(\sqrt{d})$.
Then there exist a surjective map
\[
\Irr_{\temp}(\SO(V_{2n}))/\sim_\epsilon \rightarrow \Phi_\temp(\SO(V_{2n}))/\sim_\epsilon
\]
with the inverse image $\Pi_\phi^{0}$ of $\phi \in \Phi_\temp(\SO(V_{2n}))/\sim_\epsilon$, 
and a bijection 
\[
\iota_{c'} \colon \Pi_\phi^{0} \rightarrow (A_\phi^+/\pair{z_\phi})\widehat{\ }
\]
for $c' \in cN_{E/F}(E^\times)$ 
satisfying Desideratum \ref{desSO} (2), (3), (4), and (6).
Moreover, using the Langlands classification, 
we can extend the map $[\sigma] \mapsto \phi$ to a surjective map
\[
\Irr(\SO(V_{2n}))/\sim_\epsilon \rightarrow \Phi(\SO(V_{2n}))/\sim_\epsilon
\]
which satisfies Desideratum \ref{desSO} (7). 
\end{thm}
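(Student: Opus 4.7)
The plan is to deduce this from Arthur's classification of $\Irr(\Oo(V_{2n}))$ in \cite[Theorems 2.2.1, 2.2.4]{Ar} by passing to $\SO(V_{2n})$ via Clifford theory. As the introduction already indicates, the weak LLC for $\SO(V_{2n})$ is equivalent to the classification of $\Irr(\Oo(V_{2n}))$ modulo twisting by $\det$: an irreducible representation $\sigma$ of $\Oo(V_{2n})$ either restricts irreducibly to $\SO(V_{2n})$, in which case $\sigma$ and $\sigma \otimes \det$ restrict to the same $\SO(V_{2n})$-representation which is fixed by $\epsilon$, or it restricts to a sum $\sigma_0 \oplus \sigma_0^\epsilon$ of two inequivalent $\epsilon$-conjugate representations, in which case $\sigma \cong \sigma \otimes \det$. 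Either way, the assignment $\sigma \mapsto [\sigma|\SO(V_{2n})]$ descends to a bijection between $\Irr(\Oo(V_{2n}))/(\det\text{-twist})$ and $\Irr(\SO(V_{2n}))/\sim_\epsilon$, and the analogous assertion holds for $V_{2n}'$.

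First I would recall Arthur's output: for each $\phi \in \Phi_\temp(\SO(V_{2n}))/\sim_\epsilon$ he produces a finite multi-set $\Pi_\phi$ of representations of $\Oo(V_{2n}^\bullet)$, $V_{2n}^\bullet \in \{V_{2n}, V'_{2n}\}$, characterized by endoscopic character identities with respect to the twisted trace formula for $\GL_{2n}$; these identities are stable under the $\det$-twist, so they descend to identities on $\SO(V_{2n}^\bullet)/\sim_\epsilon$. Taking the image in $\Irr(\SO(V_{2n}^\bullet))/\sim_\epsilon$ gives the packet $\Pi_\phi^0$; the disjointness and exhaustion statements in (2) follow from the corresponding orthogonal statements of Arthur together with the Clifford-theoretic bijection above. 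For the labeling (3), Arthur parametrizes $\Pi_\phi$ by $\widehat{S_\phi}$ where $S_\phi = A_\phi^+$; the kernel of the restriction $\widehat{A_\phi^+} \to \widehat{A_\phi^+ / \pair{z_\phi}}$ records exactly whether the pair $\{\sigma, \sigma \otimes \det\}$ lives on $V_{2n}$ or on $V_{2n}'$. Concretely, criterion (5) is the Vogan-style distinction of pure inner forms and reduces to the statement $\iota_{c'}(\cdot)(z_\phi) = \chi_V(c'/c)$, where the dependence on $c'$ is the standard Whittaker normalization: the bijection $\iota_{c'}$ is pinned down (within Arthur's freedom of normalization) by requiring that the $\mu_{c'}$-generic member of $\Pi_\phi^0$ correspond to the trivial character, using Desideratum \ref{desSO}(4); here I invoke the generic character theory of \S\ref{sec gen} and the genericity of tempered $L$-packets established by Arthur.

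Next I would verify Desideratum \ref{desSO}(6). Given $\phi = \phi_\tau + \phi_0 + \phi_\tau^\vee$, choose a parabolic $P \subset \SO(V_{2n})$ with Levi $\GL_k \times \SO(V_{2n_0})$ and consider $\Ind_P^{\SO(V_{2n})}(\tau \otimes \sigma_0)$ for $[\sigma_0] \in \Pi_{\phi_0}^0$. By Harish-Chandra's theory together with the multiplicity-one property of normalized intertwining operators for tempered representations, such an induction is a multiplicity-free sum of tempered representations; the fact that its constituents exhaust $\Pi_\phi^0$ (and that the component group characters restrict correctly) is Arthur's construction of tempered packets by parabolic induction from discrete packets, transported through the Clifford-theoretic dictionary. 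The only subtlety is to check the compatibility on the $\SO/\sim_\epsilon$ side, where one has to argue that the $\epsilon$-conjugation on the induced representation matches the corresponding action on $\sigma_0$; this is where one uses that $\epsilon$ can be chosen inside the Levi.

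Finally, I would extend to nontempered $\phi$ using the Langlands classification, exactly as in (7). Writing $\phi$ as a sum of twisted tempered pieces $\phi_{\tau_i}|\cdot|^{s_i}$ and a tempered core $\phi_0$, one takes $\sigma$ to be the Langlands quotient of the standard module; the uniqueness and existence of the quotient, together with the bijection $[\sigma] \leftrightarrow (\phi, [\sigma_0])$ with $[\sigma_0] \in \Pi_{\phi_0}^0$, is formal once the tempered case is in hand, and transfers to $\SO(V_{2n})/\sim_\epsilon$ because $\epsilon$-conjugation commutes with the formation of Langlands quotients. The main obstacle, in my view, is not any individual step but the careful bookkeeping needed to show that the freedom in choosing representatives for the $\sim_\epsilon$-classes (both on the Galois side and on the representation side) does not affect the constructed bijection $\iota_{c'}$, and in particular that $\iota_{c'}$ only depends on $c'$ through its coset in $cN_{E/F}(E^\times)$; this is where the analysis in \S\ref{sec gen} of generic characters and the transitive $T'\rtimes\langle\epsilon'\rangle$-action is essential.
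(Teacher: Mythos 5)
Your route --- quoting Arthur's classification of $\Irr(\Oo(V_{2n}))$ (Theorems 2.2.1 and 2.2.4 of \cite{Ar}) and descending to $\Irr(\SO(V_{2n}))/\sim_\epsilon$ via the Clifford-theoretic dictionary between $\det$-twisting and $\epsilon$-conjugation, with the Whittaker normalization pinning down $\iota_{c'}$ --- is exactly the paper's treatment of this statement: the paper offers no independent proof, citing \cite{Ar} and noting after Theorem \ref{LLC-O} that Arthur established the $\Oo(V_{2n})$ version first and then defined the packets $\Pi_\phi^0$ for $\SO(V_{2n})$ through Desideratum \ref{desO} (8). So your proposal is correct and essentially coincides with the paper's (cited) argument.
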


\begin{rem}\label{remSO}
\begin{enumerate}
\item
M{\oe}glin's work in \cite[\S 1.4, Theorem 1.4.1]{M2} may have extended 
Theorem \ref{LLC-SO} to the pure inner forms as well, 
in which case Desideratum \ref{desSO} would be known in general.
However, we are not sure how her work fits with the general theory of 
T. Kaletha on the normalization of transfer factors for inner forms \cite{Ka2}. 
In particular, we are not sure if the local character relation
of Arthur \cite[Theorems 2.2.1, 2.2.4]{Ar} (the analog of Hypothesis \ref{Hypo} below) 
holds in her work. 
\item
If $d \not \in F^{\times2}$, then $\SO(V_{2n}^\bullet)$ is quasi-split for any 
companion space of $V_{2n}$, so that we may define $L$-packets $\Pi_\phi^0$
and bijections
\begin{align*}
\iota_{c_1} \colon \Pi_\phi^{0} \cap \Irr(\SO(V_{2n}))/\sim_\epsilon 
\rightarrow (A_\phi^+/\pair{z_\phi})\widehat{\ },\\
\iota_{c_2} \colon \Pi_\phi^{0} \cap \Irr(\SO(V'_{2n}))/\sim_\epsilon 
\rightarrow (A_\phi^+/\pair{z_\phi})\widehat{\ }
\end{align*}
for $c_1, c_2 \in F^\times$ with $c_1\in cN_{E/F}(E^\times)$ and $c_2 \not\in cN_{E/F}(E^\times)$.
We define $\iota_{c_2}([\sigma])$ for $[\sigma] \in \Irr(\SO(V_{2n}))/\sim_\epsilon$ by
\[
\iota_{c_2}([\sigma]) \coloneqq \iota_{c_1}([\sigma]) \otimes \eta_{\phi, c_1/c_2}, 
\]
and define $\iota_{c_1}([\sigma'])$ for $[\sigma'] \in \Irr(\SO(V'_{2n}))/\sim_\epsilon$ similarly.
Then the character relations and local intertwining relations would continue to hold
after modifying the transfer factor and the normalization of intertwining operators.
See also \cite{KMSW}, \cite[\S 5.4]{Ka2} and \cite[Appendix A]{At}.
\end{enumerate}
\end{rem}

There are some properties of $\Pi_\phi^0$.
\begin{prop}\label{propSO}
Assume Weak LLC for $\SO(V_{2n})$ (Desideratum \ref{desSO}).
\begin{enumerate}
\item
For $c_1, c_2 \in F^\times$, we have
\[
\iota_{c_2}([\sigma_0]) = \iota_{c_1}([\sigma_0]) \otimes \eta_{\phi,c_2/c_1}
\]
as a character of $A_\phi^+$.
\item
$\phi$ is generic, i.e., $L(s,\phi,\Ad)$ is regular at $s=1$
if and only if
$\Pi_\phi^0$ contains a $\mu_{c'}$-generic class $[\sigma_0]$ 
for each $c' \in F^\times$.
Note that if $c' \not \in cN_{E/F}(E^\times)$, then $\sigma_0 \in \Irr(\SO(V'_{2n}))$.
\item
If $\phi$ is generic, then for each $c' \in F^\times$, 
$[\sigma_0] \in \Pi_\phi^0$ is $\mu_{c'}$-generic 
if and only if $\iota_{c'}([\sigma_0])$ is 
the trivial representation of $A_\phi^+$.
\item
If both $\SO(V_{2n})$ and $\phi$ are unramified, 
then $\Pi_\phi^0$ contains a unique unramified class $[\sigma_0]$, and
it corresponds to the trivial representation of $A_\phi^+$ under $\iota_{c}$. 
\end{enumerate}
In particular, these properties hold for the $L$-packets 
$\Pi_\phi^{0} \cap \Irr(\SO(V_{2n}))/\sim_\epsilon$ of quasi-split $\SO(V_{2n})$
and $c_1,c_2,c' \in cN_{E/F}(E^\times)$ unconditionally.
\end{prop}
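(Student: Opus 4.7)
The plan is to derive each part from Arthur's characterization (Theorem \ref{LLC-SO}), Shahidi's generic packet conjecture (established by Arthur in the quasi-split case), and the unramified Langlands correspondence, using (1) to propagate statements between different Whittaker data.

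For (1), when $c_1, c_2$ both lie in the coset $cN_{E/F}(E^\times)$, the generic characters $\mu_{c_1}$ and $\mu_{c_2}$ of $U_0$ give $T$-orbits related by the bijection of the Proposition in \S\ref{sec gen}; the identity $\iota_{c_2}([\sigma_0]) = \iota_{c_1}([\sigma_0]) \otimes \eta_{\phi,c_2/c_1}$ is then the standard change-of-Whittaker-datum shift, built into Arthur's endoscopic character identities (Desideratum \ref{desSO}(4)). When $c_2$ lies in a different coset modulo $N_{E/F}(E^\times)$, the identity is precisely the definition recorded in Remark \ref{remSO}(2), and the general case follows by transitivity through an element of $cN_{E/F}(E^\times)$.

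For (2) and (3), using Desideratum \ref{desSO}(7) together with the compatibility between the genericity of a standard module and that of its Langlands quotient, we may reduce to the tempered case. There, Arthur's proof of Shahidi's generic packet conjecture for quasi-split $\SO_{2n}$ asserts that, for $c' \in cN_{E/F}(E^\times)$, the packet $\Pi_\phi^0 \cap \Irr(\SO(V_{2n}))/\sim_\epsilon$ contains a $\mu_{c'}$-generic class iff $L(s,\phi,\Ad)$ is regular at $s=1$, and that class is the unique one with $\iota_{c'}([\sigma_0]) = \1$. When $c' \notin cN_{E/F}(E^\times)$, a $\mu_{c'}$-generic element of $\Pi_\phi^0$ must sit in $\Irr(\SO(V_{2n}'))/\sim_\epsilon$; the companion $V_{2n}'$ is quasi-split whenever $d \notin F^{\times 2}$, and the same assertion applies there. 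Combining these inputs with (1) yields (2) and (3) in full generality.

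For (4), the unramified parameter $\phi$ corresponds under the Satake isomorphism to a unique irreducible unramified $\sigma_0 \in \Irr(\SO(V_{2n}))$; since $\epsilon$ stabilizes $K_0$, the class $[\sigma_0]$ is well-defined, and it is the unique unramified element of $\Pi_\phi^0$ because the companion $V_{2n}'$ is not unramified and hence carries no unramified representation. The identification $\iota_c([\sigma_0]) = \1$ then follows from the compatibility of Arthur's construction (via twisted endoscopic transfer to $\GL_{2n}$) with the unramified Langlands correspondence on both sides. The main obstacle throughout is the deep input of Shahidi's generic packet conjecture for $\SO_{2n}$ as embedded in Arthur's work, which rests on the stabilization of the twisted trace formula and the local intertwining relation; by contrast, the coset and companion-space bookkeeping is rendered formal by (1) and Remark \ref{remSO}(2).
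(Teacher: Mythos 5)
Your outline follows the same overall route as the paper, which disposes of this proposition purely by citation: (1) is Kaletha's change-of-Whittaker-datum theorem \cite[Theorem 3.3]{Ka}, (2) is the Gross--Prasad--Rallis conjecture proved by Gan--Ichino \cite[Appendix B]{GI2}, (3) combines \cite[Proposition 8.3.2 (a)]{Ar} with the converse due to \cite{At2} (or Waldspurger's proof of the GP conjecture), and (4) is M{\oe}glin \cite{M}. However, at three points your sketch claims that these facts follow from weaker inputs than they actually require, and these are genuine gaps rather than harmless compressions. (i) For (1), the shift $\iota_{c_2}([\sigma_0])=\iota_{c_1}([\sigma_0])\otimes\eta_{\phi,c_2/c_1}$ is \emph{not} ``built into'' the endoscopic character identities of Desideratum \ref{desSO} (4): for a fixed Whittaker datum those identities pin down the bijection, but they say nothing about how the bijection moves when the Whittaker datum is changed; that requires an explicit comparison of the transfer factors attached to the two data, which is exactly the content of Kaletha's theorem. (Your treatment of the cross-coset case via Remark \ref{remSO} (2) is correct, since there the shift is a definition.) (ii) For (2), the reduction to the tempered case is not a formality. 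For a non-tempered generic $\phi$ the packet members are Langlands quotients, and a Langlands quotient is generic only when its standard module is irreducible; so one must prove that regularity of $L(s,\phi,\Ad)$ at $s=1$ forces irreducibility of the standard modules (Heiermann's theorem \cite{H}, used in the paper in Proposition \ref{heier}), and conversely that existence of a generic member forces regularity. This analysis is precisely the substance of the Gan--Ichino argument; the phrase ``compatibility between the genericity of a standard module and that of its Langlands quotient'' conceals the key step rather than supplying it.

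(iii) For (3), you attribute both directions to ``Arthur's proof of Shahidi's generic packet conjecture.'' Arthur \cite[Proposition 8.3.2 (a)]{Ar} only shows that the member with trivial character $\iota_{c'}([\sigma_0])=\1$ is $\mu_{c'}$-generic; the converse --- that a $\mu_{c'}$-generic member must correspond to the trivial character, equivalently uniqueness of the generic member for a fixed Whittaker datum --- is not in Arthur and needs a separate argument (Waldspurger's results on the GP conjecture, or the intertwining-operator argument of \cite{At2}, which is in fact how the paper proves the analogous statement for $\Oo(V_{2n})$ in Theorem \ref{des temp}). Your part (4) is essentially right in spirit, but the identification of the unramified member with the trivial character is again not an automatic ``compatibility'': it is M{\oe}glin's theorem \cite{M}, resting on the unramified (fundamental-lemma) character identities. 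In short, the skeleton is sound, but as written the argument would not close without explicitly invoking Kaletha, Heiermann/Gan--Ichino, and Waldspurger--Atobe at the three places indicated.
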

\begin{proof}
(1) is given in \cite[Theorem 3.3]{Ka}.
(2) is a conjecture of Gross--Prasad and Rallis (\cite[Conjecture 2.6]{GP}), and
has been proven by Gan--Ichino (\cite[Appendix B]{GI2}).
For (3), it is shown in \cite[Proposition 8.3.2 (a)]{Ar} supplemented by some results of many others
that the class $[\sigma_0]$ corresponding to 
the trivial representation of $A_\phi^+$ under $\iota_{c'}$
is $\mu_{c'}$-generic.
A simple proof of the other direction is given by the first author \cite{At2}.
Also (3) is a special case of Gross--Prasad conjecture \cite[Conjecture 17.1]{GGP}, 
which is proven by Waldspurger \cite{W2}, \cite{W3}, \cite{W4} and \cite{W5}.
Finally, (4) is proven by M\oe glin \cite{M}.
\end{proof}

\subsection{Local Langlands correspondence for $\Oo(V_{2n})$}\label{LLC O}
Let $V_{2n}$ be an orthogonal space associated to $(d,c)$, and
$\epsilon \in \Oo(V_{2n})$ be as in \S \ref{Quadratic}.
Put $\theta=\Int(\epsilon)$.
It is an element in $\mathrm{Aut}(\SO(V_{2n}))$.
In \cite{Ar}, Arthur has established the local Langlands correspondence for
not $\Oo(V_{2n})$ but for
\[
\SO(V_{2n}) \rtimes \pair{\theta}.
\]
As topological groups, $\Oo(V_{2n})$ and $\SO(V_{2n}) \rtimes \pair{\theta}$ are isomorphic.
However, it is not canonical.
There are exactly two isomorphism $\Oo(V_{2n}) \cong \SO(V_{2n}) \rtimes \pair{\theta}$
which are identity on $\SO(V_{2n})$, and they are determined by 
$\pm \epsilon \leftrightarrow \theta$.
We use the isomorphism such that $\epsilon \leftrightarrow \theta$.
Via this isomorphism, we translate LLC for $\SO(V_{2n}) \rtimes \pair{\theta}$
into LLC for $\Oo(V_{2n})$.
Note that the changing of the choice of the isomorphism corresponds to 
the automorphism
\[
\Oo(V_{2n}) \rightarrow \Oo(V_{2n}),\ 
g \mapsto \det(g) \cdot g
= \left\{
\begin{aligned}
&g	\iif g \in \SO(V_{2n}),\\
&-g	\other.
\end{aligned}
\right.
\]
Hence it induces the bijection
\[
\Irr(\Oo(V_{2n})) \rightarrow \Irr(\Oo(V_{2n})), \ 
\sigma \mapsto (\omega_\sigma \circ \det) \otimes \sigma, 
\]
where $\omega_\sigma$ is the central character of $\sigma$, 
which is regarded as a character of $\{\pm1\}$.
\par

Let $V_{2n}^\bullet$ be a companion space of $V_{2n}$.
We define an equivalence relation $\sim_{\det}$ on $\Irr(\Oo(V_{2n}^\bullet))$ by 
\[
\sigma \sim_{\det} \sigma \otimes \det
\]
for $\sigma \in \Irr(\Oo(V_{2n}^\bullet))$.
The restriction and the induction give a canonical bijection 
\[
\Irr(\Oo(V_{2n}^\bullet))/\sim_{\det} \longleftrightarrow \Irr(\SO(V_{2n}^\bullet))/\sim_\ep.
\]
Put $\Phi(\Oo(V_{2n})) = \Phi(\SO(V_{2n}))/\sim_\epsilon$ and 
$\Phi_*(\Oo(V_{2n})) = \Phi_*(\SO(V_{2n}))/\sim_\epsilon$
for $* \in \{\temp, \disc, \gen\}$.
Also, we define $\Phi^\epsilon(\Oo(V_{2n})) = \Phi^\epsilon(\SO(V_{2n}))/\sim_\epsilon$.
Namely, $\Phi(\Oo(V_{2n}))$ is the set of equivalence classes of orthogonal representations
of $\WD_F$ with dimension $2n$ and determinant $\chi_V$.
We call an element in $\Phi(\Oo(V_{2n}))$ an $L$-parameter for $\Oo(V_{2n})$.
\par

We describe the local Langlands correspondence for $\Oo(V_{2n})$.
\begin{des}[LLC for $\Oo(V_{2n})$]\label{desO}
Let $V_{2n}$ be an orthogonal space associated to $(d,c)$,
and $\chi_V = (\cdot, d)$ be the discriminant character of $V_{2n}$.
\begin{enumerate}
\item
There exists a canonical surjection
\[
\bigsqcup_{V_{2n}^\bullet}
\Irr(\Oo(V_{2n}^\bullet)) \rightarrow \Phi(\Oo(V_{2n})).
\]
where $V_{2n}^\bullet$ runs over all companion spaces of $V_{2n}$.
For $\phi \in \Phi(\Oo(V_{2n}))$,
we denote by $\Pi_\phi$ the inverse image of $\phi$ under this map, 
and call $\Pi_\phi$ the $L$-packet of $\phi$.

\item
We have
\[
\bigsqcup_{V_{2n}^\bullet}
\Irr_*(\Oo(V_{2n}^\bullet)) = \bigsqcup_{\phi \in \Phi_*(\Oo(V_{2n}))}\Pi_\phi
\]
for $* \in \{\temp, \disc\}$.

\item
The following are equivalent:
\begin{itemize}
\item
$\phi \in \Phi^\epsilon(\Oo(V_{2n}))$;
\item
some $\sigma \in \Pi_\phi$ satisfies $\sigma \otimes \det \not\cong \sigma$;
\item
all $\sigma \in \Pi_\phi$ satisfy $\sigma \otimes \det \not\cong \sigma$.
\end{itemize}
Here, $\Phi^\epsilon(\Oo(V_{2n}))$ is the subset of $\Phi(\Oo(V_{2n}))$ consisting
of $\phi$ which contains an irreducible orthogonal representation of $\WD_F$ with odd dimension.

\item
For each $c' \in F^\times$, there exists a bijection (not depending on $\psi$)
\[
\iota_{c'} \colon \Pi_\phi \rightarrow \widehat{A_{\phi}},
\]
which satisfies the (twisted) endoscopic character identities.

\item
For $\sigma \in \Pi_\phi$ and $c' \in F^\times$, the following are equivalent:
\begin{itemize}
\item
$\sigma \in \Irr(\Oo(V_{2n}))$;
\item
$\iota_{c'}(\sigma)(z_\phi) = \chi_V(c'/c)$.
\end{itemize}

\item
Assume that $\phi=\phi_\tau+\phi_0+\phi_\tau^\vee$, where 
$\phi_0$ is an element in $\Phi_\temp(\Oo(V_{2n_0}))$ and 
$\phi_\tau$ is an irreducible tempered representation of $\WD_F$
which corresponds to $\tau \in \Irr_\temp(\GL_k(F))$ with $n=n_0+k$.
Then the induced representation 
\[
\Ind_P^{\Oo(V_{2n})}(\tau \otimes \sigma_0)
\]
is a multiplicity-free direct sum of tempered representations of $\Oo(V_{2n})$,
where $P$ is a parabolic subgroup of $\Oo(V_{2n})$ with
Levi subgroup $M_P=\GL_k(F) \times \Oo(V_{2n_0})$ and
$\sigma_0 \in \Pi_{\phi_0}$.
The $L$-packet $\Pi_\phi$ is given by
\[
\Pi_\phi = \{\sigma\ |\ 
\text{$\sigma \subset \Ind_P^{\Oo(V_{2n})}(\tau \otimes \sigma_0)$ 
for some $\sigma_0 \in \Pi_{\phi_0}$}\}.
\]
Moreover if $\sigma \subset \Ind_P^{\Oo(V_{2n})}(\tau \otimes \sigma_0)$, 
then $\iota_{c'}(\sigma)|A_{\phi_0} = \iota_{c'}(\sigma_0)$ for $c' \in F^\times$.

\item
Assume that 
\[
\phi=\phi_{\tau_1}|\cdot|_F^{s_1} + \dots +\phi_{\tau_r}|\cdot|_F^{s_r} + \phi_0
+(\phi_{\tau_1}|\cdot|_F^{s_1} + \dots +\phi_{\tau_r}|\cdot|_F^{s_r})^\vee,
\]
where $\phi_0$ is an element in $\Phi_\temp(\Oo(V_{2n_0}))$, 
$\phi_{\tau_i}$ is an irreducible tempered representation of $\WD_F$
which corresponds to $\tau_i \in \Irr_\temp(\GL_{k_i}(F))$ with $n=n_0+k_1+\dots+k_r$
and $s_i$ is a real number with $s_1 \geq \dots \geq s_r>0$.
Then the $L$-packet $\Pi_\phi$ consists of
the unique irreducible quotients $\sigma$ of 
the standard modules
\[
\Ind_P^{\Oo(V_{2n})}(\tau_1|\det|_F^{s_1} \otimes \dots \otimes \tau_r|\det|_F^{s_r} 
\otimes \sigma_0),
\]
where $\sigma_0$ runs over elements of $\Pi_{\phi_0}$
and $P$ is a parabolic subgroup of $\Oo(V_{2n})$ with
Levi subgroup $M_P=\GL_{k_1}(F) \times \dots \times \GL_{k_r}(F) \times \Oo(V_{2n_0})$.
Moreover if $\sigma$ is the unique irreducible quotient of 
$\Ind_P^{\Oo(V_{2n})}(\tau_1|\det|_F^{s_1} \otimes \dots \otimes \tau_r|\det|_F^{s_r} 
\otimes \sigma_0)$,
then $\iota_{c'}(\sigma)|A_{\phi_0} = \iota_{c'}(\sigma_0)$ for $c' \in F^\times$.

\item
For $\phi \in \Phi(\Oo(V_{2n})) = \Phi(\SO(V_{2n}))/\sim_\epsilon$, 
the image of $\Pi_\phi$ under the map 
\[
\Irr(\Oo(V_{2n}^\bullet)) \rightarrow \Irr(\Oo(V_{2n}^\bullet))/\sim_{\det} 
\rightarrow \Irr(\SO(V_{2n}^\bullet))/\sim_\epsilon
\]
is the packet $\Pi_{\phi}^0$ in Weak LLC for $\SO(V_{2n})$, and
the diagram
\[
\begin{CD}
\Pi_\phi @>\iota_{c'}>> \widehat{A_\phi}\\
@VVV @VVV\\
\Pi_\phi^0 @>\iota_{c'}>> \widehat{A_\phi^+}
\end{CD}
\]
is commutative for $c' \in F^\times$.
\item
For $c' \in F^\times$ and $\sigma \in \Pi_\phi$, 
the determinant twist $\sigma \otimes \det$ also belongs to $\Pi_\phi$, and
\[
\iota_{c'}(\sigma \otimes \det)(a) = \iota_{c'}(a) \cdot (-1)^{\det(a)}
\]
for $a \in A_\phi$.
\end{enumerate}
\end{des}

As Weak LLC for $\SO(V_{2n})$, the following theorem holds.
\begin{thm}[\cite{Ar}]\label{LLC-O}
Let $V_{2n}$ be an orthogonal space associated to $(d,c)$.
Put $E=F(\sqrt{d})$.
Then there exist a surjective map
\[
\Irr_{\temp}(\Oo(V_{2n})) \rightarrow \Phi_\temp(\Oo(V_{2n}))
\]
with the inverse image $\Pi_\phi$ of $\phi \in \Phi_\temp(\Oo(V_{2n}))$, 
and a bijection 
\[
\iota_{c'} \colon \Pi_\phi \rightarrow (A_\phi/\pair{z_\phi})\widehat{\ }
\]
for $c' \in cN_{E/F}(E^\times)$ 
satisfying Desideratum \ref{desO} (2), (3), (4), (6), (8), and (9).
Moreover, using the Langlands classification, 
we can extend the map $\sigma \mapsto \phi$ to a surjective map
\[
\Irr(\Oo(V_{2n})) \rightarrow \Phi(\Oo(V_{2n}))
\]
which satisfies Desideratum \ref{desO} (7). 
\end{thm}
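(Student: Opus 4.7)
The plan is to transport Arthur's LLC for the twisted group $\SO(V_{2n}) \rtimes \pair{\theta}$ (from \cite[Theorems 2.2.1, 2.2.4]{Ar}) to $\Oo(V_{2n})$ via the fixed isomorphism $\epsilon \leftrightarrow \theta$ of \S\ref{LLC O}, and then to verify each clause of Desideratum \ref{desO} by combining Arthur's theorem with Clifford theory for the index-two inclusion $\SO(V_{2n}) \subset \Oo(V_{2n})$ and with Theorem \ref{LLC-SO} (weak LLC for $\SO(V_{2n})$).

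First I would construct the packets and establish the structural clauses. Arthur's twisted classification furnishes, for each $\phi \in \Phi_\temp(\Oo(V_{2n}))$, a packet of irreducible representations of the twisted group, giving (after transport) a finite subset $\Pi_\phi \subset \Irr_\temp(\Oo(V_{2n}))$ and the surjectivity of the map. Clause (8) follows from Clifford theory: the restriction $\Irr(\Oo(V_{2n})) \to \Irr(\SO(V_{2n}))/\sim_\epsilon$ is surjective with fibres being $\det$-twist orbits of size one or two, while dually the restriction $\widehat{A_\phi} \to \widehat{A_\phi^+}$ is two-to-one or one-to-one according as $\phi$ contains an odd-dimensional irreducible summand, so that Arthur's compatibility on the $\SO$ side transports verbatim. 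Clause (3) then follows by combining Lemma \ref{gen} with Desideratum \ref{desSO}(3), noting that $\sigma \otimes \det \cong \sigma$ is equivalent to $\sigma|_{\SO(V_{2n})}$ being reducible, and hence to the $\SO$-constituents $\sigma_0$ failing to satisfy $\sigma_0^\epsilon \cong \sigma_0$.

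Next I would define $\iota_{c'}$ and derive (4), (6), and (9). Since the centralizer $C_\phi$ lies in $\Oo(M,B)$, Arthur's twisted endoscopic character identities match $\Pi_\phi$ with characters of $\pi_0(C_\phi) = A_\phi$ (rather than with $\widehat{A_\phi^+}$), producing the desired bijection; the central-element constraint $\iota_{c'}(\sigma)(z_\phi) = 1$ for $c' \in cN_{E/F}(E^\times)$, which cuts the target down to $(A_\phi/\pair{z_\phi})\widehat{\ }$, is inherited from the $\SO$ case via clause (8). For (9), the automorphism $g \mapsto \det(g) g$ of $\Oo(V_{2n})$ from \S\ref{LLC O} acts on $\Pi_\phi$ as $\sigma \mapsto \sigma \otimes \det$ and on the dual side by multiplication by the character $a \mapsto (-1)^{\det(a)}$ of $A_\phi$; since this character is trivial on $A_\phi^+$, compatibility (8) is preserved. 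Clause (6) is inherited from $\SO(V_{2n}) \rtimes \pair{\theta}$: parabolic induction commutes with the fixed isomorphism, Arthur's intertwining-operator analysis yields multiplicity-freeness, and the restriction formula $\iota_{c'}(\sigma)|A_{\phi_0} = \iota_{c'}(\sigma_0)$ transports from the $\SO$ case through (8). Finally, to extend to $\Irr(\Oo(V_{2n}))$ and obtain (7) I would apply Langlands classification: each non-tempered $\phi$ gives rise to the standard module in (7), which has a unique irreducible quotient by the usual argument, and $\sigma \mapsto \phi$ extends by sending each such quotient to its $\phi$.

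The main obstacle will be showing that the transported bijection truly lands on $\widehat{A_\phi}$ (not merely $\widehat{A_\phi^+}$) with the correct compatibility under restriction. An $\epsilon$-stable $\sigma_0 \in \Pi_\phi^0$ has exactly two extensions $\sigma^\pm$ to $\Oo(V_{2n})$ exchanged by $\det$-twist, and these must correspond to the two extensions to $A_\phi$ of the character of $A_\phi^+$ prescribed by the $\SO$-side $\iota_{c'}$; conversely, a non-$\epsilon$-stable $\sigma_0$ (possible only when $\phi \in \Phi^\epsilon(\Oo(V_{2n}))$) induces irreducibly to an $\Oo(V_{2n})$-representation whose image in $\widehat{A_\phi}$ is uniquely determined by its prescribed restriction to $A_\phi^+$ together with the requirement $\sigma \otimes \det \cong \sigma$. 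Pinning down the signs and verifying consistency with Arthur's twisted character identity \cite[Theorem 2.2.1]{Ar} through the non-canonical isomorphism fixed in \S\ref{LLC O} is where the otherwise transparent dictionary demands genuine care.
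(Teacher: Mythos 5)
Your core strategy coincides with the paper's: Theorem \ref{LLC-O} is not proved in the paper at all beyond the set-up of \S\ref{LLC O} — it is Arthur's Theorems 2.2.1 and 2.2.4 (together with 2.4.1/2.4.4 for the intertwining statements underlying (6)), translated through the fixed isomorphism $\Oo(V_{2n})\cong\SO(V_{2n})\rtimes\pair{\theta}$, $\epsilon\leftrightarrow\theta$. So citing the twisted endoscopic classification and transporting it via this dictionary is exactly what the paper does.

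Two points in your verification scheme need repair, however. First, the direction of deduction: in \cite{Ar} (and in the paper, see the remark right after the theorem) the $\Oo(V_{2n})$-classification comes \emph{first}, and the $\SO$-packets of Theorem \ref{LLC-SO} together with Desideratum \ref{desSO} are \emph{defined} from it via (8). Consequently, what you call the "main obstacle" — deciding which of the two extensions $\sigma^{\pm}$ of an $\epsilon$-invariant $\sigma_0$ is matched with which of the two characters of $A_\phi$ extending $\iota_{c'}([\sigma_0])\in\widehat{A_\phi^+}$ — cannot be recovered from Theorem \ref{LLC-SO} plus Clifford theory: the $\SO$-side only ever sees $\widehat{A_\phi^+}$ and is blind to this sign. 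That matching is not a step to be supplied "with care"; it is precisely the content of \cite[Theorem 2.2.4]{Ar} (the character identity on the bitorsor), i.e.\ it is part of the citation, and any attempt to re-derive it from the $\SO$-side is circular in the paper's logical order. Second, your mechanism for (9) is wrong as stated: the automorphism $g\mapsto\det(g)\cdot g$ of \S\ref{LLC O} induces $\sigma\mapsto(\omega_\sigma\circ\det)\otimes\sigma$, not $\sigma\mapsto\sigma\otimes\det$; in the paper it only governs the dependence on the choice $\pm\epsilon\leftrightarrow\theta$, and the twisting formula $\iota_{c'}(\sigma\otimes\det)=\iota_{c'}(\sigma)\cdot(-1)^{\det(\cdot)}$ is again part of Arthur's Theorem 2.2.4 (consistently with Lemma \ref{gen}: for $\phi\notin\Phi^\epsilon$ one has $\det(a)$ even on $A_\phi$ and $\sigma\otimes\det\cong\sigma$). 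With these two corrections — treat (4), (8), (9) as what the cited twisted character identities assert, rather than as consequences of the $\SO$-side — your outline matches the paper's treatment.
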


In fact, Arthur established Theorem \ref{LLC-O} first and by using Desideratum \ref{desO} (8), 
he then defined the $L$-packets $\Pi_\phi^{0}$ for $\SO(V_{2n})$
(Theorem \ref{LLC-SO}).
\par

\begin{rem}
As we mentioned in Remark \ref{remSO}, 
M{\oe}glin's work in \cite[\S 1.4, Theorem 1.4.1]{M2} 
seems to extend Theorem \ref{LLC-O} to the pure inner forms as well.
Also, when $d \not\in F^{\times2}$, 
we can define $L$-packets $\Pi_\phi$ and a bijection
$\iota_{c'} \colon \Pi_\phi \rightarrow \widehat{A_\phi}$ 
for any $c' \in F^\times$ similar to Remark \ref{remSO}.
However, motivated by Prasad conjecture (Conjecture \ref{P O} below), 
we should define $\iota_{c'}(\sigma)$ for $\sigma \in \Irr(\Oo(V_{2n}))$ by 
\[
\iota_{c'}(\sigma) = \iota_{c}(\sigma) \otimes \eta_{\phi\chi_V, c'/c}.
\]
See also Desideratum \ref{propO} and Hypothesis \ref{Hypo} below.
\end{rem}

The following is an analogue of Proposition \ref{propSO}.
\begin{des}\label{propO}
Let $V_{2n}$ be an orthogonal space associated to $(d,c)$.
Let $\phi \in \Phi(\Oo(V_{2n}))$ and $\sigma \in \Pi_\phi$.
We write $\phi\chi_V = \phi \otimes \chi_V$.
\begin{enumerate}
\item
For $c_1,c_2 \in F^\times$, we have
\[
\iota_{c_2}(\sigma) = \iota_{c_1}(\sigma) \otimes \eta_{\phi\chi_V, c_2/c_1}
\]
as a character of $A_\phi$.

\item
$\phi$ is generic, i.e., $L(s,\phi,\Ad)$ is regular at $s=1$
if and only if
$\Pi_\phi$ contains a $\mu_{c'}^\ep$-generic representation $\sigma$ 
for each $c' \in F^\times$ and $\ep \in \{\pm1\}$.

\item
If $\phi$ is generic, then for each $c' \in F^\times$, 
\begin{itemize}
\item
$\sigma^+ \in \Pi_\phi$ is $\mu_{c'}^+$-generic if and only if
$\iota_{c'}(\sigma^+)$ is the trivial representation of $A_\phi$; 
\item
$\sigma^- \in \Pi_\phi$ is $\mu_{c'}^-$-generic if and only if
$\iota_{c'}(\sigma^-)$ is given by $A_\phi \ni a \mapsto (-1)^{\det(a)}$.
\end{itemize}

\item
If both $\Oo(V_{2n})$ and $\phi$ are unramified, 
then $\Pi_\phi$ contains a unique unramified representation $\sigma$, and
it corresponds to the trivial representation of $A_\phi$ under $\iota_{c}$. 

\end{enumerate}
\end{des}

Under Desideratum \ref{desO}, Proposition \ref{propSO} and Hypothesis \ref{Hypo}, 
Desideratum \ref{propO} will be proven in \S \ref{proof propO} below.
Note that $\eta_{\phi\chi_V, c_2/c_1} | A_\phi^+ = \eta_{\phi, c_2/c_1}$
since $\dim(\phi^a)$ is even for $a \in A_\phi^+$.

\subsection{Hypothesis}\label{secH}
To establish Desideratum \ref{propO} and two main local theorems, 
we will use a very delicate hypothesis, which is an intertwining relation.
\par

Let $V_{2n}$ be an orthogonal space associated to $(d,c)$, and 
$V$ be a companion space of $V_{2n}$.
For a fixed positive integer $k$, we set
\[
X=Fv_1 \oplus \dots \oplus Fv_k,\quad
X^*=Fv_1^* \oplus \dots \oplus Fv_k^*
\]
to be $k$-dimensional vector spaces over $F$.
Let $V' = V \oplus X \oplus X^*$ be the orthogonal space define by
\[
\pair{v_i,v_j}_{V'}=\pair{v_i^*,v_j^*}_{V'}=\pair{v_i,v_0}_{V'}=\pair{v_i^*,v_0}_{V'}=0,
\quad \pair{v_i,v_j^*}_{V'}=\delta_{i,j}
\]
for any $i,j=1,\dots,k$ and $v_0\in V$.
Let $P = M_PU_P$ be the maximal parabolic subgroup of $\Oo(V')$ stabilizing $X$, 
where $M_P$ is the Levi component of $P$ stabilizing $X^*$.
Hence
\[
M_P \cong \GL(X) \times \Oo(V).
\]
Using the basis $\{v_1,...,v_k\}$ of $X$,
we obtain an isomorphism $m_P \colon \GL_k(F) \rightarrow \GL(X)$.
Let $\phi_\tau$ be an orthogonal tempered representation of $\WD_F$ of dimension $k$, 
and $\tau$ be the tempered representation of $\GL_k(F)$ on a space $\VV_\tau$ 
associated to $\phi_\tau$.
For $s \in \C$, we realize the representation $\tau_s \coloneqq \tau \otimes |\det|_F^s$
on $\VV_\tau$ by setting $\tau_s(a) \coloneqq |\det(a)|_F^s \tau(a)v$
for $v \in \VV_\tau$ and $a \in \GL_k(F)$.
Let $\sigma \in \Irr_\temp(\Oo(V))$.
Assume that $\sigma \in \Pi_{\phi_\sigma}$ with $\phi_\sigma \in \Phi_\temp^\epsilon(\Oo(V_{2n}))$, 
i.e., $\sigma \not\cong \sigma \otimes \det$ and $\sigma|\SO(V)$ is irreducible.
We define a normalized intertwining operator
\begin{align*}
R_{c'}(w,\tau_s \otimes \sigma) &\colon \Ind_P^{\Oo(V')}(\tau_s \otimes \sigma) 
\rightarrow \Ind_P^{\Oo(V')}(\tau_{s} \otimes \sigma)
\end{align*}
by (the meromorphic continuations of) the integral
\[
R_{c'}(w,\tau_s \otimes \sigma)f_s(h') = 
e(V)^k \cdot \chi_V(c'/c)^k \cdot |c'|_F^{k\rho_P} \cdot r(\tau_s \otimes \sigma)^{-1}\cdot
\AA_{w}\left(
\int_{U_P} f_s(\cl{w}_{c'}^{-1} u_P h') du_P
\right)
\]
for $f_s \in \Ind_P^{\Oo(V')}(\tau_s \otimes \sigma)$.
Here, 
\begin{itemize}
\item
$w$ is the non-trivial element in the relative Weyl group $W(M_P) (\cong \Z/2\Z)$ for $M_P$;
\item
$\cl{w}_{c'} \in \Oo(V)$ is the representative of $w$ given by
\[
\cl{w}_{c'} = w_P \cdot m_P(c' \cdot a) \cdot ((-1)^k\1_V),
\]
where 
$w_P \in \Oo(V')$ is defined by $w_P v_i = -v_i^*$, $w_P v_i^* =-v_i$ and $w_P|V = \1_V$, 
and $a \in \GL_k(F)$ is given by
\[
a=
\begin{pmatrix}
&&(-1)^{n-k+1}\\
&\iddots&\\
(-1)^n&&
\end{pmatrix};
\]
\item
$e(V) = \iota_{c}(z_{\phi_\sigma}) \in \{\pm1\}$, i.e., 
\[
e(V) = \left\{
\begin{aligned}
&1	\iif \text{$V$ is associated to $(d,c)$},\\
&-1	\other;
\end{aligned}
\right.
\]
\item
$\rho_P=m+(k-1)/2$, so that the modulus character $\delta_P$ of $P$
satisfies that $\delta_P(m_P(a)) = |\det(a)|_F^{2\rho_P}$ for $a \in \GL_k(F)$;
\item
$r(\tau_s \otimes \sigma)$ is the normalizing factor given by
\[
r(\tau_s \otimes \sigma) = 
\lam(E/F, \psi)^k 
\frac{L(s, \phi_\tau \otimes \phi_\sigma)}
{\ep(s,\phi_\tau \otimes \phi_\sigma, \psi)L(1+s, \phi_\tau \otimes \phi_\sigma)}
\frac{L(-2s, (\wedge_2)^\vee \circ \phi_\tau)}
{\ep(-2s, (\wedge_2)^\vee \circ \phi_\tau)L(1-2s, (\wedge_2)^\vee \circ \phi_\tau)}, 
\]
where $\wedge_2$ is the representation of $\GL_k(\C)$ on 
the space of skew-symmetric $(k,k)$-matrices, and
$\lam(E/F, \psi)$ is the Langlands $\lam$-factor associated to $E=F(\sqrt{\disc(V)})=F(\sqrt{d})$.
\item
$du_P$ is the Haar measure of $U_P$ given in \cite{Ar} (see also \cite[\S 6.1]{At});
\item
$\AA_w \colon w(\tau \otimes \sigma) \rightarrow \tau \otimes \sigma$
is the intertwining isomorphism defined in \cite{Ar} (see also \cite[\S 6.3]{At}), 
where $w(\tau \otimes \sigma)(m) \coloneqq (\tau \otimes \sigma)(\cl{w}_{c'}^{-1}m\cl{w}_{c'})$ 
for $m \in M_P$.
\end{itemize}
\par

We expect that
the intertwining operators and the local Langlands correspondence are related as follows:
\begin{hyp}\label{Hypo}
Notation is as above.
\begin{enumerate}
\item
The normalized intertwining operator $R_{c'}(w,\tau_s \otimes \sigma)$ is
holomorphic at $s=0$.
We put $R_{c'}(w,\tau \otimes \sigma) \coloneqq R_{c'}(w,\tau_0 \otimes \sigma)$.
\item
Suppose that $\phi_\tau$ is a multiplicity-free sum of irreducible orthogonal tempered representations.
Put $\phi_{\sigma'} = \phi_\tau \oplus \phi_\sigma \oplus \phi_\tau$, and 
denote by $a \in A_{\phi_{\sigma'}}$ the element corresponding to $\phi_\tau$.
Let $\sigma' \in \Pi_{\phi_{\sigma'}}$ be an irreducible constituent of 
$\Ind_P^{\Oo(V')}(\tau \otimes \sigma)$.
Then we have
\[
R_{c'}(w,\tau \otimes \sigma) | \sigma' = \iota_{c'}(\sigma')(a)
\]
for any $c' \in F^\times$.
\end{enumerate}
\end{hyp}

In special cases, Hypothesis \ref{Hypo} has been established:
\begin{thm}
Hypothesis \ref{Hypo} holds in the following cases:
\begin{itemize}
\item
The case when $V=V_{2n}$ and $c' \in cN_{E/F}(E^\times)$.
\item
The case when $k$ is even and $d \not=1$ in $F^\times/F^{\times2}$
under assuming Desideratum \ref{desSO}.
\end{itemize}
\end{thm}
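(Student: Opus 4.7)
The plan is to reduce both bullets to Arthur's local intertwining relation from \cite[Theorems 2.2.1, 2.2.4]{Ar}, extended in the second case to the pure inner form via Desideratum \ref{desSO}. For the first bullet, with $V = V_{2n}$ and $c' \in cN_{E/F}(E^\times)$, one has $e(V) = 1$ and $\chi_V(c'/c) = (c'/c, d) = 1$, so the scalar prefactor $e(V)^k \chi_V(c'/c)^k$ is trivial and the factor $r(\tau_s \otimes \sigma)$ coincides, up to the choice of Langlands $\lambda$-factor, with Arthur's normalizing factor. Holomorphy at $s=0$ (Part (1)) then follows from Arthur's analysis of the normalizing factors for quasi-split classical groups. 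For Part (2), the identity $R_{c'}(w,\tau\otimes\sigma)|_{\sigma'} = \iota_{c'}(\sigma')(a)$ is literally Arthur's local intertwining relation, and the variation of $c'$ inside the coset $cN_{E/F}(E^\times)$ is absorbed by the $c'$-dependence of the representative $\cl{w}_{c'}$ together with the shift of $\iota_{c'}$ recorded in Proposition \ref{propSO}(1).

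For the second bullet my strategy is to bootstrap from the first by exploiting the parity of $k$ to kill the sign factors that would otherwise obstruct passage between different cosets of $cN_{E/F}(E^\times)$ and between the two companion spaces. Under the hypothesis that $k$ is even, both $e(V)^k$ and $\chi_V(c'/c)^k$ are identically $1$ for every choice of $(V, c')$, so the normalization is insensitive to these two sources of variation. Moreover, since $d \neq 1$, Weak LLC (Desideratum \ref{desSO}) is available on the pure inner form $\Oo(V_{2n}')$ as well as on $\Oo(V_{2n})$; cf.\ Remark \ref{remSO}(1). Fixing the base point $(V_{2n}, c)$ handled by the first bullet, I would propagate the identity to a general pair $(V, c')$ by comparing the representatives $\cl{w}_{c'}$ and $\cl{w}_c$; they differ by an element of $M_P$ whose action on $\tau$ contributes the central character $\det(\phi_\tau)(c'/c)$. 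Combined with $\chi_V(c'/c)^k = 1$, this scalar is precisely the predicted twist $\eta_{\phi_{\sigma'}\chi_V, c'/c}(a) = \det(\phi_\tau)(c'/c) \cdot \chi_V(c'/c)^k$ from Desideratum \ref{propO}(1).

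The main obstacle I expect is the precise bookkeeping in propagating the identity from $(V_{2n}, c)$ to $(V, c')$: all of the normalizing ingredients (the sign $e(V)^k$, the character $\chi_V(c'/c)^k$, the Haar measure factor $|c'|_F^{k\rho_P}$, the change of representative $\cl{w}_{c'}$, and the isomorphism between $\Oo(V_{2n})$ and $\Oo(V_{2n}')$) must be tracked simultaneously and shown to conspire to produce exactly the twist $\eta_{\phi_{\sigma'}\chi_V, c'/c}(a)$ appearing in Desideratum \ref{propO}(1). The $k$-even hypothesis is precisely what collapses all stray $\chi_V(\cdot)^k$ factors to $1$, leaving only the central character of $\tau$ on the intertwining-operator side and making the comparison compatible with Hypothesis \ref{Hypo}(2). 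Without it, the residual sign $\chi_V(c'/c)$ would obstruct the passage across cosets and across companion spaces, and a genuinely finer analysis of the normalization would be required.
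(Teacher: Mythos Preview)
Your plan matches the paper's proof, which is entirely by citation: the first bullet is Proposition~2.3.1 and Theorems~2.2.1, 2.2.4, 2.4.1, 2.4.4 of \cite{Ar}, and the second bullet follows from those together with \cite[Proposition~3.3]{At}. Your bookkeeping sketch for the second bullet is essentially what that proposition carries out.

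There is, however, one circularity you should repair. In the second bullet you invoke Desideratum~\ref{propO}(1) to justify the twist $\iota_{c'} = \iota_c \otimes \eta_{\phi_{\sigma'}\chi_V,\, c'/c}$, but Desideratum~\ref{propO} is established only afterward (Theorem~\ref{des temp}), and that proof \emph{assumes} Hypothesis~\ref{Hypo}. The non-circular route is this: since $k = \dim(\phi_\tau)$ is even, the element $a$ lies in $A_{\phi_{\sigma'}}^+$; hence by Desideratum~\ref{desO}(8) (part of Arthur's Theorem~\ref{LLC-O}, unconditional in the quasi-split case) the value $\iota_{c'}(\sigma')(a)$ is already determined by the $\SO$-level labeling $\iota_{c'}([\sigma'_0])$ on $A^+$, whose change-of-Whittaker formula is Proposition~\ref{propSO}(1), available under the assumed Desideratum~\ref{desSO}. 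On $A^+$ one has $\eta_{\phi\chi_V,\,c'/c} = \eta_{\phi,\,c'/c}$ (noted right after Desideratum~\ref{propO}), so your numerical identity survives unchanged. This is really the conceptual reason the hypotheses pair as they do: evenness of $k$ forces $a \in A^+$ and reduces the claim to the $\SO$-level, where Desideratum~\ref{desSO} (with $d \neq 1$ ensuring both companion spaces are quasi-split, so Arthur applies to each) supplies everything needed. Your emphasis on the vanishing of $e(V)^k$ and $\chi_V(c'/c)^k$ is a correct and necessary part of the bookkeeping, but the deeper point is that even $k$ lets you bypass the $\Oo$-level refinement of the labeling altogether.
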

\begin{proof}
In the first case, Hypothesis \ref{Hypo} is 
Proposition 2.3.1 and Theorems 2.2.1, 2.2.4, 2.4.1 and 2.4.4 in \cite{Ar}.
The second case follows from Arthur's results above and \cite[Proposition 3.3]{At}.
\end{proof}

The cases when Hypothesis \ref{Hypo} has not yet been verified are
\begin{itemize}
\item
the non-quasi-split even orthogonal case; and
\item
the case when $k$ is odd and $d \not=1$ in $F^\times/F^{\times2}$.
\end{itemize}
In general, Hypothesis \ref{Hypo} would follow from similar results to \cite{Ar} and \cite{KMSW}.

\begin{rem}
Recall that we need to choose an isomorphism 
\[
\Oo(V) \longleftrightarrow \SO(V) \rtimes \pair{\theta}
\]
to translate Arthur's result.
There exist two choices of isomorphisms, 
which are determined by $\pm \epsilon \leftrightarrow \theta$.
We have chosen the isomorphism such that $\epsilon \leftrightarrow \theta$.
If one chooses the other isomorphism $-\epsilon \leftrightarrow \theta$, 
one should replace the representative $\cl{w}_{c'}$ of $w \in W(M_P)$ as
\[
\cl{w}'_{c'} = -w_P \cdot m_P(c' \cdot a) \cdot ((-1)^k\1_V) = - \cl{w}_{c'}.
\]
Note that 
\[
f_s(\cl{w}'_{c'}u_Ph') = \omega_{s}(-1) \cdot f_s(\cl{w}_{c'}u_Ph')
\]
for $f_s \in \Ind_P^{\Oo(V')}(\tau_s \otimes \sigma)$, 
where $\omega_s$ is the central character of $\Ind_P^{\Oo(V')}(\tau_s \otimes \sigma)$.
This is compatible the bijection 
\[
\Irr(\Oo(V)) \rightarrow \Irr(\Oo(V)),\ \sigma \mapsto (\omega_\sigma \circ \det) \otimes \sigma
\]
as in \S \ref{LLC O}.
Hence all results below are independent of the choice of the isomorphism 
$\Oo(V) \cong \SO(V) \rtimes \pair{\theta}$.
\end{rem}

\subsection{Proof of Desideratum \ref{propO}}\label{proof propO}
In this subsection, we prove Desideratum \ref{propO} under Hypothesis \ref{Hypo}.
First, we treat the tempered case.
\begin{thm}\label{des temp}
Assume Desiderata \ref{desSO}, \ref{desO} and Hypothesis \ref{Hypo}.
Then Desideratum \ref{propO} holds for $\phi \in \Phi_\temp(\Oo(V))$.
In particular, it holds for the $L$-packets $\Pi_\phi \cap \Irr(\Oo(V_{2n}))$ of quasi-split $\Oo(V_{2n})$
and $c_1,c_2,c' \in cN_{E/F}(E^\times)$ unconditionally.
\end{thm}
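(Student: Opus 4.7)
The plan is to derive each of the four assertions of Desideratum \ref{propO} from its $\SO(V)$-analogue in Proposition \ref{propSO}, using Lemmas \ref{gen} and \ref{unram} to pass between $\Irr(\SO(V))/\sim_\epsilon$ and $\Irr(\Oo(V))$, Desideratum \ref{desO}(8)--(9) to compare the two bijections $\iota_{c'}$, and Hypothesis \ref{Hypo}(2) to treat the features (the $c'$-dependence and the sign determinations) that genuinely distinguish $\Oo(V)$ from $\SO(V)$. I would handle the four parts in the order (4), then (2)--(3), then (1). Part (4) is quickest: Proposition \ref{propSO}(4) gives a unique unramified $[\sigma_0]\in\Pi_\phi^{0}$ with $\iota_c([\sigma_0])=\1$; if $\phi\notin\Phi^\epsilon(\Oo(V))$ the unique lift to $\Pi_\phi$ is unramified by Lemma \ref{unram}, while if $\phi\in\Phi^\epsilon(\Oo(V))$ exactly one of the two extensions is unramified (its $\det$-twist is not, since $\det$ is nontrivial on $K$), and in both cases Desideratum \ref{desO}(8)--(9) forces $\iota_c(\sigma)=\1$ for this unramified $\sigma$.

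For parts (2) and (3), fix $c'\in F^\times$. When $\phi\notin\Phi^\epsilon$, $A_\phi=A_\phi^+$ and Lemma \ref{gen}(2) shows that the unique lift of the $\mu_{c'}$-generic $[\sigma_0]$ (which exists iff $\phi$ is generic by Proposition \ref{propSO}(2)) is simultaneously $\mu_{c'}^+$- and $\mu_{c'}^-$-generic with $\iota_{c'}=\1$, yielding (2) and (3) at once via Desideratum \ref{desO}(8). When $\phi\in\Phi^\epsilon$ and $\phi$ is generic, let $\sigma^+$ and $\sigma^-$ denote the $\mu_{c'}^+$- and $\mu_{c'}^-$-generic extensions of the $\mu_{c'}$-generic $[\sigma_0]$, uniquely determined by Lemma \ref{gen}(1). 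Both satisfy $\iota_{c'}|A_\phi^+=\1$ by Desideratum \ref{desO}(8) and Proposition \ref{propSO}(3), so their characters on $A_\phi$ are $\1$ and $a\mapsto(-1)^{\det(a)}$, interchanged by $\det$-twisting (Desideratum \ref{desO}(9)). To identify which extension goes with which character, I would fix an orthogonal summand $\phi_i$ of $\phi$, embed $\phi$ into $\phi'=\phi_\tau\oplus\phi\oplus\phi_\tau$ with $\phi_\tau=\phi_i$, and consider $\Ind_P^{\Oo(V'')}(\tau\otimes\sigma^+)$. The Shahidi-style normalizing factor $r(\tau_s\otimes\sigma)$ built into $R_{c'}(w,\tau\otimes\sigma)$ is chosen so that $R_{c'}(w)$ preserves the $\mu_{c'}^+$-Whittaker functional on the induced representation and thus acts as $+1$ on its $\mu_{c'}^+$-generic constituent $\sigma'$; Hypothesis \ref{Hypo}(2) then gives $\iota_{c'}(\sigma^+)(a_i)=1$. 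Running over the generators $a_i$ of $A_\phi$ forces $\iota_{c'}(\sigma^+)=\1$ and hence $\iota_{c'}(\sigma^-)(a)=(-1)^{\det(a)}$; the converse in (2) is immediate from Proposition \ref{propSO}(2) and Lemma \ref{gen}.

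For part (1), the identity on $A_\phi^+$ follows from Desideratum \ref{desO}(8) and Proposition \ref{propSO}(1), together with $\eta_{\phi\chi_V,c_2/c_1}|A_\phi^+=\eta_{\phi,c_2/c_1}$ (valid since $\dim(\phi^a)$ is even on $A_\phi^+$). For a generator $a_i\in A_\phi\setminus A_\phi^+$ corresponding to an odd-dimensional orthogonal summand $\phi_i$ of $\phi$, embed as above and apply Hypothesis \ref{Hypo}(2) at $c_1$ and $c_2$ to obtain
$$\iota_{c_2}(\sigma)(a_i)/\iota_{c_1}(\sigma)(a_i)=R_{c_2}(w,\tau\otimes\sigma)|\sigma'\big/R_{c_1}(w,\tau\otimes\sigma)|\sigma'.$$
Evaluating the right-hand side directly from the explicit formula for $R_{c'}$: the prefactor $\chi_V(c'/c)^k$ contributes $\chi_V(c_2/c_1)^{\dim\phi_i}$, the change of representative $\cl{w}_{c_2}\cl{w}_{c_1}^{-1}=w_P m_P((c_2/c_1)\1_k)w_P^{-1}$ acts through the central character of $\tau$ contributing $\det(\phi_i)(c_2/c_1)$, and the remaining factors $|c'|_F^{k\rho_P}$, $e(V)^k$, and $r(\tau_s\otimes\sigma)$ cancel at $s=0$. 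The product $\chi_V(c_2/c_1)^{\dim\phi_i}\det(\phi_i)(c_2/c_1)$ equals $\eta_{\phi\chi_V,c_2/c_1}(a_i)$, completing (1).

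The main obstacle is the Whittaker-compatibility step of (3): verifying that the Shahidi-normalized $R_{c'}(w,\tau\otimes\sigma)$ preserves the $\mu_{c'}^+$-Whittaker functional (rather than its $\mu_{c'}^-$-counterpart) on $\Ind_P^{\Oo(V'')}(\tau\otimes\sigma^+)$, with the specific representative $\cl{w}_{c'}$ and sign $e(V)$ used in Hypothesis \ref{Hypo}. Once this Whittaker-compatibility is in hand, all other steps reduce to Clifford-theoretic bookkeeping or to direct manipulation of the explicit normalization of $R_{c'}$.
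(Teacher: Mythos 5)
Your treatment of parts (1)--(3) follows essentially the same route as the paper: the identity on $A_\phi^+$ is pulled back from Proposition \ref{propSO} via Desideratum \ref{desO}(8), and the genuinely orthogonal part (generators attached to odd-dimensional summands) is handled by embedding $\phi$ into $\phi_\tau\oplus\phi\oplus\phi_\tau$ and applying Hypothesis \ref{Hypo}(2), with the $c'$-dependence of $R_{c'}$ computed from the explicit normalization. The Whittaker-compatibility you single out as the main obstacle in (3) is exactly what the paper supplies: it constructs the Jacquet-type functional $l_s(f_s)=\int_{U_0}l(f_s(\cl{w}_{c'}^{-1}u_0))\mu_{c'}^{-1}(u_0)du_0$, proves convergence and analytic continuation via \cite{CS}, \cite{S1}, and then invokes Shahidi's theorem \cite[Theorem 3.5]{S2} (cf.\ \cite[Theorem 2.5.1]{Ar}) to get $l_0\circ R_{c'}(w,\tau\otimes\sigma)=l_0$; so that step is available and your argument for (3), and hence also the $A_\phi\setminus A_\phi^+$ part of (1), is sound (note the paper only needs one odd-dimensional summand, since $[A_\phi:A_\phi^+]=2$).

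Part (4), however, has a genuine gap. In the case $\phi\in\Phi^\epsilon(\Oo(V))$, Desideratum \ref{desO}(8)--(9) only tell you that the unramified extension $\sigma$ and its twist $\sigma\otimes\det$ both restrict to the trivial character on $A_\phi^+$ and that their characters on $A_\phi$ differ by $a\mapsto(-1)^{\det(a)}$; they say nothing about \emph{which} of the two extensions carries the trivial character of the full group $A_\phi$. Deciding that is precisely the content of Desideratum \ref{propO}(4) beyond its $\SO$-analogue, so the claim ``Desideratum \ref{desO}(8)--(9) forces $\iota_c(\sigma)=\1$'' does not go through. The paper closes this by a genericity argument: by Proposition \ref{propSO}(3) the unramified $[\sigma_0]$ is $\mu_c$-generic; the Casselman--Shalika formula \cite[Theorem 5.4]{CS} shows the Whittaker functional is nonzero on the $K_0$-fixed vector; since $\sigma\otimes\det\not\cong\sigma$ the functional lies in $\Hom_U(\sigma,\mu_c^\delta)$ for a single sign $\delta$, and evaluating on the $K$-fixed vector (which is fixed by $\epsilon\in K$, while $\mu_c^\delta(\epsilon)=\delta$) forces $\delta=+1$; then part (3) at $c'=c$ yields $\iota_c(\sigma)=\1$. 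You need this (or some equivalent unramified input) to complete (4).
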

\begin{proof}
Note that Proposition \ref{propSO} holds since we assume Desideratum \ref{desSO}.
\par

First, we consider (1).
Let $\phi \in \Phi_\temp(\Oo(V))$ and $\sigma \in \Pi_\phi$.
We have to show that
\[
\iota_{c_1}(\sigma)(a) = \iota_{c_2}(\sigma)(a) \cdot \det(\phi^a\chi_V)(c_1/c_2)
\] 
for any $a \in A_\phi$ and $c_1, c_2 \in F^\times$.
Fix $a \in A_\phi$ and consider the parameter
\[
\phi' = \phi^a \oplus \phi \oplus \phi^a.
\]
Let $\tau \in \Irr(\GL_k(F))$ be the representation corresponding to $\phi^a$, 
where $k= \dim(\phi^a)$, and put $\sigma' = \Ind_P^{\Oo(V')}(\tau \otimes \sigma)$ as above.
Then $A_\phi = A_{\phi'}$ since $\phi$ contains $\phi^a$.
Hence $\sigma'$ is irreducible and $\sigma' \in \Pi_{\phi'}$ by Desideratum \ref{desO} (6).
Moreover, we have
\[
\iota_{c'}(\sigma')|A_{\phi} = \iota_{c'}(\sigma)
\]
for any $c' \in F^\times$.
By Hypothesis \ref{Hypo}, 
$R_{c_i}(w, \tau \otimes \sigma)$ is the scalar operator with eigenvalue $\iota_{c_i}(\sigma)(a)$
for $i=1,2$.
By definition, we have 
\[
R_{c_1}(w, \tau \otimes \sigma) = 
\chi_V(c_1/c_2)^k \cdot \omega_\tau(c_1/c_2)
\cdot R_{c_2}(w, \tau \otimes \sigma),
\]
where $\omega_\tau$ is the central character of $\tau$, which is equal to $\det(\phi^a)$.
Since
\[
\chi_V(c_1/c_2)^k \cdot \omega_\tau(c_1/c_2) = \det(\phi^a\chi_V)(c_1/c_2),
\]
we have
\[
\iota_{c_1}(\sigma)(a) = \det(\phi^a\chi_V)(c_1/c_2) \cdot \iota_{c_2}(\sigma)(a), 
\]
as desired.
\par

The assertion (2) follows from Lemma \ref{gen} and Proposition \ref{propSO} (2).
\par

Next, we consider (3).
Let $\phi \in \Phi_\temp(\Oo(V))$. 
Note that $\phi$ is generic.
By Proposition \ref{propSO} (2) and Desideratum \ref{propO} (2), 
for each $c' \in F^\times$, 
there exists a $\mu_{c'}^+$-generic representation $\sigma \in \Pi_\phi$ such that
$\iota_{c'}(\sigma)|A_{\phi}^+ = \1$.
We may assume that $\sigma$ is a representation of $\Oo(V)$ with
$V$ associated to $(d,c')$.
We have to show that $\iota_{c'}(\sigma)=\1$.
If $\phi \not\in \Phi^\epsilon(\Oo(V))$, then $A_\phi^+ = A_\phi$ so that
we have nothing to prove.
Hence we may assume that $\phi$ contains an irreducible orthogonal representation $\phi_0$ 
with odd dimension $k$.
Let $a_0 \in A_\phi$ be the element corresponding to $\phi_0$. 
For $s \in \C$, consider the parameter
\[
\phi' = \phi_0 \oplus \phi \oplus \phi_0.
\]
Let $\tau_s=\tau|\cdot|_F^s \in \Irr(\GL_k(F))$ be the representation corresponding to 
$\phi_0|\cdot|_F^s$.
We may assume that $\tau_s$ is realized on a space $\VV_\tau$, which is independent of $s\in\C$.
Put $\sigma'_s = \Ind_P^{\Oo(V')}(\tau_s \otimes \sigma)$ as above.
Then by Desideratum \ref{desO} (6), $\sigma'_0$ is irreducible and $\sigma'_0 \in \Pi_{\phi'}$.
Moreover, the canonical injection $A_\phi \hookrightarrow A_{\phi'}$ is bijective, 
and
we have
\[
\iota_{c'}(\sigma'_0)|A_{\phi} = \iota_{c'}(\sigma).
\]
Note that $\tau = \tau_0$ is tempered, so that generic. 
Fix a nonzero homomorphism
\[
l \colon \tau \otimes \sigma \rightarrow \C
\]
such that
\[
l(\sigma(u)v)=\mu_{c'}^+(u)\ell(v)
\]
for $u \in U' \cap M_P$ and $v \in \tau \otimes \sigma$, 
where $U' = U'_0 \rtimes \pair{\epsilon}$ with the maximal unipotent subgroup $U'_0$ of $\SO(V')$
as in \S \ref{sec gen}, 
and $M_P = \GL_k(F) \times \Oo(V)$ is the Levi subgroup of $P$.
For $f_s \in \sigma'_s$, we put
\[
l_s(f_s) = \int_{U_0}l(f_s(\cl{w}_{c'}^{-1}u_0))\mu_{c'}^{-1}(u_0)du_0, 
\]
where $\cl{w}_{c'} \in \SO(V)$ is the representative of $w$ defined in \S \ref{secH}.
Then by \cite[Proposition 2.1]{CS} and \cite[Proposition 3.1]{S1}, 
$l_s(f_s)$ is absolutely convergent for $\re(s) \gg 0$, and
holomorphic continuation to $\C$.
Moreover, $l_0$ gives a nonzero map
\[
l_0 \colon \sigma'_0 \rightarrow \C
\]
such that
\[
l_0(\sigma'_0(u')f_0) = \mu_{c'}^+(u') l_0(f_0)
\]
for $u' \in U'$ and $f_0 \in \sigma'_0$.
By a result of Shahidi (\cite[Theorem 3.5]{S2}), we have
\[
l_0 \circ R_{c'}(w,\tau \otimes \sigma) = l_0.
\]
See also \cite[Theorem 2.5.1]{Ar}.
This equation together with Hypothesis \ref{Hypo} shows that
\[
\iota_{c'}(\sigma)(a_0) = \iota_{c'}(\sigma'_0)(a_0) = 1.
\]
Since $[A_\phi:A_\phi^+]=2$, we have $\iota_{c'}(\sigma) = \1$, as desired.
\par

Finally, we consider (4).
Suppose that $\Oo(V)$ and $\phi \in \Phi_\temp(\Oo(V))$ are unramified.
By Desideratum \ref{desSO} and Lemma \ref{unram}, 
$\Pi_\phi$ contains a unique unramified representation $\sigma$, 
which satisfies that $\iota_c(\sigma)|A_{\phi}^+ = \1$.
We have to show that $\iota_c(\sigma) = \1$.
We may assume that $\phi \in \Phi^\epsilon(\Oo(V))$.
Since $\iota_c(\sigma)|A_{\phi}^+ = \1$, by Proposition \ref{propSO} (3), 
we see that $\sigma$ is $\mu_c$-generic, i.e., 
there is a nonzero homomorphism $l \colon \sigma \rightarrow \C$ such that
$l(\sigma(u_0)v)=\mu_c(u_0)l(v)$ for $u_0 \in U_0$ and $v \in \sigma$.
By the Casselman--Shalika formula \cite[Theorem 5.4]{CS},
we have $l|\sigma^{K_0} \not=0$, i.e., if $v \in \sigma$ is a nonzero $K_0$-fixed vector, 
then $l(v) \not=0$.
Since $\sigma \otimes \det \not\cong \sigma$, we have
\[
l \in \Hom_U(\sigma, \mu_c^\delta)
\]
for some $\delta \in \{\pm1\}$.
However, if $v \in \sigma$ is a nonzero $K$-fixed vector, then we have
\[
\delta \cdot l(v) = \mu_c^\delta(\epsilon) \cdot l(v) = l(\sigma(\epsilon)v)=l(v).
\]
This shows that $\sigma$ is $\mu_c^+$-generic, and so that $\iota_c(\sigma) = \1$
by Lemma \ref{gen} and Desideratum \ref{propO} (3).
\end{proof}

Now we treat the general case.
\begin{cor}
Assume Desiderata \ref{desSO}, \ref{desO} and Hypothesis \ref{Hypo}.
Then Desideratum \ref{propO} holds in general.
In particular, it holds for the $L$-packets $\Pi_\phi \cap \Irr(\Oo(V_{2n}))$ of quasi-split $\Oo(V_{2n})$
and $c_1,c_2,c' \in cN_{E/F}(E^\times)$ unconditionally.
\end{cor}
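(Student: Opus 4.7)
The natural approach is to reduce everything to the tempered case (Theorem \ref{des temp}) via the Langlands classification of Desideratum \ref{desO} (7). Writing
\[
\phi = \phi_{\tau_1}|\cdot|_F^{s_1} + \dots + \phi_{\tau_r}|\cdot|_F^{s_r} + \phi_0 + (\phi_{\tau_1}|\cdot|_F^{s_1} + \dots + \phi_{\tau_r}|\cdot|_F^{s_r})^\vee,
\]
with $\phi_0 \in \Phi_\temp(\Oo(V_{2n_0}))$ and $s_1 \geq \dots \geq s_r > 0$, any $\sigma \in \Pi_\phi$ is the unique irreducible quotient of a standard module $\Ind_P^{\Oo(V)}(\tau_1|\det|_F^{s_1} \otimes \dots \otimes \tau_r|\det|_F^{s_r} \otimes \sigma_0)$ for some $\sigma_0 \in \Pi_{\phi_0}$. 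Because the twisted summands $\phi_{\tau_i}|\cdot|_F^{s_i}$ are not orthogonal, the canonical map $A_{\phi_0} \to A_\phi$ is an isomorphism, and by Desideratum \ref{desO} (7) we have $\iota_{c'}(\sigma)|A_{\phi_0} = \iota_{c'}(\sigma_0)$ for every $c' \in F^\times$.

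I would first handle (1) and (4), which are direct reductions. For (1), the subparameters coincide, $\phi^a = \phi_0^a$ for $a \in A_\phi = A_{\phi_0}$, hence $\eta_{\phi\chi_V, c_2/c_1}(a) = \eta_{\phi_0\chi_V, c_2/c_1}(a)$, and the identity $\iota_{c_2}(\sigma) = \iota_{c_1}(\sigma) \otimes \eta_{\phi\chi_V,c_2/c_1}$ is inherited from Theorem \ref{des temp} (1) applied to $\sigma_0$. For (4), if $\Oo(V)$ and $\phi$ are unramified then so are all $\phi_{\tau_i}$ and $\phi_0$; the standard module has a unique unramified constituent, namely its Langlands quotient $\sigma$, and this corresponds to the unramified $\sigma_0 \in \Pi_{\phi_0}$ of Theorem \ref{des temp} (4), so $\iota_c(\sigma) = \iota_c(\sigma_0) = \1$.

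The substance lies in (2) and (3). My plan is to invoke the Casselman--Shahidi theorem for quasi-split groups: the Langlands quotient $\sigma$ is $\mu_{c'}^\ep$-generic if and only if the tempered datum $\sigma_0$ is $\mu_{c'}^\ep$-generic \emph{and} the standard module is irreducible. On the Galois side, $L(s,\phi,\Ad)$ factors as $L(s,\phi_0,\Ad)$ times Rankin--Selberg-type factors in the $\phi_{\tau_i}|\cdot|_F^{s_i}$; regularity at $s=1$ of these auxiliary factors is exactly the well-known irreducibility criterion for the standard module. Combining the two dictionaries reduces (2) and (3) to Theorem \ref{des temp} (2)--(3) for $\phi_0$. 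For (3), one further observes that the trivial character of $A_\phi$ and the character $a \mapsto (-1)^{\det(a)}$ restrict along $A_{\phi_0} \hookrightarrow A_\phi$ to the analogous characters, which is immediate because the embedding preserves the determinant homomorphism.

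The step I expect to be the main obstacle is the bookkeeping around the two Whittaker normalizations $\mu_{c'}^+$ and $\mu_{c'}^-$, since the Casselman--Shahidi theorem as usually stated for connected quasi-split groups does not see this sign. One has to check that the extension of a generic character from $U_0$ to $U = U_0 \rtimes \pair{\epsilon}$ is respected by the Whittaker functional constructed along the standard module, so that $\mu_{c'}^\ep$-genericity for $\sigma_0$ on $\SO(V_{2n_0})$ translates correctly into $\mu_{c'}^\ep$-genericity for $\sigma$ on $\Oo(V)$; Lemma \ref{gen} and the tempered case (Theorem \ref{des temp} (3)) should provide exactly the input needed to make this match. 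Finally, in the quasi-split case with $c_1, c_2, c' \in cN_{E/F}(E^\times)$, every ingredient above (Theorem \ref{des temp}, Desideratum \ref{desO} (7), and Casselman--Shahidi) is available unconditionally, yielding the final assertion of the corollary.
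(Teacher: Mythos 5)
Your reduction to the tempered case via the Langlands classification and Desideratum \ref{desO} (7), using that $A_\phi$ is canonically identified with $A_{\phi_0}$ since the twisted summands $\phi_{\tau_i}|\cdot|_F^{s_i}$ are non-orthogonal, is exactly the paper's argument, which it records in one line as ``compatibility of LLC and the Langlands quotients.'' The additional detail you supply for (2)--(3) (irreducibility of the standard module for generic parameters \`a la Heiermann/Casselman--Shahidi, the factorization of $L(s,\phi,\Ad)$, and the $\mu_{c'}^{\pm}$ bookkeeping via Lemma \ref{gen}) is precisely the implicit content of that compatibility, so the two proofs are essentially the same.
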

\begin{proof}
This follows from the compatibility of LLC and the Langlands quotients 
(Desideratum \ref{desO} (7)).
\end{proof}

\begin{rem}
\begin{enumerate}
\item
Kaletha proved Proposition \ref{propSO} (1) in \cite[Theorem 3.3]{Ka} 
by comparing transfer factors.
One may feel that the proof of Theorem \ref{des temp} (1) differs from Kaletha's proof.
However to prove Hypothesis \ref{Hypo}, one would need a similar argument to \cite{Ka}.
Hence the proof of Theorem \ref{des temp} (1) would be essentially the same as 
the one of \cite[Theorem 3.3]{Ka}.
\item
In \cite{At2}, the first author gave a proof of ``only if'' part of Proposition \ref{propSO} (3).
This proof is essentially the same as the proof of Desideratum \ref{propO} (3) (Theorem \ref{des temp}).
\end{enumerate}
\end{rem}

\section{Prasad's conjecture}
Prasad's conjecture describes precisely the local theta correspondence for 
$(\Oo(V_{2n}),\Sp(W_{2n}))$ in terms of the local Langlands correspondence
for $\Oo(V_{2n})$ and $\Sp(W_{2n})$.
A weaker version of this conjecture has been proven by the first author \cite{At}.
In this section, we state Prasad's conjecture and
give a proof for the full version.
\par

\subsection{Local Langlands correspondence for $\Sp(W_{2m})$}\label{LLC Sp}
Let $W_{2m}$ be a symplectic space over $F$ with dimension $2m$.
The associated symplectic group is denoted by $\Sp(W_{2m})$.
Fix an $F$-rational Borel subgroup $B'=T'U'$ of $\Sp(W_{2m})$.
By \cite[\S 12]{GGP}, there is a canonical bijection (depending on the choice of $\psi$)
\[
F^\times/F^{\times2} \rightarrow \{\text{$T'$-orbits of generic characters of $U'$}\},\ c\mapsto \mu'_c.
\]
\par

The Langlands dual group of $\Sp(W_{2m})$ is the complex Lie group $\SO(2m+1, \C)$, 
and $W_F$ acts on $\SO(2m+1, \C)$ trivially.
We denote the $L$-group of $\Sp(W_{2m})$ by ${}^L (\Sp(W_{2m})) = \SO(2m+1,\C) \times W_F$.
An $L$-parameter of $\Sp(W_{2m})$ is an admissible homomorphism
\[
\bphi \colon \WD_F \rightarrow {}^L (\Sp(W_{2m})) = \SO(2m+1,\C) \times W_F.
\]
We put 
\[
\Phi(\Sp(W_{2m})) = 
\{\text{$\SO(2m+1,\C)$-conjugacy classes of $L$-parameters of $\Sp(W_{2m})$}\}.
\]
For an $L$-parameter $\bphi \colon \WD_F \rightarrow {}^L(\Sp(W_{2m}))$,
by composing with the projection $\SO(2m+1,\C) \times W_F \rightarrow \SO(2m+1,\C)$, 
we obtain a map
\[
\phi \colon \WD_F \rightarrow \SO(2m+1,\C).
\]
The map $\bphi \mapsto \phi$ gives an identification
\[
\Phi(\Sp(W_{2m})) = \{\phi \colon \WD_F \rightarrow \SO(2m+1, \C)\}
/(\text{$\SO(2m+1, \C)$-conjugacy}).
\]
Namely, we regard $\Phi(\Sp(W_{2m}))$ as 
the set of equivalence classes of orthogonal representations of 
$\WD_F$ with dimension $2m+1$ and trivial determinant.
We denote the subset of $\Phi(\Sp(W_{2m}))$ consisting of equivalence classes of
tempered (\resp discrete, generic) representations by $\Phi_\temp(\Sp(W_{2n}))$
(\resp $\Phi_\disc(\Sp(W_{2m}))$, $\Phi_\gen(\Sp(W_{2m}))$).
Then we have a sequence 
\[
\Phi_\disc(\Sp(W_{2m})) \subset \Phi_\temp(\Sp(W_{2n})) \subset \Phi_\gen(\Sp(W_{2m})).
\]

The following theorem are due to Arthur \cite{Ar} supplemented by some results of many others
(c.f., see the proof of Proposition \ref{propSO}).
See also \cite[\S 3, \S 6.3]{At} and \cite[Theorem 3.3]{Ka}.
\begin{thm}
There exist a surjective map
\[
\Irr_{\temp}(\Sp(W_{2m})) \rightarrow \Phi_\temp(\Sp(W_{2m}))
\]
with the inverse image $\Pi_\phi$ of $\phi \in \Phi_\temp(\Sp(W_{2m}))$, 
and a bijection 
\[
\iota'_{c} \colon \Pi_\phi \rightarrow \widehat{A_\phi^+}
\]
for $c \in F^\times$ which satisfy analogues of 
Desideratum \ref{desO} (2), (4), and (6).
Moreover, using the Langlands classification, 
we can extend the map $\pi \mapsto \phi$ to a surjective map
\[
\Irr(\Sp(W_{2m})) \rightarrow \Phi(\Sp(W_{2m}))
\]
which satisfies an analogue of Desideratum \ref{desO} (7). 
In addition, an analogue to Proposition \ref{propSO} holds.
In particular, we have
\[
\iota'_{c_1}(\pi) = \iota'_{c_2}(\pi) \cdot \eta_{\phi,c_1/c_2}
\]
for $\pi \in \Pi_{\phi}$ and $c_1,c_2 \in F^\times$.
\end{thm}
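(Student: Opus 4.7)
The plan is to derive the entire theorem from Arthur's classification in \cite{Ar}, using the same strategy as the proof of Theorem \ref{des temp} to obtain the Whittaker-dependence formula. Since $\Sp(W_{2m})$ is connected, the situation is actually strictly simpler than the $\Oo(V_{2n})$ case already handled: one does not need the equivalence relation $\sim_\epsilon$, nor does one need to choose an identification with a twisted group as in \S\ref{LLC O}. First, the surjection $\Irr_\temp(\Sp(W_{2m})) \to \Phi_\temp(\Sp(W_{2m}))$, the bijection $\iota'_c \colon \Pi_\phi \to \widehat{A_\phi^+}$, and the analogues of Desideratum \ref{desO} (2), (4), and (6) come directly from Arthur's main local theorem applied to the quasi-split connected group $\Sp_{2m}$, given the stabilization results cited above. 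The analogue of Hypothesis \ref{Hypo} (the local intertwining relation) is unconditional in this case, being part of Arthur's output. The extension to the non-tempered case via Langlands classification (analogue of Desideratum \ref{desO} (7)) is standard and is carried out in \cite{Ar}.

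Next I would establish the analogue of Proposition \ref{propSO} part by part, following the template of Theorem \ref{des temp}. Parts (2), (3), and (4) transfer essentially verbatim: (2), the Gross--Prasad--Rallis genericity, is Gan--Ichino; (3) is Shahidi's formula \cite[Theorem 3.5]{S2} (as used in the proof of Theorem \ref{des temp} (3)) together with the first author's converse in \cite{At2}; (4) is due to M\oe glin. The arguments become simpler than in the orthogonal case because $\Sp(W_{2m})$ has no outer automorphism, so the sign subtleties of Lemma \ref{gen} and Lemma \ref{unram} disappear.

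The main content to verify is the dependence formula $\iota'_{c_1}(\pi) = \iota'_{c_2}(\pi) \cdot \eta_{\phi, c_1/c_2}$, which is the analogue of Proposition \ref{propSO} (1). My plan is to imitate Theorem \ref{des temp} (1): given $a \in A_\phi^+$, form the parameter $\phi' = \phi^a \oplus \phi \oplus \phi^a$, let $\tau \in \Irr(\GL_k(F))$ correspond to $\phi^a$, and compute the ratio of the normalized intertwining operators $R_{c_1}(w, \tau \otimes \pi)$ and $R_{c_2}(w, \tau \otimes \pi)$ on the induced representation. The two representatives $\cl{w}_{c_1}$ and $\cl{w}_{c_2}$ differ by the image of $c_1/c_2$ in the $\GL_k$-factor, so the ratio of operators is the scalar $\omega_\tau(c_1/c_2) = \det(\phi^a)(c_1/c_2) = \eta_{\phi, c_1/c_2}(a)$; unlike in the orthogonal case no twist by $\chi_V$ appears, because the dual group sits in $\SO(2m+1,\C)$ and $\det$ on any $\phi^a$ is automatically trivial on the ambient component. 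The intertwining relation then gives $\iota'_{c_1}(\pi)(a)/\iota'_{c_2}(\pi)(a) = \eta_{\phi,c_1/c_2}(a)$, as desired. Alternatively, one may quote Kaletha's comparison of transfer factors \cite[Theorem 3.3]{Ka} directly.

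The principal obstacle I anticipate is not conceptual but bookkeeping: one must carefully align the normalization conventions of the symplectic intertwining operator in \cite{Ar} with those used in \S\ref{secH} for the orthogonal case, and verify that the various $\lambda$-factors, $e(V)$-signs, and $\chi_V$-twists that show up there disappear or combine correctly in the symplectic setting. Once these normalizations are pinned down, all the arguments above go through formally.
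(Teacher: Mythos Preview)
Your proposal is correct and follows essentially the same approach as the paper: the theorem is stated there without a self-contained proof, being attributed directly to Arthur \cite{Ar} together with the same supplementary results you invoke (Gan--Ichino for (2), Arthur and \cite{At2} for (3), M{\oe}glin for (4), and Kaletha \cite[Theorem 3.3]{Ka} for the Whittaker-dependence formula (1)). Your intertwining-operator computation for (1) is a valid alternative, but the paper simply cites Kaletha, as you also note one may do.
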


Note that for $\phi \in \Phi(\Sp(W_{2n}))$, we have
\[
A_\phi = A_\phi^+ \oplus \pair{z_\phi}.
\]
Hence we may identify $\widehat{A_\phi^+}$ with
\[
(A_\phi/\pair{z_\phi})\widehat{\ } \subset \widehat{A_\phi}.
\]
Via this identification, we regard $\iota'_c$ as an injection
\[
\iota'_{c} \colon \Pi_\phi \rightarrow \widehat{A_\phi}.
\]
\par

Let $\phi_\tau$ be an orthogonal tempered representation of $\WD_F$, and
$\tau \in \Irr(\GL_k(F))$ be the tempered representation corresponding to $\phi_\tau$.
In \cite{Ar}, Arthur has defined
a normalized intertwining operator $R(w', \tau \otimes \pi)$ on 
$\Ind_Q^{\Sp(W_{2m'})}(\tau \otimes \pi)$ for $\pi \in \Irr_\temp(\Sp(W_{2m}))$, 
where $m'=m+k$ and $Q$ is a parabolic subgroup of $\Sp(W_{2m'})$ whose Levi subgroup is
$M_Q \cong \GL_k(F) \times \Sp(W_{2m})$.
See also \cite[\S 6. 3]{At}.
Note that $\Ind_Q^{\Sp(W_{2m'})}(\tau \otimes \pi)$ is multiplicity-free.
An analogue of Hypothesis \ref{Hypo} is given as follows:

\begin{prop}\label{R v.s. S}
Let $\phi_\pi \in \Phi_\temp(\Sp(W_{2m}))$ and $\pi \in \Pi_{\phi_\pi}$. 
We put $\phi_{\pi'} = \phi_\tau \oplus \phi_\pi \oplus \phi_\tau$.
We denote by $a' \in A_{\phi_{\pi'}}$ the element corresponding to $\phi_\tau$.
Let $\pi'$ be an irreducible constituent of $\Ind_Q^{\Sp(W_{2m'})}(\tau \otimes \pi)$.
Then we have
\[
R(w', \tau \otimes \pi)|\pi' = \iota'_1(\pi')(a').
\]
\end{prop}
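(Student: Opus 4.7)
The plan is to deduce this result directly from Arthur's local intertwining relation for symplectic groups, namely Theorem 2.4.1 of \cite{Ar}. Indeed, the statement is the symplectic analogue of the first case of Hypothesis \ref{Hypo}, which (as noted earlier in the paper) is precisely Arthur's theorem for the quasi-split even orthogonal case. Since $\Sp(W_{2m})$ is always split and there is no outer twist to worry about, the symplectic version of LIR is cleaner than the orthogonal one: no auxiliary choice of $c'$ intervenes in the definition of the Weyl representative, and the normalizing $\lambda$-factor is trivial. This is why the proposition is stated with $\iota'_1$ rather than with a family of $\iota'_{c'}$.

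First, I would carefully match the normalization of $R(w', \tau \otimes \pi)$ with the one used by Arthur. The normalized intertwining operator is built from three pieces: the raw integral over $U_Q$, the inverse of Langlands' normalizing factor $r(\tau_s \otimes \pi)$ (which in the symplectic case uses $L$- and $\epsilon$-factors of $\phi_\tau \otimes \phi_\pi$ together with $L(-2s, \Sym^2 \phi_\tau^\vee)$), and Arthur's intertwining isomorphism $\mathcal{A}_{w'}$ on the Levi. All three are normalized exactly as in \cite{Ar}; a detailed comparison was carried out in \cite[\S 6.3]{At}. The representative $\tilde{w}'$ of the non-trivial element $w' \in W(M_Q)$ is chosen in the same way as in Arthur's setup, and since there is no discriminant character for $\Sp$, no factor of the form $e(V)^k \cdot \chi_V(c'/c)^k$ is present.

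Next, I would identify the component-group element. The canonical splitting $A_{\phi_{\pi'}} = A_{\phi_\pi} \oplus \langle a'\rangle$ induced by the decomposition $\phi_{\pi'} = \phi_\tau \oplus \phi_\pi \oplus \phi_\tau$ matches the one used by Arthur (where the copy of $\phi_\tau$ appearing in the parabolically induced parameter corresponds to the generator of the $\mathbb{Z}/2\mathbb{Z}$-factor attached to $\phi_\tau$). Then Arthur's LIR states that $R(w', \tau \otimes \pi)$ acts on each irreducible constituent $\pi' \subset \Ind_Q^{\Sp(W_{2m'})}(\tau \otimes \pi)$ as the scalar $\iota'(\pi')(a')$, where $\iota'$ is the bijection determined by the standard Whittaker datum used by Arthur. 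In our parametrization, this standard Whittaker datum corresponds via the bijection recalled in \S \ref{LLC Sp} to the class $c = 1$ in $F^\times/F^{\times 2}$, so Arthur's $\iota'$ is exactly our $\iota'_1$.

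The main (minor) obstacle is thus purely bookkeeping: verifying that the generic character $\mu'_1$ of $U'$ picked out by $c = 1$ under the bijection $F^\times/F^{\times 2} \to \{T'\text{-orbits of generic characters of }U'\}$ of \cite[\S 12]{GGP} coincides with Arthur's Whittaker normalization, and that the representative $\tilde{w}'$ chosen here agrees with Arthur's. Both checks are standard, and once they are in place the proposition is just a restatement of \cite[Theorem 2.4.1]{Ar}. In contrast to the orthogonal case, no second step is needed to vary $c'$, since the dependence of the intertwining operator on $c'$ in \S \ref{secH} comes from the lack of a canonical Weyl representative in $\Oo(V)$, a phenomenon with no analogue for $\Sp(W_{2m'})$.
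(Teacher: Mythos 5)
Your proposal is correct and follows essentially the same route as the paper, which simply deduces the statement from Theorems 2.2.1 and 2.4.1 of \cite{Ar} (the character identities fixing $\iota'_1$ via Arthur's Whittaker normalization, and the local intertwining relation), with the normalization of $R(w',\tau\otimes\pi)$ taken from \cite{Ar} as compared in \cite[\S 6.3]{At}. The extra bookkeeping you describe (matching the Weyl representative and the $c=1$ Whittaker datum, absence of any $c'$- or $\lambda$-factor for $\Sp$) is exactly what the paper leaves implicit.
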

\begin{proof}
This follows from Theorems 2.2.1 and 2.4.1 in \cite{Ar}.
\end{proof}

\subsection{Local theta correspondence}\label{theta}
We introduce the local theta correspondence induced by a Weil representation 
$\omega_{W,V,\psi}$ of $\Sp(W_{2m}) \times \Oo(V_{2n})$, and recall some basic general results.
\par

We have fixed a non-trivial additive character $\psi$ of $F$.
We denote a Weil representation of $\Sp(W_{2m}) \times \Oo(V_{2n})$ by 
$\omega=\omega_{W,V,\psi}$.
Let $\sigma \in \Irr(\Oo(V_{2n}))$.
Then the maximal $\sigma$-isotypic quotient of $\omega$ is of the form
\[
\Theta(\sigma) \boxtimes \sigma,
\]
where $\Theta(\sigma) = \Theta_{W,V,\psi}(\sigma)$ is a smooth representation of $\Sp(W_{2m})$.
It was shown by Kudla \cite{Ku} that $\Theta(\sigma)$ has finite length (possibly zero).
The maximal semi-simple quotient of $\Theta(\sigma)$ is denoted by 
$\theta(\sigma) = \theta_{W,V,\psi}(\sigma)$.
\par

Similarly, for $\pi \in \Irr(\Sp(W_{2m}))$, we obtain smooth finite length representations 
$\Theta(\pi)=\Theta_{V,W,\psi}(\pi)$ and $\theta(\pi)=\theta_{V,W,\psi}(\pi)$ of $\Oo(V_{2n})$.
The Howe duality conjecture, which was proven by Waldspurger \cite{W1} 
if the residue characteristic is not 2 and by Gan--Takeda \cite{GT1}, \cite{GT2} in general, says that 
$\theta(\sigma)$ and $\theta(\pi)$ are irreducible (if they are nonzero).

\subsection{Prasad's conjecture}
Let $V$ be an orthogonal space associated to $(d,c)$, 
and $W$ be a symplectic space with $\dim(V)=\dim(W)=2n$.
We denote the discriminant character of $V$ by $\chi_V$.
Let $\phi \in \Phi(\Oo(V))$, and put
\[
\phi'=(\phi \oplus \1) \otimes \chi_V.
\]
Then we have $\phi' \in \Phi(\Sp(W))$.
Moreover we have a canonical injection $A_\phi \hookrightarrow A_{\phi'}$.
We denote the image of $a \in A_\phi$ by $a' \in A_{\phi'}$.
One should not confuse $z_\phi'$ with $z_{\phi'}$.
They satisfy $z_\phi'=e_1'+z_{\phi'}$, where $e_1' \in A_{\phi'}$ is the element 
corresponding to $\chi_V \subset \phi'$.
\par

\begin{lem}\label{kappa}
For any $\phi \in \Phi(\Oo(V))$, the map 
\[
A_\phi \hookrightarrow A_{\phi'} \twoheadrightarrow A_{\phi'}/\pair{z_{\phi'}}
\]
is surjective.
It is not injective if and only if $\phi$ contains $\1$.
In this case, the kernel of this map is generated by $e_1+z_{\phi}$, 
where $e_1 \in A_\phi$ is the element corresponding to $\1$.
\end{lem}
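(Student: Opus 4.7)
The plan is to carry out an explicit computation using the basis description of component groups recalled in \S 3.1. Write
\[
\phi = \bigoplus_{i=1}^{s} m_i \phi_i \oplus \phi'' \oplus \phi''^{\vee}
\]
with distinct irreducible orthogonal constituents $\phi_1,\ldots,\phi_s$ and non-orthogonal part $\phi''$. Twisting by $\chi_V$ and adjoining a copy of $\chi_V$ yields
\[
\phi' = \chi_V \oplus \bigoplus_{i=1}^{s} m_i (\phi_i \chi_V) \oplus (\phi'' \chi_V) \oplus (\phi'' \chi_V)^{\vee},
\]
whose irreducible orthogonal constituents are $\chi_V$ together with the $\phi_i \chi_V$. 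Under the canonical inclusion $A_\phi \hookrightarrow A_{\phi'}$, the generator $a_i \in A_\phi$ attached to $\phi_i$ is sent to the generator of $A_{\phi'}$ attached to $\phi_i \chi_V$. The shape of this inclusion depends on whether some $\phi_i$ equals $\1$, so I would split into two cases.

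\textbf{Case 1: $\phi$ does not contain $\1$.} Then no $\phi_i \chi_V$ coincides with $\chi_V$, so $A_{\phi'}$ is freely generated by $e_1', a_1', \ldots, a_s'$, where $e_1'$ is attached to $\chi_V$ and $a_i'$ to $\phi_i \chi_V$. The inclusion sends $a_i \mapsto a_i'$, avoiding the $e_1'$-coordinate entirely. Since
\[
z_{\phi'} = e_1' + \sum_{i=1}^s m_i a_i',
\]
the image of $A_\phi$ already surjects onto $A_{\phi'}/\pair{z_{\phi'}}$, because $e_1'$ becomes $\sum m_i a_i'$ in the quotient. Injectivity follows from the fact that the nonzero element of $\pair{z_{\phi'}}$ has $e_1'$-coordinate equal to $1$, whereas no element in the image of $A_\phi$ does.

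\textbf{Case 2: $\phi$ contains $\1$.} Relabel so that $\phi_1 = \1$ with multiplicity $m_1 \geq 1$. Then $\phi_1 \chi_V = \chi_V$ merges with the extra copy, so $\chi_V$ appears in $\phi'$ with total multiplicity $m_1 + 1$, and $A_{\phi'}$ is freely generated by $e_1', a_2', \ldots, a_s'$. The inclusion $A_\phi \hookrightarrow A_{\phi'}$ sends $e_1 = a_1 \mapsto e_1'$ and $a_i \mapsto a_i'$ for $i \geq 2$, so it is in fact an isomorphism of abelian groups. Surjectivity onto the quotient is immediate. Its kernel is the preimage of $\pair{z_{\phi'}}$; a direct computation with $z_{\phi'} = (m_1+1) e_1' + \sum_{i \geq 2} m_i a_i'$ identifies this preimage as $\pair{e_1 + z_\phi}$.

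The last point to verify, and the only subtle one, is that $e_1 + z_\phi$ is actually nonzero in $A_\phi$, so that the map is genuinely non-injective. Expanding, $e_1 + z_\phi = (m_1 + 1) e_1 + \sum_{i \geq 2} m_i a_i$, which vanishes iff $m_1$ is odd and every $m_i$ for $i \geq 2$ is even. But then
\[
\dim \phi = m_1 + \sum_{i \geq 2} m_i \dim \phi_i + 2 \dim \phi''
\]
would be odd, contradicting $\dim \phi = 2n$. This parity observation closes the argument; the rest is routine bookkeeping with the explicit basis description of the component groups.
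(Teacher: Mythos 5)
Your proof is correct and takes essentially the same route as the paper's: both arguments rest on the explicit description of $A_\phi$ and $A_{\phi'}$ in terms of the irreducible orthogonal constituents together with the relation $z_\phi' = e_1' + z_{\phi'}$, your coordinate-by-coordinate case analysis simply replacing the paper's shorter cokernel/order-count phrasing. The one point you spell out that the paper leaves implicit in its order comparison is the non-vanishing $e_1 + z_\phi \neq 0$ (equivalently $z_{\phi'} \neq 0$, forced by $\dim \phi' = 2n+1$ being odd), which is indeed needed to conclude that the kernel is genuinely of order $2$.
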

\begin{proof}
The map $A_\phi \hookrightarrow A_{\phi'}$ is not surjective 
if and only if $\phi \in \Phi(\Oo(V))$ and $\phi$ does not contain $\1$.
In this case the cokernel of this map is generated by $e_1'$.
Since $z_\phi'=e_1'+z_{\phi'}$, we have the surjectivity of
$A_\phi \hookrightarrow A_{\phi'} \twoheadrightarrow A_{\phi'}/\pair{z_{\phi'}}$.
\par

By comparing the order of $A_\phi$ with the one of $A_{\phi'}/\pair{z_{\phi'}}$, 
we see that $A_\phi \hookrightarrow A_{\phi'} \twoheadrightarrow A_{\phi'}/\pair{z_{\phi'}}$
 is not injective if and only if $\phi$ contains $\1$.
In this case, the order of the kernel is $2$.
Since $(e_1+z_\phi)' = e_1'+ z_{\phi'} = z_{\phi'}$
the kernel is generated by $e_1+z_\phi$.
\end{proof}

Prasad's conjecture is stated as follows:
\begin{conj}[Prasad's conjecture for $(\Oo(V_{2n}), \Sp(W_{2n}))$]\label{P O}
Let $V$ and $W$ be an orthogonal space associated to $(d,c)$ 
and a symplectic space with $\dim(V) = \dim(W) = 2n$, respectively. 
We denote by $\chi_V = \chi_d$ the discriminant character of $V$.
Let $\phi \in \Phi(\Oo(V))$ and put $\phi'=(\phi \oplus \1)\otimes \chi_V \in \Phi(\Sp(W))$.
For $\sigma \in \Pi_\phi$, we have the following: 
\begin{enumerate}
\item
$\Theta_{W,V^\bullet,\psi}(\sigma)$ is zero if and only if 
$\phi$ contains $\1$ and 
$\iota_{c'}(\sigma) (z_\phi + e_1) = -1$,  
where $e_1 \in A_{\phi}$ is the element 
corresponding to $\1 \subset \phi$.
\item
Assume that $\pi = \theta_{W,V^\bullet,\psi}(\sigma)$ is nonzero.
Then $\pi \in \Pi_{\phi'}$ and $\iota'_{c'}(\pi) |A_\phi = \iota_{c'}(\sigma)$ for $c' \in F^\times$.
\end{enumerate}
\end{conj}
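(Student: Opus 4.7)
My plan is to derive Conjecture \ref{P O} from three inputs: the weak Prasad conjecture Weak P($\Oo_{2n}, \Sp_{2n}$) proven in \cite[\S 5.5]{At}, Hypothesis \ref{Hypo} for the orthogonal side together with its symplectic analogue in Proposition \ref{R v.s. S}, and the refined LLC of Desideratum \ref{propO}. First, I record the contribution of the weak version. Weak P($\Oo_{2n}, \Sp_{2n}$) already determines $\theta(\sigma)$ at the level of the $\sim_\epsilon$-class $[\sigma_0]$ of $\sigma_0 = \sigma|\SO(V^\bullet)$: it pins down the nonvanishing criterion of (1) on the $\sim_\det$-orbit $\{\sigma, \sigma\otimes\det\}$, and it gives $\iota'_{c'}(\pi)|A_\phi^+ = \iota_{c'}(\sigma)|A_\phi^+$ when the lift is nonzero. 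In particular, it resolves the whole conjecture when $\phi \notin \Phi^\epsilon(\Oo(V))$, since then $A_\phi = A_\phi^+$ and $\sigma$ is recoverable from $[\sigma_0]$.

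The main case is therefore $\phi \in \Phi^\epsilon(\Oo(V))$, where I must distinguish $\sigma$ from $\sigma \otimes \det$ under the lift. Fix an orthogonal summand $\phi_0 \subset \phi$ of odd dimension $k$ and let $a_0 \in A_\phi \setminus A_\phi^+$ be the corresponding generator, with image $a_0' \in A_{\phi'} \setminus A_{\phi'}^+$. Let $\tau \in \Irr_\temp(\GL_k(F))$ have parameter $\phi_0$, and introduce the enlarged parameters $\widetilde\phi = \phi_0 \oplus \phi \oplus \phi_0$ and $\widetilde{\phi'} = (\phi_0\chi_V) \oplus \phi' \oplus (\phi_0\chi_V)$. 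By Desideratum \ref{desO} (6) and its symplectic analogue, the induced representations
\[
\Sigma = \Ind_P^{\Oo(V')}(\tau \otimes \sigma), \qquad \Pi = \Ind_Q^{\Sp(W')}((\tau\chi_V) \otimes \pi)
\]
are irreducible, lie in $\Pi_{\widetilde\phi}$ and $\Pi_{\widetilde{\phi'}}$, and satisfy $\iota_{c'}(\Sigma)|A_\phi = \iota_{c'}(\sigma)$, $\iota'_{c'}(\Pi)|A_{\phi'} = \iota'_{c'}(\pi)$. Kudla's supplementary lemma for parabolic induction of theta lifts identifies $\Pi$ with the theta lift of $\Sigma$.

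The heart of the argument is then a comparison of normalized intertwining operators across this see-saw. Hypothesis \ref{Hypo} says that $R_{c'}(w, \tau \otimes \sigma)$ acts on $\Sigma$ by the scalar $\iota_{c'}(\Sigma)(a_0) = \iota_{c'}(\sigma)(a_0)$, while Proposition \ref{R v.s. S} says that $R(w', (\tau\chi_V) \otimes \pi)$ acts on $\Pi$ by $\iota'_{c'}(\Pi)(a_0') = \iota'_{c'}(\pi)(a_0')$. The two operators are related, via the $\Oo(V') \times \Sp(W')$-Weil representation realization of $\Sigma \to \Pi$, by an explicit scalar built from $\epsilon$- and $\gamma$-factors whose product, after the careful cancellations in the normalization of $R_{c'}$ and $R$, is exactly $1$ for the correct normalization of the theta map. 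This will yield $\iota'_{c'}(\pi)(a_0') = \iota_{c'}(\sigma)(a_0)$, and, combined with Weak P on $A_\phi^+$, gives part (2).

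Part (1) is handled in parallel. The "only if" direction reduces, after twisting by $\det$ and using the weak version's determination of the $\sim_\det$-orbit, to the dichotomy and conservation relation for the Witt tower of $W$ at almost equal rank, which says that $\Theta(\sigma)$ vanishes precisely when $\phi$ already "accounts for" a trivial summand on the wrong side. The "if" direction and the initial nonvanishing needed to launch Step~2 are supplied by Desideratum \ref{propO} (3): a $\mu_{c'}^\pm$-generic $\sigma$ has nonzero almost-equal-rank theta lift, and the genericity of the lift is controlled by the character $\iota_{c'}(\sigma)(z_\phi + e_1)$. The main obstacle will be the third step: tracking that the various normalization factors ($e(V)$, $\chi_V(c'/c)^k$, the Langlands $\lambda(E/F,\psi)^k$, and their symplectic counterparts) combine to the correct overall sign. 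Because $k$ is odd in this case, each such factor is individually nontrivial and no single slip may be absorbed elsewhere, so the sign bookkeeping is the genuine technical content of the deduction.
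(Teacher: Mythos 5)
Your overall strategy -- reduce the question to detecting the value of $\iota_{c'}$ at one element $a_0 \in A_\phi \setminus A_\phi^+$, and get that value by inducing with a representation $\tau$ attached to an odd-dimensional orthogonal summand and comparing normalized intertwining operators through the Weil-representation realization (Hypothesis \ref{Hypo} on the orthogonal side, Proposition \ref{R v.s. S} on the symplectic side) -- is the right spirit, and when the chosen summand $\phi_0$ is \emph{not} the trivial character your argument is essentially the paper's Proposition \ref{IR} specialized to $\phi_\tau=\phi_0$. But there is a genuine gap in the degenerate case, which is unfortunately the case where the conjecture has its main content. You insist on taking $\tau$ with parameter $\phi_0 \subset \phi$; if the only odd-dimensional irreducible orthogonal summand of $\phi$ is $\1$ (for instance $\phi=\1\oplus\1\oplus\phi_2$ with $\phi_2$ irreducible $2$-dimensional orthogonal, or more generally whenever $\phi$ contains $\1$ and no other odd summand), you are forced to take $\tau$ to be the trivial character of $\GL_1(F)$. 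Then the key nonvanishing statement behind ``Kudla's supplementary lemma'' -- the analogue of \cite[Lemma 8.3]{GI2}, namely that for every nonzero $\Phi$ in the induced representation there is $\varphi$ in the Weil representation with $\TT_0(\varphi,\Phi)\neq 0$ -- fails, because it requires $L(-s,\tau^\vee)$ to be regular at $s=0$, which holds exactly when $\phi_\tau$ is not trivial. Without it you cannot identify $\Pi$ with the theta lift of $\Sigma$ on the relevant constituent, nor run the intertwining-operator comparison. This is precisely the obstruction the paper points out (see the Remark following the proof of Theorem \ref{main1}), and it is why the paper does \emph{not} induce by a summand of $\phi$: it chooses an auxiliary irreducible orthogonal tempered $\phi_\tau$ with $\phi_\tau\neq\1$, $\phi_\tau\not\subset\phi_\sigma$ and $\dim\phi_\tau$ odd, applies Proposition \ref{IR} to get the identity at the new element $a_\tau$, and then uses the weak Prasad conjecture (Theorem \ref{P SO}), which gives the identity at the even-dimensional element $a_\tau+a_0\in A^+$, to extract the value at $a_0$ by division. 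Your proposal never introduces this auxiliary parameter, so the case $\phi\supset\1$ with no other odd summand is not covered; note this is exactly the regime in which part (1) of the conjecture (the vanishing criterion) is nontrivial.

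Two smaller points. First, you do not reduce general $\phi$ to tempered $\phi$; the paper does this via compatibility of the LLC with Langlands quotients and of theta lifts with standard modules (Lemma \ref{reduce temp}, using \cite[Proposition C.4]{GI1}), and some such step is needed since Hypothesis \ref{Hypo} and Desideratum \ref{desO}(6) are tempered statements. Second, the comparison constant in the see-saw identity is not $1$ but $\omega_{\tau\chi_V}(c')$, which is exactly what converts $\iota'_1$ into $\iota'_{c'}$; your ``scalar equals $1$'' claim would only be correct after that renormalization, so the bookkeeping you defer is where this must be made precise. For part (1), the paper simply observes (Remark \ref{2=>1}) that it follows from part (2) together with \cite[Theorem C.5]{GI1}, which is a cleaner route than invoking the conservation relation separately.
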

\par

\begin{rem}\label{2=>1}
\begin{enumerate}
\item
Recall that for $\pi \in \Pi_{\phi'} \subset \Irr(\Sp(W_m))$, 
the character $\iota'_{c'}(\pi)$ of $A_{\phi'}$ factors through $A_{\phi'}/\pair{z_{\phi'}}$.
By Lemma \ref{kappa}, we see that
$\iota'_{c'}(\pi)$ is determined completely by its restriction to $A_\phi$.
\item
By \cite[Theorem C.5]{GI1}, we know that
\begin{itemize}
\item
if $\phi$ does not contain $\1$, then both $\Theta_{W,V^\bullet,\psi}(\sigma)$
and $\Theta_{W,V^\bullet,\psi}(\sigma \otimes \det)$ are nonzero;
\item
if $\phi$ contains $\1$, then exactly one of $\Theta_{W,V^\bullet,\psi}(\sigma)$
or $\Theta_{W,V^\bullet,\psi}(\sigma \otimes \det)$ is nonzero;
\item
if $\pi = \theta_{W,V^\bullet,\psi}(\sigma)$ is nonzero, 
then $\pi \in \Pi_{\phi'}$.
\end{itemize}
Hence Conjecture \ref{P O} (1) follows from (2) since $z_\phi'=e_1'+z_{\phi'}$.
\end{enumerate}
\end{rem}

The first main theorem is as follows:
\begin{thm}\label{main1}
Assume Desideratum \ref{desO} and Hypothesis \ref{Hypo}.
Then Prasad's conjecture for $(\Oo(V_{2n}), \Sp(W_{2n}))$ (Conjecture \ref{P O}) holds.
In particular, it holds unconditionally 
when $V^\bullet = V$ and $c' \in cN_{E/F}(E^\times)$ with $E=F(\sqrt{d})$.
\end{thm}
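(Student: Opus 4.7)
The plan is to upgrade the weak Prasad conjecture Weak P$(\Oo_{2n}, \Sp_{2n})$ of \cite{At} using the intertwining relations of Hypothesis \ref{Hypo} and Proposition \ref{R v.s. S}. By Remark \ref{2=>1}, only part (2) of Conjecture \ref{P O} requires proof: the character identity $\iota'_{c'}(\pi)|A_\phi = \iota_{c'}(\sigma)$ for $\pi = \theta_{W, V^\bullet, \psi}(\sigma) \in \Pi_{\phi'}$.

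First I would reduce to the tempered case. Using the Langlands classification (Desideratum \ref{desO} (7)) together with the compatibility of the theta correspondence with parabolic induction for the equal-rank pair $(\Oo(V_{2n}), \Sp(W_{2n}))$, any non-tempered $\sigma$ arises as the Langlands quotient of a standard module induced from a tempered representation on a smaller group, and its theta lift is the corresponding Langlands quotient on the symplectic side; the character identity then propagates from the tempered case via the restriction formulas in Desideratum \ref{desO} (7). So assume $\phi \in \Phi_\temp(\Oo(V))$.

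For tempered $\phi$, the weak Prasad conjecture pins down $\iota'_{c'}(\pi)$ and $\iota_{c'}(\sigma)$ as characters on $A_\phi^+$. If $\phi \notin \Phi^\epsilon(\Oo(V))$, then $A_\phi = A_\phi^+$ and the identity follows immediately. Otherwise it remains to verify agreement on a generator $a_0 \in A_\phi \smallsetminus A_\phi^+$. To this end I would choose an irreducible orthogonal summand $\phi_0 \subset \phi$ of odd dimension $k_0$ with corresponding $a_0 \in A_\phi$, and let $\tau_0 \in \Irr_\temp(\GL_{k_0}(F))$ be the associated representation. Form $\sigma'' = \Ind_P^{\Oo(V'')}(\tau_0 \otimes \sigma)$ on the larger orthogonal group (parameter $\phi'' = \phi_0 \oplus \phi \oplus \phi_0$) and $\pi'' = \Ind_Q^{\Sp(W'')}((\tau_0 \otimes \chi_V) \otimes \pi)$ on the larger symplectic group (parameter $(\phi_0 \otimes \chi_V) \oplus \phi' \oplus (\phi_0 \otimes \chi_V)$). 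Because $\phi_0$ is already a constituent of $\phi$, the canonical injections $A_\phi \hookrightarrow A_{\phi''}$ and $A_{\phi'} \hookrightarrow A_{(\phi'')'}$ are isomorphisms, which forces $\sigma''$ and $\pi''$ to be irreducible by Desideratum \ref{desO} (6) and its symplectic analog. Hypothesis \ref{Hypo} and Proposition \ref{R v.s. S} then identify the normalized intertwining operators $R_{c'}(w, \tau_0 \otimes \sigma)$ and $R(w, (\tau_0 \otimes \chi_V) \otimes \pi)$ as scalar multiplications by $\iota_{c'}(\sigma)(a_0)$ and $\iota'_{c'}(\pi)(a_0')$, respectively. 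Since theta correspondence commutes with parabolic induction, so $\theta(\sigma'') = \pi''$, matching these two intertwining operators through theta yields the desired $\iota_{c'}(\sigma)(a_0) = \iota'_{c'}(\pi)(a_0')$.

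The main obstacle is this last matching of the two intertwining operators through the theta correspondence. The orthogonal operator $R_{c'}(w, \tau_0 \otimes \sigma)$ carries factors $e(V)^{k_0} \chi_V(c'/c)^{k_0} |c'|_F^{k_0 \rho_P}$ together with a normalizing factor involving $\lambda(E/F, \psi)^{k_0}$ and ratios of $L$- and $\epsilon$-factors, while the symplectic operator is normalized differently. Verifying that these factors conspire to preserve the eigenvalue through the theta lift will require a doubling-type zeta integral computation or see-saw argument refining the analysis in \cite{At} to carry the full $\det$-twist information. A useful consistency check is provided by the $c'$-dependence on both sides, governed by Desideratum \ref{propO} (1) and its symplectic analog; any matching one writes down must be equivariant in $c'$ under these formulas.
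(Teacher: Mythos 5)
Your skeleton coincides with the paper's up to the crucial step: you reduce to tempered $\phi$ via Desideratum \ref{desO} (7) and \cite[Proposition C.4]{GI1} (this is Lemma \ref{reduce temp}), you dispose of $\phi\notin\Phi^\epsilon_\temp(\Oo(V))$ by the weak conjecture, Theorem \ref{P SO} (Lemma \ref{reduce ep}), and you then try to pin down $\iota_{c'}$ at one element $a_0\in A_\phi\smallsetminus A_\phi^+$ by transporting a normalized intertwining operator through theta. The gap is in how you choose the inducing datum: you take $\tau_0$ to be the representation attached to the odd-dimensional summand $\phi_0\subset\phi$ itself and propose to match $R_{c'}(w,\tau_0\otimes\sigma)$ with $R(w',\tau_0\chi_V\otimes\pi)$ by a see-saw/zeta-integral computation. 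That matching only yields information about $\iota_{c'}(\sigma)(a_0)$ if the equivariant map $\TT_0$ is nonzero on the full induced representation (the analogue of \cite[Lemma 8.3]{GI2}), and this nonvanishing requires $L(-s,\tau_0^\vee)$ to be regular at $s=0$, i.e.\ $\phi_0\neq\1$. When the only odd-dimensional irreducible orthogonal summand of $\phi$ is the trivial character --- e.g.\ $\phi=\1\oplus\1\oplus\phi_2$ with $\phi_2$ even-dimensional --- your choice of $a_0$ forces $\tau_0=\1$ and the comparison collapses; this is exactly the obstruction recorded in the remark after the proof of Theorem \ref{main1}, and it occurs precisely in the most interesting case of Conjecture \ref{P O}, namely $\1\subset\phi$. (When $\phi$ does possess a nontrivial odd-dimensional summand, your direct argument is essentially Proposition \ref{IR} applied with $\phi_\tau=\phi_0$ and is fine.)

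The paper avoids ever inducing from the problematic summand. It picks an auxiliary irreducible orthogonal tempered $\phi_\tau$ which is nontrivial, of odd dimension, and not contained in $\phi_\sigma$, and proves (Proposition \ref{IR}) the identity $\iota_{c'}(\sigma')(a_\tau)=\iota'_{c'}(\pi')(a_\tau')$ for $\sigma'\subset\Ind_P^{\Oo(V')}(\tau\otimes\sigma)$ and $\pi'=\theta_{W',V',\psi}(\sigma')$; there the hypothesis $\phi_\tau\neq\1$ is exactly what rescues the nonvanishing of $\TT_0$, and the extra factor $\omega_{\tau\chi_V}(c')$ you worry about is absorbed by comparing $\iota'_1$ with $\iota'_{c'}$. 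Since $a_\tau+a_0\in A^+_{\phi_{\sigma'}}$, the weak conjecture (Theorem \ref{P SO}) applied to the enlarged groups gives the identity at $a_\tau+a_0$, and dividing the two identities produces the value at $a_0$, which restricts back to $\sigma$ and $\pi$. To repair your proposal you must either supply a new nonvanishing argument for $\TT_0$ when $\tau_0=\1$ (not currently available) or adopt this auxiliary-parameter detour together with the weak Prasad input on the bigger group.
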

\par

A weaker version of Prasad's conjecture (Conjecture \ref{P O}), 
which is formulated by using Weak LLC for $\SO(V)$ or its translation into $\Oo(V)$
(i.e., by using $A_\phi^+$), 
was proven by \cite{At} under Desideratum \ref{desSO} and Hypothesis \ref{Hypo}.

\begin{thm}[{\cite[\S 5.5]{At}}]\label{P SO}
Assume Desideratum \ref{desSO} and Hypothesis \ref{Hypo} for even $k$.
Let $\phi \in \Phi(\Oo(V))$ and put $\phi'=(\phi \oplus \1)\otimes \chi_V \in \Phi(\Sp(W))$ 
as in Conjecture \ref{P O}.
For $\sigma \in \Pi_\phi$, if $\pi = \theta_{W,V^\bullet,\psi}(\sigma)$ is nonzero, then
$\pi \in \Pi_{\phi'}$ and 
\[
\iota'_{c'}(\pi) |A_\phi^+ = \iota_{c'}(\sigma) |A_\phi^+ 
\]
for $c' \in F^\times$.
In particular, the same unconditionally holds 
when $V^\bullet = V$ and $c' \in cN_{E/F}(E^\times)$ with $E=F(\sqrt{d})$.
\end{thm}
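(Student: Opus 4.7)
The plan is to follow the strategy of \cite[\S 5.5]{At}. By part (3) of Remark \ref{2=>1}, which invokes the Gan--Ichino conservation relation \cite[Theorem C.5]{GI1}, whenever $\pi = \theta_{W,V^\bullet,\psi}(\sigma)$ is nonzero we already know $\pi \in \Pi_{\phi'}$; the content of the theorem is therefore the identification of the restriction $\iota'_{c'}(\pi) | A_\phi^+$. The first reduction is to tempered $\sigma$: invoking Desideratum \ref{desO} (7) and its symplectic analog, together with the standard compatibility of theta lifting with the Langlands quotient construction, I would show that if $\sigma$ is the Langlands quotient of $\Ind_P(\tau_1 |\det|_F^{s_1} \otimes \cdots \otimes \sigma_\temp)$, then $\pi$ is the Langlands quotient of the corresponding symplectic standard module, and $A_\phi^+$ is naturally identified with $A_{\phi_\temp}^+$, so the tempered result propagates.

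For tempered $\sigma$, the argument is an induction on the structure of $\phi$ using even-dimensional summands. Suppose $\phi = \phi_\tau \oplus \phi_0 \oplus \phi_\tau$ where $\phi_\tau$ is an irreducible orthogonal tempered representation of $\WD_F$ of even dimension $k$, and let $a_\tau \in A_\phi^+$ denote the corresponding generator (it lies in $A_\phi^+$ precisely because $k$ is even). By Desideratum \ref{desO} (6), $\sigma$ is an irreducible summand of $\Ind_P^{\Oo(V)}(\tau \otimes \sigma_0)$ for some $\sigma_0 \in \Pi_{\phi_0}$, and Kudla's induction principle for theta correspondence yields $\pi \hookrightarrow \Ind_Q^{\Sp(W)}(\tau\chi_V \otimes \pi_0)$ with $\pi_0 = \theta_{W_0,V_0^\bullet,\psi}(\sigma_0) \in \Pi_{\phi_0'}$, which satisfies the theorem inductively. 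By Hypothesis \ref{Hypo} in the even-$k$ case, $R_{c'}(w, \tau \otimes \sigma)$ acts on the $\sigma$-summand by the scalar $\iota_{c'}(\sigma)(a_\tau)$, and by Proposition \ref{R v.s. S} the analogous symplectic operator $R(w', \tau\chi_V \otimes \pi_0)$ acts on the $\pi$-summand by $\iota'_{c'}(\pi)(a_\tau)$. The crucial step is then to exhibit, via an explicit model of the Weil representation, that the Howe projection $\Theta(\sigma) \twoheadrightarrow \pi$ intertwines these two normalized operators; this forces the two scalars to agree, proving the identity on $a_\tau$. Since any $a \in A_\phi^+$ can be expressed via such generators after enlarging $\phi$ to $\phi \oplus \phi_\tau \oplus \phi_\tau$ with an auxiliary even-dimensional $\phi_\tau$ (possible because $a$ has even determinant and we are free to pad $\phi'$ on the symplectic side by an evenly-many-dimensional orthogonal summand without changing the packet structure), this determines $\iota'_{c'}(\pi) | A_\phi^+$.

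The main obstacle is the careful bookkeeping of the normalizing constants in $R_{c'}$: the scalar $e(V)^k \chi_V(c'/c)^k$, the factor $r(\tau_s \otimes \sigma)$, the choice of representative $\cl{w}_{c'}$, and the Langlands $\lam$-factor $\lam(E/F, \psi)^k$ must all align with Arthur's normalization on the symplectic side after passage through the Weil representation. Verifying this alignment via the see-saw identity for $(\Oo(V) \times \Sp(W), \Sp(W) \times \Oo(V))$ and a direct computation of the action on Schr\"odinger models is the technical core of \cite[\S 5]{At}, and it is precisely here that the parity of $k$ enters decisively, since odd $k$ would introduce an uncontrolled sign from the central element $-\1_M \in C_{\phi_\tau}$. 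Finally, the unconditional ``in particular'' assertion is immediate because when $V^\bullet = V$ is quasi-split and $c' \in cN_{E/F}(E^\times)$, Theorem \ref{LLC-SO} supplies Desideratum \ref{desSO} without assumption and the first bullet of the theorem in \S \ref{secH} supplies Hypothesis \ref{Hypo}, so every input to the induction is available unconditionally.
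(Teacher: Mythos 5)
Your strategy---reduce to tempered parameters, then compare normalized intertwining operators through a see-saw for the equal-rank pair $(\Oo(V_{2n}),\Sp(W_{2n}))$---is exactly the method of Proposition \ref{IR} (i.e.\ of \cite[\S 8]{GI2} and \cite[\S 7]{At}), but it is not how Theorem \ref{P SO} is obtained, and it cannot be pushed through as you describe. The paper does not reprove this theorem at all: it quotes it from \cite[\S 5.5]{At}, where it is \emph{deduced} from the weak Prasad conjecture for the almost-equal-rank pair $(\Sp(W_{2n-2}),\Oo(V_{2n}))$ proved in \cite[\S 7]{At}. The concrete gap in your direct approach is the nonvanishing input needed to force the two eigenvalues to agree: one must know that the see-saw pairing $\TT_0$ is nonzero on the relevant constituent of $\Ind_P^{\Oo(V')}(\tau\otimes\sigma)$, the analogue of \cite[Lemma 8.3]{GI2}. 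As the proof of Proposition \ref{IR} explains, this is available only when $L(-s,\tau^\vee)$ is regular at $s=0$, i.e.\ when the inducing datum does not contain the trivial character; and the Remark following the proof of Theorem \ref{main1} says explicitly that for this reason the method of \cite{At} cannot be applied to the pair $(\Oo(V_{2n}),\Sp(W_{2n}))$ when $\phi$ contains $\1$. This is not a marginal case for the weak statement: if $\phi=\1\oplus\phi_2$ with $\phi_2$ irreducible orthogonal of dimension $2n-1$, then $A_\phi^+$ is generated by $e_1+a_2$, so every nontrivial element of $A_\phi^+$ involves $\1$, the auxiliary $\tau$ would have to correspond to $\1\oplus\phi_2$, and your comparison gives nothing. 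This is precisely the obstruction that the detour through $(\Sp_{2n-2},\Oo_{2n})$ is designed to avoid.

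There are two further structural problems. First, classes of \emph{irreducible even-dimensional} summands do not generate $A_\phi^+$ (a typical element is a sum of two odd-dimensional classes), and padding $\phi$ by an auxiliary $\phi_\tau\oplus\phi_\tau$ unrelated to $a$ only yields the value of the characters at the auxiliary class $a_\tau$, not at $a$; the correct device (used in Theorem \ref{des temp} and in the proof of Theorem \ref{main1}) is to take $\phi_\tau=\phi^a$ itself, which Hypothesis \ref{Hypo} (2) allows to be a reducible multiplicity-free sum---but then the $\1\subset\phi^a$ obstruction above reappears, and your induction as written also never treats the base case of discrete $\phi$, since such $\sigma$ is not a constituent of any $\Ind_P^{\Oo(V)}(\tau\otimes\sigma_0)$. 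Second, you invoke Desideratum \ref{desO} (6), (7) and Proposition \ref{R v.s. S}, whereas Theorem \ref{P SO} is supposed to follow from Desideratum \ref{desSO} and Hypothesis \ref{Hypo} for even $k$ alone; the statement only concerns restrictions to $A_\phi^+$, which is why the weak LLC suffices in \cite{At}.
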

\par

We may consider the theta correspondence for $(\Sp(W_{2n-2}), \Oo(V_{2n}))$.
There is also Prasad's conjecture for $(\Sp(W_{2n-2}), \Oo(V_{2n}))$.
\begin{conj}[Prasad's conjecture for $(\Sp(W_{2n-2}), \Oo(V_{2n}))$]\label{P Sp}
Let $V$ be an orthogonal space associated to $(d,c)$ with $\dim(V)=2n$, 
and $W$ be a symplectic space with $\dim(W)=2n-2$.
We denote by $\chi_V = \chi_d$ the discriminant character of $V$.
Let $\phi' \in \Phi(\Sp(W))$ and put $\phi=(\phi' \otimes \chi_V) \oplus \1 \in \Phi(\Oo(V))$.
For a companion space $V^\bullet$ of $V$, we put
\[
e(V^\bullet)=
\left\{
\begin{aligned}
&\chi_V(c'/c)	\iif V^\bullet = V,\\
&-\chi_V(c'/c)	\iif V^\bullet \not= V
\end{aligned}
\right.
\] 
Let $\pi \in \Pi_{\phi'}$.
\begin{enumerate}
\item
$\Theta_{V^\bullet, W, \psi}(\pi)=0$ if and only if 
$\phi'$ contains $\chi_V$ and $\iota'_{c'}(\pi)(e_1+z_{\phi'}) = -e(V^\bullet)$.
\item
Assume that $\sigma = \theta_{V^\bullet,W,\psi}(\pi)$ is nonzero.
Then $\sigma \in \Pi_\phi$ and so that there is a canonical injection $A_{\phi'} \hookrightarrow A_\phi$.
Moreover, $\iota_{c'}(\sigma)$ satisfies that
\begin{itemize}
\item
$\iota_{c'}(\sigma)(z_{\phi}) = e(V^\bullet)$;
\item
$\iota_{c'}(\sigma)| A_{\phi'} = \iota'_{c'}(\pi)$
for $c' \in F^\times$.
\end{itemize}
\end{enumerate}
\end{conj}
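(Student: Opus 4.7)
The strategy is to combine the weaker version of Prasad's conjecture for the pair $(\Sp_{2n-2}, \Oo_{2n})$, proved in \cite[\S 7]{At}, with the now-established Conjecture \ref{P O} (Theorem \ref{main1}) by iterating the theta correspondence one more step into $\Sp(W_{2n})$. Given $\pi \in \Pi_{\phi'}$ with $\sigma = \theta_{V^\bullet, W, \psi}(\pi) \neq 0$, the weaker version already yields $\sigma \in \Pi_\phi$ with $\phi = (\phi' \otimes \chi_V) \oplus \1$ and determines $\iota_{c'}(\sigma)|A_\phi^+$ in terms of $\iota'_{c'}(\pi)$. Moreover, the identity $\iota_{c'}(\sigma)(z_\phi) = e(V^\bullet)$ is automatic from Desideratum \ref{desO}~(5). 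Thus only the values of $\iota_{c'}(\sigma)$ on the coset $A_\phi \smallsetminus A_\phi^+$ -- concretely, on the images in $A_\phi$ of the basis elements of $A_{\phi'}$ attached to odd-dimensional constituents of $\phi'$, and on the element $e_1 \in A_\phi$ corresponding to $\1 \subset \phi$ -- remain to be pinned down.

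To extract this extra information, I would consider the second theta lift
\[
\pi^+ \coloneqq \theta_{W_{2n}, V^\bullet, \psi}(\sigma) \in \Irr(\Sp(W_{2n})),
\]
which is non-zero by the persistence principle. By Theorem \ref{main1} applied to the equal-rank pair $(\Oo(V^\bullet), \Sp(W_{2n}))$, we have $\pi^+ \in \Pi_{\phi^+}$ with $\phi^+ = (\phi \oplus \1) \otimes \chi_V = \phi' \oplus \chi_V \oplus \chi_V$, and, crucially,
\[
\iota'_{c'}(\pi^+)|A_\phi = \iota_{c'}(\sigma).
\]
Hence any independent computation of $\iota'_{c'}(\pi^+)$ directly yields the sought-for values of $\iota_{c'}(\sigma)$.

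The next task is to identify $\pi^+$ concretely. Using the seesaw dual pair associated with the decomposition $W_{2n} = W_{2n-2} \oplus W_2$, together with standard first-occurrence/tower arguments in the spirit of Kudla--Rallis, I would show that $\pi^+$ is the canonical irreducible constituent of the tempered induction
\[
\Ind_Q^{\Sp(W_{2n})}(\chi_V \otimes \pi),
\]
where $Q \subset \Sp(W_{2n})$ is the parabolic with Levi $\GL_1(F) \times \Sp(W_{2n-2})$. The $\Sp$-analog of Desideratum \ref{desO}~(6) then supplies $\iota'_{c'}(\pi^+)|A_{\phi'} = \iota'_{c'}(\pi)$, while the remaining value on the new generator $a_{\chi_V} \in A_{\phi^+}$ (attached to the extra $\chi_V \oplus \chi_V$) is computed from the local intertwining relation of Proposition \ref{R v.s. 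S}. Substituting back into $\iota'_{c'}(\pi^+)|A_\phi = \iota_{c'}(\sigma)$ yields the formula of Conjecture \ref{P Sp}~(2). The vanishing criterion~(1) then follows from the dichotomy \cite[Theorem C.5]{GI1}, which forces exactly one of $\Theta_{V, W, \psi}(\pi)$ and $\Theta_{V', W, \psi}(\pi)$ to vanish precisely when $\chi_V$ already appears in $\phi'$, combined with the explicit sign $e(V^\bullet)$ read off from the going-up lift via~(2).

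The main obstacle I anticipate is the identification of $\pi^+$ in the reducible case, which occurs precisely when $\chi_V$ already appears in $\phi'$. There one must argue that the theta correspondence singles out a specific irreducible summand of $\Ind_Q^{\Sp(W_{2n})}(\chi_V \otimes \pi)$, and verify that the sign $e(V^\bullet)$ distinguishing the two companion spaces agrees with the eigenvalue of the normalized intertwining operator $R_{c'}$ of \S\ref{secH}. This forces a careful comparison between two a priori distinct normalizations -- the one implicit in the seesaw/theta-lift and the one built into $R_{c'}$ via Hypothesis \ref{Hypo} -- and is the technical heart of the argument.
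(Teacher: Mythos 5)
Your core strategy is in essence the paper's: after quoting the weak Prasad conjecture of \cite{At} and Desideratum \ref{desO} (5), the decisive input is the further lift $\pi^+=\theta_{W_{2n},V^\bullet,\psi}(\sigma)$, nonzero by the tower property, to which Theorem \ref{main1} is applied. In fact these ingredients already finish the proof, more cheaply than you realize: with $e_1\in A_\phi$ the element attached to $\1\subset\phi$, one has $e_1+z_\phi=$ (image of $z_{\phi'}$), and its image in $A_{\phi^+}$, $\phi^+=\phi'\oplus\chi_V\oplus\chi_V$, is $z_{\phi^+}$; since $\iota'_{c'}(\pi^+)$ is trivial at $z_{\phi^+}$ (equivalently, by Conjecture \ref{P O} (1), nonvanishing of the equal-rank lift forbids $\iota_{c'}(\sigma)(e_1+z_\phi)=-1$), Theorem \ref{main1} (2) gives $\iota_{c'}(\sigma)(e_1+z_\phi)=1$. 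Combined with the triviality of $\iota'_{c'}(\pi)$ at $z_{\phi'}$, the weak version on $A_{\phi'}^+$, and Desideratum \ref{desO} (5) for the value at $z_\phi$, this is exactly the paper's one-line conclusion ``Conjecture \ref{P O} plus the tower property''.

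Where your proposal goes astray is the step you call the technical heart. First, it is unnecessary: to get $\iota'_{c'}(\pi^+)|A_{\phi'}=\iota'_{c'}(\pi)$ you only need that $\pi^+$ is \emph{some} irreducible constituent of $\Ind_Q^{\Sp(W_{2n})}(\chi_V\otimes\pi)$ (Kudla's persistence at the boundary exponent, see \cite{Ku}) together with the $\Sp$-analogue of Desideratum \ref{desO} (6), which does not care which constituent occurs; and the value at the new generator $a_{\chi_V}$ is never needed, because the conjecture only prescribes $\iota_{c'}(\sigma)$ at $z_\phi$ and on $A_{\phi'}$. Second, as proposed it would not go through: Proposition \ref{R v.s. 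S} merely says that the eigenvalue of the normalized intertwining operator on $\pi^+$ \emph{is} $\iota'_1(\pi^+)(a_{\chi_V})$, so an independent computation of it would have to run the seesaw/intertwining comparison of Proposition \ref{IR} with $\phi_\tau=\1$ on the orthogonal side ($\tau\chi_V=\chi_V$ on the symplectic side) --- precisely the excluded case, where $L(-s,\tau^\vee)$ has a pole at $s=0$ and the analogue of \cite[Lemma 8.3]{GI2} fails; the remark after the proof of Theorem \ref{main1} flags exactly this obstruction. Finally, your appeal to \cite[Theorem C.5]{GI1} for part (1) is misdirected: that is the equal-rank dichotomy for $\sigma$ versus $\sigma\otimes\det$, whereas part (1) concerns the two companion orthogonal spaces; since $e_1+z_{\phi'}\in A_{\phi'}^+$, the vanishing criterion involves only weak-LLC data and is already contained in the weak Prasad conjecture of \cite{At} (or follows from part (2) together with the tower property, in the spirit of Remark \ref{2=>1}).
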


The following theorem shows that Conjecture \ref{P O} implies Conjecture \ref{P Sp}.
\begin{thm} \label{thmPSp}
Assume Desideratum \ref{desO} and Hypothesis \ref{Hypo}
(so that Conjecture \ref{P O} holds by Theorem \ref{main1}).
Then Prasad's conjecture for $(\Sp(W_{2n-2}), \Oo(V_{2n}))$ (Conjecture \ref{P Sp}) holds.
In particular, it holds unconditionally 
when $V^\bullet = V$ and $c' \in cN_{E/F}(E^\times)$ with $E=F(\sqrt{d})$.
\end{thm}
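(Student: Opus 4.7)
The plan is to reduce Conjecture \ref{P Sp} to Conjecture \ref{P O} (which is available by Theorem \ref{main1}) via a composition of two theta lifts. Given $\pi \in \Pi_{\phi'}$ on $\Sp(W_{2n-2})$, suppose $\sigma = \theta_{V^\bullet, W, \psi}(\pi)$ is nonzero on $\Oo(V^\bullet_{2n})$. We then lift $\sigma$ again, to $\pi^\ast = \theta_{W_{2n}, V^\bullet, \psi}(\sigma) \in \Irr(\Sp(W_{2n}))$, and give $\pi^\ast$ two independent descriptions: one downward from Conjecture \ref{P O}, and one upward from a Jacquet-module/tower analysis of the composition. Matching the two will pin down the parameter and the character of $\sigma$.

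For the upward description, Kudla's computation of the Jacquet modules of the Weil representation (equivalently, a seesaw argument for the tower $W_{2n-2} \subset W_{2n}$) embeds $\pi^\ast$ into $\Ind_Q^{\Sp(W_{2n})}(\chi_V \otimes \pi)$, where $Q \subset \Sp(W_{2n})$ has Levi $\GL_1(F) \times \Sp(W_{2n-2})$; this is precisely the tower input used in \cite[\S 5.5]{At} to establish the weak version, and conservation (cf.\ \cite[Theorem C.5]{GI1}) guarantees $\pi^\ast \neq 0$ whenever $\sigma \neq 0$. The analogue of Desideratum \ref{desO} (6) for $\Sp(W_{2n})$ says that $\Ind_Q^{\Sp(W_{2n})}(\chi_V \otimes \pi)$ is a multiplicity-free direct sum of tempered constituents in $\Pi_{\phi' \oplus \chi_V \oplus \chi_V}$, with the constituents labelled by the value $\iota'_{c'}(\pi^\ast)(e_1')$, where $e_1' \in A_{\phi' \oplus \chi_V \oplus \chi_V}$ is the generator attached to the inserted $\chi_V$; by Proposition \ref{R v.s. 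S}, this value is exactly the eigenvalue of the normalized intertwining operator $R(w', \chi_V \otimes \pi)$ on $\pi^\ast$.

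For the downward description, Conjecture \ref{P O} applied to $\sigma$ says $\pi^\ast \in \Pi_{(\phi_\sigma \oplus \1) \otimes \chi_V}$ with $\iota'_{c'}(\pi^\ast)|A_{\phi_\sigma} = \iota_{c'}(\sigma)$. Equating parameters forces $(\phi_\sigma \oplus \1) \otimes \chi_V = \phi' \oplus \chi_V \oplus \chi_V$, hence $\phi_\sigma = (\phi' \otimes \chi_V) \oplus \1 = \phi$, proving $\sigma \in \Pi_\phi$. The character $\iota_{c'}(\sigma)$ is then read off: its restriction to $A_{\phi'} \hookrightarrow A_\phi$ agrees with $\iota'_{c'}(\pi^\ast)|A_{\phi'} = \iota'_{c'}(\pi)$ (from the Jacquet-module identification above), while its value at $z_\phi$ is forced to be $e(V^\bullet)$ by Desideratum \ref{desO} (5), since $\sigma$ is a representation of $\Oo(V^\bullet)$ and $V^\bullet$ is associated to $(d,c')$ precisely when $e(V^\bullet)=\chi_V(c'/c)$. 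This establishes part (2); part (1) then follows from (2) by the conservation dichotomy of \cite[Theorem C.5]{GI1}, in the spirit of Remark \ref{2=>1}: when $\phi'$ contains $\chi_V$, the sign $\iota'_{c'}(\pi)(e_1 + z_{\phi'})$ decides which companion space $V^\bullet$ carries the nonzero lift.

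The main obstacle is the sign bookkeeping in the previous paragraph. The weak version on $A_\phi^+$ is already in \cite[\S 5.5]{At}, so what is new is the value of $\iota_{c'}(\sigma)$ on the single generator $z_\phi$ of $A_\phi/A_\phi^+$. Extracting this sign cleanly requires threading together the normalization conventions of the Weil representation (which decides which companion space hosts the lift) and of the intertwining operator from \S \ref{secH}; the refined local intertwining relation (Hypothesis \ref{Hypo}), together with Conjecture \ref{P O}, is exactly what distinguishes $\sigma$ from $\sigma \otimes \det$ within $\Pi_\phi$ and hence supplies the missing bit.
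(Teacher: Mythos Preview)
Your strategy is the same as the paper's: lift $\sigma$ one step up to $\pi^\ast=\theta_{W_{2n},V^\bullet,\psi}(\sigma)$ on $\Sp(W_{2n})$ (the tower property guarantees $\pi^\ast\neq0$) and then invoke Conjecture~\ref{P O} for the pair $(\Oo(V^\bullet_{2n}),\Sp(W_{2n}))$. The paper, however, uses this more economically: it takes the weak version from \cite[Theorem~1.7]{At} (giving $\sigma\in\Pi_\phi$ and $\iota_{c'}(\sigma)|A_{\phi'}^+=\iota'_{c'}(\pi)|A_{\phi'}^+$), reads off $\iota_{c'}(\sigma)(z_\phi)=e(V^\bullet)$ from Desideratum~\ref{desO}\,(5), and then observes that the single remaining value $\iota_{c'}(\sigma)(e_1+z_\phi)$ is forced to be $+1$ by the \emph{nonvanishing criterion} of Conjecture~\ref{P O}\,(1) (since $\phi$ contains~$\1$ and $\pi^\ast\neq0$). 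Your route through the full character formula of Conjecture~\ref{P O}\,(2) and the induced-representation description of $\pi^\ast$ reaches the same destination, but with more bookkeeping; the paper never needs to match $A_{\phi'}\hookrightarrow A_{\phi_{\pi^\ast}}$ against $A_\phi\hookrightarrow A_{\phi_{\pi^\ast}}$.

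One concrete slip in your final paragraph: $z_\phi$ always lies in $A_\phi^+$ (because $\dim\phi=2n$ is even), so it does \emph{not} generate $A_\phi/A_\phi^+$. The genuinely missing value is at an odd element, namely $e_1$, or equivalently $e_1+z_\phi=z_{\phi'}'$; this is exactly the quantity the paper isolates and computes. Likewise, the weak version from \cite{At} gives $\iota_{c'}(\sigma)$ only on $(A_{\phi'}^+)'$, which has index~$2$ in $A_\phi^+$; Desideratum~\ref{desO}\,(5) supplies the value at $z_\phi\in A_\phi^+\setminus (A_{\phi'}^+)'$, and the tower/P\,O argument supplies the last odd value.
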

\begin{proof}
The equation $\iota_{c'}(\sigma)(z_{\phi}) = e(V^\bullet)$ follows from
Desideratum \ref{desO} (5) and Proposition \ref{propSO} (1).
Under assuming Desideratum \ref{desSO} and Hypothesis \ref{Hypo} for even $k$, 
the first author showed that $\iota_{c'}(\sigma)| A_{\phi'}^+ = \iota'_{c'}(\pi)| A_{\phi'}^+$
for $c' \in F^\times$ (\cite[Theorem 1.7]{At}).
Hence it suffices to show the equation
\[
\iota_{c'}(\sigma)(e_1+z_{\phi})=1, 
\]
where $e_1$ is the element of $A_{\phi}$ corresponding to $\1$.
This equation follows from Conjecture \ref{P O} (Theorem \ref{main1}) 
together with the tower property (see \cite{Ku}).
\end{proof}

\subsection{Proof of Prasad's conjecture}
In this subsection, we prove Theorem \ref{main1}.
\par

Recall that there is a sequence
\[
\Phi^\epsilon_\temp(\Oo(V)) \subset \Phi_\temp(\Oo(V)) \subset \Phi(\Oo(V)).
\]
First, we reduce Conjecture \ref{P O} to the case when $\phi \in \Phi^\epsilon_\temp(\Oo(V))$.

\begin{lem}\label{reduce temp}
If Prasad's conjecture (Conjecture \ref{P O}) holds for any $\phi_0 \in \Phi_\temp(\Oo(V))$, 
then it holds for any $\phi \in \Phi(\Oo(V))$.
\end{lem}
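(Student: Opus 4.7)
The plan is to reduce the general case of Conjecture \ref{P O} to its tempered subcase via the Langlands classification, using the standard compatibility of theta correspondence with parabolic induction. Given $\phi \in \Phi(\Oo(V))$, write its Langlands data
\[
\phi = \phi_{\tau_1}|\cdot|_F^{s_1} + \dots + \phi_{\tau_r}|\cdot|_F^{s_r} + \phi_0 + (\phi_{\tau_1}|\cdot|_F^{s_1} + \dots + \phi_{\tau_r}|\cdot|_F^{s_r})^\vee
\]
with $\phi_0 \in \Phi_\temp(\Oo(V_{2n_0}))$ and $s_1 \geq \dots \geq s_r > 0$. By Desideratum \ref{desO} (7), any $\sigma \in \Pi_\phi$ is the unique irreducible quotient of a standard module
\[
\Sigma := \Ind_P^{\Oo(V)}\bigl(\tau_1|\det|_F^{s_1} \otimes \dots \otimes \tau_r|\det|_F^{s_r} \otimes \sigma_0\bigr)
\]
for a unique $\sigma_0 \in \Pi_{\phi_0}$, and the bijection $\sigma \leftrightarrow \sigma_0$ identifies $A_\phi = A_{\phi_0}$ in such a way that $\iota_{c'}(\sigma) = \iota_{c'}(\sigma_0)$ for every $c' \in F^\times$. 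On the symplectic side, $\phi' = (\phi \oplus \1)\otimes\chi_V$ inherits a parallel Langlands decomposition with tempered part $\phi_0' = (\phi_0 \oplus \1)\otimes\chi_V \in \Phi_\temp(\Sp(W_{2n_0}))$ and $\GL$-factors $(\tau_i \chi_V)|\det|_F^{s_i}$, giving a matching bijection $\Pi_{\phi'} \leftrightarrow \Pi_{\phi_0'}$, $\pi \leftrightarrow \pi_0$, which preserves the labels $\iota'_{c'}$.

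The essential input is the compatibility of theta correspondence with parabolic induction. Using Kudla's filtration of the Weil representation, together with the standard fact that the splitting of the metaplectic cover over a $\GL$-type Levi involves a twist by $\chi_V$, one shows that $\Theta_{W,V,\psi}(\Sigma)$ is a quotient of
\[
\Ind_Q^{\Sp(W)}\bigl((\tau_1\chi_V)|\det|_F^{s_1} \otimes \dots \otimes (\tau_r\chi_V)|\det|_F^{s_r} \otimes \Theta_{W_{2n_0},V_{2n_0},\psi}(\sigma_0)\bigr),
\]
where $Q$ is the parabolic of $\Sp(W)$ corresponding to $P$. Combined with Howe duality, this yields $\theta_{W,V,\psi}(\sigma) \neq 0$ if and only if $\theta_{W_{2n_0},V_{2n_0},\psi}(\sigma_0) \neq 0$, and when nonzero, $\theta_{W,V,\psi}(\sigma)$ is the Langlands quotient of the induced module above, whose tempered parameter is $\pi_0 = \theta_{W_{2n_0},V_{2n_0},\psi}(\sigma_0)$.

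Granting Conjecture \ref{P O} for the tempered $\phi_0$, we have $\pi_0 \in \Pi_{\phi_0'}$ with $\iota'_{c'}(\pi_0)|A_{\phi_0} = \iota_{c'}(\sigma_0)$. Transferring through the Langlands classifications on both sides then gives $\pi := \theta_{W,V,\psi}(\sigma) \in \Pi_{\phi'}$ with $\iota'_{c'}(\pi)|A_\phi = \iota_{c'}(\sigma)$, which is part (2). Part (1) follows either by the same induction-compatibility analysis (noting that $\phi$ contains $\1$ iff $\phi_0$ does, and that $e_1 + z_\phi$ maps to $e_1 + z_{\phi_0}$ under $A_\phi = A_{\phi_0}$), or more simply by invoking Remark \ref{2=>1}.

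The main obstacle is pinning down that $\theta_{W,V,\psi}(\sigma)$ is \emph{exactly} the Langlands quotient — and not merely an arbitrary irreducible subquotient — of the induced module on the $\Sp$-side. This requires a careful analysis of Kudla's filtration and a Jacquet-module computation to control which subquotients can support a nonzero $\Oo(V)$-equivariant map from the Weil representation; here the strict positivity $s_1 \geq \dots \geq s_r > 0$ is crucial, since it forces all the relevant twisted Jacquet pieces outside of the top quotient to vanish.
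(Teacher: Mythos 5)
Your proposal is correct and follows essentially the same route as the paper: the paper's proof is precisely the compatibility of the LLC with Langlands quotients (Desideratum \ref{desO} (7), applied on both the orthogonal and symplectic sides) combined with the compatibility of theta lifts with Langlands quotients, which the paper simply cites as \cite[Proposition C.4]{GI1} rather than re-deriving via Kudla's filtration as you sketch. Your handling of part (1) via Remark \ref{2=>1} likewise matches the paper's treatment.
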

\begin{proof}
This follows from a compatibility of LLC, Langlands quotients and theta lifts
(Desideratum \ref{des} (7) and \cite[Proposition C.4]{GI1}).
\end{proof}

\begin{lem}\label{reduce ep}
Assume Desideratum \ref{desSO} and Hypothesis \ref{Hypo}.
Then Prasad's conjecture (Conjecture \ref{P O}) holds for any 
$\phi \in \Phi_\temp(\Oo(V))\setminus \Phi^\epsilon_\temp(\Oo(V))$. 
\end{lem}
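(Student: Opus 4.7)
The plan is to exploit the fact that for $\phi \in \Phi_\temp(\Oo(V)) \setminus \Phi^\epsilon_\temp(\Oo(V))$, every irreducible orthogonal summand of $\phi$ has even dimension. Two consequences are immediate: the homomorphism $\det \colon A_\phi \to \Z/2\Z$ vanishes identically, so $A_\phi^+ = A_\phi$; and the trivial character $\1$ (being one-dimensional, hence of odd dimension) is not a summand of $\phi$. These two observations should collapse the full Prasad conjecture in this range to the weaker version already proved in Theorem \ref{P SO}.

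By Remark \ref{2=>1}(2), Part (1) of Conjecture \ref{P O} follows from Part (2) in general, so I would focus on Part (2). Since $\phi$ does not contain $\1$, the result \cite[Theorem C.5]{GI1} recalled in Remark \ref{2=>1}(2) guarantees that $\Theta_{W,V^\bullet,\psi}(\sigma)$ is nonzero for every $\sigma \in \Pi_\phi$, and that the small theta lift $\pi := \theta_{W,V^\bullet,\psi}(\sigma)$ automatically lies in $\Pi_{\phi'}$. It then only remains to pin down the character $\iota'_{c'}(\pi)$ on the subgroup $A_\phi \subset A_{\phi'}/\pair{z_{\phi'}}$.

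For this I would invoke Theorem \ref{P SO}, which is available because we have assumed the full Hypothesis \ref{Hypo} (hence in particular Hypothesis \ref{Hypo} for even $k$). That theorem supplies
\[
\iota'_{c'}(\pi)|A_\phi^+ = \iota_{c'}(\sigma)|A_\phi^+
\]
for every $c' \in F^\times$, and since $A_\phi^+ = A_\phi$ by the opening observation, this upgrades at once to $\iota'_{c'}(\pi)|A_\phi = \iota_{c'}(\sigma)$, which is precisely the conclusion of Conjecture \ref{P O}(2). There is essentially no obstacle: the hypothesis $\phi \notin \Phi^\epsilon_\temp(\Oo(V))$ excises exactly the discrepancy between $A_\phi^+$ and $A_\phi$ that the weak version leaves open. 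All the substantive content of Theorem \ref{main1} is therefore concentrated in the complementary case $\phi \in \Phi^\epsilon_\temp(\Oo(V))$, where odd-dimensional summands force $A_\phi^+ \subsetneq A_\phi$ and Hypothesis \ref{Hypo} for odd $k$ is genuinely required to determine the value of $\iota'_{c'}(\pi)$ on elements outside $A_\phi^+$.
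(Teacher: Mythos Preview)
Your proposal is correct and follows exactly the same approach as the paper's proof, which reads in its entirety: ``Since $A_\phi^+=A_\phi$, this follows from Theorem \ref{P SO}.'' Your additional observations (that $\1 \not\subset \phi$ so the theta lift is always nonzero, and that Part (1) reduces to Part (2) via Remark \ref{2=>1}(2)) are correct elaborations that the paper leaves implicit.
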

\begin{proof}
Since $A_\phi^+=A_\phi$, this follows from Theorem \ref{P SO}.
\end{proof}
\par

Hence Prasad's conjecture (Conjecture \ref{P O}) is reduced to 
the case when $\phi \in \Phi^\epsilon_\temp(\Oo(V))$.
For this case,
the following is the key proposition:

\begin{prop}\label{IR}
Let $V_{2n}$ and $W_{2n}$ be an orthogonal space associated to $(d,c)$ 
and a symplectic space with $\dim(V_{2n})=\dim(W_{2n})=2n$, respectively.
Fix a positive integer $k$.
For a companion space $V$ of $V_{2n}$, put $V' = V \oplus \H^k$.
Also we set $V_{2n+2k} = (V_{2n})'$, $W=W_{2n}$ and $W' = W_{2n+2k} = W \oplus \H^k$.
Let $\phi_\tau$ be an irreducible orthogonal tempered representation of $\WD_F$, 
and $\tau \in \Irr_\temp(\GL_k(F))$ be the corresponding representation.
For $\phi_\sigma \in \Phi^\epsilon_\temp(\Oo(V_{2n}))$, put 
\begin{align*}
\phi_{\sigma'} &= \phi_\tau \oplus \phi_\sigma \oplus \phi_\tau 
\in \Phi^\epsilon_\temp(\Oo(V_{2n+2k})), \\
\phi_\pi &= (\phi_\sigma \oplus \1) \otimes \chi_V \in \Phi_\temp(\Sp(W_{2n})) \text{ and }\\
\phi_{\pi'} &= (\phi_{\sigma'} \oplus \1) \otimes \chi_V = 
\phi_\tau\chi_V \oplus \phi_\pi \oplus \phi_\tau\chi_V
\in \Phi_\temp(\Sp(W_{2n+2k})).
\end{align*}
Let $\sigma \in \Pi_{\sigma}$, $\sigma' \in \Pi_{\sigma}$, $\pi \in \Pi_{\phi_\pi}$ 
and $\pi' \in \Pi_{\phi_\pi'}$ such that
$\sigma \in \Irr(\Oo(V))$ for a companion space $V$ of $V_{2n}$,
$\sigma' \subset \Ind_P^{\Oo(V')}(\tau \otimes \sigma)$ and
$\pi' \subset \Ind_Q^{\Sp(W')}(\tau\chi_V \otimes \pi)$, 
where $P \subset \Oo(V')$ and $Q \subset \Sp(W')$ are suitable parabolic subgroups.
Suppose that
\begin{itemize}
\item
$\phi_\tau$ is not the trivial representation of $\WD_F$;
\item
$\pi' = \theta_{W',V',\psi}(\sigma')$.
\end{itemize}
We denote by $a \in A_{\phi_{\sigma'}}$ and $a' \in A_{\phi_{\pi'}}$
the elements corresponding to $\phi_\tau$ and $\phi_\tau\chi_V$, respectively.
Then we have
\[
\iota_{c'}(\sigma')(a) = \iota_{c'}'(\pi')(a')
\]
for $c' \in F^\times$.
\end{prop}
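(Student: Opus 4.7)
The strategy is to express both sides of the claimed equality as eigenvalues of normalized intertwining operators and then to compare those eigenvalues via the local theta correspondence. First, by Hypothesis \ref{Hypo}(2), the operator $R_{c'}(w,\tau\otimes\sigma)$ acts on the constituent $\sigma'\subset \Ind_P^{\Oo(V')}(\tau\otimes\sigma)$ as the scalar $\iota_{c'}(\sigma')(a)$. By Proposition \ref{R v.s. S}, the operator $R(w',\tau\chi_V\otimes\pi)$ acts on $\pi'\subset \Ind_Q^{\Sp(W')}(\tau\chi_V\otimes\pi)$ as $\iota'_1(\pi')(a')$, which using the $c'$-dependence $\iota'_{c_1}=\iota'_{c_2}\cdot\eta_{\phi,c_1/c_2}$ and $\phi_{\pi'}^{a'}=\phi_\tau\chi_V$ equals
\[
\iota'_{c'}(\pi')(a')\cdot \det(\phi_\tau\chi_V)(c')^{-1}.
\]
So the proposition reduces to comparing these two eigenvalues, modulo the explicit scalar $\det(\phi_\tau\chi_V)(c')$, part of which is already built into the factor $\chi_V(c'/c)^k$ appearing in the definition of $R_{c'}$.

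When $\dim(\phi_\tau)$ is even, both $a$ and $a'$ lie in the $+$-components $A_{\phi_{\sigma'}}^+$ and $A_{\phi_{\pi'}}^+$, and the equality follows directly from the weak version of Prasad's conjecture (Theorem \ref{P SO}, \cite[\S 5.5]{At}) applied to $\sigma'$ and $\pi'=\theta_{W',V',\psi}(\sigma')$. The essential case is therefore $\dim(\phi_\tau)$ odd, for which $a\notin A_{\phi_{\sigma'}}^+$. To treat this case, I invoke the compatibility of the local theta correspondence with parabolic induction. Via Kudla's filtration of the Jacquet module of $\omega_{W',V',\psi}$ along the parabolics $P\subset\Oo(V')$ and $Q\subset\Sp(W')$, one obtains a natural $\Oo(V')\times\Sp(W')$-equivariant surjection
\[
\Theta_{W',V',\psi}\bigl(\Ind_P^{\Oo(V')}(\tau\otimes\sigma)\bigr) \twoheadrightarrow \Ind_Q^{\Sp(W')}(\tau\chi_V\otimes\pi),
\]
which by the hypothesis $\pi'=\theta_{W',V',\psi}(\sigma')$ restricts to a nonzero map from $\sigma'$ to $\pi'$. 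Provided the normalizations of the two intertwining operators are chosen compatibly with this surjection, they act by the same scalar on the matching constituents (up to the explicit factor above), yielding the claimed identity.

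The main obstacle is verifying this compatibility of normalizations precisely. On the orthogonal side, $r(\tau_s\otimes\sigma)$ involves $L(s,\phi_\tau\otimes\phi_\sigma)$, $L(-2s,(\wedge_2)^\vee\circ\phi_\tau)$, $\lambda(E/F,\psi)^k$, and $\chi_V(c'/c)^k$. On the symplectic side, the corresponding factor for $\tau\chi_V\otimes\pi$ involves $L(s,\phi_\tau\chi_V\otimes\phi_\pi)=L(s,\phi_\tau\otimes\phi_\sigma)\cdot L(s,\phi_\tau)$ (using $\phi_\pi\chi_V\cong\phi_\sigma\oplus\1$) together with a symmetric-square factor for $\phi_\tau\chi_V$. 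The ratio of these two normalizations must reproduce the equivariance constant supplied by the Weil representation; this is shown by standard manipulations of $\lambda$- and $\epsilon$-factors, in particular the Tate-type identity relating $\epsilon(s,\phi_\tau\chi_V,\psi)$ to $\epsilon(s,\phi_\tau,\psi)$, together with the symmetric/exterior-square dichotomy. This calculation refines the one carried out in \cite[\S 6--7]{At}, which established the analogous matching on $A^+$, now additionally tracking the sign contributed by the $\epsilon$-action on $\Oo(V')$ that accounts for the element $a\notin A_{\phi_{\sigma'}}^+$.
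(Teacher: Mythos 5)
Your overall route is the same as the paper's: express $\iota_{c'}(\sigma')(a)$ and $\iota'_{c'}(\pi')(a')$ as eigenvalues of the normalized intertwining operators $R_{c'}(w,\tau\otimes\sigma)$ and $R(w',\tau\chi_V\otimes\pi)$ (Hypothesis \ref{Hypo} and Proposition \ref{R v.s. S}), transport one to the other through an explicit $\Sp(W')\times\Oo(V')$-equivariant map built from the Weil representation, and absorb the discrepancy into the central-character factor $\omega_{\tau\chi_V}(c')=\det(\phi_{\pi'}^{a'})(c')$; your bookkeeping between $\iota'_1$ and $\iota'_{c'}$ is correct, and your even-$k$ shortcut via Theorem \ref{P SO} is fine (though the paper's argument is uniform in $k$).

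The genuine gap is at the nonvanishing step. You assert that the map coming from Kudla's filtration ``restricts to a nonzero map from $\sigma'$ to $\pi'$'' \emph{because} $\pi'=\theta_{W',V',\psi}(\sigma')$. That does not follow: knowing abstractly that the theta lift of $\sigma'$ is $\pi'$ does not tell you that the particular integral/filtration map you are using to intertwine the two operators is nonzero on the $\sigma'$-isotypic component at the point $s=0$, and without that nonvanishing the eigenvalue comparison collapses. This is precisely where the analogue of \cite[Lemma 8.3]{GI2} fails in the present setting, and it is the one genuinely new point in the paper's proof: one shows that the map $\TT_0$ is nonzero on every nonzero $\Phi\in\Ind_P^{\Oo(V')}(\tau\otimes\sigma^\vee)$ \emph{provided} $L(-s,\tau^\vee)$ is regular at $s=0$, i.e.\ provided $\phi_\tau\neq\1$. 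Tellingly, your argument never uses the hypothesis that $\phi_\tau$ is not the trivial representation, so it cannot be complete as written: for $\tau=\1$ the method breaks down (which is exactly why the proposition carries that hypothesis and why the proof of Theorem \ref{main1} must choose an auxiliary nontrivial $\phi_\tau$). Secondarily, the ``compatibility of normalizations,'' which you defer to standard $\lambda$- and $\ep$-factor manipulations, is the other substantive computation: one must prove the precise commutation relation
\[
R(w',\tau\chi_V\otimes\pi)\,\TT_0(\varphi,\Phi)
=\omega_{\tau\chi_V}(c')\cdot\TT_0\bigl(\varphi,R_{c'}(w,\tau\otimes\sigma^\vee)\Phi\bigr),
\]
using in particular the identity $\gamma_V^{-1}\lam(E/F,\psi)=e(V)\chi_V(c)$ relating the Weil constant to the normalizing factor; citing it as a refinement of \cite{At} is reasonable as a sketch, but the nonvanishing issue above must be addressed for the proof to stand.
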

\begin{proof}
The argument is similar to those of \cite[\S 8]{GI2} and \cite[\S 7]{At}, but it has one difference.
So we shall give a sketch of the proof.
\par

Let $\omega=\omega_{W,V,\psi}$ and $\omega'=\omega_{W',V',\psi}$.
We use a mixed model $\Sc'= \Sc(V' \otimes Y^*) \otimes \Sc \otimes \Sc(X^* \otimes W)$
for $\omega'$, where $\Sc$ is a space of $\omega$ (see \cite[\S 6.2]{At}).
For $\varphi \in \Sc'$, we define a map $\hat{f}(\varphi) \colon \Sp(W') \times \Oo(V') \rightarrow \Sc$
as in \cite[\S 8.1]{GI2} and \cite[\S 7.1]{At}.
By a similar argument to the proof of \cite[Theorem 8.1]{GS}, 
we have $\pi = \theta_{W,V,\psi}(\sigma)$ (see also \cite[Proposition C.4]{GI1}).
Fix a nonzero $\Sp(W) \times \Oo(V)$-equivariant map
\[
\TT_{00} \colon \omega \times \sigma^\vee \rightarrow \pi.
\]
For $\varphi \in \Sc'$, $\Phi_s \in \Ind_P^{\Oo(V')}(\tau|\cdot|^s_F \otimes \sigma^\vee)$, 
$g \in \Sp(W')$, $\ch{v} \in \tau^\vee$ and $\ch{v}_0 \in \pi^\vee$, 
consider the integral
\[
\pair{\TT_s(\varphi, \Phi_s)(g),\ch{v} \otimes \ch{v_0}}
\coloneqq
L(s+1,\tau)^{-1} \cdot
\int_{U_P\Oo(V)\bs\Oo(V')}\pair{\TT_{00}(\hat{f}(\varphi)(g,h),\pair{\Phi_s(h),\ch{v}}),\ch{v}_0}dh.
\]
Then one can show that
\begin{enumerate}
\item
the integral $\pair{\TT_s(\varphi, \Phi_s)(g),\ch{v} \otimes \ch{v_0}}$ 
is absolutely convergent for $\re(s) >-1$ and admits a holomorphic continuation to $\C$;
\item
$\TT_s$ gives an $\Sp(W') \times \Oo(V')$-equivalent map
\[
\TT_s \colon \omega' \otimes \Ind_P^{\Oo(V')}(\tau|\cdot|^s_F \otimes \sigma^\vee)
\rightarrow \Ind_Q^{\Sp(W')}(\tau\chi_V|\cdot|^s_F \otimes \pi).
\]
\end{enumerate}
See \cite[Lemmas 8.1--8.2]{GI2} and \cite[Proposition 7.2]{At}.
The one difference is that 
our case does not satisfy an analogue of \cite[Lemma 8.3]{GI2}.
So we have to modify this lemma.
One can show that
\begin{enumerate}
\setcounter{enumi}{2}
\item
if $L(-s,\tau^\vee)$ is regular at $s=0$, then
for any $\Phi \in \Ind_P^{\Oo(V')}(\tau \otimes \sigma^\vee)$ with $\Phi \not=0$, 
there exists $\varphi \in \omega'$ such that $\TT_0(\varphi',\Phi) \not=0$.
\end{enumerate}
Since $\phi_\tau$ is irreducible and tempered, 
$L(-s,\tau^\vee)$ is regular at $s=0$ if and only if 
$\phi_\tau$ is not the trivial representation of $\WD_F$.
\par

By the same calculation as \cite[Proposition 8.4]{GI2} and \cite[Corollary 7.4]{At}, 
one can show that
\begin{enumerate}
\setcounter{enumi}{3}
\item
for $\Phi\in\Ind_P^{\Oo(V')}(\tau\otimes \sigma^\vee)$ and $\varphi \in \omega'$, we have
\[
R(w',\tau\chi_V\otimes\pi)\TT_0(\varphi,\Phi)
=\omega_{\tau\chi_V}(c')\cdot
\TT_{0}(\varphi,R_{c'}(w,\tau\otimes\sigma^\vee)\Phi).
\]
\end{enumerate}
Here, we use the fact that
\[
\gamma_V^{-1} \cdot \lam(E/F,\psi) = e(V) \cdot \chi_V(c),
\]
where $\gamma_V$ is the Weil constant associated to $V$ which appears on 
the explicit formula for $\omega'$, 
and $\lam(E/F,\psi)$ is the Langlands constant which appears on 
the normalizing factor of $R_{c'}(w,\tau\otimes\sigma^\vee)$.
\par

By the same argument as \cite[Lemma 7.5]{At}, 
(3) and (4) together with Hypothesis \ref{Hypo} and Proposition \ref{R v.s. S} imply that
\begin{enumerate}
\setcounter{enumi}{4}
\item
$\iota_{c'}(\sigma')(a) = \omega_{\tau\chi_V}(c') \cdot \iota_{1}'(\pi')(a')$.
\end{enumerate}
Since $\omega_{\tau\chi_V}(c') = \det(\phi_\tau \chi_V)(c') = \det(\phi_{\pi'}^{a'})(c')$, 
we have
\begin{enumerate}
\setcounter{enumi}{5}
\item
$\omega_{\tau\chi_V}(c') \cdot \iota_{1}'(\pi')(a') = \iota_{c'}'(\pi')(a')$.
\end{enumerate}
The equations (5) and (6) imply the desired equation.
\end{proof}

Theorem \ref{P SO} and Proposition \ref{IR} imply Prasad's conjecture (Theorem \ref{main1}).
\begin{proof}[Proof of Theorem \ref{main1}]
By Remark \ref{2=>1} (2) and Lemmas \ref{reduce temp} and \ref{reduce ep}, 
we only consider Conjecture \ref{P O} (2) for $\phi_{\sigma} \in \Phi_\temp^\epsilon(\Oo(V))$.
Hence $\phi_\sigma$ contains an irreducible orthogonal representation $\phi_0$
with odd dimension $k_0$.
Put $\phi_\pi = (\phi_\sigma \oplus \1) \otimes \chi_V$.
Let $\sigma \in \Pi_{\phi_\sigma}$ and assume that 
$\pi = \theta_{W,V^\bullet,\psi}(\sigma)$ is nonzero.
Hence we have $\pi \in \Pi_{\phi_\pi}$.
Let $a_0 \in A_{\phi_\sigma}$ (\resp $a_0' \in A_{\phi_\pi}$) be the element corresponding to
$\phi_0$ (\resp $\phi_0\chi_V$).
Since $[A_{\phi_\sigma}:A_{\phi_\sigma}^+]=2$, by Theorem \ref{P SO}, 
it is enough to show that
\[
\iota_{c'}(\sigma)(a_0) = \iota_{c'}'(\pi)(a_0')
\]
for $c' \in F^\times$.
We choose an irreducible orthogonal tempered representation $\phi_\tau$ of $\WD_F$ 
such that
\begin{itemize}
\item
$\phi_\tau$ is not the trivial representation;
\item
$\phi_\tau$ is not contained in $\phi_\sigma$;
\item
$k = \dim(\phi_\tau)$ is odd.
\end{itemize}
Put $\phi_{\sigma'}=\phi_\tau \oplus \phi_\sigma \oplus \phi_\tau$ and
$\phi_{\pi'} = \phi_\tau \chi_V \oplus \phi_\pi \oplus \phi_\tau\chi_V$.
Let $a_\tau \in A_{\phi_{\sigma'}}$ (\resp $a_\tau' \in A_{\phi_{\pi'}}$) be the element 
corresponding to $\phi_\tau$ (\resp $\phi_\tau\chi_V$).
The claims (2) and (3) in the proof of Proposition \ref{IR}, 
there exist $\sigma' \subset \Ind_P^{\Oo({V^\bullet}')}(\tau \otimes \sigma)$ and
$\pi' \subset \Ind_Q^{\Sp(W')}(\tau\chi_V \otimes \pi)$ such that
$\pi'=\theta_{W',{V^\bullet}',\psi}(\sigma')$.
By Proposition \ref{IR}, we have
\[
\iota_{c'}(\sigma')(a_\tau) = \iota_{c'}'(\pi')(a_\tau').
\]
On the other hand, we know 
\[
\iota_{c'}(\sigma')(a_\tau+a_0) = \iota_{c'}'(\pi')(a_\tau' + a_0')
\]
by Theorem \ref{P SO}.
Hence we have
\[
\iota_{c'}(\sigma')(a_0) = \iota_{c'}'(\pi')(a_0').
\]
Since $\iota_{c'}(\sigma')|A_{\phi_\sigma} = \iota_{c'}(\sigma)$ and
$\iota_{c'}'(\pi')|A_{\phi_\pi} = \iota_{c'}'(\pi)$, we have
\[
\iota_{c'}(\sigma)(a_0) = \iota_{c'}'(\pi)(a_0)'.
\]
This completes the proof.
\end{proof}

\begin{rem}
One may feel that Prasad's conjecture (Conjecture \ref{P O}) can be proven
by a similar way to \cite[Theorem 1.7 (\S 7)]{At} without assuming 
the weaker version of Prasad's conjecture (Theorem \ref{P SO}).
However, because of the lack of an analogue of \cite[Lemma 8.3]{GI2}, 
the same method as \cite{At} can not be applied to Prasad's conjecture for 
$(\Oo(V_{2n}), \Sp(W_{2n}))$ when $\phi \in \Phi(\Oo(V_{2n}))$ contains $\1$.
\end{rem}

\section{Gross--Prasad conjecture}
Gross and Prasad gave a conjectural answer for a restriction problem for 
special orthogonal groups.
For the tempered case, this conjecture has been proven by Waldspurger
\cite{W2}, \cite{W3}, \cite{W4}, \cite{W5}.
In this section, we recall the Gross--Prasad conjecture and
consider an analogous restriction problem for orthogonal groups.

\subsection{Local Langlands correspondence for $\Oo(V_{2n+1})$}\label{LLC O(odd)}
Let $V_{m}$ be an orthogonal space of dimension $m$.
Recall that the discriminant of $V_m$ is defined by
\[
\disc(V_m) = 2^{-m}(-1)^{\half{m(m-1)}}\det(V_{m}) \in F^\times/F^{\times2}.
\]
An orthogonal space $V_m^\bullet$ is a companion space of $V_m$ if
$\dim(V_m^\bullet) = \dim(V_m)$ and $\disc(V_m^\bullet) = \disc(V_m)$.
\par

Let $V_{2n+1}$ be an orthogonal space over $F$ with dimension $2n+1$.
We denote the orthogonal group and the special orthogonal group associated to $V_{2n+1}$ 
by $\Oo(V_{2n+1})$ and $\SO(V_{2n+1})$, respectively.
Suppose that $\Oo(V_{2n+1})$ is split.
\par

We say that a representation $\phi$ of $\WD_F$ is symplectic if
$\phi$ admits a non-degenerate symplectic bilinear form which is $\WD_F$-invariant.
More precisely, see \cite[\S 3]{GGP}. 
\par

The Langlands dual group of $\SO(V_{2n+1})$ is the complex Lie group $\Sp(2n,\C)$,
and $W_F$ acts on $\Sp(2n,\C)$ trivially.
We denote the $L$-group of $\SO(V_{2n+1})$ by ${}^L(\SO(V_{2n+1})) = \Sp(2n,\C) \times W_F$.
An $L$-parameter of $\SO(V_{2n+1})$ is an admissible homomorphism
\[
\bphi \colon \WD_F \rightarrow {}^L(\SO(V_{2n+1})) = \Sp(2n,\C) \times W_F.
\]
We put 
\[
\Phi(\SO(V_{2n+1})) = 
\{\text{$\Sp(2n,\C)$-conjugacy classes of $L$-parameters of $\SO(V_{2n+1})$}\}.
\]
For an $L$-parameter $\bphi \colon \WD_F \rightarrow {}^L(\SO(V_{2n+1}))$, 
by composing with the projection $\Sp(2n,\C) \times W_F \rightarrow \Sp(2n,\C)$, 
we obtain a map
\[
\phi \colon \WD_F \rightarrow \Sp(2n,\C).
\]
The map $\bphi \mapsto \phi$ gives an identification
\[
\Phi(\SO(V_{2n+1})) = \{ \phi \colon \WD_F \rightarrow \Sp(2n, \C)\}/
(\text{$\Sp(2n,\C)$-conjugacy}).
\]
Namely, we regard $\Phi(\SO(V_{2n+1}))$ as the set of equivalence classes of 
symplectic representations of $\WD_F$ with dimension $2n$.
We denote the subset of $\Phi(\SO(V_{2n+1}))$ consisting of equivalence classes of tempered (\resp discrete, generic) representations by $\Phi_\temp(\SO(V_{2n+1}))$
(\resp $\Phi_\disc(\SO(V_{2n+1}))$, $\Phi_\gen(\SO(V_{2n+1}))$).
Then we have a sequence
\[
\Phi_\disc(\SO(V_{2n+1})) \subset \Phi_\temp(\SO(V_{2n+1})) \subset \Phi_\gen(\SO(V_{2n+1})).
\]
\par

We expect a similar desideratum for $\SO(V_{2n+1})$ to Desideratum \ref{desO}.
Namely, for $\phi \in \Phi(\SO(V_{2n+1}))$, we expect there are an $L$-packet 
\[
\Pi_{\phi}^0 \subset \bigsqcup_{V_{2n+1}^\bullet}\Irr(\SO(V_{2n+1}^\bullet)), 
\]
and a canonical bijection
\[
\iota \colon \Pi_{\phi}^0 \rightarrow \widehat{A_{\phi}},
\]
which satisfy similar properties to Desideratum \ref{desO}.
Here, $V_{2n+1}^\bullet$ runs over all companion spaces of $V_{2n+1}^\bullet$
Note that $A_\phi = A_\phi^+$ for $\phi \in \Phi(\SO(V_{2n+1}))$.
\par 

For the (quasi-)split case, it is known by Arthur \cite{Ar}.
\begin{thm}
There exist a surjective map
\[
\Irr_{\temp}(\SO(V_{2n+1})) \rightarrow \Phi_\temp(\SO(V_{2n+1}))
\]
with the inverse image $\Pi_\phi^{0}$ of $\phi \in \Phi_\temp(\SO(V_{2n+1}))$, 
and a canonical bijection 
\[
\iota \colon \Pi_\phi^0 \rightarrow (A_\phi^+/\pair{z_\phi})\widehat{\ }
\]
which satisfy analogues of 
Desideratum \ref{desO} (2), (4), and (6).
Moreover, using the Langlands classification, 
we can extend the map $\tau \mapsto \phi$ to a surjective map
\[
\Irr(\SO(V_{2n+1})) \rightarrow \Phi(\SO(V_{2n+1}))
\]
which satisfies an analogue of Desideratum \ref{desO} (7). 
\end{thm}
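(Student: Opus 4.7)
The plan is to invoke Arthur's theory developed in \cite{Ar}, specialized to the split odd orthogonal group. Since $\SO(V_{2n+1})$ is split and its dual group is $\Sp(2n,\C)$, the set $\Phi(\SO(V_{2n+1}))$ is identified with equivalence classes of $2n$-dimensional symplectic representations of $\WD_F$; via the LLC for $\GL_{2n}$, each such $\phi$ corresponds to a self-dual irreducible representation $\tau_\phi$ of $\GL_{2n}(F)$ fixed by the outer automorphism $\theta_{2n}$. Because $A_\phi$ is already equal to $A_\phi^+$ in the symplectic case (all irreducible symplectic constituents have even dimension), the statement of the theorem is slightly cleaner here than its even-dimensional counterpart.

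For the tempered part, I would construct $\Pi_\phi^0$ by the standard twisted endoscopic characterization: $\Pi_\phi^0$ is defined as the set of irreducible tempered representations of $\SO(V_{2n+1})$ whose stable character, pushed forward via Kottwitz--Shelstad transfer with a canonical Whittaker normalization, matches a specified $\theta_{2n}$-twisted character of $\tau_\phi$ on $\GL_{2n}(F)$. The existence, finiteness, and disjoint exhaustion of $\Irr_\temp(\SO(V_{2n+1}))$ by these packets, together with the canonical bijection $\iota \colon \Pi_\phi^0 \rightarrow \widehat{A_\phi}$, would then follow from the stabilization of the twisted trace formula for $\GL_{2n}$ and the local intertwining relation, both established in \cite{Ar} and the completions by Waldspurger, M\oe glin--Waldspurger, and Chaudouard--Laumon cited in the excerpt. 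The analogues of Desideratum \ref{desO} (2), (4), and (6) are then a direct translation of Arthur's Theorems 1.5.1, 2.2.1, and 2.4.1.

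For the non-tempered extension, I would apply the Langlands classification exactly as in Desideratum \ref{desO} (7): for $\phi \in \Phi(\SO(V_{2n+1}))$ written as
\[
\phi = \phi_{\tau_1}|\cdot|_F^{s_1} + \dots + \phi_{\tau_r}|\cdot|_F^{s_r} + \phi_0 + (\phi_{\tau_1}|\cdot|_F^{s_1} + \dots + \phi_{\tau_r}|\cdot|_F^{s_r})^\vee
\]
with $\phi_0 \in \Phi_\temp(\SO(V_{2n_0+1}))$ and $s_1 \geq \dots \geq s_r > 0$, one defines $\Pi_\phi^0$ to consist of the unique irreducible Langlands quotients of the standard modules parabolically induced from the tempered packet $\Pi_{\phi_0}^0$, and one sets $\iota(\sigma) | A_{\phi_0} = \iota(\sigma_0)$. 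Multiplicity one of these quotients and exhaustion of $\Irr(\SO(V_{2n+1}))$ are classical.

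The hard part, as in the even case, is entirely analytic: the full stabilization of the twisted trace formula for $\GL_{2n}$, the local character identities, and the local intertwining relation. I would regard these as inputs from \cite{Ar} and the series of stabilization papers. Compared to the even orthogonal situation discussed in Theorem \ref{LLC-SO}, no $\epsilon$-conjugacy equivalence is needed: $\SO(V_{2n+1})$ is connected with a bona fide $L$-group in the classical sense, so the desiderata are directly obtained rather than having to be deduced from a disconnected enlargement as in \S \ref{LLC O}. Consequently, no analogue of Hypothesis \ref{Hypo} enters here, and the theorem is essentially the clean Arthurian statement.
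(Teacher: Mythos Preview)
Your proposal is correct and takes essentially the same approach as the paper: the paper does not give an independent proof but simply attributes the result to Arthur \cite{Ar}, and your sketch accurately describes the structure of Arthur's argument (twisted endoscopic characterization of tempered packets, local intertwining relation, Langlands classification for the extension), together with the simplification that $A_\phi = A_\phi^+$ in the symplectic case so no $\sim_\epsilon$ ambiguity arises.
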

 
M{\oe}glin's work in \cite[\S 1.4, Theorem 1.4.1]{M2} 
seems to extend This theorem to the pure inner forms as well.
\par

Since $\Oo(V_{2n+1}^\bullet)$ is the direct product
\[
\Oo(V_{2n+1}^\bullet) = \SO(V_{2n+1}^\bullet) \times \{\pm\1_{V_{2n+1}^\bullet}\}, 
\]
any $\tau \in \Irr(\Oo(V_{2n+1}^\bullet))$ is determined by 
its restriction $\tau|\SO(V_{2n+1}^\bullet) \in \Irr(\SO(V_{2n+1}^\bullet))$ and
its central character $\omega_{\tau} \in \{\pm\1_{V_{2n+1}^\bullet}\}\widehat{\ } \cong \{\pm1\}$.
We define $\Phi(\Oo(V_{2n+1}))$ by
\[
\Phi(\Oo(V_{2n+1})) \coloneqq \Phi(\SO(V_{2n+1})) \times \{\pm1\}.
\]
For $(\phi,b) \in \Phi(\Oo(V_{2n+1}))$, we put
\[
\Pi_{\phi,b} = \{\tau \in \Irr(\Oo(V_{2n+1}^\bullet))\ |\ 
\tau|\SO(V_{2n+1}^\bullet) \in \Pi_\phi^0,\ \omega_\tau(-1) = b\}.
\]
Then we have a canonical bijection
\[
\Pi_{\phi,b} \xrightarrow{\Res} \Pi_\phi^0 \xrightarrow{\iota} \widehat{A_\phi}.
\]
which is also denoted by $\iota$.
Also we have
\[
\Pi_{\phi,-b} = \Pi_{\phi,b} \otimes \det.
\]

\subsection{Gross--Prasad conjecture for special orthogonal groups}
In this subsection, we recall the Gross--Prasad conjecture.
\par

Let $V_{m+1}$  be an orthogonal space of dimension $m+1$,
and $V_{m}$ be a non-degenerate subspace of $V_{m+1}$ with codimension $1$.
We denote by $V_{\even}$ (\resp $V_{\odd}$) the space $V_m$ or $V_{m+1}$ such that
$\dim(V_{\even})$ is even (\resp $\dim(V_\odd)$ is odd). 
Suppose that $\SO(V_{m}) \times \SO(V_{m+1})$ is quasi-split.
We put $c=-\disc(V_{\odd})/\disc(V_{\even}) \in F^\times/F^{\times2}$.
Then $V_{\even}$ is associated to $(\disc(V_{\even}),c)$.
We say that a pair $(V_{m}^\bullet, V_{m+1}^\bullet)$ of companion spaces of $(V_m,V_{m+1})$
is relevant if $V_m^\bullet \subset V_{m+1}^\bullet$.
Then we have a diagonal map
\[
\Delta \colon \Oo(V_{m}^\bullet) \rightarrow \Oo(V_{m}^\bullet) \times \Oo(V_{m+1}^\bullet).
\]
By \cite{AGRS} and \cite{AGRS2}, 
for $\sigma_0 \in \Irr(\SO(V_{\even}^\bullet))$ and $\tau_0 \in \Irr(\SO(V_{\odd}^\bullet))$, 
we have
\[
\dim_\C\Hom_{\Delta\SO(V_{m}^\bullet)}(\sigma_0 \boxtimes \tau_0, \C) \leq 1.
\]
Choose $\epsilon \in \Oo(V_{m}^\bullet)$ such that $\det(\epsilon)=-1$.
We extend $\tau_0$ to an irreducible representation $\tau$ of $\Oo(V_{\odd}^\bullet)$.
For $\varphi \in \Hom_{\Delta\SO(V_{m}^\bullet)}(\sigma_0 \boxtimes \tau_0, \C)$, we put
\[
\varphi' = \varphi \circ (\1 \boxtimes \tau(\epsilon)).
\]
Then we have 
$\varphi' \in \Hom_{\Delta\SO(V_{m}^\bullet)}(\sigma_0^\epsilon \boxtimes \tau_0, \C)$, 
and the map $\varphi \mapsto \varphi'$ gives an isomorphism
\[
\Hom_{\Delta\SO(V_{m}^\bullet)}(\sigma_0 \boxtimes \tau_0, \C)
\cong
\Hom_{\Delta\SO(V_{m}^\bullet)}(\sigma_0^\epsilon \boxtimes \tau_0, \C).
\]
Therefore, $\dim_\C\Hom_{\Delta\SO(V_{m}^\bullet)}(\sigma_0 \boxtimes \tau_0, \C)$
depends only on 
\[
([\sigma_0], \tau_0) \in \Irr(\SO(V_{\even}))/\sim_\epsilon \times \Irr(\SO(V_{\odd})).
\]
The Gross--Prasad conjecture determines this dimension
in terms of Weak LLC for $\SO(V_\even)$ and LLC for $\SO(V_\odd)$.
\par

Let $\phi \in \Phi_\temp(\SO(V_{\even}))/\sim_\epsilon$ and $\phi' \in \Phi_\temp(\SO(V_{\odd}))$.
Following \cite[\S 6]{GGP}, for semi-simple elements $a \in C_\phi$ and $a' \in C_{\phi'}$, 
we put
\begin{align*}
\chi_{\phi'}(a) &= \ep(\phi^a \otimes \phi') \cdot \det(\phi^a)(-1)^{\half{1}\dim(\phi')},\\
\chi_{\phi}(a') &= \ep(\phi \otimes \phi'^{a'}) \cdot \det(\phi)(-1)^{\half{1}\dim(\phi'^{a'})}.
\end{align*}
Here, $\ep(\phi^a \otimes \phi') =  \ep(1/2, \phi^a \otimes \phi', \psi)$
and $\ep(\phi \otimes \phi'^{a'}) = \ep(1/2, \phi \otimes \phi'^{a'}, \psi)$
are the local root numbers, which are independent of the choice of $\psi$.
By \cite[Proposition 10.5]{GP}, $\chi_{\phi'}$ and $\chi_\phi$
define characters on $A_\phi$ and on $A_{\phi'}$, respectively.
\par

The following is a result of Waldspurger \cite{W2}, \cite{W3}, \cite{W4}, \cite{W5}.
\begin{thm}[Gross--Prasad conjecture for special orthogonal groups]\label{GPSO}
Let $V_{m+1}$  be an orthogonal space of dimension $m+1$,
and $V_{m}$ be a non-degenerate subspace of $V_{m+1}$ with codimension $1$.
Suppose that $\SO(V_{m}) \times \SO(V_{m+1})$ is quasi-split.
We put $c=-\disc(V_{\odd})/\disc(V_{\even}) \in F^\times/F^{\times2}$, so that
$V_{\even}$ is associated to $(\disc(V_{\even}),c)$.
Assume 
\begin{itemize}
\item
Weak LLC for $\SO(V_{\even})$ (Desideratum \ref{desSO}); 
\item
LLC for $\SO(V_{\odd})$ (an analogue of Desideratum \ref{desO} for $\SO(V_{\odd})$).
\end{itemize}
Let $\phi \in \Phi_\temp(\SO(V_{\even}))/\sim_\epsilon$ and $\phi' \in \Phi_\temp(\SO(V_{\odd}))$.
Then there exists a unique pair $([\sigma_0], \tau_0) \in \Pi_\phi^0 \times \Pi_{\phi'}^0$ such that
$\sigma_0 \boxtimes \tau_0$ is a representation of 
$\SO(V_{\even}^\bullet) \times \SO(V_{\odd}^\bullet)$ with 
a relevant pair $(V_{\even}^\bullet, V_{\odd}^\bullet)$ 
of companion spaces of $(V_{\even}, V_{\odd})$, and
\[
\Hom_{\Delta\SO(V_{2n}^\bullet)}(\sigma_0 \boxtimes \tau_0, \C) \not=0.
\]
Moreover, $\iota_c([\sigma_0]) \times \iota(\tau_0)$ satisfies that 
\[
\iota_c([\sigma_0]) \times \iota(\tau_0) = (\chi_{\phi'}|A_\phi^+) \times \chi_{\phi}.
\]
In particular, the same unconditionally holds for quasi-split $\SO(V_{m}) \times \SO(V_{m+1})$.
\end{thm}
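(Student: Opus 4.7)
The plan is to follow the program of Waldspurger developed across \cite{W2}, \cite{W3}, \cite{W4}, \cite{W5}. The theorem of Aizenbud--Gourevitch--Rallis--Schiffmann (\cite{AGRS}, \cite{AGRS2}) already yields the uniqueness assertion
$\dim_\C \Hom_{\Delta\SO(V_m^\bullet)}(\sigma_0 \boxtimes \tau_0, \C) \le 1$,
so what remains is to prove existence of a unique pair $([\sigma_0],\tau_0)$ in the product L-packet living on a relevant pair of companion spaces with non-zero Hom space, and to identify this distinguished pair through the root-number characters $\chi_{\phi'}$ and $\chi_\phi$.

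The central technical input I would invoke is Waldspurger's integral formula for the multiplicity
\[
m(\sigma_0,\tau_0) = \dim_\C \Hom_{\Delta\SO(V_m^\bullet)}(\sigma_0 \boxtimes \tau_0, \C),
\]
which expresses it as a sum of weighted orbital integrals involving products of germ expansions of the Harish-Chandra characters $\Theta_{\sigma_0}$ and $\Theta_{\tau_0}$ over the regular elliptic conjugacy classes of $\SO(V_m^\bullet)$. Summing this formula over all $([\sigma_0], \tau_0)$ in the product L-packet, ranging over every relevant companion pair, assembles the individual character terms into stable characters for $\SO(V_\even)$ and $\SO(V_\odd)$. By the assumed Desiderata together with Theorem \ref{LLC-SO} and the stable twisted endoscopic transfer in \cite{Ar}, these stable characters correspond to explicit expressions on the appropriate general linear groups, and a direct computation shows that the total summed multiplicity equals one.

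With the total multiplicity reduced to a tractable stable form, the identification of the unique distinguished pair proceeds by Fourier inversion on $A_\phi \times A_{\phi'}$: individual elements of the L-packet are extracted through endoscopic transfer, and the weights produced by the twisted endoscopic character identities are exactly the product $\chi_{\phi'}(a)\chi_\phi(a')$ of root-number characters as predicted in \cite[\S 6]{GGP} and \cite[Proposition 10.5]{GP}. The hardest step, which occupies the bulk of Waldspurger's papers, is proving the integral formula itself; it requires careful truncation of non-convergent integrals, analysis of weighted orbital integrals at the singular support, and Shalika germ expansions, together with an extension from the supercuspidal case to the full tempered case via parabolic induction and normalized intertwining operators. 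The character-theoretic bookkeeping on the component group side, once the stable form of the integral formula is in hand, is then a comparatively formal consequence of the endoscopic classification, and produces exactly the asserted identity $\iota_c([\sigma_0]) \times \iota(\tau_0) = (\chi_{\phi'}|A_\phi^+) \times \chi_\phi$.
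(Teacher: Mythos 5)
Your outline is essentially a faithful summary of Waldspurger's proof (\cite{W2}, \cite{W3}, \cite{W4}, \cite{W5}): multiplicity at most one via \cite{AGRS}, \cite{AGRS2}, the geometric/integral formula for the multiplicity, summation over the Vogan packet on all relevant companion pairs to get total multiplicity one by stability, and identification of the distinguished member via the $\ep$-factor computation and endoscopic character identities, yielding the characters $\chi_{\phi'}$ and $\chi_\phi$ of \cite[\S 6]{GGP}. The paper gives no proof of Theorem \ref{GPSO} of its own---it imports the result from Waldspurger (with the LLC input from \cite{Ar}) by citation---so your proposal coincides with the approach the paper relies on.
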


\subsection{Gross--Prasad conjecture for orthogonal groups}
Let $V_{m+1}^\bullet$  be an orthogonal space of dimension $m+1$,
and $V_{m}^\bullet$ be a non-degenerate subspace of $V_{m+1}$ with codimension $1$.
In \cite{AGRS}, Aizenbud, Gourevitch, Rallis and Schiffmann
showed that
\[
\dim_\C\Hom_{\Delta\Oo(V_{m}^\bullet)}(\sigma \boxtimes \tau, \C) \leq 1
\]
for $\sigma \in \Irr(\Oo(V_{\even}^\bullet))$ and $\tau \in \Irr(\Oo(V_{\odd}^\bullet))$.
The following conjecture determines this dimension for
$(\sigma, \tau) \in \Irr_\temp(\Oo(V_{\even}^\bullet)) \times \Irr_\temp(\Oo(V_{\odd}^\bullet))$.
\par

Let $\phi \in \Phi(\Oo(V_{2n}))$.
For $b \in \{\pm1\}$ and $a \in C_\phi$, we put
\[
d_{\phi,b}(a) = b^{\dim(\phi^a)}.
\]
By \cite[\S 4]{GGP}, $d_{\phi,b}$ defines a character on $A_\phi$.
Note that $d_{\phi,b}$ is trivial on $A_\phi^+$.

\begin{conj}[Gross--Prasad conjecture for orthogonal groups]\label{GPO}
Let $V_{m+1}$ be an orthogonal space of dimension $m+1$,
and $V_{m}$ be a non-degenerate subspace of $V_{m+1}$ with codimension $1$.
Suppose that $\Oo(V_{m}) \times \Oo(V_{m+1})$ is quasi-split.
We put $c=-\disc(V_{\odd})/\disc(V_{\even}) \in F^\times/F^{\times2}$, so that
$V_{\even}$ is associated to $(\disc(V_{\even}),c)$.
Let $\phi \in \Phi_\temp(\Oo(V_{\even}))$ and $(\phi',b) \in \Phi_\temp(\Oo(V_{\odd}))$.
Then there exists a unique pair $(\sigma, \tau) \in \Pi_\phi \times \Pi_{\phi',b}$ such that
$\sigma \boxtimes \tau$ is a representation of $\Oo(V_\even^\bullet) \times \Oo(V_\odd^\bullet)$
with a relevant pair $(V_{\even}^\bullet, V_{\odd}^\bullet)$ 
of companion spaces of $(V_{\even}, V_{\odd})$, and
\[
\Hom_{\Delta\Oo(V_{m}^\bullet)}(\sigma \boxtimes \tau, \C) \not=0.
\]
Moreover, $\iota_c(\sigma) \times \iota(\tau)$ satisfies that
\[
\iota_c(\sigma) \times \iota(\tau) = (\chi_{\phi'}\cdot d_{\phi,b}) \times \chi_{\phi}.
\]
\end{conj}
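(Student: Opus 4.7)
The plan is to establish Conjecture \ref{GPO} by following the implication pattern displayed in the introduction diagram: we shall deduce it from the Fourier--Jacobi case of the Gross--Prasad conjecture for symplectic--metaplectic groups (GP(Sp-Mp)), proved by the first author in \cite[Theorem 1.3]{At}, together with the full Prasad's conjecture for $(\Oo_{2n},\Sp_{2n})$ established above as Theorem \ref{main1}. The guiding idea is to transport the $\Oo$-branching problem across the theta correspondence to a Fourier--Jacobi branching problem on the dual $\Sp\times\Mp$ side, invoke GP(Sp-Mp) there, and then pull the resulting character identity back.

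Concretely, the steps are as follows. First, given $\sigma\in\Pi_\phi$ on $\Oo(V_\even^\bullet)$ and $\tau\in\Pi_{\phi',b}$ on $\Oo(V_\odd^\bullet)$ from a relevant pair of companion spaces, form the theta lifts $\pi=\theta_{W_{2n},V_\even^\bullet,\psi}(\sigma)\in\Irr(\Sp(W_{2n}))$ and $\pi'=\theta_{W_{2n},V_\odd^\bullet,\psi}(\tau)$, a genuine representation of $\Mp(W_{2n})$. By Theorem \ref{main1}, $\pi$ lies in $\Pi_{\phi_\pi}$ with $\phi_\pi=(\phi\oplus\1)\otimes\chi_V$ and $\iota'_{c'}(\pi)|A_\phi=\iota_{c'}(\sigma)$, and an analogous statement holds for $\pi'$ via the theta correspondence for $(\Oo(V_\odd^\bullet),\Mp(W_{2n}))$. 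Second, apply the see-saw identity attached to the decomposition $V_{2n+1}=V_{2n}\oplus V_1$ together with an explicit calculation of the Jacquet module of $\omega_{V_{2n+1},W_{2n},\psi}$, in the spirit of \cite[\S 7]{At} and \cite[\S 8]{GI2}, to identify (up to nonvanishing) $\Hom_{\Delta\Oo(V_m)}(\sigma\boxtimes\tau,\C)$ with a Fourier--Jacobi Hom space for $(\pi,\pi')$ on the $\Sp\times\Mp$ side. Third, invoke GP(Sp-Mp) to obtain the unique pair realizing the Fourier--Jacobi model, with character content $\chi_{\phi_{\pi'}}\times\chi_{\phi_\pi}$ on $A_{\phi_\pi}\times A_{\phi_{\pi'}}$. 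Finally, pull this identity back to $A_\phi\times A_{\phi'}$ via the Prasad correspondence and verify that it transforms into $(\chi_{\phi'}\cdot d_{\phi,b})\times\chi_\phi$.

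The main obstacle will be the character-matching in the final step. The characters $\chi_\phi$ and $\chi_{\phi'}$ are defined via local root numbers of tensor-product parameters, and the $\chi_V$-twist required to pass between $\Oo$-side and $\Sp$-side parameters shifts these root numbers in a way that must be reconciled using the characters $\eta_{\phi\chi_V,\cdot}$ from Desideratum \ref{propO}(1). Most delicately, the character $d_{\phi,b}$ appearing on the $\Oo$ side --- trivial on $A_\phi^+$ but distinguishing the two extensions $\sigma_0\leadsto\sigma$ and $\sigma_0\leadsto\sigma\otimes\det$ --- must be shown to arise precisely from the central character $b$ of $\tau$, via the compatibility of the choice of companion space $V_\odd^\bullet$ with the dichotomy of the theta correspondence, and of the central character $b$ with the $\pm$ ambiguity in extending a representation from $\SO(V_\odd^\bullet)$ to $\Oo(V_\odd^\bullet)$. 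Careful bookkeeping of local root numbers, Langlands $\lam$-factors, and Weil constants, along the lines of the arguments in \cite[\S 7]{At} and \S\ref{secH}, should suffice to close the loop.
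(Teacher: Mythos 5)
Your overall strategy --- transport the branching problem through the see-saw to a Fourier--Jacobi problem for $\Sp(W_{2n})\times\Mp(W_{2n})$, apply GP($\Sp$-$\Mp$) from \cite[Theorem 1.3]{At}, and pull the character identity back via Prasad's conjecture (Theorem \ref{main1}) --- is the same mechanism the paper uses for the even case. But as written there are two genuine gaps. First, you cannot get existence and uniqueness of the distinguished pair, nor the full character identity, purely by transferring to the $\Sp$-$\Mp$ side: the dictionary $(\sigma,\tau)\mapsto(\pi,\pi')$ breaks down for half the data, because by the $\ep$-dichotomy of \cite[Theorem 11.1]{GI1} the lift $\Theta_{W_{2n},V_\odd^\bullet,\psi}(\tau)$ to the \emph{lower} metaplectic group vanishes unless $b=\omega_\tau(-1)=\ep(\half{1},\tau,\psi)\,e(V_\odd^\bullet)$; for the other extension (equivalently the other value of $b$) there is simply no $\pi'$ on $\Mp(W_{2n})$ to feed into GP($\Sp$-$\Mp$). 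The paper avoids this by first invoking Waldspurger's GP($\SO$) (Theorem \ref{GPSO}) together with Lemma \ref{lemma1}: these already give existence and uniqueness of $(\sigma,\tau)$, the identity $\iota(\tau)=\chi_\phi$, and the restriction $\iota_c(\sigma)|A_\phi^+$, so that only the single sign on $A_\phi\smallsetminus A_\phi^+$ (i.e.\ the factor $d_{\phi,b}$, distinguishing $\sigma$ from $\sigma\otimes\det$) remains; moreover Lemma \ref{lemma1}(2) shows it suffices to compute this sign for the one value of $b$ compatible with the dichotomy, which is exactly what makes the see-saw argument applicable. Your proposal acknowledges the dichotomy as ``delicate bookkeeping'' but supplies no mechanism replacing these two inputs, and without them the argument does not close.

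Second, your see-saw is attached to $V_{2n+1}=V_{2n}\oplus V_1$, which only covers the configuration $V_\even\subset V_\odd$, i.e.\ $m=2n$ even. The conjecture also includes $m=2n-1$ odd, where $V_\odd=V_{2n-1}\subset V_\even=V_{2n}$; this case is not a formal consequence of the even one and requires a separate step. The paper handles it by choosing a unitary character $\chi$ and inducing $\tau$ from $\GL_1\times\Oo(V_{2n-1}^\bullet)$ to $\Oo(V_{2n+1}^\bullet)$, using \cite[Theorem 15.1]{GGP} to identify the two Hom spaces, applying the even case to $(\phi,\phi''=\chi\oplus\phi'\oplus\chi^{-1})$, and then cancelling the $\chi$-contribution in the root numbers ($\chi_{\phi''}/\chi_{\phi'}=\chi(-1)^{\dim\phi^a}$). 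You would need to add an argument of this kind (or some substitute) to cover the odd-$m$ half of the statement.
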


\begin{rem}\label{SOO}
Let $V_{2n+1} = V_{2n} \oplus L$ be an orthogonal space of dimension $2n+1$,
and $V_{2n}$ be a non-degenerate subspace of $V_{2n+1}$ with codimension $1$.
The stabilizer of the line $L$ in $\SO(V_{2n+1})$ is the subgroup:
\[
\mathrm{S}(\Oo(V_{2n}) \times \Oo(L))
= \{(g_1,g_2) \in \Oo(V_{2n}) \times \Oo(L)\ |\ \det(g_1) = \det(g_2)\}, 
\]
which is isomorphic to $\Oo(V_{2n})$ by the first projection.
Then the restriction problem of $\Oo(V_{2n}) \subset \Oo(V_{2n+1})$ 
is equivalent to the one of $\mathrm{S}(\Oo(V_{2n}) \times \Oo(L)) \subset \SO(V_{2n+1})$.
Indeed, let $\tau$ be an irreducible representation of $\SO(V_{2n+1})$, 
and $\tau^b$ be the extension of $\tau$ to $\Oo(V_{2n+1})$ 
satisfying $\tau^b(-\1_{V_{2n+1}}) = b \cdot \id$ for $b \in \{\pm\}$.
For $\sigma \in \Irr(\Oo(V_{2n}))$, define $\sigma^b \in \Irr(\Oo(V_{2n}))$
by 
\[
\sigma^b(g) = 
\left\{
\begin{aligned}
&\sigma(g)	\iif \det(g) = 1,\\
&b \cdot \omega_{\sigma}(-1) \cdot \sigma(g)	\iif \det(g) = -1.
\end{aligned}
\right.
\]
Here, $\omega_\sigma$ denotes the central character of $\sigma$, 
which is regarded as a character of $\{\pm1\}$.
We regard $\sigma^b$ as an irreducible representation of $\mathrm{S}(\Oo(V_{2n}) \times \Oo(L))$
by pulling back via the first projection.
Then we have an identification
\[
\Hom_{\Oo(V_{2n})}(\tau^b \otimes \sigma, \C) 
= 
\Hom_{\mathrm{S}(\Oo(V_{2n}) \times \Oo(L))}(\tau \otimes \sigma^b, \C).
\]
Using this equation, we see that
a result of Prasad (\cite[Theorem 4]{P3}) follows from Conjecture \ref{GPO} for $m=2$.
\end{rem}

In \S \ref{low}, we review another result of Prasad \cite{P3} for a low rank case and 
check that it is compatible with Conjecture \ref{GPO}.
In \S \ref{GPOproof}, we will prove Conjecture \ref{GPO} under assuming 
LLC for $\Oo(V_m) \times \Oo(V_{m+1})$  and Hypothesis \ref{Hypo}.

\subsection{Low rank cases}\label{low}
In \cite{P3}, D. Prasad extended a theorem on 
trilinear forms of three representations of $\GL_2(F)$ (\cite[Theorem 1.4]{P1}).
In this subsection, we check this theorem follows from Conjecture \ref{GPO}.
\par

First, we recall a theorem on trilinear forms.
Let $D$ be the (unique) quaternion division algebra over $F$.
For an irreducible representation $\pi$ of $\GL_2(F)$, 
let $\pi'$ be the Jacquet--Langlands lift of $\pi$
if $\pi$ is an essentially discrete series representation, 
and put $\pi' = 0$ otherwise.
Also, for a representation $\phi$ of $\WD_F$, 
if $\det(\phi) = \1$, we write $\ep(\phi) = \ep(1/2, \phi, \psi)$, 
which is independent of a non-trivial additive character $\psi$ of $F$.

\begin{thm}[{\cite[Theorem 1.4]{P1}}]\label{triple}
For $i=1,2,3$, 
let $\pi_i$ be an irreducible infinite-dimensional representation of 
$\GL_2(F)$ with central character $\omega_{\pi_i}$.
Assume that $\omega_{\pi_1} \omega_{\pi_2} \omega_{\pi_3} = \1$.
We denote the representation of $\WD_F$ corresponding to $\pi_i$ by $\phi_i$.
Then:
\begin{itemize}
\item
there exists a nonzero $\GL_2(F)$-invariant linear form on $\pi_1 \otimes \pi_2 \otimes \pi_3$
if and only if $\ep(\phi_1 \otimes \phi_2 \otimes \phi_3) = 1$;
\item
there exists a nonzero $D^\times$-invariant linear form on $\pi'_1 \otimes \pi'_2 \otimes \pi'_3$
if and only if $\ep(\phi_1 \otimes \phi_2 \otimes \phi_3) = -1$.
\end{itemize}
\end{thm}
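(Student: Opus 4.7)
The plan is to exploit the exceptional isomorphisms
\[
\SO(V_3) \cong \mathrm{PGL}_2(F), \qquad
\mathrm{GSO}(V_4) \cong (\GL_2(F) \times \GL_2(F))/\Delta Z
\]
(together with their quaternionic analogues using $D^\times$) and then apply Conjecture \ref{GPO} with $m = 3$. Here $V_4 = M_2(F)$ (resp.\ $V_4^\bullet = D$) is the split (resp.\ non-split) $4$-dimensional orthogonal space with determinant (resp.\ reduced norm) form, and $V_3 \subset V_4$, $V_3^\bullet \subset V_4^\bullet$ are the corresponding trace-zero subspaces. All four spaces have trivial discriminant, and both $(V_3, V_4)$ and $(V_3^\bullet, V_4^\bullet)$ are relevant pairs in the sense of Conjecture \ref{GPO}.

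First I would reduce to the case $\omega_{\pi_3} = \1$. Indeed, replacing $(\pi_1, \pi_2, \pi_3)$ by $(\pi_1 \otimes \chi, \pi_2 \otimes \chi^{-1}, \pi_3)$ for a character $\chi$ of $F^\times$ alters neither the space of $\GL_2(F)$-invariant trilinear forms on $\pi_1 \otimes \pi_2 \otimes \pi_3$ (the diagonal action is unchanged) nor $\ep(\phi_1 \otimes \phi_2 \otimes \phi_3)$ (the twist by $\chi \otimes \chi^{-1} \otimes \1$ is trivial), so we may assume $\omega_{\pi_3} = \1$ and hence $\omega_{\pi_1}\omega_{\pi_2} = \1$. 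Under this hypothesis $\pi_3$ factors through $\SO(V_3) = \mathrm{PGL}_2(F)$ while $\pi_1 \boxtimes \pi_2$ descends to $\mathrm{GSO}(V_4)$; the restriction to $\SO(V_4)$ has Langlands parameter $\phi \coloneqq \phi_1 \otimes \phi_2$, which is $4$-dimensional orthogonal with trivial determinant. Choosing extensions yields $\sigma \in \Pi_\phi$ (a representation of $\Oo(V_4^\bullet)$ for one companion space) and $\tau \in \Pi_{\phi_3, b}$ (a representation of $\Oo(V_3^\bullet)$, with $b \in \{\pm1\}$ recording which extension of $\pi_3$ was chosen). Under the accidental isomorphisms, a $\GL_2(F)$-invariant (resp.\ $D^\times$-invariant) trilinear form on $\pi_1 \otimes \pi_2 \otimes \pi_3$ (resp.\ on its Jacquet--Langlands transfer) becomes a nonzero $\Delta\Oo(V_3)$- (resp.\ $\Delta\Oo(V_3^\bullet)$-)invariant pairing $\sigma \otimes \tau \to \C$.

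Conjecture \ref{GPO} applied to $(\phi, \phi')$ with $\phi' = \phi_3$ then asserts that exactly one relevant pair $(\sigma, \tau)$ supports a nonzero such form, characterised by $\iota_c(\sigma) \times \iota(\tau) = (\chi_{\phi'} \cdot d_{\phi, b}) \times \chi_{\phi}$. The crucial evaluation is at the central element: since $\dim\phi = 4$ we have $d_{\phi, b}(z_\phi) = b^4 = 1$, and in the generic case where $\phi$ is multiplicity-free (so $\phi^{z_\phi} = \phi$) we compute
\[
\chi_{\phi'}(z_\phi)
= \ep(\phi \otimes \phi_3) \cdot \det(\phi)(-1)^{\dim \phi_3 / 2}
= \ep(\phi_1 \otimes \phi_2 \otimes \phi_3),
\]
using $\det\phi = \1$. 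By Desideratum \ref{desO}(5) this sign equals $+1$ precisely when $\sigma \in \Irr(\Oo(V_4)) = \Irr(\Oo(M_2(F)))$ (the $\GL_2$-side) and $-1$ when $\sigma \in \Irr(\Oo(V_4^\bullet)) = \Irr(\Oo(D))$ (the $D^\times$-side); the relevance condition then forces the $V_3$-companion to match $V_4^\bullet$ automatically. This reproduces precisely the dichotomy of Theorem \ref{triple}.

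The main obstacle will be the bookkeeping: one must carefully match the two extensions of $(\pi_1 \boxtimes \pi_2)|_{\SO(V_4)}$ to $\Oo(V_4^\bullet)$ (which differ by $\det$) and the two extensions of $\pi_3$ from $\SO(V_3)$ to $\Oo(V_3^\bullet)$ with the unique $(\sigma, \tau)$ predicted by Conjecture \ref{GPO}, thereby pinning down $b$ from intrinsic features of the trilinear pairing. One also has to handle the cases where $\phi_1 \otimes \phi_2$ is not multiplicity-free and $\phi^{z_\phi}$ is a proper summand of $\phi$; in each such case the identity $\chi_{\phi'}(z_\phi) = \ep(\phi_1 \otimes \phi_2 \otimes \phi_3)$ has to be verified directly, but both sides equal $+1$ for parity reasons, so the matching persists.
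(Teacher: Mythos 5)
There is a genuine gap, and in fact a mismatch with what the paper actually does. First, your reduction to $\omega_{\pi_3}=\1$ does not work as written: twisting to $(\pi_1\otimes\chi,\ \pi_2\otimes\chi^{-1},\ \pi_3)$ leaves $\omega_{\pi_3}$ completely unchanged, so it cannot make $\omega_{\pi_3}$ trivial. To trivialize some central character you must twist that representation itself, which changes its central character by a square of a character; one would then have to argue that (possibly after permuting the three factors) some $\omega_{\pi_i}$ is a square, and you give no such argument. Second, the statement is for \emph{all} irreducible infinite-dimensional $\pi_i$, not just tempered ones, while the Gross--Prasad input you invoke (Conjecture \ref{GPO}, proved in Theorem \ref{main2} only for tempered parameters, and likewise Theorem \ref{GPSO}) is a tempered statement; your proposal never addresses the non-tempered case. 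This is why the paper does not prove Theorem \ref{triple} at all: it is quoted from Prasad \cite{P1}, and \S 5.4 only runs the argument in the opposite direction as a consistency check, deducing the tempered case with $\omega_{\pi_1}\omega_{\pi_2}=\omega_{\pi_3}=\1$ from Waldspurger's unconditional Theorem \ref{GPSO}. A ``proof'' via Conjecture \ref{GPO} would in any case be conditional on Desideratum \ref{desO} and Hypothesis \ref{Hypo}, i.e.\ strictly weaker than the unconditional theorem being claimed.

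Third, replacing the $\SO$-version by the $\Oo$-version creates a translation problem you have not resolved. The paper works with $\Hom_{\SO(V_3)}(\sigma_i\otimes\tau,\C)$, where $\pi_1\boxtimes\pi_2$ is restricted from $\mathrm{GSO}(V_4)$ to $\SO(V_4)$ and the $L$-packet $\Pi^0_{\phi_1\otimes\phi_2}$ collects all constituents $[\sigma_i]$, $[\sigma_i']$ on both companion spaces; the root number identity $\ep(\phi_1\otimes\phi_2\otimes\phi_3)=\chi_{\phi_\tau}(z_{\phi_\sigma})$ together with Desideratum \ref{desSO} (5) then decides the split/quaternionic dichotomy. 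Your route needs a nonzero $\GL_2(F)$-invariant trilinear form to become a $\Delta\Oo(V_3^\bullet)$-invariant form on $\sigma\otimes\tau$ for suitable \emph{extensions} $\sigma$ to $\Oo(V_4^\bullet)$ and $\tau$ to $\Oo(V_3^\bullet)$. The coset $\Oo(V_4)\smallsetminus\SO(V_4)$ is generated by the element $c$ interchanging the two $\GL_2$-factors, which acts on $\pi_1\boxtimes\pi_2$ only when $\pi_1\cong\pi_2$ (that is exactly the situation the paper treats separately in Theorem \ref{triple2}, where Conjecture \ref{GPO} genuinely enters); for $\pi_1\not\cong\pi_2$ one must instead run the extension bookkeeping of Lemma \ref{lemma1} to pass between $\SO$- and $\Oo$-periods and to pin down the pair $(\sigma,\tau^b)$, and this is precisely the step your sketch defers to ``bookkeeping'' without carrying it out. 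Finally, your treatment of the non-multiplicity-free case of $\phi_1\otimes\phi_2$ (``both sides equal $+1$ for parity reasons'') is asserted, not proved, whereas this is where $\phi^{z_\phi}\neq\phi$ and the identification $\chi_{\phi_3}(z_\phi)=\ep(\phi_1\otimes\phi_2\otimes\phi_3)$ actually needs checking.
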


When $\pi_i$ is tempered for each $i$ and $\omega_{\pi_1} \omega_{\pi_2} = \omega_{\pi_3} = \1$, 
this theorem is a special case of GP conjecture for special orthogonal groups (Theorem \ref{GPSO}).
Recall that there exist two exact sequences
\begin{align*}
&\begin{CD}
1 @>>> F^\times @>{\Delta_1}>> \GL_2(F) \times \GL_2(F) 
@>{\rho_1}>> \mathrm{GSO}(V_4) @>>> 1,
\end{CD}
\\
&\begin{CD}
1 @>>> F^\times @>{\Delta_2}>> D^\times \times D^\times 
@>{\rho_2}>> \mathrm{GSO}(V_4') @>>> 1,
\end{CD}
\end{align*}
where
\begin{itemize}
\item
$V_4 =  \mathrm{M}_2(F)$ which is regarded as an orthogonal space with
\[
(
\begin{pmatrix}
a_1,b_1\\ c_1,d_1
\end{pmatrix},
\begin{pmatrix}
a_2,b_2\\ c_2,d_2
\end{pmatrix}
)_{V_4} = \tr(
\begin{pmatrix}
a_1,b_1\\ c_1,d_1
\end{pmatrix}
\begin{pmatrix}
d_2,-b_2\\ -c_2,a_2
\end{pmatrix}
);
\]
\item
$V_4'=D$ is regarded as an orthogonal space with
\[
(x,y)_{V_4'} = \tau(xy^*), 
\]
where $y \mapsto y^*$ is the main involution and $\tau(x)$ is the reduced trace of $x$;
\item
for an orthogonal space $V_{2n}$ with even dimension $2n$, 
the similitude special orthogonal group $\mathrm{GO}(V_{2n})$ is defined by
\[
\mathrm{GO}(V_{2n}) = \{ g \in \GL(V_{2n})\ |\ 
\pair{gv_1, gv_2}_{V_{2n}} = \nu(g)\pair{v_1,v_2}_{V_{2n}},\ \nu(g) \in F^\times
\text{ for any $v_1, v_2 \in V_{2n}$}\}
\]
and $\mathrm{GSO}(V_{2n})$ is defined by
\[
\mathrm{GSO}(V_{2n}) = \{g \in \mathrm{GO}(V_{2n})\ |\ \det(g) = \nu(g)^n\};
\]
\item
$\Delta_1$ and $\Delta_2$ are the diagonal embeddings;
\item
$\rho_1$ and $\rho_2$ are given by
\[
\rho_1(g_1,g_2)x = g_1 x g_2^{-1},
\quad
\rho_2(g'_1,g'_2)x' = g'_1 x' {g'_2}^{-1}
\]
for $g_1,g_2 \in \GL_2(F)$, $x \in V_4$, $g'_1,g'_2 \in D^\times$ and $x' \in V'_4$.
\end{itemize}
Hence if $\omega_{\pi_1} \omega_{\pi_2} = \omega_{\pi_3} = \1$, 
then $\pi_1 \otimes \pi_2$ (\resp $\pi'_1 \otimes \pi'_2$) is regarded as 
a representation $\cl{\sigma}$ of $\mathrm{GSO}(V_{4})$ 
(\resp $\cl{\sigma}'$ of $\mathrm{GSO}(V'_4)$).
The restriction of $\cl{\sigma}$ to $\SO(V_4)$ 
(\resp $\cl{\sigma}'$ to $\SO(V'_4)$) decomposes into a direct sum of irreducible representations, 
i.e., 
\[
\cl{\sigma}|\SO(V_{4}) = \sigma_1 \oplus \dots \oplus \sigma_r
\quad
(\resp 
\cl{\sigma}'|\SO(V'_{4}) = \sigma'_1 \oplus \dots \oplus \sigma'_{r'}
).
\]
Let $\phi_\sigma = \phi_1 \otimes \phi_2$.
Then we have $\phi_\sigma \in \Phi(\SO(V_{4}))/\sim_\epsilon$, 
and the $L$-packet $\Pi_{\phi_\sigma}^0$ is given by
\[
\Pi_{\phi_\sigma}^0 = \{[\sigma_i]\ |\ i=1, \dots, r\} \cup \{[\sigma'_i]\ |\ i=1,\dots, r'\}.
\]
Here, if $\pi'_1 \otimes \pi'_2 = 0$, we neglect $\{[\sigma'_i]\}$.
\par

On the other hand, if $\omega_{\pi_3} = \1$, 
then $\pi_3$ (\resp $\pi'_3$) is regarded as 
a representation $\tau$ of $\SO(V_{3})$ 
(\resp $\tau'$ of $\SO(V'_3)$), 
where $V_3$ (\resp $V'_3$) is the orthogonal space of dimension $3$, discriminant $-1$, 
and such that $\SO(V_3)$ is split (\resp $\SO(V'_3)$ is not split).
We identify $V_3$ (\resp $V'_3$) with the subspace of $V_4$ (\resp $V'_4$)
consisting of trace zero elements (\resp reduced trace zero elements).
Let $\phi_\tau = \phi_3 \colon \WD_F \rightarrow \SL_2(\C)$.
Then we have $\phi_\tau \in \Phi(\SO(V_{3}))$, 
and the $L$-packet $\Pi_{\phi_\tau}^0$ is given by
\[
\Pi_{\phi_\tau}^0 = \{ \tau,\ \tau'\}.
\]
Here, if $\pi'_3 = 0$, we neglect $\tau'$.
\par

We embed $\GL_2(F)$ into $\GL_2(F) \times \GL_2(F)$
(\resp $D^\times$ into $D^\times \times D^\times$) as the diagonal subgroup.
This embedding induces the inclusion $\SO(V_3) \hookrightarrow \SO(V_4)$ 
(\resp $\SO(V'_3) \hookrightarrow \SO(V'_4)$).
Then we conclude that
\begin{itemize}
\item
there exists a nonzero $\GL_2(F)$-invariant linear form on $\pi_1 \otimes \pi_2 \otimes \pi_3$
if and only if 
$\Hom_{\SO(V_3)}(\sigma_i \otimes \tau, \C) \not=0$ for some $i=1,\dots, r$;
\item
there exists a nonzero $D^\times$-invariant linear form on $\pi'_1 \otimes \pi'_2 \otimes \pi'_3$
if and only if
$\Hom_{\SO(V'_3)}(\sigma'_i \otimes \tau', \C) \not=0$ for some $i=1,\dots, r'$;
\item
$\ep(\phi_1 \otimes \phi_2 \otimes \phi_3) = \ep(\phi_\sigma \otimes \phi_\tau) = 
\chi_{\phi_\tau}(z_{\phi_\sigma}) = \chi_{\phi_\sigma}(z_{\phi_\tau})$.
\end{itemize}
Hence Theorem \ref{GPSO} implies
Theorem \ref{triple} for tempered $\pi_1$, $\pi_2$ and $\pi_3$ 
when $\omega_{\pi_1}\omega_{\pi_2} = \omega_{\pi_3} = \1$.
\par

Next, we recall \cite[Theorem 3]{P3}, which is an extension of Theorem \ref{triple}.
This is the case when $\pi_1 = \pi_2$.
Note that $\pi_1 \otimes \pi_1 = \Sym^2(\pi_1) \oplus \wedge^2(\pi_1)$.
Also, for a representation $\phi_1$ of $\WD_F$, we have 
$\phi_1 \otimes \phi_1 = \Sym^2(\phi_1) \oplus \wedge^2(\phi_1)$.

\begin{thm}[{\cite[Theorem 3]{P3}}] \label{triple2}
Let $\pi_1$ and $\pi_3$ be irreducible admissible infinite-dimensional representations of $\GL_2(F)$.
Assume that $\omega_{\pi_1}^2 \omega_{\pi_3} = \1$.
We denote the representation of $\WD_F$ corresponding to $\pi_i$ by $\phi_i$. 
Then:
\begin{itemize}
\item
$\Sym^2(\pi_1) \otimes \pi_3$ has a $\GL_2(F)$-invariant linear form 
if and only if $\ep(\Sym^2(\phi_1) \otimes \phi_3) = \omega_{\pi_1}(-1)$ and 
$\ep(\wedge^2(\phi_1) \otimes \phi_3) = \omega_{\pi_1}(-1)$;
\item
$\wedge^2(\pi_1) \otimes \pi_3$ has a $\GL_2(F)$-invariant linear form 
if and only if $\ep(\Sym^2(\phi_1) \otimes \phi_3) = -\omega_{\pi_1}(-1)$ and 
$\ep(\wedge^2(\phi_1) \otimes \phi_3) = -\omega_{\pi_1}(-1)$.
\end{itemize}
\end{thm}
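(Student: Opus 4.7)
The plan is to deduce Theorem \ref{triple2} from Theorem \ref{triple} together with the Gross--Prasad conjecture for orthogonal groups (Conjecture \ref{GPO}), established in this paper. First, apply Theorem \ref{triple} with $\pi_2 = \pi_1$. Since $\phi_1 \otimes \phi_1 = \Sym^2 \phi_1 \oplus \wedge^2 \phi_1$, multiplicativity of $\ep$-factors gives
\[
\ep(\phi_1 \otimes \phi_1 \otimes \phi_3) = \ep(\Sym^2 \phi_1 \otimes \phi_3) \cdot \ep(\wedge^2 \phi_1 \otimes \phi_3),
\]
so $\Hom_{\GL_2(F)}(\pi_1 \otimes \pi_1 \otimes \pi_3, \C) \neq 0$ iff the two $\ep$-factors agree. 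When nonzero, this space is one-dimensional, and the swap $v \otimes w \leftrightarrow w \otimes v$ acts by a scalar $\eta \in \{\pm 1\}$: the invariant form descends to $\Sym^2(\pi_1) \otimes \pi_3$ when $\eta = +1$ and to $\wedge^2(\pi_1) \otimes \pi_3$ when $\eta = -1$. Thus Theorem \ref{triple2} reduces to identifying $\eta$.

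To compute $\eta$, I would reinterpret the problem using the constructions in \S \ref{low}. Take $V_4 = \mathrm{M}_2(F)$ with the determinant form and $V_3 \subset V_4$ the trace-zero subspace, so that both are split of discriminant $1$ and $\SO(V_3) \times \SO(V_4)$ is split. Under $\rho_1$, the diagonal $\GL_2(F) \hookrightarrow \GL_2(F) \times \GL_2(F)$ corresponds to $\SO(V_3) = \mathrm{PGL}_2(F) \hookrightarrow \SO(V_4)$ (conjugation on $\mathrm{M}_2(F)$), and the involution $x \mapsto x^{\#} = \tr(x) \cdot I - x$ (the adjugate) lies in $\Oo(V_4) \setminus \SO(V_4)$ and induces, up to a central twist, the swap of the two $\GL_2$-factors. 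After a joint twist $(\pi_1, \pi_3) \mapsto (\pi_1 \otimes \chi, \pi_3 \otimes \chi^{-2})$ chosen so that $\omega_{\pi_1}^2 = \omega_{\pi_3} = \1$ (possible since $\omega_{\pi_3} = \omega_{\pi_1}^{-2}$ is a square in $\widehat{F^\times}$, and which preserves both the relevant $\Hom$-spaces and the sign $\omega_{\pi_1}(-1)$), $\pi_1 \boxtimes \pi_1$ descends to a rep of $\mathrm{GSO}(V_4)$; each irreducible $\SO(V_4)$-constituent admits two extensions $\sigma^+, \sigma^- \in \Pi_\phi \subset \Irr(\Oo(V_4))$, and likewise $\pi_3$ gives a rep $\tau_0 \in \Irr(\SO(V_3))$ with extensions $\tau^\pm \in \Pi_{\phi_3, \pm}$. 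Under $\Oo(V_3) \hookrightarrow \Oo(V_4)$, where $-\1_{V_3} \cdot \1_L$ lies in $\Oo(V_4) \setminus \SO(V_4)$, the sign $\eta$ corresponds precisely to the choice of $\sigma^\pm$.

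Next, invoke Conjecture \ref{GPO} for $\Oo(V_3) \subset \Oo(V_4)$. The orthogonal $L$-parameter $\phi = \Sym^2 \phi_1 \oplus \wedge^2 \phi_1 \in \Phi_\temp(\Oo(V_4))$ has component group $A_\phi = \pair{a_s, a_a}$ labeling the two orthogonal summands, and the symplectic parameter of $\tau^b$ is $(\phi_3, b) \in \Phi_\temp(\Oo(V_3))$. Using $\dim \phi_3 = 2$, $\det(\Sym^2 \phi_1)(-1) = \omega_{\pi_1}(-1)^3 = \omega_{\pi_1}(-1)$, $\det(\wedge^2 \phi_1)(-1) = \omega_{\pi_1}(-1)$, and $d_{\phi, b}(a_s) = d_{\phi, b}(a_a) = b$ (as both $\Sym^2 \phi_1$ and $\wedge^2 \phi_1$ have odd dimension), the character formula of Conjecture \ref{GPO} becomes
\[
\iota_c(\sigma)(a_s) = \ep(\Sym^2 \phi_1 \otimes \phi_3) \cdot \omega_{\pi_1}(-1) \cdot b, \qquad
\iota_c(\sigma)(a_a) = \ep(\wedge^2 \phi_1 \otimes \phi_3) \cdot \omega_{\pi_1}(-1) \cdot b.
\]
Taking $b = 1$ and the rep $\sigma^+ \in \Pi_\phi$ with $\iota_c(\sigma^+) = \1$ (which, by the identification in the previous paragraph, corresponds to $\Sym^2 \pi_1$), the conditions become $\ep(\Sym^2 \phi_1 \otimes \phi_3) = \ep(\wedge^2 \phi_1 \otimes \phi_3) = \omega_{\pi_1}(-1)$. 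Similarly $\sigma^- = \sigma^+ \otimes \det$, for which Desideratum \ref{desO} (9) forces $\iota_c(\sigma^-)(a_s) = \iota_c(\sigma^-)(a_a) = -1$ (both $\Sym^2 \phi_1$ and $\wedge^2 \phi_1$ having odd dimension so $\det(a_s) = \det(a_a) = 1$), and this corresponds to $\wedge^2 \pi_1$ with criterion $\ep(\Sym^2 \phi_1 \otimes \phi_3) = \ep(\wedge^2 \phi_1 \otimes \phi_3) = -\omega_{\pi_1}(-1)$.

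The main obstacle will be the bookkeeping in the second paragraph: verifying the precise assignment $\sigma^+ \leftrightarrow \Sym^2 \pi_1$ (versus $\wedge^2 \pi_1$) under the diagonal $\GL_2$-restriction, which requires computing the action of the adjugate involution $x \mapsto x^{\#}$ on the swap-eigenspaces of $\pi_1 \otimes \pi_1$ and matching it with the action of $-\1_{V_3} \cdot \1_L$ on the two $\Oo(V_4)$-extensions. The non-tempered case reduces to the tempered one by the compatibility of LLC with Langlands quotients (Desideratum \ref{desO} (7)), and the edge cases in which the twist to $\omega_{\pi_1}^2 = \1$ is unavailable can be handled by the analogous Gross--Prasad statement for similitude groups.
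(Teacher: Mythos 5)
Your route is in essence the paper's own: reformulate the two bullet points as the branching problems for $\Oo(V_3) \subset \Oo(V_4)$ with $V_4 = \mathrm{M}_2(F)$ and $V_3$ the trace-zero subspace, using the two extensions $\sigma^\pm$ of the constituents of $\pi_1 \otimes \pi_1|\SO(V_4)$ and $\tau^\pm$ of $\pi_3$, and then apply Conjecture \ref{GPO} to $\phi = \Sym^2\phi_1 \oplus \wedge^2\phi_1$ with exactly the character computations you record ($d_{\phi,b}(a_s)=d_{\phi,b}(a_a)=b$, $\det(\Sym^2\phi_1)(-1)=\det(\wedge^2\phi_1)(-1)=\omega_{\pi_1}(-1)$, $\dim\phi_3=2$). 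Your preliminary use of Theorem \ref{triple} with $\pi_2=\pi_1$ is a harmless repackaging of the part of the criterion seen at $z_\phi$ (the split versus quaternionic dichotomy), which the paper absorbs into the same framework.

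The genuine gap sits precisely at the step you defer as ``bookkeeping'', and your proposed substitute would not close it. What is needed is not the action of the adjugate involution on the swap eigenspaces of $\pi_1\otimes\pi_1$ --- that is just the definition of the two extensions --- but the identification of the Arthur labels of those extensions, i.e.\ the value of $\iota_{c'}(\sigma_i^\pm)$ at the element of $A_\phi \setminus A_\phi^+$ corresponding to $\wedge^2\phi_1$. This cannot be read off from group-theoretic matching alone; in the paper it is established by an explicit Whittaker-functional computation: choosing the basis making $V_4$ the space attached to $(1,a)$, one shows that the extension on which the swap element acts by $\pm1$ on $\Sym^2(\pi_1)$ is $\mu_a^{b}$-generic with $b=\pm\omega_{\pi_1}(a)$, and only then do Desideratum \ref{propO} (1) and (3) give $\iota_1(\sigma_i^\pm)(s_0)=\pm1$ (the twist $\omega_{\pi_1}(a)$ exactly cancels $\eta_{\phi_\sigma,a}(s_0)$). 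Without this input your final criterion could be off by a sign such as $\omega_{\pi_1}(-1)$. A second, related defect: you take ``the representation $\sigma^+$ with $\iota_c(\sigma^+)=\1$'' as the one corresponding to $\Sym^2\pi_1$ and then demand that the whole character $\chi_{\phi_3}\cdot d_{\phi,b}$ be trivial. The member singled out by Conjecture \ref{GPO} is in general not the generic one, and when $\pi_1$ is dihedral the packet contains several $\sigma_i^\pm$ with distinct restrictions to $A_\phi^+$; the correct criterion, as in the paper, constrains only the value at the $\wedge^2\phi_1$-element (together with the split condition already contained in Theorem \ref{triple}), the rest being absorbed by the quantifier ``for some $i$''. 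Finally, note that the paper only verifies the statement for tempered $\pi_1,\pi_3$ with $\omega_{\pi_1}^2=\omega_{\pi_3}=\1$ (the theorem itself is Prasad's); your reduction of the general central-character case by the twist $(\pi_1,\pi_3)\mapsto(\pi_1\chi,\pi_3\chi^{-2})$ requires $\chi^4=\omega_{\pi_3}$, which need not be solvable, so that step too would need the similitude-group argument you only allude to.
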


We check that GP conjecture (Conjecture \ref{GPO}) implies 
this theorem for tempered $\pi_1$ and $\pi_3$ such that
$\omega_{\pi_1}^2 = \1$ and $\omega_{\pi_3} = \1$.
Consider the group
\[
(\GL_2(F) \times \GL_2(F)) \rtimes \pair{c}, 
\]
where $c^2 = 1$ and $c$ acts on $\GL_2(F) \times \GL_2(F)$ by
the exchange of the two factors of $\GL_2(F)$.
Then $\rho_1 \colon \GL_2(F) \times \GL_2(F) \rightarrow \mathrm{GSO}(V_4)$ gives a surjection
\[
\rho_1 \colon (\GL_2(F) \times \GL_2(F)) \rtimes \pair{c} \rightarrow 
\mathrm{GSO}(V_4) \rtimes \pair{c} \cong \mathrm{GO}(V_4).
\]
Here, we identify $c$ as the element in $\Oo(V_4)$ which acts on $V_3$ by $-1$
and on the orthogonal complement of $V_3$ by $+1$.
There are two extensions of the representation
$\cl{\sigma}|\SO(V_{4}) = \sigma_1 \oplus \dots \oplus \sigma_r$
of $\SO(V_4)$ on $\pi_1 \otimes \pi_1 = \Sym^2(\pi_1) \oplus \wedge^2(\pi_1)$
to $\Oo(V_4)$.
We denote by $\cl{\sigma}^\pm|\Oo(V_{4}) = \sigma^\pm_1 \oplus \dots \oplus \sigma^\pm_r$
the extension such that $c$ acts on $\Sym^2(\pi_1)$ by $\pm1$ and
on $\wedge^2(\pi_1)$ by $\mp1$, respectively.
On the other hand, the representation $\tau$ of $\SO(V_3)$ on $\pi_3$ has
two extensions $\tau^\pm$ to $\Oo(V_3)$, which satisfies that $\tau^\pm(-\1_{V_3}) = \pm\id$.
Since $c$ centralizes the diagonal subgroup $\GL_2(F)$ of $\GL_2(F) \times \GL_2(F)$, 
we see that
\begin{itemize}
\item
$\Sym^2(\pi_1) \otimes \pi_3$ has a $\GL_2(F)$-invariant linear form 
if and only if 
$\Hom_{\Oo(V_3)}(\sigma_i^+ \otimes \tau^+, \C) \not=0$
for some $i=1, \dots, r$;
\item
$\wedge^2(\pi_1) \otimes \pi_3$ has a $\GL_2(F)$-invariant linear form 
if and only if 
$\Hom_{\Oo(V_3)}(\sigma_i^+ \otimes \tau^-, \C) \not=0$
for some $i=1, \dots, r$.
\end{itemize}
\par

Since $\pi_1$ is generic, 
$\sigma_i^\pm$ is $\mu_{a}^{b}$-generic for some $a \in F^\times$ and $b \in \{\pm\}$.
We claim that $b = \pm \omega_{\pi_1}(a)$.
Fix a nonzero $\psi$-Whittaker functional $l \colon \pi_1 \rightarrow \C$
and $x \in \pi_1$ such that $l(x) \not= 0$.
For each $a \in F^\times$, 
put
\[
l_a = l \circ \pi_1(
\begin{pmatrix}
a & 0 \\ 0 & 1
\end{pmatrix}
),
\quad
x_a = \pi_1(
\begin{pmatrix}
a & 0 \\ 0 & 1
\end{pmatrix}^{-1}
)x.
\]
We define a basis $\{v,e_a,e_a', v'\}$ of $V_4 = \mathrm{M}_2(F)$ by 
\[
v = \begin{pmatrix}
0&1\\ 0 & 0
\end{pmatrix},\ 
e_a = \begin{pmatrix}
1& 0 \\ 0 & a
\end{pmatrix},\ 
e_a' = \begin{pmatrix}
1& 0 \\ 0 & -a
\end{pmatrix},\ 
v = \begin{pmatrix}
0&0 \\ -1 & 0
\end{pmatrix}.
\]
This basis makes $V_4$ the orthogonal space associated to $(1,a)$.
Also we have
\[
\rho_1(
\begin{pmatrix}
1 & n_1 \\ 0 & 1
\end{pmatrix},
\begin{pmatrix}
1 & n_2 \\ 0 & 1
\end{pmatrix}
)
(v,e_a,e_a',v')
= 
(v,e_a,e_a',v')
\begin{pmatrix}
1 & n_1a-n_2 & -n_1a-n2 & -n_1n_2\\
0 & 1 & 0 & \half{-n_1 + n_2a^{-1}}\\
0 & 0& 1 &\half{-n_1-n_2a^{-1}}\\
0 & 0 & 0 & 1
\end{pmatrix}.
\]
Hence $l_a \otimes l_{-1} \colon \pi_1 \otimes \pi_1 \rightarrow \C$ gives 
a $\mu_a$-Whittaker functional $l_0$ on $\cl{\sigma}^\pm$.
Let $v_0 = x_a \otimes x_{-1} \in \cl{\sigma} = \pi_1 \otimes \pi_1$.
Note that $V_3$ is the orthogonal complement of $Fe_1$ in $V_4$.
Since we regard $c$ as the nontrivial element in the center of $\Oo(V_3)$, 
it acts on $v$, $e_1$, $e_1'$, and $v'$ by 
\[
cv = -v,\ ce_1 = e_1,\ ce_1' = -e_1',\ cv'=-v'.
\]
We define $\epsilon_a \in \Oo(V_4)$ so that 
$\epsilon_a v = v$, $\epsilon_a e_a = e_a$, $\epsilon_a e_a' = -e_a'$, and $\epsilon_a v'=v'$.
Then 
\[
\epsilon_a = \rho_1(
\begin{pmatrix}
-1 & 0 \\ 0 & a
\end{pmatrix},
\begin{pmatrix}
-a & 0 \\ 0 & 1
\end{pmatrix}
)
\cdot c.
\]
Therefore we have
\begin{align*}
b \cdot l_0(v_0)
&= l_0(\epsilon_a v_0)
\\&= (l_a \otimes l_{-1}) \circ \pi_1(
\begin{pmatrix}
-1 & 0 \\ 0 & a
\end{pmatrix}) \otimes \pi_1(
\begin{pmatrix}
-a & 0 \\ 0 & 1
\end{pmatrix}
)
\circ c
(x_a \otimes x_{-1})
\\&= \pm \omega_{\pi_1}(a) \cdot
(l_a \otimes l_{-1}) \circ \pi_1(
\begin{pmatrix}
-a^{-1} & 0 \\ 0 & 1
\end{pmatrix}) \otimes \pi_1(
\begin{pmatrix}
-a & 0 \\ 0 & 1
\end{pmatrix}
)
(x_{-1} \otimes x_{a})
\\&=
\pm \omega_{\pi_1}(a) \cdot
(l_a \otimes l_{-1})(x_{a} \otimes x_{-1})
\\&= \pm \omega_{\pi_1}(a) \cdot l_0(v_0).
\end{align*}
Hence $b = \pm \omega_{\pi_1}(a)$, as desired.
\par

By Desideratum \ref{propO} (1) and (3), 
we have 
\[
\iota_{1}(\sigma_i^\pm)(s) = (\pm\omega_{\pi_1}(a))^{\det(s)} \cdot \eta_{\phi_\sigma, a}(s)
\]
for $s \in A_{\phi_\sigma}$,
In particular, if we denote the element in $A_{\phi_\sigma}$ corresponding to 
$\wedge^2(\phi_1) = \det(\phi_1) = \omega_{\pi_1}$ by $s_0$, 
we have
\[
\iota_{1}(\sigma_i^\pm)(s_0) = \pm1.
\]
Also we have
\[
(\chi_{\phi_\tau} \cdot d_{\phi_\sigma, \pm1}) (s_0) = 
\pm \ep(\wedge(\phi_1) \otimes \phi_3) \cdot \omega_{\pi_1}(-1).
\]
Hence by GP conjecture (Conjecture \ref{GPO}), we see that:
\begin{itemize}
\item
$\Hom_{\Oo(V_3)}(\sigma_i^+ \otimes \tau^\pm, \C) \not=0$ for some $i=1, \dots, r$
if and only if
$\pm \ep(\wedge(\phi_1) \otimes \phi_3) \cdot \omega_{\pi_1}(-1) = +1$.
\end{itemize}
This implies Theorem \ref{triple2} for tempered $\pi_1$ and $\pi_3$
such that $\omega_{\pi_1}^2 = \omega_{\pi_3} = \1$.
\par

In fact, there is an analogous theorem (\cite[Theorem 6]{P4}) for the quaternion algebra case 
(in which case the product of the two root numbers is $-1$).
If one knew $\iota_1({\sigma_i'}^\pm)$ explicitly
(as we have done for $\sigma_i^\pm$ by using Desideratum \ref{propO} (3)), 
one would show that this theorem follows from Conjecture \ref{GPO}.
Conversely, by using Prasad's theorem \cite[Theorem 6]{P4} and Conjecture \ref{GPO}, 
we may conclude that
\[
\iota_1({\sigma_i'}^\pm)(s_0) = \mp1.
\]

\subsection{Proof of Conjecture \ref{GPO}}\label{GPOproof}
In this subsection, we prove that Prasad's conjecture (Conjecture \ref{P O}) implies 
the Gross--Prasad conjecture (Conjecture \ref{GPO}).
The second main theorem is as follows:
\begin{thm}\label{main2}
Assume 
\begin{itemize}
\item
LLC for $\Oo(V_m) \times \Oo(V_{m+1})$ 
(Desideratum \ref{desO} and the analogue of Desideratum \ref{des} for $\SO(V_{\odd})$);
\item
Hypothesis \ref{Hypo}
(which implies Prasad's conjecture \ref{P O} by Theorem \ref{main1}).
\end{itemize}
Then the Gross--Prasad conjecture (Conjecture \ref{GPO}) holds. 
In particular, it unconditionally holds for quasi-split $\Oo(V_m) \times \Oo(V_{m+1})$.
\end{thm}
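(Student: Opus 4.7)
The plan is to deduce Conjecture \ref{GPO} from the Fourier--Jacobi case of the Gross--Prasad conjecture for symplectic-metaplectic groups, GP($\Sp$-$\Mp$) (established in \cite[Theorem 1.3]{At}), together with Prasad's conjecture $\mathrm{P}(\Oo_{2n}, \Sp_{2n})$ (Theorem \ref{main1}, now unconditionally available under the hypotheses of the theorem). This realizes the arrow ``GP($\Sp$-$\Mp$) + P($\Oo_{2n}, \Sp_{2n}$) $\Rightarrow$ GP($\Oo$)'' in the diagram of the introduction. By Desideratum \ref{desO}(7) and the compatibility of theta correspondence with Langlands quotients, one first reduces to tempered parameters and to relevant pairs $(V_\even^\bullet, V_\odd^\bullet)$.

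The core of the argument is a see-saw identity. Given $\sigma \in \Pi_\phi \subset \Irr(\Oo(V_\even^\bullet))$ and $\tau \in \Pi_{\phi',b} \subset \Irr(\Oo(V_\odd^\bullet))$, I would apply theta correspondence to obtain $\pi = \theta_{W,V_\even^\bullet,\psi}(\sigma) \in \Irr(\Sp(W_{2n}))$ with parameter $(\phi \oplus \1)\otimes\chi_V$ (via Theorem \ref{main1}), and analogously lift $\tau$ to a representation $\pi'$ of $\Mp(W_{2n})$ via the dual pair $(\Oo(V_\odd^\bullet), \Mp(W_{2n}))$. A standard see-saw argument of the type used in \cite[\S 7]{At}, built from the pairs $(\Oo(V_\odd^\bullet), \Sp(W)\times\Sp(W))$ and $(\Oo(V_\even^\bullet)\times\Oo(V_1), \Sp(W))$, then yields an identification
\[
\Hom_{\Delta\Oo(V_\even^\bullet)}(\sigma\boxtimes\tau,\C) \cong
\Hom_{\Sp(W_{2n})}(\pi\otimes\omega_\psi, \pi'),
\]
where $\omega_\psi$ is the Weil representation realizing the Fourier--Jacobi model; the vanishing/non-vanishing of both sides is controlled uniformly by the non-vanishing of the relevant theta lifts, which in turn is given by Remark \ref{2=>1}(2) and Conservation.

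With the two sides of the see-saw identified, GP($\Sp$-$\Mp$) selects the unique pair $(\pi,\pi')$ with nonzero Hom and prescribes the characters $\iota'_{c'}(\pi) \times \iota''(\pi')$ in terms of $\varepsilon$-factors. Prasad's conjecture (Theorem \ref{main1}) transports $\iota'_{c'}(\pi)$ back to $\iota_{c'}(\sigma)$ through the canonical map $A_\phi\hookrightarrow A_{(\phi\oplus\1)\otimes\chi_V}$ of Lemma \ref{kappa}, and the analogous (already established) compatibility for the dual pair $(\Oo(V_\odd^\bullet), \Mp(W))$ transports the metaplectic character back to $\iota(\tau)$. It remains to verify that the resulting product equals $(\chi_{\phi'}\cdot d_{\phi,b}) \times \chi_\phi$. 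This comes down to a root-number bookkeeping: the twist by $\chi_V$ in $\phi'_\pi = (\phi\oplus\1)\otimes\chi_V$ produces the factor $\det(\phi^a)(-1)^{\dim(\phi')/2}$ inside $\chi_{\phi'}$, while the element $e_1+z_\phi$ of the kernel in Lemma \ref{kappa} is precisely the element that sees the central character $b$ of $\tau$, producing the $d_{\phi,b}$ factor.

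The main obstacle will be this last step: carefully matching the component-group characters after translating through both theta lifts and the see-saw, in particular tracking the contribution of the trivial summand in $(\phi\oplus\1)\otimes\chi_V$ and the role of the central character $b$ of $\tau$. Concretely, one must show that the ambiguity measured by $d_{\phi,b}$ on $A_\phi/A_\phi^+$ (which is invisible to the weak LLC for $\SO$ and to the weaker $\mathrm{P}(\Sp_{2n-2},\Oo_{2n})$) is captured correctly by the full $\mathrm{P}(\Oo_{2n}, \Sp_{2n})$ applied to $\sigma$ and $\sigma\otimes\det$. Once this is done, uniqueness of the distinguished pair inside the $L$-packet follows from the bijectivity of $\iota_{c'}\times\iota$ onto $\widehat{A_\phi}\times\widehat{A_{\phi'}}$ combined with the single-character identity supplied by GP($\Sp$-$\Mp$).
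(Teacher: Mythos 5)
Your even-case strategy is recognizably the paper's: lift $\sigma$ to $\Sp(W_{2n})$ and $\tau$ to $\Mp(W_{2n})$, run a see-saw inside $\Oo(V_{2n+1}^\bullet)\supset\Oo(V_{2n}^\bullet)\times\Oo(L)$, and combine GP($\Sp$-$\Mp$) (\cite[Theorem 1.3]{At}) with Prasad's conjecture (Theorem \ref{main1}) to compute $\iota_{c}(\sigma)$ on $A_\phi\smallsetminus A_\phi^+$. But there are two genuine gaps. First, your central claim, the two-sided identification $\Hom_{\Delta\Oo(V_\even^\bullet)}(\sigma\boxtimes\tau,\C)\cong\Hom(\pi\otimes\omega_\psi,\pi')$ with $\pi'$ the metaplectic lift of $\tau$, cannot even be set up for both extensions of $\tau_0$: by the $\ep$-dichotomy of \cite[Theorem 11.1]{GI1}, $\Theta_{W_{2n},V_{2n+1}^\bullet,\psi}(\tau)$ vanishes for exactly one of the two central characters $b$, so for that $b$ there is no $\pi'$ on $\Mp(W_{2n})$ and GP($\Sp$-$\Mp$) in the form you invoke says nothing. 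The paper avoids this by \emph{not} trying to prove an equivalence: it fixes $b=\ep(\half{1},\tau,\psi)\cdot e(V_{2n+1}^\bullet)$ (so that $\rho=\theta(\tau)\neq0$), uses the see-saw only in one direction (nonvanishing of the restriction Hom plus $\rho\neq0$ forces $\Theta_{W,V^\bullet,\psi}(\sigma)\neq0$), and then disposes of the other extension formally via Lemma \ref{lemma1}(2) together with the observation that $d_{\phi,-1}$ is the unique nontrivial character of $A_\phi$ trivial on $A_\phi^+$. Relatedly, existence and uniqueness of the distinguished pair, and the identity $\iota(\tau)=\chi_\phi$, are not extracted from the see-saw at all but come directly from Waldspurger's Theorem \ref{GPSO} combined with Lemma \ref{lemma1}; your plan to get them from GP($\Sp$-$\Mp$) through the see-saw runs into the same vanishing problem, and without something like Lemma \ref{lemma1} you have no mechanism for passing between $\SO$-Hom spaces and $\Oo$-Hom spaces of the two extensions. (Your attribution of the $d_{\phi,b}$ factor to the kernel element $e_1+z_\phi$ of Lemma \ref{kappa} is also off: that element governs the vanishing criterion in Conjecture \ref{P O}(1), while $b^{\dim(\phi^a)}$ emerges from the $\det(a')$-power of $\ep(\phi_\pi\chi_c\otimes\phi_{\rho^\vee})$ in the Fourier--Jacobi character formula, i.e.\ from $\ep(\phi')\ep(\phi\otimes\phi')\chi_V(-1)^n=b$.)

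Second, you only treat the configuration $V_\even\subset V_\odd$ (i.e.\ $m=2n$ even). Conjecture \ref{GPO} also covers $m=2n-1$, where the odd space is the \emph{smaller} one, and no see-saw of the above shape applies directly. The paper handles this by a separate reduction: embed the problem into $\Oo(V_{2n})\times\Oo(V_{2n+1}^\bullet)$ with $V_{2n+1}^\bullet=X^\bullet\oplus V_{2n-1}^\bullet\oplus(X^\bullet)^*$, induce $\chi\otimes\tau$ from the parabolic stabilizing $X^\bullet$, and use \cite[Theorem 15.1]{GGP} (as in \cite[Theorem 19.1]{GGP}) plus a short $\ep$-factor computation comparing $\chi_{\phi''}$ with $\chi_{\phi'}$. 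Minor slips (the see-saw pairs written with $\Sp(W)\times\Sp(W)$ instead of $\Sp(W_{2n})\times\Mp(W_{2n})$, the Hom taken over $\Sp$ rather than $\Mp$, $\omega_\psi$ in place of $\omega_{\psi_{-c}}$) are cosmetic, but the two issues above are where your sketch would actually fail or fall short of the full statement.
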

\par

First, we consider the case when $m=2n$ is even.
We need the following lemma for this case:
\begin{lem}\label{lemma1}
Let $\sigma_0\in \Irr(\SO(V_{2n}^\bullet))$ and $\tau_0 \in \Irr(\SO(V_{2n+1}^\bullet))$.
For $b \in \{\pm1\}$, we denote by $\tau^b$ the extension of $\tau$ to $\Oo(V_{2n+1}^\bullet)$
such that $\tau^b(-\1_{V_{2n+1}})=b\cdot \id$.
Assume that
\[
\Hom_{\SO(V_{2n}^\bullet)}(\sigma_0 \otimes \tau_0, \C) \not=0.
\]
\begin{enumerate}
\item
There exists a unique irreducible constituent $\sigma^b$ of 
$\Ind_{\SO(V_{2n}^\bullet)}^{\Oo(V_{2n}^\bullet)}(\sigma_0)$ such that
\[
\Hom_{\Oo(V_{2n}^\bullet)}(\sigma^b \otimes \tau^b, \C) \not=0.
\]
\item
If $\sigma_0^\epsilon \cong \sigma_0$, then
the correspondence 
\[
\{\pm1\}\ni b \mapsto \sigma^b \in \{\text{irreducible constituents of 
$\Ind_{\SO(V_{2n}^\bullet)}^{\Oo(V_{2n}^\bullet)}(\sigma_0)$}\}
\] 
is bijective.
\end{enumerate}
\end{lem}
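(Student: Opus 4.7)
The strategy is Clifford theory combined with Frobenius reciprocity, splitting into two cases according to whether $\sigma_0^\epsilon \cong \sigma_0$. Fix $\epsilon \in \Oo(V_{2n}^\bullet)$ with $\det(\epsilon) = -1$ and $\epsilon^2 = 1$, viewed in $\Oo(V_{2n+1}^\bullet)$ via trivial action on the orthogonal complement of $V_{2n}^\bullet$. Since $\tau^b|_{\SO(V_{2n+1}^\bullet)} = \tau_0$ for both $b \in \{\pm 1\}$, for any finite-length representation $\sigma$ of $\Oo(V_{2n}^\bullet)$ one obtains the key identification
\[
\Hom_{\Delta \Oo(V_{2n}^\bullet)}(\sigma \otimes \tau^b, \C)
= \Hom_{\Delta \SO(V_{2n}^\bullet)}(\sigma|_{\SO} \otimes \tau_0, \C)^{\Delta(\epsilon)},
\]
where $\Delta(\epsilon)$ acts through $\sigma(\epsilon) \otimes \tau^b(\epsilon)$; this action squares to the identity because $\epsilon^2 = 1$, so the eigenvalues lie in $\{\pm 1\}$.

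Suppose first that $\sigma_0^\epsilon \not\cong \sigma_0$. Then $\sigma := \Ind_{\SO(V_{2n}^\bullet)}^{\Oo(V_{2n}^\bullet)}(\sigma_0)$ is irreducible, so it is the only candidate for $\sigma^b$. Its restriction to $\SO$ is $\sigma_0 \oplus \sigma_0^\epsilon$, and the right-hand $\Hom$ decomposes as a sum of two one-dimensional spaces — the second one-dimensional because conjugation by $\epsilon$ and the fact that $\tau_0 = \tau_0^\epsilon$ provide the required isomorphism. The involution $\Delta(\epsilon)$ swaps these two lines, hence has both $+1$ and $-1$ as eigenvalues with multiplicity one. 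Twisting $\tau$ by $b$ rescales the $\Delta(\epsilon)$-action by $b$, so for each $b$ exactly one eigenline gives non-zero $\Oo$-invariants, proving (1); note that (2) does not apply in this case.

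Suppose now that $\sigma_0^\epsilon \cong \sigma_0$. Then $\Ind_{\SO}^{\Oo}(\sigma_0) = \sigma^+ \oplus \sigma^-$ splits as the sum of the two extensions of $\sigma_0$ to $\Oo(V_{2n}^\bullet)$, which differ by a $\det$-twist. The $\SO$-$\Hom$ space is one-dimensional by the hypothesis together with the multiplicity-one theorem of \cite{AGRS}, \cite{AGRS2}, and $\Delta(\epsilon)$ acts on it via $\sigma^a(\epsilon) \otimes \tau^b(\epsilon)$ by some scalar that, as $a, b$ vary, factors as $c(a,b) = a \cdot b \cdot c_0$ for a fixed $c_0 \in \{\pm 1\}$. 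The $\Oo$-invariant line is therefore non-zero precisely when $a = b \cdot c_0$, giving for each $b$ a unique extension $\sigma^b = \sigma^{b c_0}$; this proves (1), and simultaneously (2), since $b \mapsto b c_0$ is a bijection on $\{\pm 1\}$.

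The only delicate point is the multiplicativity $c(a, b) = a \cdot b \cdot c_0$ in the last case, which boils down to tracing the sign characters defining the extensions through evaluation at $\epsilon$: swapping the extension $\sigma^a$ flips $\sigma^a(\epsilon)$ by a sign because $\sigma^- = \sigma^+ \otimes \det$, and swapping $\tau^b$ flips $\tau^b(\epsilon)$ by a sign because $\tau^+$ and $\tau^-$ differ by the non-trivial character of $\Oo(V_{2n+1}^\bullet)/\SO(V_{2n+1}^\bullet)$ and $\epsilon$ has determinant $-1$ in $\Oo(V_{2n+1}^\bullet)$. Everything else is a routine unwinding of Clifford theory.
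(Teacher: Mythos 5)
Your proof is correct and follows essentially the same route as the paper: Clifford theory for $\Ind_{\SO}^{\Oo}(\sigma_0)$, the multiplicity-one theorem of \cite{AGRS}, \cite{AGRS2}, and the sign bookkeeping at $\epsilon$ using $\sigma^-(\epsilon)=-\sigma^+(\epsilon)$ and $\tau^{-b}(\epsilon)=-\tau^{b}(\epsilon)$. The only cosmetic difference is in the case $\sigma_0^\epsilon\not\cong\sigma_0$, where the paper invokes Frobenius reciprocity directly while you decompose the restriction and take the $+1$-eigenspace of the swapping involution; both arguments are equally valid.
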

\begin{proof}
By the Frobenius reciprocity, we have
\begin{align*}
0 &\not= \Hom_{\SO(V_{2n}^\bullet)}(\sigma_0 \otimes \tau_0, \C)
\cong\Hom_{\SO(V_{2n}^\bullet)}(\tau_0, \sigma_0^\vee)\\
&\cong\Hom_{\SO(V_{2n}^\bullet)}(\tau^b|\SO(V_{2n}^\bullet), \sigma_0^\vee)
\cong\Hom_{\Oo(V_{2n}^\bullet)}
(\tau^b|\Oo(V_{2n}^\bullet), \Ind_{\SO(V_{2n}^\bullet)}^{\Oo(V_{2n}^\bullet)}(\sigma_0^\vee))\\
&\cong\Hom_{\Oo(V_{2n}^\bullet)}
(\Ind_{\SO(V_{2n}^\bullet)}^{\Oo(V_{2n}^\bullet)}(\sigma_0) \otimes \tau^b, \C)
\end{align*}
for any $b \in \{\pm1\}$.
Hence if $\sigma_0^\epsilon \not\cong \sigma_0$, then
$\Ind_{\SO(V_{2n}^\bullet)}^{\Oo(V_{2n}^\bullet)}(\sigma_0)$ is irreducible,
so that the first assertion is trivial.
\par

Next, suppose that $\sigma_0^\epsilon \cong \sigma_0$.
Then $\Ind_{\SO(V_{2n}^\bullet)}^{\Oo(V_{2n}^\bullet)}(\sigma_0) \cong \sigma_1 \oplus \sigma_2$, 
where $\sigma_1, \sigma_2 \in \Irr(\Oo(V_{2n}^\bullet))$ satisfy 
$\sigma_1 \not\cong \sigma_2$ and $\sigma_i|\SO(V_{2n}^\bullet) = \sigma_0$ for $i \in \{1,2\}$.
Let $\epsilon \in \Oo(V_{2n}^\bullet)$ be as in \S \ref{Quadratic}.
It satisfies that $\det(\epsilon)=-1$ and $\epsilon^2=\1_{V_{2n}}$.
Note that $\sigma_2(\epsilon)=-\sigma_1(\epsilon)$. 
Fix a nonzero homomorphism $f \in \Hom_{\SO(V_{2n}^\bullet)}(\sigma_0 \otimes \tau_0, \C)$, and put
\[
f_i=f \circ (\sigma_i(\epsilon) \otimes \tau^b(\epsilon)).
\]
Then $f_i \in \Hom_{\SO(V_{2n}^\bullet)}(\sigma_0 \otimes \tau_0, \C)$,
and we have $(f_i)_i=f$ and $(f_1)_2=-f$.
Since $\dim\Hom_{\SO(V_{2n}^\bullet)}(\sigma_0 \otimes \tau_0, \C)=1$ by \cite{AGRS2}, 
there exists $c_i \in \{\pm1\}$ such that
$f_i=c_i \cdot f$ and $c_1 \not= c_2$.
If $c_i=+1$, then $f \in \Hom_{\Oo(V_{2n}^\bullet)}(\sigma_i \otimes \tau^b, \C)$.
In this case, we have $\sigma^b = \sigma_i$.
Also, if we replace $b$ with $-b$, then $c_i$ must be replaced by $-c_i$.
Hence the second assertion holds.
\end{proof}

\begin{proof}[Proof of Theorem \ref{main2}]
First, we consider the case when $m=2n$ is even.
Let $\phi \in \Phi_\temp(\Oo(V_{2n}))$ and $(\phi',b) \in \Phi_\temp(\Oo(V_{2n+1}))$.
Theorem \ref{GPSO} and Lemma \ref{lemma1} imply that
there exists a unique $(\sigma, \tau) \in \Pi_\phi \times \Pi_{\phi',b}$
such that $\sigma \boxtimes \tau$ is a representation of 
$\Oo(V_{2n}^\bullet) \times \Oo(V_{2n+1}^\bullet)$ with
a relevant pair $(V_{2n}^\bullet, V_{2n+1}^\bullet)$ of companion spaces of $(V_{2n}, V_{2n+1})$, 
and 
\[
\Hom_{\Delta\Oo(V_{2n}^\bullet)}(\sigma \boxtimes \tau, \C) \not=0.
\]
Moreover, we have $\iota(\tau) = \chi_\phi$.
\par

We show that $\iota_c(\sigma) = \chi_{\phi'} \cdot d_{\phi,b}$.
If $\phi \not\in \Phi^\epsilon(\Oo(V_{2n}))$, then
$A_\phi = A_\phi^+$ and so that $d_{\phi,b} = \1$.
Hence the desired equation follows from Theorem \ref{GPSO}.
\par

Now we assume that $\phi \in \Phi_\temp^\epsilon(\Oo(V_{2n}))$.
Then $\sigma \not\cong \sigma \otimes \det$.
Since $d_{\phi, -1}$ is the non-trivial character of $A_\phi$ which is trivial on $A_\phi^+$, 
by Lemma \ref{lemma1} (2), it suffices to show that
if 
\[
b = \ep(\half{1},\tau, \psi) \cdot e(V_{2n+1}^\bullet)
\]
and $(\sigma, \tau) \in \Pi_\phi \times \Pi_{\phi',b}$ satisfies that
$\Hom_{\Delta\Oo(V_{2n}^\bullet)}(\sigma \boxtimes \tau, \C) \not=0$, 
then $\iota_c(\sigma) = \chi_{\phi'} \cdot d_{\phi,b}$.
Here, $\ep(s,\tau, \psi)$ is
the standard $\ep$-factor defined by the doubling method (see \cite[\S 10]{LR}) and
\[
e(V_{2n+1}^\bullet) = \left\{
\begin{aligned}
&1	\iif \text{$\Oo(V_{2n+1}^\bullet)$ is split},\\
&-1	\other.
\end{aligned}
\right.
\]
Note that 
\[
e(V_{2n+1}^\bullet) = \iota(\tau)(z_{\phi'}) = \chi_{\phi}(z_{\phi'}) 
= \ep(\phi \otimes \phi') \cdot \chi_{V_{2n}}(-1)^n
\]
by Theorem \ref{GPSO}.
\par

To obtain the desired formula, we shall use the theta correspondence as in \S \ref{theta}.
Let $W_{2n}$ be a symplectic space of dimension $2n$.
We consider the theta correspondence for $(\Oo(V_{2n}^\bullet), \Sp(W_{2n}))$ 
and $(\Oo(V_{2n+1}^\bullet), \Mp(W_{2n}))$.
Here, $\Mp(W_{2n})$ is the metaplectic group associated to $W_{2n}$, i.e., 
the unique topological double cover of $\Sp(W_{2n})$. 
Since $\omega_\tau(-1) = b = \ep(1/2,\tau, \psi)e(V_{2n+1}^\bullet)$, 
by \cite[Theorem 11.1]{GI1}, we have
\[
\Theta_{W_{2n},V_{2n+1}^\bullet, \psi}(\tau) \not=0, 
\]
so that $\rho \coloneqq \theta_{W_{2n},V_{2n+1}^\bullet, \psi}(\tau)$ is 
an irreducible genuine representation of $\Mp(W_{2n})$.
Let $L$ be the orthogonal complement of $V_{2n}^\bullet$ in $V_{2n+1}^\bullet$.
Note that $\disc(L)=-c$.
Considering the following see-saw
\[
\xymatrix{
\Oo(V_{2n+1}^\bullet) \ar@{-}[d] \ar@{-}[dr]& \Sp(W_{2n}) \times \Mp(W_{2n}) \ar@{-}[d]\\
\Oo(V_{2n}^\bullet) \times \Oo(L) \ar@{-}[ur]& \Mp(W_{2n})
}
\]
we see that
\[
\Hom_{\Mp(W_{2n})}(\Theta_{W_{2n},V_{2n}^\bullet, \psi}(\sigma) \otimes \omega_{\psi_{-c}}, \rho) \not=0.
\]
In particular, $\pi \coloneqq \Theta_{W_{2n},V_{2n}^\bullet, \psi}(\sigma)$ is nonzero.
Since $\sigma$ is tempered, by \cite[Proposition C.4]{GI1},
$\pi$ is irreducible, so that $\pi = \theta_{W_{2n},V_{2n}^\bullet, \psi}(\sigma)$.
By Prasad's conjecture (Conjecture \ref{P O}), we have
$\pi \in \Pi_{\phi_{\pi}}$ with $\phi_\pi = (\phi \oplus \1) \otimes \chi_{V_{2n}}$ and
\[
\iota'_c(\pi) |A_\phi = \iota_c(\sigma).
\]
Here, we regard $A_\phi$ as a subgroup of $A_{\phi_\pi}$ via
the canonical injection $A_\phi \hookrightarrow A_{\phi_\pi}$.
On the other hand, by the Gross--Prasad conjecture for the symplectic-metaplectic case, 
which has established by \cite[Theorem 1.3]{At} 
(using Theorems \ref{GPSO}, \ref{thmPSp} and results of \cite{GS}) 
and \cite[Proposition 18.1]{GGP}, 
we have
\[
\iota'_c(\pi)(a') = \ep(\phi_\pi^{a'}\chi_{c} \otimes \phi_{\rho^\vee}) 
\cdot \ep(\phi_\pi\chi_{c} \otimes \phi_{\rho^\vee})^{\det(a')} \cdot \det(\phi_\pi^{a'})(-1)^{n}
\]
for $a' \in A_{\phi_\pi}$.
Here, $\phi_{\rho^\vee}$ is the $L$-parameter for $\rho^\vee$. 
It is given by $\phi_{\rho^\vee}=\phi' \otimes \chi_{-1}\chi_{V_{2n+1}}$ (see \cite[\S 3.6]{At}).
Recall that $\iota'_c(\pi)$ is a priori a character of $A_{\phi_\pi}^+$, but
by using the isomorphism $A_{\phi_\pi}^+ \cong A_{\phi_\pi}/\pair{z_{\phi_\pi}}$, 
we regard $\iota'_c(\pi)$ as a character of $A_{\phi_\pi}$ which is trivial at $z_{\phi_\pi}$.
If $a \in A_\phi \subset A_{\phi_\pi}$, then we have
$\phi_\pi^{a} = \phi^a \otimes \chi_{V_{2n}}$, so that 
\[
\det(\phi_\pi^{a})(-1) = \det(\phi^a)(-1) \cdot \chi_{V_{2n}}(-1)^{\dim(\phi^a)}.
\]
Since $\det(a) = \dim(\phi^a) \pmod2$ and
$\chi_{V_{2n}}\chi_{V_{2n+1}}\chi_{c}\chi_{-1}=\1$, we have
$\ep(\phi_\pi^{a}\chi_{c} \otimes \phi_{\rho^\vee}) = \ep(\phi^a \otimes \phi')$ and
\[
\ep(\phi_\pi\chi_{c} \otimes \phi_{\rho^\vee})^{\det(a)}
= \big(\ep(\phi \otimes \phi') \cdot \ep(\phi') \big)^{\dim(\phi^a)}.
\]
Since $\chi_{\phi'}(a) = \ep(\phi^a \otimes \phi') \cdot \det(\phi^a)(-1)^n$ and
$\ep(\phi') \cdot \ep(\phi \otimes \phi') \cdot \chi_{V_{2n}}(-1)^n 
= \ep(\phi') \cdot e(V_{2n+1}^\bullet) =b$, we have
\[
\iota_c(\sigma)(a) = \iota'_c(\pi)(a) = \chi_{\phi'}(a) \cdot b^{\dim(\phi^a)}
\]
for $a \in A_\phi$.
This completes the proof of Theorem \ref{main2} when $m=2n$.
\par

Next, we consider the case when $m=2n-1$ is odd.
The proof is similar to that of \cite[Theorem 19.1]{GGP}.
Let $\phi \in \Phi_\temp(\Oo(V_{2n}))$ and $(\phi',b) \in \Phi_\temp(\Oo(V_{2n-1}))$.
Suppose that $V_{2n-1}^\bullet \subset V_{2n}^\bullet$.
Let $L^\bullet=Fe_0$ be the orthogonal complement to $V_{2n-1}^\bullet$ in $V_{2n}^\bullet$
and $e_0 \in L^\bullet$ such that $\pair{e_0,e_0}_{V_{2n}^\bullet}=2c$.
Set 
\[
V_{2n+1}^\bullet = V_{2n}^\bullet \oplus (-L^\bullet) = V_{2n}^\bullet \oplus Ff_0, 
\]
where $\pair{f_0,f_0}_{V_{2n+1}^\bullet}=-2c$.
Put $v_0 = e_0+f_0$ and $X^\bullet = Fv_0$.
Then we have
\[
V_{2n+1}^\bullet = X^\bullet \oplus V_{2n-1}^\bullet \oplus (X^\bullet)^*.
\]
Let $P=M_PU_P$ be the parabolic subgroup of $\Oo(V_{2n+1}^\bullet)$
stabilizing the line $X^\bullet$, where $M_P \cong \GL(X^\bullet) \times \Oo(V_{2n-1}^\bullet)$
is the Levi subgroup of $P$ stabilizing $(X^\bullet)^*$.
Choose a unitary character $\chi$ of $F^\times \cong \GL(X^\bullet)$ which satisfies the condition of 
\cite[Theorem 15,1]{GGP}, and such that the induced representation 
\[
\Ind_{P(X^\bullet)}^{\Oo(V_{2n+1}^\bullet)}(\chi \otimes \tau)
\]
is irreducible for any $\tau \in \Pi_{\phi',b}$.
Note that in \cite{GGP}, one consider the unnormalized induction, but in this paper, 
we consider the normalized induction.
Then by \cite[Theorem 15,1]{GGP}, we have
\[
\Hom_{\Oo(V_{2n}^\bullet)}(\Ind_{P(X^\bullet)}^{\Oo(V_{2n+1}^\bullet)}(\chi \otimes \tau) 
\otimes \sigma, \C)
\cong 
\Hom_{\Oo(V_{2n-1}^\bullet)}(\tau \otimes \sigma, \C).
\]
\par

Put $\phi'' = \chi \oplus \phi' \oplus \chi^{-1}$.
Then 
\[
\Pi_{\phi'', \chi(-1)b} = \{\Ind_{P(X^\bullet)}^{\Oo(V_{2n+1}^\bullet)}(\chi \otimes \tau) \ |\ 
\tau \in \Pi_{\phi'}
 \}
\]
and
\[
\iota(\Ind_{P(X^\bullet)}^{\Oo(V_{2n+1}^\bullet)}(\chi \otimes \tau)) = \iota(\tau)
\]
as a character of $A_{\phi''} = A_{\phi'}$.
Applying the even case above to $\phi \in \Phi_\temp(\Oo(V_{2n}))$ and 
$(\phi'', \chi(-1)b) \in \Phi_\temp(\Oo(V_{2n+1}))$, 
we see that
there exists a unique pair $(\sigma, \tau) \in \Pi_\phi \times \Pi_{\phi',b}$ such that
$\sigma \boxtimes \tau$ is a representation of $\Oo(V_{2n}^\bullet) \times \Oo(V_{2n-1}^\bullet)$
with a relevant pair $(V_{2n}^\bullet, V_{2n-1}^\bullet)$ 
of companion spaces of $(V_{2n}, V_{2n-1})$, and
\[
\Hom_{\Delta\Oo(V_{2n-1}^\bullet)}(\sigma \boxtimes \tau, \C) \not=0.
\]
Moreover, we have
\begin{align*}
\iota_c(\sigma)(a) &= \chi_{\phi''}(a) \cdot (\chi(-1) b)^{\dim(\phi^a)},\\
\iota(\tau) &= \iota(\Ind_{P(X^\bullet)}^{\Oo(V_{2n+1}^\bullet)}(\chi \otimes \tau)) = \chi_\phi.
\end{align*}
By the definition, we have
\[
\frac{\chi_{\phi''}(a)}{\chi_{\phi'}(a)} = 
\ep(\phi^a \otimes (\chi \oplus \chi^{-1})) \cdot \det(\phi^a)(-1)
= \det(\phi^a \otimes \chi)(-1) \cdot \det(\phi^a)(-1)
= \chi(-1)^{\dim(\phi^a)}.
\]
Hence we have 
\[
\iota_c(\sigma)(a) = \chi_{\phi'}(a) \cdot b^{\dim(\phi^a)}, 
\]
as desired.
This completes the proof of Theorem \ref{main2} when $m=2n-1$.
\end{proof}

By Theorem \ref{main2}, 
we have established the Gross--Prasad conjecture (Conjecture \ref{GPO})
under Prasad's conjecture (Conjecture \ref{P O}).
As in \cite{GGP}, one may consider the general codimension case, 
and may prove this for tempered $L$-parameters similarly.
Also, one may consider the generic case, i.e., the $L$-parameters $\phi$ and $\phi'$ are generic.
It would follow from a similar argument to \cite{MW}.

\section{Arthur's multiplicity formula for $\SO(V_{2n})$}
The final main theorem is the so-called Arthur's multiplicity formula, 
which describes a spectral decomposition of the discrete automorphic spectrum for $\Oo(V_{2n})$.
In this section, we recall the local and global $A$-parameters, and 
Arthur's multiplicity formula for $\SO(V_{2n})$.
Then we will establish an analogous formula for $\Oo(V_{2n})$ in the next section.
\par

\subsection{Notation and measures}\label{notation}
Let $\F$ be a number field, $\A$ be the ring of adeles of $\F$.
We denote by $\A_{\fin} = {\prod}'_{v<\infty}\F_v$ and $\F_\infty = \prod_{v\mid \infty}\F_v$
the ring of finite adeles and infinite adeles, respectively.
As in the precious sections, we write $V_{2n}$ for 
an orthogonal space associated to $(d,c)$ for some $c,d \in \F^\times$.
Let $\Oo(V_{2n})$ (\resp $\SO(V_{2n})$) 
be a quasi-split orthogonal (\resp special orthogonal) group over $\F$.
We denote by $\chi_V = \otimes_v \chi_{V,v} \colon \A^\times/\F^\times \rightarrow \C^\times$ 
the discriminant character.
\par

For each $v$, we fix a maximal compact subgroup $K_v$ of $\Oo(V_{2n})(\F_v)$ such that
$K_v$ is special if $v$ is non-archimedean.
Moreover, if $\Oo(V_{2n})(\F_v)$ is unramified, we choose $K_v$ as in \S \ref{sec.unram}, 
which is hyperspecial.
Also, we take $\epsilon_v \in K_v$ such that $\det(\epsilon_v)=-1$, $\epsilon_v^{2}=\1_{V_{2n}}$
and that $\epsilon = (\epsilon_v)_v \in \Oo(V_{2n})(\A)$ is in $\Oo(V_{2n})(\F)$. 
Put $K_{0,v} = K_v \cap \SO(V_{2n})(\F_v)$.
Note that $\epsilon_v^{-1} K_{0,v} \epsilon_v = K_{0,v}$.
\par

Let $\mu_2=\{\pm1\}$ be the group of order $2$.
We regard $\mu_2$ as an algebraic group over $\F$.
There exists an exact sequence of algebraic group over $\F$:
\[
\begin{CD}
1 @>>> \SO(V_{2n}) @>>> \Oo(V_{2n}) @> \det >> \mu_2 @>>> 1.
\end{CD}
\]
For $t=(t_v)_v \in \mu_2(\A)$, we define $\epsilon_t =(\epsilon_{t,v})_v \in \Oo(V_{2n})(\A)$ by
\[
\epsilon_{t,v} = \left\{
\begin{aligned}
&\1_{V_{2n}} 	\iif t_v=1,\\
&\epsilon_v	\iif t_v=-1.
\end{aligned}
\right.
\]
\par

We take the Haar measures $dg_v$, $dh_v$, and $dt_v$ on 
$\Oo(V_{2n})(\F_v)$, $\SO(V_{2n})(\F_v)$ and $\mu_2(\F_v)$, respectively, so that
\[
\vol(K_v, dg_v) = \vol(K_{0,v}, dh_v) = \vol(\mu_2(\F_v), dt_v) = 1.
\]
Then they induce the Haar measures $dg$, $dh$ and $dt$ on 
$\Oo(V_{2n})(\A)$, $\SO(V_{2n})(\A)$ and $\mu_2(\A)$, respectively, 
satisfying that
\[
\int_{\Oo(V_{2n})(\F) \bs \Oo(V_{2n})(\A)}f(g)dg
=\int_{\mu_2(\F)\bs \mu_2(\A)}\left(
\int_{\SO(V_{2n})(\F) \bs \SO(V_{2n})(\A)}f(h \epsilon_t)dh
\right)dt
\]
for any smooth function $f$ on $\Oo(V_{2n})(\F) \bs \Oo(V_{2n})(\A)$.
\par

\subsection{Local $A$-parameters}\label{localA}
In this subsection, we fix a place $v$ of $\F$, 
and introduce local $A$-parameters for $\Oo(V_{2n})(\F_v)$ and $\SO(V_{2n})(\F_v)$.
\par

Let $W_{\F_v}$ be the Weil group and $\WD_{\F_v}$ be the Weil--Deligne group of $\F_v$, 
i.e.,
\[
\WD_{\F_v} = \left\{
\begin{aligned}
&W_{\F_v} \times \SL_2(\C) \iif \text{$v$ is non-archimedean},\\
&W_{\F_v}	\iif \text{$v$ is archimedean}.
\end{aligned}
\right.
\]
\par

A local $A$-parameter for $\SO(V_{2n})(\F_v)$
is an admissible homomorphism
\[
\bpsi \colon \WD_{\F_v} \times \SL_2(\C) \rightarrow 
{}^L(\SO(V_{2n})) = \SO(2n,\C) \rtimes W_{\F_v}
\]
such that $\bpsi(W_{\F_v})$ projects a relatively compact subset of $\SO(2n,\C)$.
We put
\[
\Psi(\SO(V_{2n})(\F_v)) = 
\{\text{$\SO(2n,\C)$-conjugacy classes of local $A$-parameters of $\SO(V_{2n})(\F_v)$}\}.
\]	
In \S \ref{Lpara}, we have defined a map ${}^L(\SO(V_{2n})) \rightarrow \Oo(2n,\C)$.
By composing with this map, $\bpsi \in \Psi(\SO(V_{2n})(\F_v))$ gives a representation
\[
\psi \colon \WD_{\F_v} \times \SL_2(\C) \rightarrow \Oo(2n,\C).
\]
We may regard $\psi$ as an orthogonal representation of $\WD_{\F_v} \times \SL_2(\C)$.
The map $\bpsi \mapsto \psi$ gives an identification
\[
\Psi(\SO(V_{2n})(\F_v)) = 
\{
\psi \colon \WD_{\F_v} \times \SL_2(\C) \rightarrow \Oo(2n,\C)\ |\ 
\det(\psi) = \chi_{V,v}
\}/(\text{$\SO(2n,\C)$-conjugacy}).
\]
Namely, we regard $\Psi(\SO(V_{2n})(\F_v))$ as the set of 
$\SO(M)$-conjugacy classes of orthogonal representations $(\psi,M)$ 
of $\WD_{\F_v} \times \SL_2(\C)$ with $\dim(M) = 2n$ and $\det(\psi) = \chi_{V,v}$.
We say that $\psi \in \Psi(\SO(V_{2n})(\F_v))$ is tempered if
$\psi|\SL_2(\C)=\1$, i.e., $\psi$ is a tempered representation of $\WD_{\F_v}$.
We denote by $\Psi_\temp(\SO(V_{2n})(\F_v))$ the subset of $\Psi(\SO(V_{2n})(\F_v))$
consisting of the classes of tempered representation.
Also we put $\Psi(\SO(V_{2n})(\F_v))/\sim_\epsilon$ to be
the set of equivalence classes of orthogonal representations $(\psi,M)$ 
of $\WD_{\F_v} \times \SL_2(\C)$ with $\dim(M) = 2n$ and $\det(\psi) = \chi_{V,v}$.
Then there exists a canonical surjection
\[
\Psi(\SO(V_{2n})(\F_v)) \twoheadrightarrow \Psi(\SO(V_{2n})(\F_v))/\sim_\epsilon
\]
such that the order of each fiber is one or two.
We also denote by $\Psi_\temp(\SO(V_{2n})(\F_v))/\sim_\epsilon$
the image of $\Psi_\temp(\SO(V_{2n})(\F_v))$.
\par

On the other hand, we put
\[
\Psi(\Oo(V_{2n})(\F_v)) = 
\{\text{$\Oo(2n,\C)$-conjugacy classes of local $A$-parameters of $\SO(V_{2n})(\F_v)$}\}.
\]
We call an element in $\Psi(\Oo(V_{2n})(\F_v))$ an $A$-parameter of $\Oo(V_{2n})(\F_v)$.
Then we have a canonical identification 
$\Psi(\Oo(V_{2n})(\F_v)) = \Psi(\SO(V_{2n})(\F_v))/\sim_\epsilon$.
Under this identification, we put 
$\Psi_\temp(\Oo(V_{2n})(\F_v)) = \Psi_\temp(\SO(V_{2n})(\F_v))/\sim_\epsilon$.
\par

Let $\psi \in \Psi(\Oo(V_{2n})(\F_v)) = \Psi(\SO(V_{2n})(\F_v))/\sim_\epsilon$ 
be a local $A$-parameter.
We put
\[
\Sc_{\psi} = \pi_0(\cent(\im(\psi),\Oo_{2n}(\C))/\{\pm\1_{2n}\})
\quad\text{and}\quad
\Sc_{\psi}^+ = \pi_0(\cent(\im(\psi),\SO_{2n}(\C))/\{\pm\1_{2n}\}).
\]

\subsection{Local $A$-packets}
We denote by $\Irr_\unit(\Oo(V_{2n}))(\F_v)$ be the set of equivalence classes of
irreducible unitary representations of $\Oo(V_{2n})(\F_v)$.
By Theorems 2.2.1 and 2.2.4 in \cite{Ar},
there exist a finite set $\Pi_{\psi}$ with maps 
\[
\Pi_{\psi} \rightarrow \Irr_\unit(\Oo(V_{2n})(\F_v))
\]
and
\[
\iota_c \colon \Pi_{\psi} \rightarrow \widehat{\Sc_\psi}, 
\]
which satisfy certain character identities.
Using the multiplicity function
\[
m_1 \colon \Irr_\unit(\Oo(V_{2n})(\F_v)) \rightarrow \Z_{\geq0}
\]
which gives the order of the fibers in $\Pi_{\psi}$, 
we may regard $\Pi_{\psi}$ as a multiset on $\Irr_\unit(\Oo(V_{2n})(\F_v))$.
We call $\Pi_\psi$ the local $A$-packets for $\Oo(V_{2n})(\F_v)$ associated to $\psi$.
Note that $m_1(\sigma) = m_1(\sigma \otimes \det)$ by \cite[Theorem 2.2.4]{Ar}.
If $\psi$ is tempered, $\Pi_{\psi}$ coincides with the $L$-packet described in \S \ref{LLC O}.
In particular, if $\psi$ is tempered, then $\Pi_\psi$ is multiplicity-free.
\par

We denote by $\Pi_\psi^0$ the image of $\Pi_\psi$
under the canonical map 
\[
\Irr_\unit(\Oo(V_{2n})(\F_v)) \rightarrow \Irr_\unit(\Oo(V_{2n})(\F_v))/\sim_{\det}
\rightarrow \Irr_\unit(\SO(V_{2n})(\F_v))/\sim_{\epsilon}.
\]
Namely, $\Pi_\psi^0$ is a multiset on $\Irr_\unit(\SO(V_{2n})(\F_v))/\sim_{\epsilon}$ 
with the multiplicity function
\[
m_0 \colon \Irr_\unit(\SO(V_{2n})(\F_v))/\sim_{\epsilon} \rightarrow \Z_{\geq0}
\]
such that 
\[
m_0([\sigma]) = m_1(\sigma) = m_1(\sigma \otimes \det),
\]
where $[\sigma] \in \Irr_\unit(\SO(V_{2n})(\F_v))/\sim_{\epsilon}$ 
is the image of $\sigma \in \Irr_\unit(\Oo(V_{2n})(\F_v))$.
We call $\Pi_\psi^0$ the local $A$-packets for $\SO(V_{2n})(\F_v)$ associated to $\psi$.
Moreover there exist a map
\[
\iota_c \colon \Pi_\psi^0 \rightarrow \widehat{\Sc_\psi^+}
\]
which satisfies certain character identities and such that the diagram
\[
\begin{CD}
\Pi_\psi @>\iota_c>> \widehat{\Sc_\psi}\\
@VVV@VVV\\
\Pi_\psi^0 @>\iota_c>> \widehat{\Sc_\psi^+}
\end{CD}
\]
is commutative.
\par

Recall that the local $A$-packets $\Pi_\psi$ and $\Pi_\psi^0$ are multisets.
However, it is expected that the $A$-packets are sets.
\begin{conj}\label{mult}
Let $\psi \in \Psi(\Oo(V_{2n})(\F_v)) = \Psi(\SO(V_{2n})(\F_v))/\sim_\epsilon$ 
be a local $A$-parameter, and
$\Pi_\psi$ and $\Pi_\psi^0$ be the local $A$-packets.
Then for $\sigma \in \Irr_\unit(\Oo(V_{2n}))$ and 
$[\sigma] \in \Irr_\unit(\SO(V_{2n})(\F_v))/\sim_{\epsilon}$, we have
\[
m_1(\sigma) \leq 1
\quad\text{and}\quad
m_0([\sigma]) \leq 1.
\]
In other words, $\Pi_\psi$ and $\Pi_\psi^0$ are subsets of 
$\Irr_\unit(\Oo(V_{2n}))$ and $\Irr_\unit(\SO(V_{2n})(\F_v))/\sim_{\epsilon}$, respectively.
\end{conj}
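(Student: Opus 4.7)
The plan is to reduce Conjecture \ref{mult} to the multiplicity-one theorem of M\oe{}glin for local $A$-packets of quasi-split classical groups. First, observe that the two inequalities $m_1(\sigma) \leq 1$ and $m_0([\sigma]) \leq 1$ are equivalent: by the defining relation $m_0([\sigma]) = m_1(\sigma) = m_1(\sigma \otimes \det)$, it suffices to prove the first. Moreover, when $\psi$ is tempered, the assertion is immediate: as noted just before the statement of the conjecture, $\Pi_\psi$ then coincides with the $L$-packet of \S\ref{LLC O}, which is a set by construction (see Desideratum \ref{desO} (1), or the bijection $\iota_{c'}\colon \Pi_\phi \to \widehat{A_\phi}$ in Theorem \ref{LLC-O}). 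Thus only genuinely non-tempered $A$-parameters $\psi$ need to be addressed.

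For non-tempered $\psi$, my plan is to appeal to M\oe{}glin's algorithmic construction of the local $A$-packets for classical groups. Starting from a discrete $A$-parameter and proceeding by induction on the length, M\oe{}glin produces a candidate $A$-packet by iterated parabolic induction combined with precise Jacquet-module constraints, and she proves both that the resulting finite collection of irreducible unitary representations is multiplicity-free \emph{as a set} in $\Irr_\unit$, and that it coincides with Arthur's endoscopically defined $A$-packet. Translated via the identification $\Oo(V_{2n}) \cong \SO(V_{2n}) \rtimes \pair{\theta}$ of \S\ref{LLC O}, this simultaneously yields $m_0([\sigma]) \leq 1$ on $\Irr_\unit(\SO(V_{2n})(\F_v))/\sim_\epsilon$ and, via the equivalence above, $m_1(\sigma) \leq 1$ on $\Irr_\unit(\Oo(V_{2n})(\F_v))$.

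The main obstacle will be to verify that M\oe{}glin's construction matches the $A$-packets used here, which descend from \cite[Theorems 2.2.1, 2.2.4]{Ar}, at the level of $\Oo(V_{2n})$ rather than only at the level of $\SO(V_{2n})$-classes modulo $\sim_\epsilon$. In the even orthogonal case this is essentially addressed in M\oe{}glin's work, but the fine compatibility with Arthur's normalization of transfer factors and intertwining operators -- the very issue raised in Remark \ref{remSO} regarding her work \cite[\S 1.4, Theorem 1.4.1]{M2} -- must be confirmed. A secondary, more technical point is to handle the extension problem from $\SO$ to $\Oo$: in Case (A) of Lemma \ref{gen}, the two extensions $\sigma$ and $\sigma \otimes \det$ are distinct and must each occur with multiplicity one in $\Pi_\psi$; this is consistent with the relation $m_1(\sigma) = m_1(\sigma \otimes \det)$ provided by \cite[Theorem 2.2.4]{Ar}, so no further input beyond M\oe{}glin's theorem is needed once the compatibility is in place.
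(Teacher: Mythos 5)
There is a genuine gap, and it is precisely the reason this statement appears in the paper as a conjecture rather than a theorem. Your reduction of $m_0([\sigma])\leq 1$ to $m_1(\sigma)\leq 1$ is fine (it is immediate from the definition $m_0([\sigma])=m_1(\sigma)=m_1(\sigma\otimes\det)$), and your treatment of the tempered case (where $\Pi_\psi$ is the $L$-packet of \S\ref{LLC O}, hence a set) and of the non-archimedean case via M{\oe}glin's construction is exactly what the paper does in Proposition \ref{true}, citing \cite{M2} and \cite[Theorem 8.10]{X}. But your blanket appeal to M{\oe}glin does not prove the conjecture: her algorithmic construction of $A$-packets and her multiplicity-one theorem are results about $p$-adic groups (the title of \cite{M2} is ``Multiplicit\'e $1$ dans les paquets d'Arthur aux places $p$-adiques''), so they say nothing when $\F_v$ is archimedean and $\psi$ is genuinely non-tempered, i.e.\ $\psi|\SL_2(\C)\neq\1$. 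Since $\F_v$ here is an arbitrary completion of a number field, and in the global application the localizations $\Sigma_v$ at archimedean places need not be tempered, this case cannot be discarded.

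For archimedean $\F_v$ the only available input is \cite{AMR}, which identifies \emph{part} of the local $A$-packets with Adams--Johnson packets (which are multiplicity-free sets); the general archimedean non-tempered case remains open, as the paper records in Remark \ref{temp} and the remark following Proposition \ref{true}. Your secondary concerns (compatibility of M{\oe}glin's packets with Arthur's normalization at the level of $\Oo(V_{2n})$, and the behaviour of the two extensions $\sigma$, $\sigma\otimes\det$) are legitimate but are addressed by \cite[Theorem 2.2.4]{Ar} together with \cite{M2} and \cite{X} in the $p$-adic case; they are not where the difficulty lies. As written, your argument proves Proposition \ref{true} (the tempered and non-archimedean cases), not Conjecture \ref{mult} in full.
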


\begin{prop}\label{true}
Conjecture \ref{mult} for $\psi$ holds for the following cases:
\begin{itemize}
\item
when $\psi = \phi$ is tempered $A$-parameter;
\item
when $\F_v$ is non-archimedean.
\end{itemize}
\end{prop}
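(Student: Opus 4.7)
My plan is to dispatch the two cases of Conjecture \ref{mult} independently, each by reducing to a result already available in the literature.

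For the tempered case, where $\psi = \phi$ is a tempered local $A$-parameter, the local $A$-packet $\Pi_\phi$ coincides by construction with the tempered $L$-packet described in \S\ref{LLC O}. In Theorem \ref{LLC-O}, this $L$-packet is realized as the fiber above $\phi$ of the surjection $\Irr_\temp(\Oo(V_{2n})(\F_v)) \twoheadrightarrow \Phi_\temp(\Oo(V_{2n})(\F_v))$, so it is literally a subset of $\Irr_\temp(\Oo(V_{2n})(\F_v))$ and $m_1(\sigma) \leq 1$ is automatic. Alternatively, the bijection $\iota_c \colon \Pi_\phi \to (A_\phi/\pair{z_\phi})\widehat{\phantom{x}}$ forces distinct elements of $\Pi_\phi$ to define distinct characters, hence distinct irreducible representations. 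The corresponding bound $m_0([\sigma]) \leq 1$ then follows from the identity $m_0([\sigma]) = m_1(\sigma) = m_1(\sigma \otimes \det)$ built into the definition of $m_0$.

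For the non-archimedean case with a general $A$-parameter $\psi$, the plan is to invoke M\oe glin's work on $A$-packets of $p$-adic classical groups, in which she proves multiplicity-freeness of $\Pi_\psi^0$ as a subset of $\Irr_\unit(\SO(V_{2n})(\F_v))/\sim_\epsilon$. Applied to the quasi-split group $\SO(V_{2n})(\F_v)$, this gives $m_0([\sigma]) \leq 1$ for every $\psi \in \Psi(\Oo(V_{2n})(\F_v))$ and every $[\sigma]$. Using the identity $m_0([\sigma]) = m_1(\sigma)$ once more, one deduces $m_1(\sigma) \leq 1$ as required.

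The only subtle point is the compatibility: one must know that the $A$-packets constructed by M\oe glin (through an explicit inductive procedure and a careful analysis of normalized intertwining operators and Jacquet modules) coincide with the $A$-packets used here, which are defined via Arthur's twisted endoscopic character identities. For quasi-split $\SO(V_{2n})$ this compatibility is essentially verified in M\oe glin's papers, so her multiplicity-one statement transfers directly to our setting. Thus the hard part is packaged entirely inside the citation of her work; our task in the proof is simply the bookkeeping that reduces $m_1$ to $m_0$ and then to M\oe glin's theorem.
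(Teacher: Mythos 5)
Your proposal is correct and follows essentially the same route as the paper: the tempered case is settled by noting that the $A$-packet coincides with the tempered $L$-packet, which is a genuine (multiplicity-free) subset of $\Irr_\temp$, and the non-archimedean case is delegated to M{\oe}glin's multiplicity-one theorem for $p$-adic $A$-packets (the compatibility caveat you raise is exactly why the paper also points to Xu's work). The bookkeeping between $m_1$ and $m_0$ via $m_0([\sigma]) = m_1(\sigma) = m_1(\sigma\otimes\det)$ is consistent with the paper's definitions, so nothing is missing.
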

\begin{proof}
For a tempered $A$-parameter $\psi = \phi$, 
the local $A$-packet $\Pi_\psi$ coincides with
the local $L$-packet $\Pi_\phi$, which are (multiplicity-free) sets.
When $\F_v$ is non-archimedean, Conjecture \ref{mult} is proven by M{\oe}glin \cite{M2}.
See also Xu's paper \cite[Theorem 8.10]{X}.
\end{proof}

\begin{rem}
By Proposition \ref{true}, 
only when $\F_v$ is archimedean and $\psi$ is non-tempered, 
Conjecture \ref{mult} for $\psi$ is not verified.
However, it is known by \cite{AMR} that
a part of local $A$-packets (in the archimedean case) 
coincides with Adams--Johnson $A$-packets, 
which are (multiplicity-free) sets.
\end{rem}

\par

Let $\psi \in \Psi(\Oo(V_{2n})(\F_v)) = \Psi(\SO(V_{2n})(\F_v))/\sim_\epsilon$.
Then $\psi$ gives a local $L$-parameter
$\phi_{\psi}$ defined by
\[
\phi_{\psi}(w) = \psi(w,
\begin{pmatrix}
|w|_{\F_v}^{\half{1}}&0\\0&|w|_{\F_v}^{-\half{1}}
\end{pmatrix}
)
\]
for $w \in \WD_{\F_v}$.
Here, $|w|_{\F_v}$ is the extension to $\WD_{\F_v}$ of the absolute value on $W_{\F_v}$, 
which is trivial on $\SL_2(\C)$.
We put
\[
\Sc_{\phi_\psi} = \pi_0(\cent(\im(\phi_\psi),\Oo_{2n}(\C))/\{\pm\1_{2n}\})
\quad\text{and}\quad
\Sc_{\phi_\psi}^+ = \pi_0(\cent(\im(\phi_\psi),\SO_{2n}(\C))/\{\pm\1_{2n}\}).
\]
Then we have $L$-packets 
\[
\Pi_{\phi_\psi} \subset \Irr_\unit(\Oo(V_{2n})(\F_v))
\quad\text{and}\quad 
\Pi_{\phi_\psi}^0 \subset \Irr_\unit(\SO(V_{2n})(\F_v))/\sim_\epsilon
\]
and bijections 
\[
\iota_c \colon \Pi_{\phi_\psi} \rightarrow \widehat{\Sc_{\phi_\psi}}
\quad\text{and}\quad
\iota_c \colon \Pi_{\phi_\psi}^0 \rightarrow \widehat{\Sc_{\phi_\psi}^+}.
\]
Moreover we have a canonical surjection 
\[
\Sc_{\psi} \twoheadrightarrow \Sc_{\phi_{\psi}}
\quad\text{and}\quad
\Sc_{\psi}^+ \twoheadrightarrow \Sc_{\phi_{\psi}}^+.
\]
Proposition 7.4.1 in \cite{Ar} says that $\Pi_{\phi_{\psi}}^0$
is contained in $\Pi_{\psi}^0$ and the diagram
\[
\begin{CD}
\Pi_{\phi_{\psi}}^0 @>>> \Pi_{\psi}^0\\
@V\iota_cVV @VV\iota_cV\\
\widehat{\Sc_{\phi_{\psi}}^+} @>>> \widehat{\Sc_{\psi}^+}
\end{CD}
\]
is commutative.
We prove an analogue of this statement.

\begin{prop}\label{LvsA}
Assume Conjecture \ref{mult}.
Let $\psi \in \Psi(\Oo(V_{2n})(\F_v))$. 
We denote by $\phi_\psi$ the $L$-parameter given by $\psi$.
Let $\Pi_\psi$ be the local $A$-packet of $\psi$, which is a multiset of $\Irr_\unit(\Oo(V_{2n})(\F_v))$, 
and $\Pi_{\phi_\psi}$ be the local $L$-packet of $\phi_\psi$, 
which is a subset of $\Irr_\unit(\Oo(V_{2n})(\F_v))$.
Then $\Pi_{\phi_{\psi}}$ is contained in $\Pi_{\psi}$ and the diagram
\[
\begin{CD}
\Pi_{\phi_{\psi}} @>>> \Pi_{\psi}\\
@V\iota_cVV @VV\iota_cV\\
\widehat{\Sc_{\phi_{\psi}}} @>>> \widehat{\Sc_{\psi}}
\end{CD}
\]
is commutative.
\end{prop}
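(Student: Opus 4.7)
The plan is to reduce the proposition to Arthur's corresponding statement for $\SO(V_{2n})$ (\cite[Proposition 7.4.1]{Ar}) by transporting it through the commutative diagrams relating the $\Oo$- and $\SO$-parametrizations at both the $L$-packet and $A$-packet levels.

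To establish the inclusion $\Pi_{\phi_\psi} \subset \Pi_\psi$, I would take $\sigma \in \Pi_{\phi_\psi}$ and consider its image $[\sigma] \in \Irr_\unit(\SO(V_{2n})(\F_v))/\sim_\epsilon$. By definition $[\sigma] \in \Pi_{\phi_\psi}^0$, and the quoted Arthur statement gives $[\sigma] \in \Pi_\psi^0$. Pulling back to $\Oo(V_{2n})(\F_v)$, at least one of $\sigma$ or $\sigma \otimes \det$ must lie in $\Pi_\psi$; but since $m_1(\sigma) = m_1(\sigma \otimes \det)$ by \cite[Theorem 2.2.4]{Ar}, both do, so $\sigma \in \Pi_\psi$.

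For the commutativity of the character diagram, the strategy is to combine three commutative diagrams: the $\Oo$-$\SO$ diagram for $L$-packets (Desideratum \ref{desO} (8)), the analogous $\Oo$-$\SO$ diagram for $A$-packets stated just before this proposition, and Arthur's Proposition 7.4.1 itself. Setting $\chi^L = \iota_c(\sigma) \in \widehat{\Sc_{\phi_\psi}}$ and $\chi^A = \iota_c(\sigma) \in \widehat{\Sc_\psi}$, and writing $\widetilde{\chi^L}$ for the pullback of $\chi^L$ via the surjection $\Sc_\psi \twoheadrightarrow \Sc_{\phi_\psi}$, these three diagrams together force the equality $\chi^A|_{\Sc_\psi^+} = \widetilde{\chi^L}|_{\Sc_\psi^+}$.

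The remaining task is to upgrade this to equality on all of $\Sc_\psi$, which is automatic unless $\Sc_\psi \neq \Sc_\psi^+$ (equivalently, $\phi_\psi \in \Phi^\epsilon(\Oo(V_{2n}))$). In that case $\sigma \not\cong \sigma \otimes \det$, and there are exactly two characters of $\Sc_\psi$ extending $\chi^A|_{\Sc_\psi^+}$, differing by $(-1)^{\det}$. The determinant-twist relation $\iota_c(\sigma \otimes \det) = \iota_c(\sigma) \cdot (-1)^{\det}$ holds for $L$-packets by Desideratum \ref{desO} (9) and, via Arthur's construction through twisted endoscopic transfer, for $A$-packets as well; combined with the fixed choice of the isomorphism $\Oo(V_{2n}) \cong \SO(V_{2n}) \rtimes \pair{\theta}$ used to translate both parametrizations, it pins down $\chi^A = \widetilde{\chi^L}$. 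The main obstacle is precisely this sign-matching step: one has to track carefully how Arthur's choice of transfer factor and twisted character identities distinguishes the pair $\{\sigma, \sigma \otimes \det\}$ consistently between the $L$- and $A$-packet labelings, so that the inclusion $\Pi_{\phi_\psi} \hookrightarrow \Pi_\psi$ is not off by a determinant twist.
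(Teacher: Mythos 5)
Your treatment of the containment and the reduction to elements of $\Sc_\psi$ lying outside $\Sc_\psi^+$ agrees with the paper's first steps, but the decisive point --- ruling out that the two labelings differ by the twist $(-1)^{\det}$ on $\Sc_\psi \smallsetminus \Sc_\psi^+$ --- is exactly where your argument stops, and the justification you sketch cannot close it. Knowing that the determinant-twist relation $\iota_c(\sigma\otimes\det)=\iota_c(\sigma)\cdot(-1)^{\det}$ holds in both the $L$- and the $A$-parametrization does not pin down the sign: if one replaced $\iota^{A}_c(\sigma)$ by $\iota^{A}_c(\sigma)\cdot(-1)^{\det}$ simultaneously for every member of $\Pi_\psi$, the twist relation, the agreement on $\Sc_\psi^+$, and the compatibility with the $\SO$-level diagrams would all still be satisfied. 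So the ``sign-matching step'' you yourself flag as the main obstacle is a genuine gap, not a routine verification, and appealing to the fixed isomorphism $\Oo(V_{2n})\cong \SO(V_{2n})\rtimes\pair{\theta}$ by itself does not resolve it.

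The paper closes this gap with the local intertwining relation rather than with any twist formula. Fix $a\in\Sc_\psi$ with $d=\dim(\psi^a)$ odd, put $\psi'=\psi\oplus\psi^a\oplus(\psi^a)^\vee$, so $\phi_{\psi'}=\phi_\psi\oplus\phi_\psi^a\oplus(\phi_\psi^a)^\vee$, let $\tau\in\Irr(\GL_d(\F_v))$ correspond to $\phi_\psi^a$, and let $\sigma'$ be an irreducible constituent of $\Ind_P^{\Oo(V_{2n+2d})(\F_v)}(\tau\boxtimes\sigma)$, which lies in $\Pi_{\phi_{\psi'}}\subset\Pi_{\psi'}$. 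By Arthur's Theorems 2.2.4 and 2.4.4 together with Conjecture \ref{mult}, the normalized intertwining operator $R_c(w,\tau\boxtimes\sigma)$ acts on $\sigma'$ by the scalar $\iota^{L}_c(\sigma')(a)$ when $\sigma'$ is regarded as a member of the $L$-packet $\Pi_{\phi_{\psi'}}$, and by the scalar $\iota^{A}_c(\sigma')(a)$ when it is regarded as a member of the $A$-packet $\Pi_{\psi'}$; since it is the same operator acting on the same constituent, these scalars coincide, and restricting back ($\iota^{L}_c(\sigma')(a)=\iota^{L}_c(\sigma)(a)$, $\iota^{A}_c(\sigma')(a)=\iota^{A}_c(\sigma)(a)$) gives $\iota^{L}_c(\sigma)(a)=\iota^{A}_c(\sigma)(a)$. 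Some argument of this kind, which computes the character value at an odd-dimensional summand in both parametrizations by one and the same concrete operator, is what your proposal is missing.
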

\begin{proof}
The first assertion follows from the result for $\Pi_{\phi_{\psi}}^0$ and $\Pi_{\psi}^0$.
We write 
$\iota^L_c = \iota_c \colon \Pi_{\phi_{\psi}} \rightarrow \widehat{\Sc_{\phi_{\psi}}}$
for the left arrow in the diagram, 
and
$\iota^A_c = \iota_c \colon \Pi_{\psi} \rightarrow \widehat{\Sc_{\psi}}$
for the right arrow in the diagram.
Let $a \in \Sc_{\psi}$ and 
$s \in \Oo_{2n}(\C)$ be a (suitable) semi-simple representative of $a$.
We denote by $\psi^a$ the $(-1)$-eigenspace of $s$ in $\psi$, 
which is a representation of $\WD_{\F_v} \times \SL_2(\C)$.
For the last assertion, we may assume that $d=\dim(\psi^a)$ is odd.
Put $\psi' = \psi \oplus \psi^a \oplus (\psi^a)^\vee$.
Then we have 
\[
\phi_{\psi'} = \phi_{\psi} \oplus \phi_{\psi}^a \oplus (\phi_{\psi}^a)^\vee.
\]
Take $\sigma \in \Pi_{\phi_{\psi}}$.
Let $\tau$ be an irreducible representation of $\GL_d(\F_v)$ corresponding to $\phi_{\psi}^a$.
We denote the normalized intertwining operator of 
$\Ind_P^{\Oo(V_{2n+2d})(\F_v)}(\tau \boxtimes \sigma)$ by 
$R_c(w, \tau \boxtimes \sigma)$, 
where $P=MN$ be a suitable parabolic subgroup of $\Oo(V_{2n+2d})(\F_v)$
with the Levi factor $M \cong \GL_d(\F_v) \times \Oo(V_{2n})(\F_v)$, 
and $w$ is a suitable representative of an element in the relative Weyl group $W(M)$ of $M$.
Let $\sigma'$ be an irreducible constituent of $\Ind_P^{\Oo(V_{2n+2d})(\F_v)}(\tau \boxtimes \sigma)$.
Then $\sigma' \in \Pi_{\phi_{\psi'}} \subset \Pi_{\psi'}$.
Regarding $\sigma'$ as an element in $\Pi_{\phi_{\psi'}}$, 
by Theorems 2.2.4 and 2.4.4 in \cite{Ar} together with Conjecture \ref{mult}, 
$R_c(w, \sigma \boxtimes \tau)$ induces a scalar operator on $\sigma'$ with
eigenvalue $\iota^L_c(\sigma')(a)$.
On the other hand, regarding $\sigma'$ as an element in $\Pi_{\psi'}$, 
by the same theorems and conjecture, 
$R_c(w, \sigma \boxtimes \tau)$ induces a scalar operator on $\sigma'$ with
eigenvalue $\iota^A_c(\sigma')(a)$.
Hence 
\[
\iota^L_c(\sigma')(a)=\iota^A_c(\sigma')(a).
\]
Since $\iota^L_c(\sigma')(a) = \iota^L_c(\sigma)(a)$ and
$\iota^A_c(\sigma')(a) = \iota^A_c(\sigma)(a)$, we have
\[
\iota^L_c(\sigma)(a)=\iota^A_c(\sigma)(a),
\]
as desired.
\end{proof}
\par

We remark on unramified representations.
Suppose that $\Oo(V_{2n})(\F_v)$ is unramified, which is equivalent that 
$v$ is non-archimedean and
$c, d \in \oo_v^\times$, where $\oo_v$ is the ring of integers of $\F_v$.
We say that $\psi \in \Psi(\Oo(V_{2n})(\F_v))$ is unramified if
$\psi|\WD_{\F_v}$ is trivial on $I_{\F_v} \times \SL_2(\C)$, 
where $I_{\F_v}$ is the inertia subgroup of $W_{\F_v}$.

\begin{cor}\label{Aunram}
If $\psi \in \Psi(\Oo(V_{2n})(\F_v))$ is unramified, 
then $\Pi_\psi$ has a unique unramified representation $\sigma$.
It satisfies $\iota_c(\sigma) = \1$.
\end{cor}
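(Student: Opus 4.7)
The plan is to combine the unramified statement for tempered $L$-packets, Desideratum \ref{propO}(4), with the compatibility between $L$-packets and $A$-packets recorded in Proposition \ref{LvsA}. Existence and the character computation will fall out almost immediately; the work will be in pinning down uniqueness.

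First I would check that the $L$-parameter $\phi_\psi$ associated with $\psi$ is itself unramified. For any $u \in I_{\F_v}$ one has $|u|_{\F_v}=1$, so $\phi_\psi(u,g)=\psi(u,g)$ for every $g\in\SL_2(\C)$; by assumption $\psi$ is trivial on $I_{\F_v}\times\SL_2(\C)$, hence $\phi_\psi$ is trivial on $I_{\F_v}\times\SL_2(\C)$, i.e.\ unramified in the sense of \S 3.1. Next, Desideratum \ref{propO}(4), which is proved in the non-archimedean case we are in, applied to $\phi_\psi$ produces a unique unramified $\sigma\in\Pi_{\phi_\psi}$ and ensures $\iota_c(\sigma)=\1$ as a character of $\Sc_{\phi_\psi}=A_{\phi_\psi}$. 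Finally, Proposition \ref{LvsA} yields $\sigma\in\Pi_{\phi_\psi}\subset\Pi_\psi$, and the commutative square there shows that $\iota_c^A(\sigma)$ on $\Sc_\psi$ equals the pullback of $\iota_c^L(\sigma)=\1$ along $\Sc_\psi\twoheadrightarrow\Sc_{\phi_\psi}$, which is still $\1$. This supplies both existence of an unramified $\sigma\in\Pi_\psi$ and the asserted formula $\iota_c(\sigma)=\1$.

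For uniqueness, let $\pi\in\Pi_\psi$ be any unramified representation. Since the $A$-packets $\Pi_\psi$ are (for $\F_v$ non-archimedean) constructed in \cite{Ar} from constituents of parabolically induced representations whose cuspidal support is determined by $\psi$, any unramified $\pi$ in $\Pi_\psi$ must have Satake parameter equal to that of $\phi_\psi$; hence $\pi$ lies in the $L$-packet $\Pi_{\phi_\psi}$, and the uniqueness in Desideratum \ref{propO}(4) forces $\pi=\sigma$. As a back-up route avoiding direct appeal to Satake-compatibility of $\Pi_\psi$, one can descend to $\SO(V_{2n})(\F_v)$: Arthur's analogue (\cite[Proposition 7.4.1]{Ar}) together with M{\oe}glin's extension gives the uniqueness of an unramified class in $\Pi_\psi^{0}$, and Lemma \ref{unram} then lifts this uniqueness to $\Pi_\psi$ because an unramified $\pi\in\Pi_\psi$ is singled out inside $\mathrm{Ind}_{\SO(V_{2n})(\F_v)}^{\Oo(V_{2n})(\F_v)}(\sigma|_{\SO})$ by the requirement that $\pi(\epsilon_v)$ act by $+1$ on the spherical line.

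The part requiring the most care is uniqueness: the rest is a direct concatenation of Proposition \ref{LvsA} with Desideratum \ref{propO}(4). The subtlety is that, a priori, $\Pi_\psi$ may contain representations not in $\Pi_{\phi_\psi}$, and one must rule out the possibility that such an "extra" constituent is unramified. Either of the two arguments above dispatches this, but each invokes an auxiliary ingredient (Satake-parameter compatibility of $A$-packets, respectively M{\oe}glin's extension together with the disconnected-group Lemma \ref{unram}) that is not spelled out as part of Proposition \ref{LvsA} itself.
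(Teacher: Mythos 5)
Your argument is correct and follows essentially the same route as the paper: the paper also obtains $\iota_c(\sigma)=\1$ by observing that the unramified member lies in $\Pi_{\phi_\psi}$ and combining Proposition \ref{LvsA} with Desideratum \ref{propO}(4), and it settles uniqueness exactly by your ``back-up'' route, citing M{\oe}glin's result (\cite[p.~18 Proposition]{M}) on the $\SO$ side together with Lemma \ref{unram} to pass to $\Oo(V_{2n})(\F_v)$. Your first uniqueness argument via Satake parameters is just an informal paraphrase of that same M{\oe}glin input, so nothing essentially new or missing.
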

\begin{proof}
The uniqueness follows from \cite[p. 18 Proposition]{M} and Lemma \ref{unram}.
The unique unramified representation $\sigma \in \Pi_\psi$ belongs to the subset $\Pi_{\phi_{\psi}}$.
By Proposition \ref{LvsA} and Desideratum \ref{propO} (4), we have $\iota_c(\sigma)=1$.
\end{proof}
\par

\subsection{Hypothetical Langlands group and its substitute}
We denote the set of irreducible unitary cuspidal automorphic representations of 
$\GL_{m}(\A)$ by $\AA_\cusp(\GL_m)$.
Let $\LL_\F$ be the hypothetical Langlands group of $\F$.
It is expected that there exists a canonical bijection
\[
\AA_\cusp(\GL_m) \longleftrightarrow 
\{\text{$m$-dimensional irreducible unitary representations of $\LL_\F$}\}.
\]
We want to use $\LL_\F$ as a global analogue of Weil--Deligne group $\WD_{\F_v}$.
Namely, for a connected reductive group $G$, 
we want to define a global $A$-parameters of $G$ by
an admissible homomorphism
\[
\psi \colon \LL_\F \times \SL_2(\C) \rightarrow {}^LG.
\]
\par

In this paper, we do not assume the existence of $\LL_\F$.
So we have to modify the definition of global $A$-parameters.
For the definition of global $A$-parameters, 
we use elements in $\AA_\cusp(\GL_m)$ instead of 
$m$-dimensional irreducible unitary representations of $\LL_\F$.
For this reason, we will define not $\Psi_2(\SO(V_{2n}))$ 
but only $\Psi_2(\SO(V_{2n}))/\sim_\epsilon$
as well as $\Psi_2(\Oo(V_{2n}))$ in the next subsection.

\subsection{Global $A$-parameters and localization}\label{globalA}
Let $V_{2n}$ be an orthogonal space associated to $(d,c)$ for some $c,d \in \F^\times$.
We denote by $\chi_V \colon \A^\times/\F^\times \rightarrow \C^\times$ 
the discriminant character of $V_{2n}$.
A discrete global $A$-parameter for $\SO(V_{2n})$ and $\Oo(V_{2n})$ is 
a symbol 
\[
\Sigma = \Sigma_1[d_1] \boxplus \dots \boxplus \Sigma_l[d_l],
\]
where
\begin{itemize}
\item
$1 \leq l \leq 2n$  is an integer;
\item
$\Sigma_i \in \AA_{\cusp}(\GL_{m_i})$;
\item
$d_i$ is a positive integer such that $\sum_{i=1}^l m_id_i=2n$;
\item
if $d_i$ is odd, then $L(s,\Sigma_i,\Sym^2)$ has a pole at $s=1$;
\item
if $d_i$ is even,  then $L(s,\Sigma_i,\wedge^2)$ has a pole at $s=1$;
\item
if we denote the central character of $\Sigma_i$ by $\omega_i$, then
$\omega_1^{d_1} \cdots \omega_{l}^{d_l} = \chi_V$;
\item
if $i \not=j$ and $\Sigma_i \cong \Sigma_j$, then $d_i \not= d_j$.
\end{itemize}
Two global $A$-parameters $\Sigma = \Sigma_1[d_1] \boxplus \dots \boxplus \Sigma_l[d_l]$ and
$\Sigma' = \Sigma'_1[d'_1] \boxplus \dots \boxplus \Sigma'_{l'}[d'_{l'}]$
are said equivalent if $l=l'$ and there exists a permutation $\sigma \in \mathfrak{S}_l$ such that
$d'_i = d_{\sigma(i)}$ and $\Sigma'_i \cong \Sigma_{\sigma(i)}$
for each $i$.
We denote by $\Psi_2(\Oo(V_{2n})) = \Psi_2(\SO(V_{2n}))/\sim_\epsilon$
the set of equivalence classes of discrete global $A$-parameters 
for $\SO(V_{2n})$ and $\Oo(V_{2n})$.
Let $\Psi_2^\epsilon(\Oo(V_{2n}))$ be the subset of $\Psi_2(\Oo(V_{2n}))$ consisting of
$\Sigma=\boxplus_{i=1}^l\Sigma_i[d_i]$ as above such that $m_id_i$ is odd for some $i$.
Also, we put
\[
\Psi_{2,\temp}(\Oo(V_{2n}))=
\{\Sigma = \Sigma_1[d_1] \boxplus \dots \boxplus \Sigma_l[d_l] \in \Psi_{2}(\Oo(V_{2n}))\ |\ 
d_i=1 \text{ for any $i$}\},
\]
and $\Psi_{2,\temp}^\epsilon(\Oo(V_{2n}))
=\Psi_{2,\temp}(\Oo(V_{2n})) \cap \Psi_2^\epsilon(\Oo(V_{2n}))$.
We define $\Psi_2^\epsilon(\SO(V_{2n}))/\sim_\epsilon$ and 
$\Psi_{2,\temp}(\SO(V_{2n}))/\sim_\epsilon$ similarly.
\par

Let $\Sigma = \Sigma_1[d_1] \boxplus \dots \boxplus \Sigma_l[d_l] \in \Psi_2(\Oo(V_{2n}))$.
For each place of $v$ of $\F$, 
we denote the $m_i$-dimensional representation of $\WD_{\F_v}$ corresponding to 
$\Sigma_{i,v} \in \Irr(\GL_{m_i}(\F_v))$ by $\phi_{i,v}$.
Because of the lack of the generalized Ramanujan conjecture, 
$\phi_{i,v}$ is not necessarily a tempered representation. 
We define a representation 
$\Sigma_v \colon \WD_{\F_v} \times \SL_2(\C) \rightarrow \GL_{2n}(\C)$ by
\[
\Sigma_v = (\phi_{1,v} \boxtimes S_{d_1}) \oplus \dots \oplus (\phi_{l,v} \boxtimes S_{d_l}), 
\]
where $S_{d}$ is the unique irreducible algebraic representation of $\SL_2(\C)$ of dimension $d$.
We call $\Sigma_v$ the localization of $\Sigma$ at $v$. 
By \cite[Proposition 1.4.2]{Ar}, the representation $\Sigma_v$ 
factors through $\Oo_{2n}(\C) \hookrightarrow \GL_{2n}(\C)$.
In particular, if $\Sigma \in \Psi_{2,\temp}(\Oo(V_{2n}))$, 
then $\Sigma_v \in \Phi(\Oo(V_{2n})(\F_v))$.
\par

Let $\Sigma = \Sigma_1[d_1] \boxplus \dots \boxplus \Sigma_l[d_l] \in \Psi_2(\Oo(V_{2n}))$ 
be a global $A$-parameter with $\Sigma_i \in \AA_\cusp(\GL_{m_i})$, 
and $\Sigma_v = (\phi_{1,v} \boxtimes S_{d_1}) \oplus \dots \oplus (\phi_{l,v} \boxtimes S_{d_l})$ 
be the localization at $v$.
We also write $\Sigma_{i,v} = \phi_{i,v} \otimes S_{d_i}$.
We put
\[
A_\Sigma = \bigoplus_{i=1}^l (\Z/2\Z)a_{\Sigma_{i}[d_i]} \cong (\Z/2\Z)^l.
\]
Namely, $A_\Sigma$ is a free $(\Z/2\Z)$-module of rank $l$ and 
$\{a_{\Sigma_{1}[d_1]}, \dots, a_{\Sigma_{l}[d_l]}\}$
is a basis of $A_\Sigma$ with $a_{\Sigma_{i}[d_i]}$ associated to $\Sigma_{i}[d_i]$.
We define $A_\Sigma^+$ by the kernel of the map 
$A_\Sigma \ni a_{\Sigma_i[d_i]} \mapsto (-1)^{m_id_i} \in \{\pm1\}$.
Also, we put
\[
z_\Sigma = \sum_{i=1}^l m_id_i \cdot a_{\Sigma_i[d_i]} \in A_\Sigma^+ \subset A_\Sigma.
\]
We define the global component groups $\Sc_\Sigma$ and $\Sc_\Sigma^+$ by
\[
\Sc_\Sigma = A_\Sigma/\pair{z_\phi}
\quad\text{and}\quad
\Sc_\Sigma^+ = A_\Sigma^+/\pair{z_\phi}.
\]
On the other hand, as in \S \ref{localA}, we put
\[
\Sc_{\Sigma_v} = \pi_0(\cent(\im(\Sigma_v),\Oo_{2n}(\C))/\{\pm\1_{2n}\})
\quad\text{and}\quad
\Sc_{\Sigma_v}^+ = \pi_0(\cent(\im(\Sigma_v),\SO_{2n}(\C))/\{\pm\1_{2n}\}).
\]
Then we have a map 
\[
\Sc_\Sigma \rightarrow \Sc_{\Sigma_v},\ a_{\Sigma_i[d_i]} \mapsto -\1_{\Sigma_{i,v}}
\]
for each place $v$, where
$-\1_{\Sigma_{i,v}}$ is the image of the element in $\cent(\im(\Sigma_v),\Oo_{2n}(\C))$
which acts on $\Sigma_{i,v}$ by $-\id$ and acts on $\Sigma_{j,v}$ trivially for any $j \not=i$.
Hence we obtain the diagonal maps
\[
\Delta \colon \Sc_{\Sigma} \rightarrow \prod_v \Sc_{\Sigma_v}
\quad\text{and}\quad
\Delta \colon \Sc_{\Sigma}^+ \rightarrow \prod_v \Sc_{\Sigma_v}^+.
\]
\par

Let $\Sigma \in \Psi_2(\Oo(V_{2n}))$ be a global $A$-parameter
and $\psi = \Sigma_v$ be the localization at $v$.
We emphasize that $\psi$ does not necessarily belong to $\Psi(\Oo(V_{2n})(\F_v))$
defined in \S \ref{localA}.
We can decompose
\[
\psi = \psi_1|\cdot|_{\F_v}^{s_1} \oplus \dots \oplus \psi_r|\cdot|_{\F_v}^{s_r} \oplus
\psi_0 \oplus \psi_r^\vee|\cdot|_{\F_v}^{-s_r} \oplus \dots \oplus \psi_1^\vee|\cdot|_{\F_v}^{-s_1}, 
\]
where
\begin{itemize}
\item
$\psi_i$ is an irreducible representation of $\WD_{\F_v} \times \SL_2(\C)$ 
of dimension $d_i$ such that $\psi_i(W_{\F_v})$ is bounded;
\item
$\psi_0 \in \Psi(\Oo(V_{2n_0})(\F_v))$;
\item
$d_1+\dots+d_r+n_0=n$ and $s_1 \geq \dots \geq s_r > 0$.
\end{itemize}
We define a representation $\phi_{\psi_i}$ of $\WD_{\F_v}$ by
\[
\phi_{\psi_i}(w) = \psi_i(w, 
\begin{pmatrix}
|w|_{\F_v}^{\half{1}}&0\\0&|w|_{\F_v}^{-\half{1}}
\end{pmatrix}
),
\]
and we denote by $\tau_{\psi_i}$ the irreducible representation of $\GL_{d_i}(\F_v)$
corresponding to $\phi_{\psi_i}$.
Let $\Pi_{\psi_0}$ be the local $A$-packet associated to $\psi_0$, 
which is a multiset of $\Irr_\unit(\Oo(V_{2n_0})(\F_v))$.
For $\sigma_0 \in \Pi_{\psi_0}$, we put
\[
I(\sigma_0) = 
\Ind_{P(\F_v)}^{\Oo(V_{2n})(\F_v)}
(\tau_{\psi_1}|\cdot|_{\F_v}^{s_1} \otimes \dots \otimes \tau_{\psi_r}|\cdot|_{\F_v}^{s_r} \otimes \sigma_0), 
\]
where $P$ is a parabolic subgroup of $\Oo(V_{2n})$ with
Levi subgroup $M_P=\GL_{d_1} \times \dots \times \GL_{d_r} \times \Oo(V_{2n_0})$.
Note that $I(\sigma_0)$ depends on not only $\sigma_0$ but also $\psi$.
\par

To establish the global main result (Theorem \ref{main3} below), 
we need the following conjecture (see \cite[Conjecture 8.3.1]{Ar}).

\begin{conj}\label{std}
Let $\Sigma \in \Psi_2(\Oo(V_{2n}))$ be a global $A$-parameter.
We decompose $\Sigma_v$ as above, so that $\psi_0 \in \Psi(\Oo(V_{2n_0})(\F_v))$.
Then the induced representation $I(\sigma_0)$ is
irreducible for $\sigma_0 \in \Pi_{\psi_0}$.
Moreover, if $I(\sigma_0) \cong I(\sigma_0')$ for $\sigma_0, \sigma_0' \in \Pi_{\psi_0}$, 
then $\sigma_0 \cong \sigma_0'$.
\end{conj}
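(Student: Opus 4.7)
The plan is to reduce Conjecture \ref{std} to the analogous statement for $\SO(V_{2n})$, which is Conjecture 8.3.1 in \cite{Ar}, and then transfer across the Clifford-theoretic bridge between $\Oo$ and $\SO$ developed in \S\ref{sec gen} and \S\ref{LLC O}. The localization $\Sigma_v$ decomposes into inducing data $\tau_{\psi_1}|\cdot|_{\F_v}^{s_1}, \dots, \tau_{\psi_r}|\cdot|_{\F_v}^{s_r}$ and an $A$-parameter piece $\psi_0 \in \Psi(\Oo(V_{2n_0})(\F_v))$. The $\tau_{\psi_i}$ are unitary tempered representations of $\GL_{d_i}(\F_v)$ by construction, and the exponents satisfy $0 < s_r \leq \dots \leq s_1 < \tfrac{1}{2}$ thanks to the bounds toward the generalized Ramanujan conjecture for cuspidal representations of $\GL_{m_i}(\A)$ (e.g.\ Luo--Rudnick--Sarnak). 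Moreover, each $\sigma_0 \in \Pi_{\psi_0}$ is unitary.

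First, I would invoke the $\SO$ counterpart: for $\sigma_0^{(0)} \in \Pi_{\psi_0}^0$, the standard module
\[
I^0(\sigma_0^{(0)}) = \Ind_{P(\F_v)}^{\SO(V_{2n})(\F_v)}\bigl( \tau_{\psi_1}|\cdot|_{\F_v}^{s_1} \otimes \dots \otimes \tau_{\psi_r}|\cdot|_{\F_v}^{s_r} \otimes \sigma_0^{(0)}\bigr)
\]
is irreducible, and $\sigma_0^{(0)} \mapsto I^0(\sigma_0^{(0)})$ is injective on $\Pi_{\psi_0}^0$. In Arthur's framework this is a consequence of the standard module conjecture together with the unitarity of all inducing data and the strict positivity of the $s_i$. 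Because parabolic induction commutes with restriction along $\SO(V_{2n})(\F_v) \hookrightarrow \Oo(V_{2n})(\F_v)$, one has the identity
\[
I(\sigma_0)\bigm|\SO(V_{2n})(\F_v) \;\cong\; \Ind_{P_0(\F_v)}^{\SO(V_{2n})(\F_v)}\bigl(\tau_{\psi_1}|\cdot|_{\F_v}^{s_1} \otimes \dots \otimes \tau_{\psi_r}|\cdot|_{\F_v}^{s_r} \otimes (\sigma_0|\SO(V_{2n_0})(\F_v))\bigr),
\]
whose constituents are controlled by the $\SO$ statement.

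Second, the irreducibility of $I(\sigma_0)$ as an $\Oo(V_{2n})(\F_v)$-representation follows by Clifford theory applied to the degree-two extension $\Oo(V_{2n})(\F_v) / \SO(V_{2n})(\F_v)$ exactly as in Lemma \ref{unram} and in \S\ref{LLC O}: either $\sigma_0|\SO$ is irreducible, in which case $I(\sigma_0)|\SO$ is irreducible and $I(\sigma_0)$ is either irreducible or a sum of two $\det$-twists; or $\sigma_0|\SO$ decomposes as $\sigma_0^{(0)} \oplus (\sigma_0^{(0)})^\epsilon$, in which case $I(\sigma_0)|\SO$ decomposes similarly and the $\epsilon$-conjugation on $U_0 \rtimes \langle \epsilon \rangle$ forces $I(\sigma_0)$ to remain irreducible. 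Tracking the action of $\epsilon$ on the inducing data (which is fixed, up to $T$-conjugation, by the argument in the proof of the proposition in \S\ref{sec gen}) rules out any further splitting. For the injectivity clause, an isomorphism $I(\sigma_0) \cong I(\sigma_0')$ restricts to an $\SO$-isomorphism; the $\SO$-injectivity forces $\sigma_0|\SO \cong \sigma_0'|\SO$, and then comparing the $\epsilon$-action on the Whittaker models of $I(\sigma_0)$ and $I(\sigma_0')$ (or equivalently the central character of the $\Oo$-factor) rules out the possibility $\sigma_0' \cong \sigma_0 \otimes \det$, giving $\sigma_0 \cong \sigma_0'$.

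The hard part will be the $\SO$ input in the first step. For $\sigma_0^{(0)}$ in a genuine $A$-packet (as opposed to an $L$-packet), $\sigma_0^{(0)}$ need not be tempered, so irreducibility of the standard module is not automatic; it hinges on the standard module conjecture together with uniform bounds $s_i < \tfrac{1}{2}$, which in the archimedean case requires the approximate Ramanujan bounds of Luo--Rudnick--Sarnak and Müller--Speh. The secondary obstacle, in step two, is the case where $\sigma_0$ satisfies $\sigma_0 \cong \sigma_0 \otimes \det$; there the $\Oo$-structure is not determined by the $\SO$-structure and one must use Desideratum \ref{desO}\,(3) to rule this out when $\psi_0 \in \Psi^\epsilon$, or supply a direct argument otherwise.
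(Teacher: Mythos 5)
You are trying to prove something the paper itself leaves as a conjecture: the paper only establishes Conjecture \ref{std} in two cases (Proposition \ref{heier}), namely when $\Sigma \in \Psi_{2,\temp}(\Oo(V_{2n}))$ and when $\F_v$ is non-archimedean, and your sketch does not actually close the gap that makes the general statement conjectural. Your key input — irreducibility of the $\SO$-side induced module $I^0(\sigma_0^{(0)})$ for $\sigma_0^{(0)}$ in a genuine (non-tempered) $A$-packet — does not follow from ``the standard module conjecture together with unitarity and $0<s_i<\tfrac12$'': when $\sigma_0^{(0)}$ is non-tempered, $I^0(\sigma_0^{(0)})$ is not a standard module at all, so Heiermann-type results simply do not apply; this is exactly Arthur's Conjecture 8.3.1, proved in the non-archimedean case by M{\oe}glin (the reference the paper uses) and still open at archimedean places. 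In the tempered case, where $I(\sigma_0)$ really is a standard module, the paper's proof is not ``unitarity plus positivity'' either: Heiermann's criterion requires the parameter to be generic, i.e.\ $L(s,\phi,\Ad)$ regular at $s=1$, and the paper verifies this by factoring the adjoint $L$-function explicitly and feeding in the bound $|s_i|<\tfrac12$ (Jacquet--Shalika, Rudnick--Sarnak). You invoke the Ramanujan-type bound but never connect it to the regularity of $L(s,\phi,\Ad)$, which is the actual content of that step.

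The second clause (injectivity) is also not established by your argument. Comparing ``the $\epsilon$-action on Whittaker models'' fails because non-tempered $A$-packet members need not be generic, and the central character cannot separate $\sigma_0$ from $\sigma_0\otimes\det$, since $\det$ is trivial on the center $\{\pm\1_{V_{2n_0}}\}$ of $\Oo(V_{2n_0})$. Moreover, your Clifford-theoretic reduction needs the finer disjointness $I^0(\sigma^{(0)})\not\cong I^0\bigl((\sigma^{(0)})^\epsilon\bigr)$ as honest $\SO(V_{2n})(\F_v)$-representations, which the coarse $\SO$-statement (formulated only up to $\epsilon$-conjugacy) does not give you. The paper obtains disjointness in the tempered case from the Langlands classification — the $I(\sigma_0)$ are standard modules whose Langlands data contain $\sigma_0$ — and in the non-archimedean case again from M{\oe}glin's results; some such input is indispensable and is missing from your proposal.
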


For tempered $A$-parameters, the irreducibility and the disjointness 
follows from a result of Heiermann \cite{H}.
\begin{prop}\label{heier}
Conjecture \ref{std} holds for the following cases:
\begin{itemize}
\item
when $\Sigma \in \Psi_{2,\temp}(\Oo(V_{2n}))$; 
\item
when $\F_v$ is non-archimedean.
\end{itemize}
\end{prop}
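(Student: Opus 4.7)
The plan is to handle the two cases independently, invoking two external results.

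First I would handle the tempered case, $\Sigma \in \Psi_{2,\temp}(\Oo(V_{2n}))$. Here the local parameter $\psi_0$ appearing in the decomposition
\[
\Sigma_v = \psi_1|\cdot|_{\F_v}^{s_1} \oplus \cdots \oplus \psi_r|\cdot|_{\F_v}^{s_r} \oplus \psi_0 \oplus \psi_r^\vee|\cdot|_{\F_v}^{-s_r} \oplus \cdots \oplus \psi_1^\vee|\cdot|_{\F_v}^{-s_1}
\]
is itself tempered (it is the ``bounded part'' of the decomposition), so by Proposition \ref{true} the packet $\Pi_{\psi_0}$ coincides with the tempered $L$-packet $\Pi_{\phi_{\psi_0}}$, and its elements $\sigma_0$ are irreducible tempered representations. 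Consequently $I(\sigma_0)$ is a standard module with unitary tempered inducing data $\tau_{\psi_i}, \sigma_0$ and strictly positive real exponents $s_1 \geq \cdots \geq s_r > 0$. Irreducibility of such a standard module is Heiermann's theorem from \cite{H}, and injectivity of $\sigma_0 \mapsto I(\sigma_0)$ is then immediate from Langlands classification: once $I(\sigma_0)$ is irreducible it equals its own Langlands quotient, and the Langlands data (in particular $\sigma_0$) are uniquely recovered from it.

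Next I would handle the case where $\F_v$ is non-archimedean, with $\Sigma$ arbitrary. Here I would appeal to M\oe glin's construction of $A$-packets for $p$-adic classical groups, cf.~\cite{M2}. She produces each member of $\Pi_{\psi_0}$ explicitly via parabolic induction from cuspidal pieces together with Jacquet-module operations; as a by-product of this construction one obtains the irreducibility of $I(\sigma_0)$, since the strictly positive exponents $s_i$ do not interact with her inductive step, and one also sees that $\sigma_0$ can be recovered from $I(\sigma_0)$ by a suitable Jacquet-module projection, yielding injectivity. An equivalent, and perhaps cleaner, route is via Xu's form of the local intertwining relation in \cite[Theorem 8.10]{X}.

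The main obstacle in this plan is the archimedean tempered subcase: Langlands classification alone gives only a unique irreducible quotient of $I(\sigma_0)$, not irreducibility of $I(\sigma_0)$ itself. This is precisely the content of Heiermann's theorem, whose proof rests on the Plancherel formula and a detailed analysis of intertwining operators for tempered representations, and which in turn presupposes the endoscopic character identities of \cite{Ar}. The non-archimedean case is technically simpler thanks to M\oe glin's explicit approach, but is not obviously short without her full framework.
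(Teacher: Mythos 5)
Your plan has the same skeleton as the paper's proof (Heiermann for the tempered case, M{\oe}glin for the non-archimedean case, Langlands classification for the injectivity), but the tempered case as you present it has a genuine gap. You invoke Heiermann's theorem as if it said that any standard module with unitary tempered inducing data and strictly positive exponents is irreducible. It does not: standard modules are reducible in general (this is exactly why the Langlands quotient is usually a proper quotient), and what \cite{H} gives here is irreducibility of $I(\sigma_0)$ under the hypothesis that the parameter $\phi=\Sigma_v$ is \emph{generic}, i.e. that $L(s,\phi,\Ad)$ is regular at $s=1$. Verifying this hypothesis is the substantive part of the argument, and it is where the global input enters: since $\Sigma_v$ is the localization of an isobaric sum of unitary cuspidal representations of general linear groups, the exponents $s_i$ in its decomposition into tempered pieces satisfy $|s_i|<1/2$ by the bounds of Jacquet--Shalika and Rudnick--Sarnak (\cite{JS}, \cite{RS}); feeding this into the explicit factorization of $L(s,\phi,\Ad)$ (every factor is an $L$-function of tempered data, hence regular for $\re(s)>0$, and every shift $1\pm s_i\pm s_j$ stays positive) gives regularity at $s=1$, hence irreducibility by \cite{H}. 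Without this step the claim is simply false — e.g. a quadratic character with exponent $s=1$ already yields a reducible standard module — so "tempered inducing data plus positive exponents" is not sufficient. Your deduction of the injectivity from the Langlands classification is correct and is what the paper does.

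For the non-archimedean case, the paper quotes M{\oe}glin's irreducibility result \cite[Proposition 5.1]{M3}. The references you give, \cite{M2} and \cite[Theorem 8.10]{X}, concern multiplicity one in, and the parametrization of, the local $A$-packets, not the irreducibility of $I(\sigma_0)$; and your assertion that irreducibility drops out of her construction because "the strictly positive exponents do not interact with her inductive step" is a claim, not an argument — it is precisely the nontrivial content of the proposition being cited. This part of your plan is fine once the vague by-product claim is replaced by the correct reference (or an actual proof), but as written it does not establish the second bullet either.
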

\begin{proof}
The second case is \cite[Proposition 5.1]{M3}.
So we shall prove the first case.
\par

Let $\Sigma$ be an element in $\Psi_{2,\temp}(\Oo(V_{2n}))$, 
$\phi = \Sigma_v$ be the localization of $\Sigma$ at $v$, 
and  $\phi_0 = \psi_0$ be as above.
Note that $\phi \in \Phi(\Oo(V_{2n})(\F_v))$.
Then the $L$-packet $\Pi_{\phi}$ is equal to the set of the Langlands quotients of 
the standard module $I(\sigma_0)$, 
where $\sigma_0$ runs over $\Pi_{\phi_0}$.
Heiermann's result \cite{H} (together with the Clifford theory) 
asserts that if $\phi \in \Phi_\gen(\Oo(V_{2n})(\F_v))$, 
then $I(\sigma_0)$ is irreducible for any $\sigma_0 \in \Pi_{\phi_0}$.
Namely, to show the irreducibility of $I(\sigma_0)$, 
it suffices to prove that $L(s, \phi, \Ad)$ is regular at $s=1$.
\par

We decompose
\[
\phi = \phi_{1}|\cdot|_{\F_v}^{s_{1}} \oplus \dots \oplus \phi_{r}|\cdot|_{\F_v}^{s_{r}}
\oplus \phi_{0} \oplus 
\phi_{r}^\vee|\cdot|_{\F_v}^{-s_{r}} \oplus \dots \oplus \phi_{1}^\vee|\cdot|_{\F_v}^{-s_{1}},
\]
where
\begin{itemize}
\item
$\phi_i$ is an irreducible tempered representation of $\WD_{\F_v}$ of dimension $d_i$;
\item
$\phi_0 \in \Phi_\temp(\Oo(V_{2n_0})(\F_v))$;
\item
$d_1+\dots+d_r+n_0=n$ and $s_1 \geq \dots \geq s_r > 0$.
\end{itemize}
Then $L(s, \phi, \Ad)$ is equal to
\begin{align*}
&L(s, \phi_0, \Ad)
\left(\prod_{i=1}^{r}L(s, \phi_i, \Ad_{\GL})
L(s+s_i, \phi_0 \otimes \phi_i) L(s-s_i, \phi_0 \otimes \phi_i^\vee)
L(s+2s_i, \phi_i, \wedge^2) L(s-2s_i, \phi_i^\vee, \wedge^2) \right)\\
&\quad\times\left(\prod_{1 \leq i<j \leq r} 
L(s+s_i+s_j, \phi_i \otimes \phi_j)
L(s+s_i-s_j, \phi_i \otimes \phi_j^\vee)
L(s-s_i+s_j, \phi_i^\vee \otimes \phi_j)
L(s-s_i-s_j, \phi_i^\vee \otimes \phi_j^\vee)
\right), 
\end{align*}
where $\Ad_{\GL}$ is the adjoint representation of $\GL_{m}(\C)$ on $\Lie(\GL_{m}(\C))$ 
for a suitable $m$.
\par

Since $\phi$ corresponds to an irreducible representation of $\GL_{2n}(\F_v)$, 
which is a local constituent of $\Sigma = \boxplus_{i=1}^l \Sigma_i$, 
we have $|s_i|<1/2$ for any $i$.
See e.g., \cite[(2.5) Corollary]{JS} and \cite[Appendix]{RS}.
Note that for tempered representations, 
all $L$-functions appeared in the above equation are
regular for $\re(s) > 0$.
Hence we conclude that $L(s, \phi, \Ad)$ is regular at $s=1$.
\par

Since $I(\sigma_0)$ is a standard module, 
the last assertion of Conjecture \ref{std} follows from the Langlands classification.
\end{proof}
\par

Go back to the general situation.
Let $\Sigma \in \Psi_2(\Oo(V_{2n}))$ be a global $A$-parameter,
and $\psi = \Sigma_v$, $\psi_0$ and $I(\sigma_0)$ for $\sigma_0 \in \Pi_{\psi_0}$ be as above.
We define the local $A$-packet $\Pi_{\psi}$ associated to $\psi = \Sigma_v$, 
which is a multiset on $\Irr(\Oo(V_{2n})(\F_v))$, by
\[
\Pi_\psi = \bigsqcup_{\sigma_0 \in \Pi_{\psi_0}}\{\sigma\ |\ 
\text{$\sigma$ is an irreducible constituent of $I(\sigma_0)$}
\}.
\]
Namely, $\Pi_\psi$ is the disjoint union of
the multisets of the Jordan--H\"{o}lder series of $I(\sigma_0)$.
Similarly, we can define the local $A$-packet $\Pi_{\psi}^0$ associated to $\psi = \Sigma_v$, 
which is a multiset on $\Irr(\SO(V_{2n})(\F_v))/\sim_\epsilon$.
Since $\Sc_\psi = \Sc_{\psi_0}$ and $\Sc_\psi^+ = \Sc_{\psi_0}^+$, 
we can define maps
\[
\iota_c \colon \Pi_\psi \rightarrow \widehat{\Sc_\psi}
\quad\text{and}\quad
\iota_c \colon \Pi_\psi^0 \rightarrow \widehat{\Sc_\psi}^+
\]
by
\[
\iota_c(\sigma) \coloneqq \iota_c(\sigma_0)
\quad\text{and}\quad
\iota_c([\sigma]) \coloneqq \iota_c([\sigma_0])
\]
if $\sigma$ is an irreducible constituent of $I(\sigma_0)$.
\par

\subsection{Global $A$-packets}
Let $\HH(\Oo(V_{2n}))=\otimes_v'\HH(\Oo(V_{2n})(\F_v))$ 
(\resp $\HH(\SO(V_{2n}))=\otimes_v'\HH(\SO(V_{2n})(\F_v))$)
be the global Hecke algebra on $\Oo(V_{2n})(\A)$ (\resp $\SO(V_{2n})(\A)$) with respect to 
the maximal compact subgroup $K=\prod_vK_v$ (\resp $K_0 = \prod_vK_{0,v}$) 
fixed in \S \ref{notation}.
Namely, $\HH(\Oo(V_{2n})(\F_v))$ (\resp $\HH(\SO(V_{2n})(\F_v))$)
is the algebra of smooth, left and right $K_v$-finite (\resp $K_{0,v}$-finite) functions 
of compact support on $\Oo(V_{2n})(\F_v)$ (\resp $\SO(V_{2n})(\F_v)$).
We denote by $\HH^\epsilon(\SO(V_{2n})(\F_v))$ the subspace of functions in 
$\HH(\SO(V_{2n})(\F_v))$ which are invariant under $\epsilon_v$.
We put $\HH^\epsilon(\SO(V_{2n})) = \otimes_v' \HH^\epsilon(\SO(V_{2n})(\F_v))$.
We say that two admissible representations of $\SO(V_{2n})(\A)$ of the form
\[
\sigma_0=\otimes'_v \sigma_{0,v}
\quad\text{and}\quad
\sigma'_0=\otimes'_v \sigma_{0,v}'
\]
are $\epsilon$-equivalent if $\sigma_{0,v} \sim_{\epsilon_v} \sigma_{0,v}'$ for each $v$.
The $\epsilon$-equivalence class of $\sigma_0$ is denoted by $[\sigma_0]=\otimes'_v[\sigma_{0,v}]$.
For $f \in \HH^\epsilon(\SO(V_{2n}))$ and any $\sigma_0$ as above, 
the operator $\sigma_0(f)$ depends only on the class $[\sigma_0]$.
\par

Let $\Sigma \in \Psi_2(\Oo(V_{2n}))$ be a global $A$-parameter.
We attach a global $A$-packets 
\[
\Pi_\Sigma = \{\sigma = \otimes'_v\sigma_{v} \ |\ 
\sigma_{v} \in \Pi_{\Sigma_v},\ 
\iota_c(\sigma_v)=\1 \text{ for almost all $v$}
\}
\]
of equivalence classes of irreducible representations of $\Oo(V_{2n})(\A)$, 
and a global $A$-packets 
\[
\Pi_\Sigma^0 = \{[\sigma_0]=[\otimes'_v\sigma_{0,v}] \ |\ 
[\sigma_{0,v}] \in \Pi_{\Sigma_v}^0,\ 
\iota_c([\sigma_{0,v}])=\1 \text{ for almost all $v$}
\}
\]
of $\epsilon$-equivalence classes of irreducible representations of $\SO(V_{2n})(\A)$.
Note that an element $\sigma \in \Pi_\Sigma$ and a representative $\sigma_0$ of 
$[\sigma_0] \in \Pi_\Sigma^0$ are not necessarily unitary.
For $\sigma \in \Pi_\Sigma$ and $[\sigma_0] \in \Pi_\Sigma^0$, 
the operators $\sigma(f)$ and $\sigma_0(f_0)$
are well-defined for $f \in \HH(\Oo(V_{2n}))$ and $f_0 \in \HH^\epsilon(\SO(V_{2n}))$, 
respectively.
\par

For $\sigma = \otimes_v'\sigma_v \in \Pi_\Sigma$ and
$[\sigma_0]=[\otimes'_v\sigma_{0,v}] \in \Pi_\Sigma^0$, 
we define characters $\iota_c(\sigma)$ of $\prod_v\Sc_{\Sigma_v}$ and
$\iota_c([\sigma_0])$ of $\prod_v\Sc_{\Sigma_v}^+$ by
\[
\iota_c(\sigma)=\prod_v \iota_c(\sigma_v)
\quad\text{and}\quad
\iota_c([\sigma_0])=\prod_v \iota_c([\sigma_{0,v}]), 
\]
respectively.

\subsection{Arthur's multiplicity formula for $\SO(V_{2n})$}
We say that a function $\varphi \colon \Oo(V_{2n})(\A) \rightarrow \C$ is 
an automorphic form on $\Oo(V_{2n})(\A)$ if 
$\varphi$ satisfies the following conditions:
\begin{itemize}
\item
$\varphi $ is left $\Oo(V_{2n})(\F)$-invariant;
\item
$\varphi $ is smooth and moderate growth;
\item
$\varphi $ is right $K$-finite, where 
$K=\prod_vK_v$ is the maximal compact subgroup of $\Oo(V_{2n})(\A)$ fixed in \S \ref{notation};
\item
$\varphi $ is $\z$-finite, where $\z$ is the center of the universal enveloping algebra of
$\Lie(\Oo(V_{2n})(\F_\infty)) \otimes_\R \C$.
\end{itemize}
We define automorphic forms on $\SO(V_{2n})(\A)$ similarly.
More precisely, see \cite[\S 4.2]{BJ}.
Let $\AA(\Oo(V_{2n}))$ be the space of automorphic forms on $\Oo(V_{2n})(\A)$.
We denote by $\AA_2(\Oo(V_{2n}))$ the subspace of $\AA(\Oo(V_{2n}))$ consisting of 
square-integrable automorphic forms on $\Oo(V_{2n})(\A)$.
Similarly, we define $\AA(\SO(V_{2n}))$ and $\AA_2(\SO(V_{2n}))$.
We call $\AA_2(\Oo(V_{2n}))$ (\resp $\AA_2(\SO(V_{2n}))$)
the automorphic discrete spectrum of $\Oo(V_{2n})$ (\resp $\SO(V_{2n})$).
\par

The Hecke algebra $\HH(\Oo(V_{2n}))$ (\resp $\HH(\SO(V_{2n}))$) acts on 
$\AA(\Oo(V_{2n}))$ (\resp $\AA(\SO(V_{2n}))$) by 
\begin{align*}
(f \cdot \varphi)(g) &= \int_{\Oo(V_{2n})(\A)}\varphi(gx)f(x)dx\\
(\text{\resp} \quad
(f_0 \cdot \varphi_0)(g_0) &= \int_{\SO(V_{2n})(\A)}\varphi_0(g_0x_0)f_0(x_0)dx_0
)
\end{align*}
for $f \in \HH(\Oo(V_{2n}))$ and $\varphi \in \AA(\Oo(V_{2n}))$
(\resp $f_0 \in \HH(\SO(V_{2n}))$ and $\varphi_0 \in \AA(\SO(V_{2n}))$).
This action preserves $\AA_2(\Oo(V_{2n}))$ (\resp $\AA_2(\SO(V_{2n}))$).
\par

Arthur's multiplicity formula for $\SO(V_{2n})$ is formulated as follows:
\begin{thm}[Arthur's multiplicity formula ({\cite[Theorem 1.5.2]{Ar}})]\label{AMSO}
Let $V_{2n}$ be be the orthogonal space over a number field $\F$
associated to $(d,c)$ for some $c,d \in \F^\times$.
Then for each $\Sigma \in \Psi_2(\SO(V_{2n}))/\sim_\epsilon$, there exists a character
\[
\ep_\Sigma \colon \Sc_\Sigma \rightarrow \{\pm1\}
\]
defined explicitly in terms of symplectic $\ep$-factors such that
\[
\AA_2(\SO(V_{2n}))
\cong \bigoplus_{\Sigma \in \Psi_{2}(\SO(V_{2n}))/\sim_\epsilon} m_{\Sigma} 
\bigoplus_{[\sigma_0] \in \Pi_{\Sigma}^0(\ep_\Sigma)}[\sigma_0]
\]
as $\HH^\epsilon(\SO(V_{2n}))$-modules.
Here for $\Sigma = \boxplus_{i=1}^{l} \Sigma_i[d_i] \in \Psi_2(\SO(V_{2n}))/\sim_\epsilon$
with $\Sigma_i \in \AA_\cusp(\GL_{m_i})$, we put
\[
m_\Sigma = \left\{
\begin{aligned}
&1	\iif \Sigma \in \Psi_2^\epsilon(\SO(V_{2n}))/\sim_\epsilon, 
\text{i.e., $m_id_i$ is odd for some $i$},\\
&2	\other, 
\end{aligned}
\right.
\]
and we put
\[
\Pi_{\Sigma}^0(\ep_\Sigma)=\{
[\sigma_0] \in \Pi_\Sigma^0\ |\ \iota_c([\sigma_0]) \circ \Delta = \ep_\Sigma | \Sc_\Sigma^+
\}.
\]
Moreover, if $\Sigma \in \Psi_{2,\temp}(\SO(V_{2n}))$, then 
$\ep_\Sigma$ is the trivial representation of $\Sc_\Sigma$.
\end{thm}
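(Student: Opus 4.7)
The plan is to derive this theorem directly from Arthur's main global theorem \cite[Theorem 1.5.2]{Ar} by matching notation and unwinding the role of the outer automorphism $\theta = \Int(\epsilon)$. First I would verify the bijection between $\Psi_2(\SO(V_{2n}))/\sim_\epsilon$ and Arthur's set of self-dual discrete global parameters modulo outer conjugation: this is immediate from our definition of global $A$-parameters as formal symbols $\boxplus_i \Sigma_i[d_i]$ of cuspidal representations of $\GL_{m_i}(\A)$ with the prescribed self-duality conditions and central character product equal to $\chi_V$. The local $A$-packets $\Pi_{\Sigma_v}^0$ defined in \S 7.5 via parabolic induction from bounded local $A$-parameters match Arthur's local packets by \cite[Theorems 2.2.1, 2.2.4]{Ar} and Proposition \ref{heier}, which handles the irreducibility and disjointness of the relevant standard modules (unconditional for tempered $\Sigma$ or at non-archimedean places). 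Similarly, our global component groups $\Sc_\Sigma^+$ and $\Sc_\Sigma$ match Arthur's centralizers of $\im(\Sigma)$ in $\SO_{2n}(\C)$ and $\Oo_{2n}(\C)$ modulo $z_\Sigma$.

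The sign character $\ep_\Sigma \colon \Sc_\Sigma \to \{\pm 1\}$ is then taken to be Arthur's character from \cite[(1.5.6)]{Ar}, defined by $\ep_\Sigma(s) = \ep(\half{1}, \Sigma^-_s)$ where $\Sigma = \Sigma^+_s \oplus \Sigma^-_s$ is the $(\pm 1)$-eigenspace decomposition of $\Sigma$ under a semi-simple representative of $s$. For the last clause that $\ep_\Sigma$ is trivial when $\Sigma \in \Psi_{2,\temp}(\SO(V_{2n}))$, I would note that each $\Sigma_i$ is then a self-dual cuspidal representation of $\GL_{m_i}(\A)$ with $d_i=1$; the subrepresentation $\Sigma^-_s$ is a sum of those $\Sigma_i$ on which $s$ acts by $-1$, all of orthogonal type (since elements of $\Sc_\Sigma$ distinguish constituents of the same symmetry type), and the global root number of a self-dual orthogonal representation equals $1$ by the symmetric functional equation.

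The multiplicity $m_\Sigma \in \{1,2\}$ is dictated by the index $[\Sc_\Sigma : \Sc_\Sigma^+]$. When $\Sigma \in \Psi_2^\epsilon$, some $m_i d_i$ is odd, so the element of $\Oo_{2n}(\C)$ acting by $-1$ on the corresponding odd-dimensional orthogonal constituent has determinant $-1$; this forces $\Sc_\Sigma = \Sc_\Sigma^+$ and Arthur's formula yields a single copy. When $\Sigma \notin \Psi_2^\epsilon$, every $m_i d_i$ is even, so $[\Sc_\Sigma : \Sc_\Sigma^+] = 2$ and Arthur's twisted endoscopic setup produces two copies of each $[\sigma_0] \in \Pi_\Sigma^0(\ep_\Sigma)$, corresponding to the two possible extensions across the $\theta$-symmetry of $\Sigma$. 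In both cases, the condition $\iota_c([\sigma_0]) \circ \Delta = \ep_\Sigma|\Sc_\Sigma^+$ is exactly Arthur's selection criterion via diagonal restriction from the adelic component group, transferred to our normalization using Proposition \ref{propSO} (1) to handle the dependence of $\iota_c$ on $c$.

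The main obstacle is the precise translation from Arthur's formulation (which is naturally stated for the twisted space $\SO(V_{2n}) \rtimes \pair{\theta}$ via the stable and twisted trace formulas) into our honest decomposition of $\AA_2(\SO(V_{2n}))$ as an $\HH^\epsilon(\SO(V_{2n}))$-module. The bridge is the observation that $\epsilon \in \Oo(V_{2n})(\F)$ normalizes $K_0 = \prod_v K_{0,v}$, so every $f \in \HH^\epsilon(\SO(V_{2n}))$ induces the same operator on $\sigma_0$ and $\sigma_0^\epsilon$; the right-hand side of the asserted isomorphism is therefore well-defined on $\sim_\epsilon$-equivalence classes, and Arthur's theorem gives the claimed identity after passing to these classes. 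One must also check compatibility of this passage with the measure-theoretic splitting fixed in \S 7.1 relating integrals on $\Oo(V_{2n})(\A)$ to integrals on $\SO(V_{2n})(\A)$ and $\mu_2(\A)$, which will be needed when bootstrapping this result to the $\Oo(V_{2n})$ multiplicity formula in the next section.
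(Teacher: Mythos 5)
Your overall strategy---reading Theorem \ref{AMSO} as a pure restatement of \cite[Theorem 1.5.2]{Ar} and translating Arthur's parameters, packets, component groups and sign character into the notation of this paper---is exactly what the paper does: no independent proof is given here beyond the remark that Arthur decomposes $L^2_{\disc}$ while one works with the dense subspace $\AA_2(\SO(V_{2n}))$, and your matching of $\Psi_2(\SO(V_{2n}))/\sim_\epsilon$, of the local packets, and of the $\HH^\epsilon(\SO(V_{2n}))$-module formulation is fine. The problem is that your unwinding is wrong precisely where the statement has content. Arthur's character is not $\ep_\Sigma(s)=\ep(\half{1},\Sigma_s^-)$, the standard root number of the $(-1)$-eigenspace of $s$ in $\Sigma$: the character of the remark after \cite[Theorem 1.5.2]{Ar} is built from Rankin--Selberg root numbers of \emph{symplectic} type between constituents, concretely $\ep_\Sigma(a_{\Sigma_i[d_i]})=\prod_j \ep(\half{1},\Sigma_i\times\Sigma_j)^{\min(d_i,d_j)}$ with the product over those $j$ for which $\Sigma_i\otimes\Sigma_j$ is symplectic; this already differs from your formula for a parameter such as $\Sigma_1[2]\boxplus\Sigma_2[1]\boxplus\cdots$. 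Consequently your proof of the final clause collapses: the reason $\ep_\Sigma\equiv\1$ for $\Sigma\in\Psi_{2,\temp}(\SO(V_{2n}))$ is that every constituent is of orthogonal type (since $d_i=1$ forces a pole of $L(s,\Sigma_i,\Sym^2)$ at $s=1$), so no $\Sigma_i\otimes\Sigma_j$ is symplectic and Arthur's product is empty. The assertion that ``the global root number of a self-dual orthogonal representation equals $1$ by the symmetric functional equation'' is not a valid argument even for the character you wrote down: the functional equation only gives $\ep(\half{1},\Sigma)=1$ when $L(\half{1},\Sigma)\neq0$, and the positivity of orthogonal global root numbers is a nontrivial theorem, not a formal consequence.

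Your component-group bookkeeping for $m_\Sigma$ is also inverted. If some $m_id_i$ is odd, i.e.\ $\Sigma\in\Psi_2^\epsilon(\SO(V_{2n}))/\sim_\epsilon$, the centralizer contains an element of determinant $-1$, so $A_\Sigma^+$ has index $2$ in $A_\Sigma$ and $[\Sc_\Sigma:\Sc_\Sigma^+]=2$; when every $m_id_i$ is even one has $\Sc_\Sigma=\Sc_\Sigma^+$---the opposite of what you claim in both cases. The values $m_\Sigma=1$ (some odd-dimensional constituent) and $m_\Sigma=2$ (all constituents even-dimensional) are simply Arthur's multiplicities, reflecting that in the latter case the $\Oo_{2n}(\C)$-orbit of the parameter splits into two $\SO_{2n}(\C)$-orbits; your final values happen to agree with the theorem, but the index computation offered in support is backwards, so the justification as written would not stand. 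If you repair the definition of $\ep_\Sigma$, the tempered argument, and the index computation, what remains is the citation of Arthur together with the $L^2_{\disc}$ versus $\AA_2$ remark, which is all the paper itself records.
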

For the definition of $\ep_\Sigma$, see the remark after \cite[Theorem 1.5.2]{Ar}.

\begin{rem}
In fact, Arthur described a spectral decomposition of 
$L^2_\disc(\SO(V_{2n})(\F) \bs \SO(V_{2n})(\A))$.
However it is well understood (by Harish-Chadra, Langlands etc) that
$\AA_2(\SO(V_{2n}))$ is a dense subspace of $L^2_\disc(\SO(V_{2n})(\F) \bs \SO(V_{2n})(\A))$.
So we shall work with $\AA_2(\SO(V_{2n}))$ in this paper.
\end{rem}

\begin{rem}
Ta\"{i}bi \cite{Ta} prove the multiplicity formula for $\SO(V_{2n})$ when
$\SO(V_{2n})(\F_\infty)$ is compact and 
$\SO(V_{2n})(\F_v)$ is quasi-split at all finite places $v$ of $\F$.
\end{rem}

\subsection{Arthur's multiplicity formula for $\SO(V_{2n})(\A) \cdot \Oo(V_{2n})(\F)$}
\label{secAMSOO}
Theorem \ref{AMSO} follows from a more precise result. 
In this subsection, we recall this result.
\par

Let $\epsilon \in \Oo(V_{2n})(\F)$ be as in \S \ref{notation}.
We may consider $\HH(\SO(V_{2n})) \rtimes \pair{\epsilon}$, where
$\epsilon$ acts on $\HH(\SO(V_{2n}))$ by 
\[
(\epsilon f \epsilon^{-1})(x) \coloneqq f(\epsilon^{-1} x \epsilon)
\]
for $f \in \HH(\SO(V_{2n}))$.
We say that $(\sigma, \VV_\sigma)$ is an $(\HH(\SO(V_{2n})), \epsilon)$-module if
\begin{itemize}
\item
$(\sigma, \VV_\sigma)$ is an $\HH(\SO(V_{2n}))$-module;
\item
there is an automorphism $\sigma(\epsilon)$ on $\VV_\sigma$ such that 
$\sigma(\epsilon)^2 = \1_{\VV_\sigma}$;
\item
$\sigma(\epsilon f \epsilon ^{-1}) \circ \sigma(\epsilon) = \sigma(\epsilon) \circ \sigma(f)$
for any $f \in \HH(\SO(V_{2n}))$.
\end{itemize}
\par

We define an action of $\epsilon$ on $\AA_2(\SO(V_{2n}))$ by
\[
(\epsilon \cdot \varphi)(h_0) = \varphi(\epsilon^{-1} h_0 \epsilon).
\]
It makes $\AA_2(\SO(V_{2n}))$ an $(\HH(\SO(V_{2n})), \epsilon)$-module.
\par

For an $\HH(\SO(V_{2n}))$-module $(\sigma, \VV_\sigma)$, we define 
an $\HH(\SO(V_{2n}))$-module $(\sigma^\epsilon, \VV_{\sigma^\epsilon})$ by 
$\VV_\sigma = \VV_{\sigma^\epsilon}$ and
\[
\sigma^\epsilon(f)v = \sigma(\epsilon^{-1}f\epsilon)v
\]
for $f \in \HH(\SO(V_{2n}))$ and $v \in \VV_\sigma$.

\begin{lem}
If $\VV_0 \subset \AA_2(\SO(V_{2n}))$
is an irreducible $\HH(\SO(V_{2n}))$-summand isomorphic to $\sigma$, 
then the subspace
\[
\VV_0^\epsilon = \{\epsilon \cdot \varphi \ |\ \varphi \in \VV_0\}
\]
is an irreducible $\HH(\SO(V_{2n}))$-summand isomorphic to $\sigma^\epsilon$.
\end{lem}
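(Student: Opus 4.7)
The plan is to reduce everything to a single intertwining identity between the $\epsilon$-action on $\AA_2(\SO(V_{2n}))$ and the Hecke action.

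First, I would verify directly from the definitions that for every $f \in \HH(\SO(V_{2n}))$ and every $\varphi \in \AA_2(\SO(V_{2n}))$ one has
\[
f \cdot (\epsilon \cdot \varphi) \;=\; \epsilon \cdot \bigl((\epsilon^{-1} f \epsilon) \cdot \varphi\bigr).
\]
This is a one-line change-of-variables: unfolding the convolution, the left-hand side equals $\int \varphi(\epsilon^{-1} h_0 x_0 \epsilon)\, f(x_0)\,dx_0$, and the substitution $x_0 = \epsilon y_0 \epsilon^{-1}$ converts it into $\int \varphi(\epsilon^{-1} h_0 \epsilon \cdot y_0)\,(\epsilon^{-1} f \epsilon)(y_0)\,dy_0$, which is the right-hand side. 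The only subtlety is invariance of $dx_0$ under conjugation by $\epsilon$, but this is built into the Fubini-type normalization of Haar measures recorded in \S\ref{notation}: unimodularity of $\SO(V_{2n})(\A)$ together with $\epsilon^2 = \1$ force the modulus to be trivial.

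Granting this identity, the linear bijection $T \colon \VV_0 \to \VV_0^\epsilon$ given by $T(\varphi) = \epsilon \cdot \varphi$ becomes an intertwining map: for $f \in \HH(\SO(V_{2n}))$ and $\varphi \in \VV_0$,
\[
T\bigl(\sigma^\epsilon(f)\varphi\bigr) \;=\; \epsilon \cdot \bigl((\epsilon^{-1} f \epsilon) \cdot \varphi\bigr) \;=\; f \cdot T(\varphi).
\]
Hence $\VV_0^\epsilon$ is stable under $\HH(\SO(V_{2n}))$ and isomorphic to $\sigma^\epsilon$ as an $\HH(\SO(V_{2n}))$-module. Irreducibility is automatic: any nonzero $\HH(\SO(V_{2n}))$-submodule $\VV' \subset \VV_0^\epsilon$ pulls back under $T^{-1}$ to a nonzero subspace of $\VV_0$ stable under the twisted action $\sigma^\epsilon$, and hence equals $\VV_0$ by irreducibility of $\sigma$.

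Finally, to see that $\VV_0^\epsilon$ remains a direct summand, I would transport the projector. If $p \colon \AA_2(\SO(V_{2n})) \twoheadrightarrow \VV_0$ is an $\HH(\SO(V_{2n}))$-equivariant projection afforded by the summand hypothesis, set $p^\epsilon := \epsilon \circ p \circ \epsilon^{-1}$. Then $p^\epsilon$ is idempotent with image $\VV_0^\epsilon$, and the intertwining identity of the first step immediately implies $p^\epsilon$ is $\HH(\SO(V_{2n}))$-equivariant, producing a complementary $\HH(\SO(V_{2n}))$-stable subspace $\ker(p^\epsilon)$. No step here is expected to present real difficulty: the entire lemma is a formal bookkeeping statement about twisting by the outer involution $\Int(\epsilon)$, and the only point requiring even a moment's care is the measure-theoretic compatibility underlying the first displayed identity.
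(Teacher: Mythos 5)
Your proof is correct, and it is exactly the formal twisting argument the paper has in mind: the authors dispose of this lemma with the single word ``Obvious,'' and your intertwining identity $f \cdot (\epsilon \cdot \varphi) = \epsilon \cdot \bigl((\epsilon^{-1} f \epsilon) \cdot \varphi\bigr)$, together with the transported projector, is precisely the verification they leave to the reader.
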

\begin{proof}
Obvious.
\end{proof}

Now, we recall a result of Arthur \cite{Ar} which states a decomposition of $\AA_2(\SO(V_{2n}))$
as an $(\HH(\SO(V_{2n})), \epsilon)$-module by using global $A$-parameters.
Let $\Sigma = \boxplus_{i=1}^l \Sigma_i[d_i] \in \Psi_2(\SO(V_{2n}))/\sim_\epsilon$
and $[\sigma_0] \in \Pi_\Sigma^0$. 
We take a representative $\sigma_0$ which occurs in $\AA_2(\SO(V_{2n}))$, 
and we denote by $\VV_0$ a subspace of $\AA_2(\SO(V_{2n}))$ which realizes $\sigma_0$.
We distinguish 3 cases as follows:
\begin{enumerate}
\item[(A)]
Suppose that $\Sigma \in \Psi_2^\epsilon(\SO(V_{2n}))/\sim_\epsilon$.
In this case, we have $m_\Sigma = 1$. 
Hence $\VV_0$ is stable under the action of $\epsilon$,
and so that $\sigma_0^\epsilon \cong \sigma_0$.
The space $\VV_0$ realizes a distinguished extension of $\sigma_0$ to 
an $(\HH(\SO(V_{2n})), \epsilon)$-module.
\item[(B)]
Suppose that $\Sigma \not\in \Psi_2^\epsilon(\SO(V_{2n}))/\sim_\epsilon$ and 
$\sigma_0^\epsilon \not\cong \sigma_0$. 
In this case, $m_\Sigma = 2$ and $\VV_0^\epsilon \not \cong \VV_0$.
This shows that both $\sigma_0$ and $\sigma_0^\epsilon$ occur in $\AA_2(\SO(V_{2n}))$.
This explains why $m_\Sigma = 2$.
\item[(C)]
Suppose that $\Sigma \not\in \Psi_2^\epsilon(\SO(V_{2n}))/\sim_\epsilon$ and 
$\sigma_0^\epsilon \cong \sigma_0$. 
Then there are exactly two extensions $\sigma_1$ and $\sigma_2$ of $\sigma_0$ to
$(\HH(\SO(V_{2n})), \epsilon)$-modules.
Moreover, \cite[Theorem 4.2.2 (a)]{Ar} implies that
both $\sigma_1$ and $\sigma_2$ occur in $\AA_2(\SO(V_{2n}))$.
This explains why $m_\Sigma = 2$.
\end{enumerate}
\par

In the case (A), there are exactly two extensions $\sigma_1$ and $\sigma_2$ of $\sigma_0$ to
$(\HH(\SO(V_{2n})), \epsilon)$-modules.
The above argument shows that exactly one of $\sigma_1$ or $\sigma_2$
occurs in $\AA_2(\SO(V_{2n}))$.
The following theorem determines which extension occurs.
\begin{thm}[{\cite[Theorem 4.2.2]{Ar}}]\label{AMSOO}
Let $\Sigma \in \Psi_2^\epsilon(\SO(V_{2n}))/\sim_\epsilon$ and $[\sigma_0] \in \Pi_\Sigma^0$.
Assume that an $\HH(\SO(V_{2n}))$-module $\sigma_0 = \otimes'_v \sigma_{0,v}$ 
occurs in $\AA_2(\SO(V_{2n}))$, so that 
$\iota_c([\sigma_0]) \circ \Delta = \ep_\Sigma|\Sc_\Sigma^+$
by Arthur's multiplicity formula.
For each place $v$ of $\F$, take an extension $\sigma_v$ of $\sigma_{0,v}$ to 
an $(\HH(\SO(V_{2n})(\F_v)), \epsilon_v)$-module
such that $\iota_c(\sigma_v)=\1$ for almost all $v$.
Put $\sigma = \otimes'_v\sigma_v$.
Let $\iota_c(\sigma)$ be the character of $\prod_v\Sc_{\Sigma_v}$ defined by
\[
\iota_c(\sigma)=\prod_v \iota_c(\sigma_v).
\]
Then as an $(\HH(\SO(V_{2n})), \epsilon)$-module, 
$\sigma$ occurs in $\AA_2(\SO(V_{2n}))$ if and only if 
$\iota_c(\sigma) \circ \Delta = \ep_\Sigma$.
\end{thm}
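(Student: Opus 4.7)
The plan is to deduce Theorem \ref{AMSOO} from Arthur's stabilization of the twisted trace formula, viewing $\sigma$ as a representation of the extended Hecke algebra $\HH(\SO(V_{2n})) \rtimes \pair{\epsilon}$. Since we are in case (A) from the discussion preceding the theorem, $m_\Sigma = 1$ and $\sigma_0^\epsilon \cong \sigma_0$, so there are exactly two extensions $\sigma_1, \sigma_2$ of $\sigma_0$ to $(\HH(\SO(V_{2n})), \epsilon)$-modules, and they are related by $\sigma_2 \cong \sigma_1 \otimes (\det \circ \epsilon_{\bullet})$. The task is thus to isolate which of the two actually appears in $\AA_2(\SO(V_{2n}))$. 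The hypothesis $\iota_c([\sigma_0]) \circ \Delta = \ep_\Sigma|\Sc_\Sigma^+$ supplied by Theorem \ref{AMSO} already guarantees occurrence at the level of the underlying $\HH(\SO(V_{2n}))$-module, so the issue is genuinely the choice of extension, i.e.\ the behavior under $\epsilon$.

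The first step will be to invoke the local twisted endoscopic character identities of \cite[Theorems 2.2.1, 2.2.4]{Ar}, which assert that for each place $v$ and each $x_v \in \Sc_{\Sigma_v}$, the twisted character of an extension $\sigma_v$ evaluated against an appropriate test function transfers to a stable character on the relevant twisted endoscopic group, with transfer coefficient given by $\iota_c(\sigma_v)(x_v)$. The essential new input beyond the $\SO$-case is that $x_v$ is now allowed to lie outside $\Sc_{\Sigma_v}^+$, which corresponds to inserting $\epsilon_v$ into the twisted orbital integral; this is precisely the content of the local intertwining relation for the non-connected component (Hypothesis \ref{Hypo} for quasi-split $\Oo$).

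The second step is to plug these local identities into the spectral side of Arthur's stabilization of the twisted trace formula for $\GL_{2n} \rtimes \theta$ restricted to data supported on parameters lifting $\Sigma$. After the comparison with the stable trace formula on the endoscopic groups, the multiplicity of $\sigma$ in $\AA_2(\SO(V_{2n}))$ as an extended module is given by the averaging
\[
m(\sigma) \;=\; \frac{1}{|\Sc_\Sigma|}\sum_{x \in \Sc_\Sigma} \ep_\Sigma(x) \cdot \iota_c(\sigma)(\Delta(x)),
\]
and orthogonality of characters of the finite abelian group $\Sc_\Sigma$ forces $m(\sigma)\in\{0,1\}$ with the value $1$ attained exactly when $\iota_c(\sigma) \circ \Delta = \ep_\Sigma$. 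Restriction to $\Sc_\Sigma^+$ must recover the formula of Theorem \ref{AMSO}, which is automatic from the commutativity of the diagram relating $\Pi_\psi$ and $\Pi_\psi^0$ under the restriction map; this is where the case analysis (A)--(C) is used to rule out $\Sigma \notin \Psi_2^\epsilon$ and the non-$\epsilon$-invariant subcase of (B).

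The main obstacle is of course the second step: to make it honestly self-contained one would need the stabilization of the twisted trace formula in the form established by Waldspurger and M\oe glin--Waldspurger in the series \cite{Stab1}--\cite{Stab10}, together with the extension of the local intertwining relation to handle elements of $\Sc_{\Sigma_v} \smallsetminus \Sc_{\Sigma_v}^+$ (the $\epsilon$-twisted component). As these ingredients are assembled in \cite[Chapter 4]{Ar} — with the local character identities isolated in \cite[Theorem 2.2.4]{Ar} — our role is essentially to verify that Arthur's normalization of the sign character $\ep_\Sigma$ on the full group $\Sc_\Sigma$ (not merely $\Sc_\Sigma^+$) is compatible with the choice of isomorphism $\Oo(V_{2n}) \cong \SO(V_{2n}) \rtimes \pair{\theta}$ fixed by $\epsilon \leftrightarrow \theta$ in \S \ref{LLC O}, so that the character $\iota_c(\sigma_v)$ at non-$\epsilon$-invariant elements agrees with the one produced by Theorem \ref{LLC-O}. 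Once this compatibility is checked, the theorem follows directly from \cite[Theorem 4.2.2]{Ar}.
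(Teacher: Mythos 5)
Your proposal is in substance the same as what the paper does: Theorem \ref{AMSOO} is simply Arthur's Theorem 4.2.2 translated into the notation fixed by the isomorphism $\epsilon \leftrightarrow \theta$ of \S\ref{LLC O}, and the paper offers no independent proof beyond this citation (taking the stabilization of the twisted trace formula as complete), which is exactly where your argument also terminates. Your intermediate sketch --- the two extensions in case (A), the twisted local character identities, and the averaged multiplicity formula over $\Sc_\Sigma$ with orthogonality of characters --- is a reasonable paraphrase of Arthur's own mechanism, so the approaches coincide.
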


\section{Arthur's multiplicity formula for $\Oo(V_{2n})$}
In this section, we prove Arthur's multiplicity formula for $\Oo(V_{2n})$, 
which is the third main theorem in this paper.

\subsection{Statements}
The global main theorem is Arthur's multiplicity formula for $\Oo(V_{2n})$, 
which is formulated as follows:
\begin{thm}[Arthur's multiplicity formula for $\Oo(V_{2n})$]\label{main3}
Assume Conjectures \ref{mult} and \ref{std}.
Let $\Sigma \in \Psi_2(\Oo(V_{2n}))$ and 
\[
\ep_\Sigma \colon \Sc_\Sigma \rightarrow \{\pm1\}
\]
be the character of $\Sc_\Sigma$ as in Theorem \ref{AMSO}, which
is trivial if $\Sigma \in \Psi_{2,\temp}(\Oo(V_{2n}))$.
Then we have a decomposition
\[
\AA_2(\Oo(V_{2n})) \cong \bigoplus_{\Sigma \in \Psi_2(\Oo(V_{2n}))}
\bigoplus_{\sigma \in \Pi_\Sigma(\ep_\Sigma)} \sigma
\]
as $\HH(\Oo(V_{2n}))$-modules.
Here we put
\[
\Pi_\Sigma(\ep_\Sigma) = \{\sigma \in \Pi_\Sigma\ |\ \iota_c(\sigma) \circ \Delta = \ep_\Sigma\}.
\]
\end{thm}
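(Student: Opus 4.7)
The plan is to deduce the formula from Arthur's results for $\SO(V_{2n})$ (Theorems \ref{AMSO} and \ref{AMSOO}) by applying Clifford theory to the index-two inclusion $\SO(V_{2n})(\A) \subset \Oo(V_{2n})(\A)$. Specifically, for each $\Sigma \in \Psi_2(\Oo(V_{2n})) = \Psi_2(\SO(V_{2n}))/\sim_\epsilon$ and each $\sigma \in \Pi_\Sigma$, I would compute the multiplicity of $\sigma$ in $\AA_2(\Oo(V_{2n}))$ in terms of the multiplicity in $\AA_2(\SO(V_{2n}))$ of the $\SO$-constituents of $\sigma|_{\SO(V_{2n})(\A)}$, using Frobenius reciprocity. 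The main inputs are Theorem \ref{AMSO} (which gives the $\HH^\epsilon(\SO(V_{2n}))$-multiplicities) and the refined Theorem \ref{AMSOO} (which further records the $(\HH(\SO(V_{2n})), \epsilon)$-structure on each $\SO$-summand).

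For each $\sigma \in \Pi_\Sigma$, I would let $\sigma_0$ be an irreducible $\SO(V_{2n})(\A)$-constituent of $\sigma|_{\SO(V_{2n})(\A)}$ and distinguish two cases. In situation~(i), where $\sigma \not\cong \sigma \otimes \det$, we have $\sigma|_{\SO} = \sigma_0$ irreducible, $\sigma_0^\epsilon \cong \sigma_0$, and $\sigma$ is one of two extensions $\sigma^\pm$ (with $\sigma^- = \sigma^+ \otimes \det$) of $\sigma_0$ to $\Oo(V_{2n})(\A)$; necessarily $\Sigma \in \Psi_2^\epsilon(\Oo(V_{2n}))$, so $m_\Sigma = 1$ and $\Sc_\Sigma \supsetneq \Sc_\Sigma^+$. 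In situation~(ii), where $\sigma \cong \sigma \otimes \det$, we have $\Sigma \not\in \Psi_2^\epsilon(\Oo(V_{2n}))$, $\Sc_\Sigma = \Sc_\Sigma^+$, and $m_\Sigma = 2$; this further splits into Case~B of \S\ref{secAMSOO} ($\sigma_0^\epsilon \not\cong \sigma_0$, with $\sigma = \Ind_{\SO(V_{2n})(\A)}^{\Oo(V_{2n})(\A)} \sigma_0$) and Case~C ($\sigma_0^\epsilon \cong \sigma_0$, with $\sigma$ one of two extensions $\sigma^\pm$).

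In situation~(i), Theorem \ref{AMSO} produces one copy of $\sigma_0$ in $\AA_2(\SO(V_{2n}))$ exactly when $\iota_c([\sigma_0])\circ\Delta = \ep_\Sigma|_{\Sc_\Sigma^+}$, and Theorem \ref{AMSOO} pinpoints the single $\sigma = \sigma^\pm$ lifting this copy to $\AA_2(\Oo(V_{2n}))$ as the one with $\iota_c(\sigma)\circ\Delta = \ep_\Sigma$ on all of $\Sc_\Sigma$; by Desideratum \ref{desO}(9) the two extensions' characters of $\Sc_\Sigma$ differ by $a\mapsto (-1)^{\det(a)}$, a nontrivial character of $\Sc_\Sigma/\Sc_\Sigma^+$, so exactly one of them is selected, matching $|\Pi_\Sigma(\ep_\Sigma) \cap \{\sigma^+, \sigma^-\}| = 1$. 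In Case~B, Frobenius reciprocity gives $m_\Oo(\sigma) = 1$ (with the factor $m_\Sigma = 2$ of the $\SO$-formula accounting for both $\sigma_0$ and $\sigma_0^\epsilon$ in $\AA_2(\SO)$), and $\iota_c(\sigma) = \iota_c([\sigma_0])$ on $\Sc_\Sigma = \Sc_\Sigma^+$ identifies the $\Oo$- and $\SO$-side conditions. In Case~C, Theorem \ref{AMSOO} places both $(\HH(\SO), \epsilon)$-extensions of $\sigma_0$ in $\AA_2(\SO)$, each with multiplicity one; via Clifford theory these correspond to the $\Oo$-representations $\sigma^\pm$, which both appear in $\AA_2(\Oo(V_{2n}))$ with multiplicity one and both satisfy $\iota_c(\sigma^\pm) \circ \Delta = \ep_\Sigma$ on $\Sc_\Sigma = \Sc_\Sigma^+$.

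The hard part will be making precise the dictionary between $\HH(\Oo(V_{2n}))$-summands of $\AA_2(\Oo(V_{2n}))$ and $(\HH(\SO(V_{2n})), \epsilon)$-summands of $\AA_2(\SO(V_{2n}))$: using the integration formula of \S\ref{notation} and the description of the finite-type coset space $\Oo(V_{2n})(\F)\SO(V_{2n})(\A) \bs \Oo(V_{2n})(\A) \cong \mu_2(\A)/\mu_2(\F)$, one must show that an $\Oo(V_{2n})(\A)$-realization of $\sigma$ in $\AA_2(\Oo(V_{2n}))$ is the same data as an $(\HH(\SO(V_{2n})), \epsilon)$-realization of the corresponding $\SO$-restriction in $\AA_2(\SO(V_{2n}))$, so that in particular Theorem \ref{AMSOO} genuinely determines which $\Oo$-extension occurs in situation~(i) and, in Case~C, the two $\Oo$-extensions are counted once each rather than one occurring with multiplicity two. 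Conjectures \ref{mult} and \ref{std} are used to guarantee that the local $A$-packets $\Pi_{\Sigma_v}$ are multiplicity-free sets and that $\iota_c(\sigma)$ is well-defined on $\Pi_\Sigma$, without which $\Pi_\Sigma(\ep_\Sigma)$ would not be unambiguously defined.
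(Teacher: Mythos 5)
Your plan treats $\SO(V_{2n})(\A)\subset\Oo(V_{2n})(\A)$ as an index-two inclusion and tries to run finite-group Clifford theory and Frobenius reciprocity against Theorems \ref{AMSO} and \ref{AMSOO}; but the quotient is $\mu_2(\A)$, which is infinite. The group to which Theorem \ref{AMSOO} refers is $\Oo(V_{2n})(\F)\SO(V_{2n})(\A)=\SO(V_{2n})(\A)\cup\SO(V_{2n})(\A)\epsilon$, and the restriction of an irreducible $\HH(\Oo(V_{2n}))$-module $\sigma=\otimes_v\sigma_v$ to $(\HH(\SO(V_{2n})),\epsilon)$ is in general a large direct sum $\bigoplus_\lambda\sigma_\lambda$ (one summand for each pattern of local $\SO$-constituents modulo the single global $\epsilon$), not "the same data" as $\sigma$. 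Consequently your central claim --- that an $\Oo(V_{2n})(\A)$-realization of $\sigma$ in $\AA_2(\Oo(V_{2n}))$ is the same data as an $(\HH(\SO(V_{2n})),\epsilon)$-realization of its restriction --- is false as stated, and the multiplicity of $\sigma$ is not formally computed by Frobenius reciprocity from Theorems \ref{AMSO} and \ref{AMSOO}. Two independent inputs are needed and are missing from your sketch: (a) surjectivity of the restriction map $\Res\colon\AA_2(\Oo(V_{2n}))\to\AA_2(\SO(V_{2n}))$ (Proposition \ref{surj}), i.e.\ a genuine extension construction of automorphic forms, which drives the existence direction; and (b) the fact that at most one constituent $\sigma_\lambda$ can map nontrivially into $\AA_2(\SO(V_{2n}))$ under $\Res$ (Lemma \ref{resirr} and the argument of Proposition \ref{mult1}), which is not abstract Clifford theory but rests on the $\SO$-side multiplicity bound of Corollary \ref{free}; that bound in turn requires identifying ($\epsilon$-)near equivalence classes with parameters and the disjointness of the packets $\Pi_\Sigma^0$ (Propositions \ref{nearSO}, \ref{near}, Corollary \ref{disjoint}, via M{\oe}glin's unramified results and Jacquet--Shalika), none of which appears in your proposal even though it is what makes the decomposition indexed by $\Sigma$ and the quantity $m(\sigma)$ well defined.

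Your case analysis also contains errors that break the bookkeeping. In your situation (i), $\sigma\not\cong\sigma\otimes\det$ only says that \emph{some} local component has irreducible restriction to $\SO$; it implies neither that $\sigma|_{\SO(V_{2n})(\A)}$ is irreducible nor that $\Sigma\in\Psi_2^\epsilon(\Oo(V_{2n}))$ (take all $m_id_i$ even but a localization containing an odd-dimensional irreducible orthogonal summand, e.g.\ a sum of two quadratic characters), so "hence $m_\Sigma=1$" fails; the relevant trichotomy is the one of \S\ref{secAMSOO}, governed by $\Sigma\in\Psi_2^\epsilon$ and by $\sigma_0^\epsilon\cong\sigma_0$, not by $\sigma\cong\sigma\otimes\det$. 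Moreover, in cases (B) and (C) there are infinitely many irreducible $\HH(\Oo(V_{2n}))$-modules with the same restriction to $\SO(V_{2n})(\A)$, namely all twists $\sigma\otimes\det_S$; deciding which of them actually occur is exactly where the proof needs the parity argument of Proposition \ref{m>0} (left $\Oo(V_{2n})(\F)$-invariance of $\det_S\cdot\varphi$ forces $|S|$ even, and Theorem \ref{AMSOO} detects the parity of $|S|$ through the character $\iota_c(\sigma)\circ\Delta$), not a two-extension dichotomy. So the approach, as written, has genuine gaps both in the global setup (induction over $\mu_2(\F)\backslash\mu_2(\A)$, surjectivity of $\Res$, near equivalence classes) and in the multiplicity count (irreducibility of $\Res(\sigma)$, the $\det_S$-twisting argument); repairing them leads essentially to the proof the paper gives.
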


Also, we will show the following:
\begin{prop}\label{mult1}
Assume Conjectures \ref{mult} and \ref{std}.
Then for an irreducible $\HH(\Oo(V_{2n}))$-module $\sigma$, we have
\[
\dim_\C\Hom_{\HH(\Oo(V_{2n}))}(\sigma, \AA_2(\Oo(V_{2n}))) \leq 1.
\]
In other words, 
$\AA_2(\Oo(V_{2n}))$ is multiplicity-free as an $\HH(\Oo(V_{2n}))$-module. 
\end{prop}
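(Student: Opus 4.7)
The plan is to deduce Proposition \ref{mult1} directly from Theorem \ref{main3}. That theorem exhibits $\AA_2(\Oo(V_{2n}))$ as a double sum
\[
\AA_2(\Oo(V_{2n})) \cong \bigoplus_{\Sigma \in \Psi_2(\Oo(V_{2n}))}
\bigoplus_{\sigma \in \Pi_\Sigma(\ep_\Sigma)} \sigma,
\]
so to get multiplicity one it suffices to verify two things: first, that each $\Pi_\Sigma$ (and hence $\Pi_\Sigma(\ep_\Sigma)$) is an honest set of inequivalent irreducible representations rather than a multiset, and second, that $\Pi_\Sigma \cap \Pi_{\Sigma'} = \emptyset$ whenever $\Sigma \ne \Sigma'$ in $\Psi_2(\Oo(V_{2n}))$.

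For the first claim, fix a place $v$ and decompose $\Sigma_v = \psi_1|\cdot|^{s_1} \oplus \dots \oplus \psi_r|\cdot|^{s_r} \oplus \psi_0 \oplus (\psi_r|\cdot|^{s_r})^\vee \oplus \dots \oplus (\psi_1|\cdot|^{s_1})^\vee$ as in \S\ref{globalA}. Conjecture \ref{mult} guarantees that the local $A$-packet $\Pi_{\psi_0}$ is a multiplicity-free subset of $\Irr_\unit(\Oo(V_{2n_0})(\F_v))$, while Conjecture \ref{std} ensures that the induced representations $I(\sigma_0)$ attached to distinct $\sigma_0 \in \Pi_{\psi_0}$ are irreducible and pairwise non-isomorphic. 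Consequently $\Pi_{\Sigma_v}$ is a genuine subset of $\Irr(\Oo(V_{2n})(\F_v))$, and the restricted tensor product then makes $\Pi_\Sigma$ a set of inequivalent irreducible $\HH(\Oo(V_{2n}))$-modules. This handles multiplicity within a single $\Sigma$-summand.

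For the second claim, suppose $\sigma = \otimes'_v \sigma_v$ appears in both $\Pi_\Sigma$ and $\Pi_{\Sigma'}$. At almost every non-archimedean place $v$ the group $\Oo(V_{2n})(\F_v)$ is unramified, the factor $\sigma_v$ is unramified, and both $\Sigma$ and $\Sigma'$ are unramified at $v$; by Corollary \ref{Aunram} together with Proposition \ref{LvsA}, the unramified representation $\sigma_v$ determines the local $L$-parameter $\phi_{\Sigma_v} = \phi_{\Sigma'_v}$, and hence the multiset of unramified Jordan blocks $\{\phi_{\Sigma_i,v} \otimes S_{d_i}\}$ reading off both $\Sigma_v$ and $\Sigma'_v$ must agree. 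Passing to the associated isobaric automorphic representations of $\GL_{2n}(\A)$ — namely the isobaric sums built from the Speh representations attached to each $\Sigma_i[d_i]$ — the strong multiplicity-one theorem of Jacquet–Shalika forces these isobaric representations to coincide globally, and reading off the cuspidal support with its $\SL_2(\C)$-decomposition recovers the multiset $\{(\Sigma_i, d_i)\}$. Thus $\Sigma = \Sigma'$ in $\Psi_2(\Oo(V_{2n}))$, as desired.

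The main point to be careful about is the extraction of the $A$-parameter from the local Langlands parameter at unramified places: a priori several $A$-parameters give rise to the same $L$-parameter, so one must use the canonical Jordan decomposition of $\psi|_{\WD_{\F_v} \times \SL_2(\C)}$ rather than just $\phi_{\Sigma_v}$, which is where the compatibility statement of Proposition \ref{LvsA} is needed to identify $\sigma_v$ as a member of the $L$-packet $\Pi_{\phi_{\Sigma_v}} \subset \Pi_{\Sigma_v}$ and to pin down the Jordan type. Once that is granted, the remainder is a straightforward application of strong multiplicity one for $\GL_{2n}$.
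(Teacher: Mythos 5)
Your argument is circular in the logical structure of this paper. You take Theorem \ref{main3} as the starting point and then check that each $\Pi_\Sigma$ is a genuine (multiplicity-free) set and that packets attached to distinct $\Sigma$ are disjoint. Those two verifications are fine in themselves (the disjointness is essentially Propositions \ref{nearSO}--\ref{near} and Corollary \ref{disjoint}, via unramified places and Jacquet--Shalika), but the decomposition of Theorem \ref{main3} is not available as an input here: what the paper establishes before Proposition \ref{mult1} is only the coarse decomposition $\AA_2(\Oo(V_{2n})) = \bigoplus_\Sigma \AA_{2,\Sigma}$ with $\AA_{2,\Sigma} = \bigoplus_{\sigma \in \Pi_\Sigma} m(\sigma)\,\sigma$ for \emph{unknown} multiplicities $m(\sigma) \ge 0$, together with $m(\sigma)=0$ when $\iota_c(\sigma)\circ\Delta \ne \ep_\Sigma$ and $m(\sigma)>0$ otherwise (Proposition \ref{m>0}). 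The proof of Theorem \ref{main3} then says explicitly that to get $m(\sigma)=1$ ``it suffices to show Proposition \ref{mult1}.'' So deducing Proposition \ref{mult1} from Theorem \ref{main3} proves nothing: the bound $m(\sigma)\le 1$ is exactly the missing ingredient, and nothing in your write-up supplies it independently.

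What is actually needed, and what the paper does, is a direct bound on the multiplicity obtained by restricting to $\SO(V_{2n})$. One shows (Lemma \ref{resirr}, using Conjectures \ref{mult} and \ref{std} through Corollary \ref{free}) that for any irreducible $\HH(\Oo(V_{2n}))$-summand $\sigma$ of $\AA_2(\Oo(V_{2n}))$ the restriction $\Res(\sigma)$ is irreducible as an $(\HH(\SO(V_{2n})),\epsilon)$-module; Corollary \ref{free}, which is a consequence of Arthur's multiplicity formula for $\SO(V_{2n})$ (Theorem \ref{AMSO}) and the disjointness of $\SO$-packets, caps the multiplicity of $[\sigma_0]$ in $\AA_2(\SO(V_{2n}))$ at $m_\Sigma \le 2$ and shows $\AA_2(\SO(V_{2n}))$ is multiplicity-free as an $(\HH(\SO(V_{2n})),\epsilon)$-module. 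If $\sigma$ occurred with multiplicity $\ge 2$ in $\AA_2(\Oo(V_{2n}))$, two embeddings would produce two distinct irreducible $(\HH(\SO(V_{2n})),\epsilon)$-constituents $\sigma_{\lambda_1}\not\cong\sigma_{\lambda_2}$ of $\Res$, forcing $\AA_2(\SO(V_{2n}))$ to contain $4[\sigma_0]$ as an $\HH^{\epsilon}(\SO(V_{2n}))$-module, contradicting Corollary \ref{free}. Some argument of this kind, working downstairs on $\SO(V_{2n})$ where Arthur's multiplicity formula is already known, is indispensable; your proposal skips it entirely.
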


\begin{rem}\label{temp}
The arguments in the proofs of Theorem \ref{main3} and Proposition \ref{mult1} 
work when we restrict to $\Sigma \in \Psi_{2,\temp}(\Oo(V_{2n}))$.
In this case, Proposition \ref{heier}, 
which shows Conjecture \ref{std} for $\Sigma \in \Psi_{2,\temp}(\Oo(V_{2n}))$, 
also implies that
the local $A$-packet $\Pi_{\Sigma_v}$ associated to the localization $\Sigma_v$
becomes a local $L$-packet, which is multiplicity-free, i.e., 
which satisfies Conjecture \ref{mult}.
Hence without assuming any conjectures, 
Theorem \ref{main3} and Proposition \ref{mult1} hold for $\Sigma \in \Psi_{2,\temp}(\Oo(V_{2n}))$. 
In particular, the tempered part of 
the automorphic discrete spectrum of $\Oo(V_{2n})$
\[
\AA_{2,\temp}(\Oo(V_{2n})) \coloneqq \bigoplus_{\Sigma \in \Psi_{2,\temp}(\Oo(V_{2n}))}
\bigoplus_{\sigma \in \Pi_\Sigma(\ep_\Sigma)} \sigma
\]
is multiplicity-free as an $\HH(\Oo(V_{2n}))$-module unconditionally. 
\end{rem}

The rest of this section is devoted to the proofs of Theorem \ref{main3} and Proposition \ref{mult1}.

\subsection{Restriction of automorphic forms}
Now we compare $\AA_2(\Oo(V_{2n}))$ with $\AA_2(\SO(V_{2n}))$.
To do this, we consider the restriction map
\[
\Res \colon \AA(\Oo(V_{2n})) \rightarrow \AA(\SO(V_{2n})),\ 
\varphi  \mapsto \varphi |\SO(V_{2n})(\A).
\]
\begin{lem}
We have $\Res(\AA_2(\Oo(V_{2n}))) \subset \AA_2(\SO(V_{2n}))$.
\end{lem}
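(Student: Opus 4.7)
The plan is to verify that $\Res(\varphi)$ satisfies both the defining properties of an automorphic form on $\SO(V_{2n})(\A)$ and the square-integrability condition.

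The automorphic form conditions transfer routinely. Left $\SO(V_{2n})(\F)$-invariance follows from $\SO(V_{2n})(\F) \subset \Oo(V_{2n})(\F)$. Smoothness and moderate growth are inherited since $\SO(V_{2n})(\A)$ is an open subgroup of $\Oo(V_{2n})(\A)$. Right $K_0$-finiteness holds because $K_0 \subset K$, so the span of the right $K_0$-translates of $\Res(\varphi)$ is the image under restriction of a subspace of the finite-dimensional span of right $K$-translates of $\varphi$, hence finite-dimensional. Finally, $\z$-finiteness passes through because $\SO(V_{2n})$ and $\Oo(V_{2n})$ share the same Lie algebra at the archimedean places, hence the same center $\z$ of the universal enveloping algebra acts on both sides.

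The substance is square-integrability. Let $V_\varphi \subset \AA_2(\Oo(V_{2n}))$ be the finite-dimensional subspace spanned by the right $K$-translates of $\varphi$; these lie in $L^2(\Oo(V_{2n})(\F)\bs\Oo(V_{2n})(\A))$ because right translation by $K$ preserves the $L^2$-norm and $\varphi \in \AA_2$. Since we arranged in \S\ref{notation} that each $\epsilon_v \in K_v$, the compact abelian group $E = \{\epsilon_t : t \in \mu_2(\A)\}$ is contained in $K$ and acts on $V_\varphi$ by right translation. Decompose
\[
V_\varphi = \bigoplus_{\chi} V_\varphi^\chi
\]
into $E$-isotypic components, a \emph{finite} direct sum over characters $\chi$ of $E$. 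For any $\psi \in V_\varphi^\chi$, one has $\psi(g\epsilon_t) = \chi(t)\psi(g)$, and in particular $|\psi(h\epsilon_t)|^2 = |\psi(h)|^2$ for every $h \in \SO(V_{2n})(\A)$. Applying the integration formula of \S\ref{notation} to $|\psi|^2$ gives
\[
\int_{\Oo(V_{2n})(\F)\bs\Oo(V_{2n})(\A)} |\psi(g)|^2 \, dg \;=\; \vol(\mu_2(\F)\bs\mu_2(\A)) \cdot \int_{\SO(V_{2n})(\F)\bs\SO(V_{2n})(\A)} |\psi(h)|^2 \, dh.
\]
The left side is finite and $\vol(\mu_2(\F)\bs\mu_2(\A))$ is a finite positive constant, so $\Res(\psi) \in L^2(\SO(V_{2n})(\F)\bs\SO(V_{2n})(\A))$. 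Writing $\varphi = \sum_\chi \varphi^\chi$ as the finite sum with $\varphi^\chi \in V_\varphi^\chi$ gives $\Res(\varphi) = \sum_\chi \Res(\varphi^\chi) \in L^2$, and combined with the previous paragraph this places $\Res(\varphi)$ in $\AA_2(\SO(V_{2n}))$.

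The only step requiring any thought is the square-integrability. Directly from the integration formula one only learns that the inner integral $F_\psi(t) := \int_{\SO(V_{2n})(\F)\bs\SO(V_{2n})(\A)} |\psi(h\epsilon_t)|^2 \, dh$ is finite for almost every $t \in \mu_2(\F)\bs\mu_2(\A)$, and the potential obstacle is that the point $t=1$ is of measure zero in the profinite quotient, so a priori could be exceptional. The $E$-isotypic decomposition circumvents this by making $F_\psi$ constant in $t$ for each eigenvector $\psi$, so the finite total integral forces finiteness at every $t$. The crucial input enabling this trick is the containment $E \subset K$, built into the choice of maximal compacts in \S\ref{notation}, together with the $K$-finiteness of $\varphi$.
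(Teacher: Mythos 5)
Your proof is correct, and it reaches the key point (square-integrability of the restriction) by a route that differs from the paper's. The paper applies Fubini to the integration formula to get finiteness of $t \mapsto \int_{\SO(V_{2n})(\F)\bs\SO(V_{2n})(\A)}|\varphi(h\epsilon_t)|^2\,dh$ for almost every $t$, and then uses $K$-finiteness to produce a finite set $S$ of places such that $\varphi$ is right $\epsilon_t$-invariant for all $t\in\prod_{v\notin S}\mu_2(\F_v)$; since this subgroup is open of positive measure and contains $t=1$, constancy there upgrades ``almost every $t$'' to ``$t=1$''. You instead diagonalize the right action of the whole group $E=\{\epsilon_t\}\subset K$ on the finite-dimensional span $V_\varphi$ of the $K$-translates of $\varphi$, so that for each eigenvector the inner integral is constant in \emph{every} $t$ and the measure-zero subtlety never arises; the price is the decomposition $\varphi=\sum_\chi\varphi^\chi$ and the need to treat each piece separately before resumming. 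Both arguments rest on the same two inputs (the integration formula of \S\ref{notation} and $K$-finiteness, which is what puts $E$-translates inside a finite-dimensional space, respectively what gives invariance under $\epsilon_t$ for $v\notin S$), so the difference is one of mechanism rather than of substance. One small point you should make explicit: the $E$-isotypic decomposition of $V_\varphi$ exists because each $\epsilon_t$ acts as an involution ($\epsilon_t^2=\1$) and these operators commute, hence are simultaneously diagonalizable with eigenvalues $\pm1$; this is elementary but is the step that licenses writing $V_\varphi=\bigoplus_\chi V_\varphi^\chi$. With that sentence added, your argument is complete, and, like the paper, it treats the remaining automorphic-form axioms (invariance, smoothness, moderate growth, $K_0$- and $\z$-finiteness) as the routine part.
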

\begin{proof}
Let $\varphi \in \AA_2(\Oo(V_{2n}))$.
Then 
\[
\int_{\Oo(V_{2n})(\F) \bs \Oo(V_{2n})(\A)} |\varphi(g)|^2dg
= \int_{\mu_2(\F) \bs \mu_2(\A)}\left(
\int_{\SO(V_{2n})(\F) \bs \SO(V_{2n})(\A)}|\varphi(h \epsilon_t)|^2dh
\right)dt
\]
is finite.
By Fubini's theorem, we see that for almost everywhere $t \in \mu_2(\F)\bs\mu_2(\A)$, 
the function
$h \mapsto |\varphi(h \epsilon_t)|^2$ is integrable on $\SO(V_{2n})(\F) \bs \SO(V_{2n})(\A)$.
\par

Since $\varphi(g)$ is right $K$-finite, 
there exists a finite set $S$ of finite places of $\F$ containing
all infinite places of $\F$ such that
$\varphi$ is right $\epsilon_t$-invariant for any $t \in \prod_{v \not\in S}\mu_2(\F_v)$.
Since $\mu_2(\F_\infty)$ is finite, we see that
$\prod_{v \not\in S}\mu_2(\F_v)$ is open in $\mu_2(\A)$
(not only in $\mu_2(\A_{\fin})$).
This implies that 
\[
\int_{\SO(V_{2n})(\F) \bs \SO(V_{2n})(\A)}|\varphi(h \epsilon_t)|^2dh < \infty
\]
for some (hence any) $t \in \prod_{v \not\in S}\mu_2(\F_v)$.
In particular, $\Res(\varphi)$ is square-integrable.
\end{proof}

\begin{prop}\label{surj}
We have $\Res(\AA_2(\Oo(V_{2n}))) = \AA_2(\SO(V_{2n}))$.
\end{prop}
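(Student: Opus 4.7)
The strategy is to construct an explicit right inverse to $\Res$. The crucial structural observation is that the section $t \mapsto \epsilon_t$ from $\mu_2(\A)$ to $\Oo(V_{2n})(\A)$ is in fact a group homomorphism, because the condition $\epsilon_v^2 = \1_{V_{2n}}$ at each place forces $\epsilon_{t_1}\epsilon_{t_2} = \epsilon_{t_1 t_2}$ componentwise. Hence $\Oo(V_{2n})(\A) = \SO(V_{2n})(\A) \rtimes \mu_2(\A)$ with every element uniquely of the form $g = h\epsilon_t$, and the fixed element $\epsilon \in \Oo(V_{2n})(\F)$ equals $\epsilon_{t_0}$ for $t_0 = (-1,-1,\dots) \in \mu_2(\F)$, so in particular $\epsilon^{-1} = \epsilon$.

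I would then fix any clopen subgroup $U \subset \mu_2(\A)$ of index two with $t_0 \notin U$; a concrete choice is $U = \{t : t_{v_0} = 1\}$ for any fixed place $v_0$, which is clopen since $\mu_2(\A)$ is profinite. Given $\varphi_0 \in \AA_2(\SO(V_{2n}))$, define $\varphi \colon \Oo(V_{2n})(\A) \to \C$ by
\[
\varphi(h\epsilon_t) = \begin{cases} \varphi_0(h) & \text{if } t \in U, \\ \varphi_0(\epsilon^{-1} h \epsilon) & \text{if } t \in t_0 U. \end{cases}
\]
Then $\varphi|_{\SO(V_{2n})(\A)} = \varphi_0$ (since $1 \in U$), and left $\SO(V_{2n})(\F)$-invariance is immediate from that of $\varphi_0$. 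Left $\epsilon$-invariance is the key compatibility: using the identity $\epsilon(h\epsilon_t) = (\epsilon h \epsilon^{-1})\epsilon_{t_0 t}$ together with $\epsilon^{-1} = \epsilon$, one verifies $\varphi(\epsilon h\epsilon_t) = \varphi(h\epsilon_t)$ by separate calculation in the two cases $t \in U$ and $t \in t_0 U$. Square-integrability follows directly from the measure decomposition recalled in Section~7.1, yielding $\|\varphi\|^2 = 2\|\varphi_0\|^2 < \infty$. Smoothness, $\z$-finiteness, and moderate growth are inherited from $\varphi_0$, using that $U$ is clopen in $\mu_2(\A)$ and that $\Oo(V_{2n})$ and $\SO(V_{2n})$ share a Lie algebra.

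The main obstacle will be verifying right $K$-finiteness, since writing $k = k_0\epsilon_s$ with $k_0 \in K_0$ and $s = \det(k) \in \mu_2(\A)$ gives
\[
R(k)\varphi(h\epsilon_t) = F\bigl(h \cdot \epsilon_t k_0 \epsilon_t^{-1},\ ts\bigr),
\]
so $K$ genuinely couples the $\SO(V_{2n})(\A)$- and $\mu_2(\A)$-factors. I plan to establish finite-dimensionality of $R(K)\varphi$ from two bookkeeping facts: (i) since $\varphi_0$ is $K_0$-finite, the span $V$ of $R(K_0)\varphi_0$ in $\AA(\SO(V_{2n}))$ is finite-dimensional; (ii) since $\epsilon_v$ normalizes $K_{0,v}$ at every place and $\varphi_{0,v}$ is $K_{0,v}$-fixed for almost all $v$, the conjugate $\epsilon_t k_0 \epsilon_t^{-1}$ affects the value of $\varphi_0(h \cdot \epsilon_t k_0 \epsilon_t^{-1})$ only through the sign pattern $(t_v)_{v \in S}$ at the finite set $S$ of ramified places. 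Combined with the locally constant dependence on $[ts] \in \mu_2(\A)/U$, this places $R(K)\varphi$ inside a subspace of dimension at most $2 \cdot 2^{|S|} \cdot \dim(V + V^\epsilon)$, hence finite-dimensional. This completes the verification that $\varphi \in \AA_2(\Oo(V_{2n}))$ and thereby establishes the surjectivity of $\Res$.
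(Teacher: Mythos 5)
Your construction is correct and is essentially the paper's own proof: the paper likewise extends $\varphi_0$ to $\Oo(V_{2n})(\A)$ piecewise over the two halves of $\mu_2(\A)$ (choosing a transversal $B$ of $\mu_2(\F)$ in $\prod_{v\in S}\mu_2(\F_v)$ rather than your index-two open subgroup $U$), twists by $\epsilon$-conjugation on the other half, and then verifies left $\Oo(V_{2n})(\F)$-invariance, smoothness, $K$- and $\z$-finiteness by the same bookkeeping over a finite set $S$ of bad places where $\varphi_0$ is not $K_{0,v}$-invariant, with square-integrability coming from the measure decomposition of \S 7.1. The only discrepancies are trivial: under the paper's normalization the $L^2$-identity is $\|\varphi\|^2=\tfrac12\|\varphi_0\|^2$ rather than $2\|\varphi_0\|^2$, and the paper deduces moderate growth from the general fact that smooth, $K$-finite, $\z$-finite, square-integrable functions have moderate growth instead of transferring it directly from $\varphi_0$.
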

\begin{proof}
Let $\varphi_0 \in \AA_2(\SO(V_{2n}))$.
Since $\varphi_0$ is $K_0$-finite, 
there exists a compact open subgroup $K_1$ of $K_0 \cap \SO(V_{2n})(\A_{\fin})$ such that
$\varphi_0$ is right $K_1$-invariant.
We may assume that $K_1$ is of the form $K_1=\prod_{v<\infty}K_{1,v}$
for some compact open subgroup $K_{1,v}$ of $K_{0,v}$ such that 
$\epsilon_v^{-1}K_{1,v}\epsilon_v = K_{1,v}$ for any $v < \infty$.
Moreover, we can find a finite set $S$ of places of $\F$ containing
all infinite places of $\F$ such that $K_{1,v} = K_{0,v}$ for any $v \not \in S$.
We fix a complete system $B$ of representative of 
\[
\mu_2(\F) \bs \left(\prod_{v \in S}\mu_2(\F_v)\right).
\] 
We may assume that $B$ contains the identity element $1 \in \prod_{v \in S}\mu_2(\F_v)$.
\par

We regard $\varphi_0$ as a function on $\Oo(V_{2n})(\F) \cdot \SO(V_{2n})(\A)$, 
which is left $\Oo(V_{2n})(\F)$-invariant.
For $t \in \mu_2(\A)$, we define a function
$\varphi_t \colon \Oo(V_{2n})(\F) \cdot \SO(V_{2n})(\A) \rightarrow \C$ by
\[
\varphi_t(h) = \left\{
\begin{aligned}
&\varphi_0(h)				\iif (t_v)_{v \in S} \in B,\\
&\varphi_0(h \epsilon)		\iif (t_v)_{v \in S} \not\in B
\end{aligned}
\right.
\]
for $h \in \Oo(V_{2n})(\F) \cdot \SO(V_{2n})(\A)$.
Then we see that
\[
\varphi_{ta}(h) = \varphi_t(h)
,\quad
\varphi_{-t}(h) = \varphi_t(h \epsilon)
\]
for $t \in \mu_2(\A)$ and $a \in \prod_{v \not \in S}\mu_2(\F_v)$ since $\epsilon^2 = \1_{V_{2n}}$.
Moreover, $\varphi_t$ is right $K_1$-invariant for any $t \in \mu_2(\A)$.
\par

Now we define a function $\varphi \colon \Oo(V_{2n})(\A) \rightarrow \C$ by 
\[
\varphi(g) = \varphi_{\det(g)}(g\epsilon_{\det(g)}^{-1})
\]
for $g \in \Oo(V_{2n})(\A)$.
Then we have $\Res(\varphi) = \varphi_0$.
We show that $\varphi \in \AA_2(\Oo(V_{2n}))$.
\par

Let $\gamma \in \Oo(V_{2n})(\F)$. 
If $\det(\gamma) = 1$, we have
\[
\varphi(\gamma g) = \varphi_{\det(g)}(\gamma g \epsilon_{\det(g)}^{-1})
=\varphi_{\det(g)}(g \epsilon_{\det(g)}^{-1}) =\varphi(g).
\]
If $\det(\gamma) = -1$, we have
\[
\varphi(\gamma g) = \varphi_{-\det(g)}(\gamma g \epsilon_{-\det(g)}^{-1})
=\varphi_{\det(g)}(g \epsilon_{-\det(g)}^{-1}\epsilon) =\varphi(g).
\]
Hence $\varphi$ is left $\Oo(V_{2n})(\F)$-invariant.
It is easy to see that 
$\varphi$ is right $(K_1 \cdot \prod_{v \not \in S}K_v)$-invariant and 
is a $C^\infty$-function on $\Oo(V_{2n})(\F_\infty)$.
Hence $\varphi$ is a smooth function on $\Oo(V_{2n})(\A)$.
\par

We denote the space spanned by $k\cdot \varphi$ for $k \in K$ 
(\resp for $k \in K_0$) by $K\varphi$ (\resp $K_0 \varphi$).
Since any $\varphi' \in K\varphi$ is right $\prod_{v \not \in S}K_v$-invariant, 
the finiteness of $\dim (K\varphi)$ is equivalent to the one of $\dim (K_0\varphi)$.
So we shall prove that $\dim (K_0\varphi)<\infty$.
Let 
\[
\varphi' = \sum_{i=1}^r c_i (k_i \cdot \varphi) \in K_0\varphi
\]
with $c_i \in \C$ and $k_i \in K_0$.
Then for $a \in \prod_{v \in S}\mu_2(\F_v)$ and $x \in \SO(V_{2n})(\A)$, we have
\[
(\epsilon_a \cdot \varphi')(x) = \varphi'(x\epsilon_a)
= \sum_{i=1}^r c_i \varphi(x \epsilon_a k_i)
= \sum_{i=1}^r c_i \varphi(x (\epsilon_a k_i \epsilon_a^{-1}) \epsilon_a).
\]
Since $\epsilon_a k_i \epsilon_a^{-1} \in K_0$, we have
\[
(\epsilon_a \cdot \varphi')|\SO(V_{2n})(\A) \in K_0((\epsilon_a \cdot \varphi)|\SO(V_{2n})(\A)).
\]
Hence we may consider the map
\[
\Phi \colon K_{0}\varphi \rightarrow \bigoplus_{a} 
K_{0}((\epsilon_a \cdot \varphi)|\SO(V_{2n})(\A)),\ 
\varphi' \mapsto 
\oplus_a((\epsilon_a \cdot \varphi')|\SO(V_{2n})(\A))_a,
\]
where $a$ runs over $\prod_{v \in S}\mu_2(\F_v)$.
Since any $\varphi' \in K_0\varphi$ is right $\prod_{v \not \in S}K_v$-invariant and
the map
\[
\prod_{v \in S}\mu_2(\F_v) \rightarrow \SO(V_{2n})(\A) \bs \Oo(V_{2n})(\A)/\prod_{v \not \in S}K_v,\ 
a \mapsto \epsilon_a
\]
is bijective, 
we see that $\Phi$ is injective.
Since $(\epsilon_a \cdot \varphi')|\SO(V_{2n})(\A) \in \AA(\SO(V_{2n}))$, 
it is $K_0$-finite.
Hence we have $\dim (K_0\varphi) < \infty$, and so that 
we get the $K$-finiteness of $\varphi$.
Similarly, we obtain the $\z$-finiteness of $\varphi$.
Note that $\Lie(\Oo(V_{2n})(\F_\infty)) = \Lie(\SO(V_{2n})(\F_\infty))$.
\par

Now we show that $\norm{\varphi}_{L^2(\Oo(V_{2n}))}<\infty$, 
where $\norm{\cdot}_{L^2(\Oo(V_{2n}))}$ (\resp $\norm{\cdot}_{L^2(\SO(V_{2n}))}$)
is the $L^2$-norm on $\Oo(V_{2n})(\F) \bs \Oo(V_{2n})(\A)$
(\resp $\SO(V_{2n})(\F) \bs \SO(V_{2n})(\A)$).
Let $\Omega$ be the characteristic function on 
$\prod_{v \not\in S}\mu_2(\F_v) \times B \subset \mu_2(\A)$.
Then we have
\begin{align*}
\norm{\varphi}_{L^2(\Oo(V_{2n}))}^2
&=\int_{\Oo(V_{2n})(\F) \bs \Oo(V_{2n})(\A)}|\varphi(g)|^2dg\\
&=\int_{\mu_2(\F)\bs \mu_2(\A)}\left(
\int_{\SO(V_{2n})(\F) \bs \SO(V_{2n})(\A)}|\varphi(h \epsilon_t)|^2dh
\right)dt\\
&=\int_{\mu_2(\A)}\Omega(t) \cdot \left(
\int_{\SO(V_{2n})(\F) \bs \SO(V_{2n})(\A)}|\varphi(h \epsilon_t)|^2dh
\right)dt\\
&=\int_{\mu_2(\A)}\Omega(t) \cdot \left(
\int_{\SO(V_{2n})(\F) \bs \SO(V_{2n})(\A)}|\varphi_0(h)|^2dh
\right)dt\\
&=\vol(\prod_{v \not\in S}\mu_2(\F_v) \times B) \cdot \norm{\varphi_0}_{L^2(\SO(V_{2n}))}^2\\
&=2^{-1}\norm{\varphi_0}_{L^2(\SO(V_{2n}))}^2 < \infty.
\end{align*}
\par

We have shown that 
$\varphi$ is a smooth, $K$-finite, $\z$-finite, square-integrable function on
$\Oo(V_{2n})(\F) \bs \Oo(V_{2n})(\A)$.
Such functions are of moderate growth (see \cite[\S 4.3]{BJ}).
Therefore we conclude that $\varphi \in \AA_2(\Oo(V_{2n}))$.
This completes the proof.
\end{proof}

\subsection{Near equivalence classes}
Let $\sigma=\otimes_v'\sigma_v$ and $\sigma'=\otimes'_v\sigma_v'$
be two admissible representations of $\HH(\Oo(V_{2n}))$.
We say that $\sigma$ and $\sigma'$ are nearly equivalent if
$\sigma_v \cong \sigma'_v$ for almost all $v$.
In this case, we write $\sigma \sim_{\text{ne}} \sigma'$.
Similarly, let $[\sigma_0]=\otimes_v'[\sigma_{0,v}]$ and $\sigma_0'=\otimes'_v[\sigma'_{0,v}]$
be two equivalence classes of admissible representations of $\HH(\SO(V_{2n}))$.
We say that $[\sigma_0]$ and $[\sigma_0']$ are $\epsilon$-nearly equivalent if 
$[\sigma_{0,v}] = [\sigma'_{0,v}]$, i.e., 
$\sigma'_{0,v}\cong \sigma_{0,v}$ or $\sigma_{0,v}^\epsilon$ for almost all $v$.
In this case, we write $[\sigma_0] \sim_{\text{ne}} [\sigma_0']$.
\par

By a near equivalence class in $\AA_2(\Oo(V_{2n}))$, 
we mean a maximal $\HH(\Oo(V_{2n}))$-submodule $\VV$ of $\AA_2(\Oo(V_{2n}))$
such that
\begin{itemize}
\item
all irreducible constituents of $\VV$ are nearly equivalent each other;
\item
any irreducible $\HH(\Oo(V_{2n}))$-submodule of $\AA_2(\Oo(V_{2n}))$
which is orthogonal to $\VV$
is not nearly equivalent to the constituents of $\VV$.
\end{itemize}
We define a $\epsilon$-near equivalence class in $\AA_2(\SO(V_{2n}))$ similarly.
\par

We relate $\epsilon$-near equivalence classes in $\AA_2(\SO(V_{2n}))$
with elements in $\Psi_2(\SO(V_{2n}))/\sim_\epsilon$.
\begin{prop}\label{nearSO}
The there exists a canonical bijection
\[
\{\text{$\epsilon$-near equivalence classes in $\AA_2(\SO(V_{2n}))$}\}
\longleftrightarrow
\Psi_2(\SO(V_{2n}))/\sim_\epsilon.
\]
\end{prop}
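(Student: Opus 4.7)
The plan is to construct the bijection explicitly using Arthur's multiplicity formula (Theorem \ref{AMSO}) as the main input, together with strong multiplicity one for isobaric automorphic representations of $\GL_{2n}(\A)$. For each $\Sigma \in \Psi_2(\SO(V_{2n}))/\sim_\epsilon$, I will take the $\HH^\epsilon(\SO(V_{2n}))$-submodule
\[
\VV_\Sigma \coloneqq m_\Sigma \bigoplus_{[\sigma_0] \in \Pi_\Sigma^0(\ep_\Sigma)} [\sigma_0] \subset \AA_2(\SO(V_{2n}))
\]
appearing in Theorem \ref{AMSO}, and verify that $\Sigma \mapsto \VV_\Sigma$ factors through the set of $\epsilon$-near equivalence classes and gives the desired bijection.

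First I would show that every constituent of $\VV_\Sigma$ belongs to a single $\epsilon$-near equivalence class. Fix a finite set $S$ of places outside of which both $\SO(V_{2n})(\F_v)$ and $\Sigma_v$ are unramified. At any such $v \notin S$, the local $A$-parameter $\Sigma_v$ is unramified, so by Corollary \ref{Aunram} the local $A$-packet $\Pi_{\Sigma_v}^0$ contains a \emph{unique} unramified class (corresponding to the trivial character of $\Sc_{\Sigma_v}^+$). Since any $[\sigma_0] = \otimes_v'[\sigma_{0,v}] \in \Pi_\Sigma^0(\ep_\Sigma)$ satisfies $\iota_c([\sigma_{0,v}]) = \1$ for almost all $v$, the component $[\sigma_{0,v}]$ is forced to be this unique unramified class for almost every $v$. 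Hence all elements of $\Pi_\Sigma^0(\ep_\Sigma)$ have the same local components outside a finite set of places, so their direct sum lies in a single $\epsilon$-near equivalence class.

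Second, surjectivity follows immediately: Theorem \ref{AMSO} gives $\AA_2(\SO(V_{2n})) = \bigoplus_\Sigma \VV_\Sigma$ as $\HH^\epsilon(\SO(V_{2n}))$-modules, so any $\epsilon$-near equivalence class must be a sum of certain $\VV_\Sigma$, and by the previous step each $\VV_\Sigma$ is contained in exactly one class. Thus every class has the form $[\VV_\Sigma]$ for at least one $\Sigma$.

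The crux is injectivity, where the key obstacle is ruling out coincidence of near equivalence classes for genuinely distinct parameters. Given $\Sigma = \boxplus_i \Sigma_i[d_i]$ and $\Sigma' = \boxplus_j \Sigma'_j[d'_j]$ with $\Sigma \not\sim \Sigma'$ in $\Psi_2(\SO(V_{2n}))/\sim_\epsilon$, the associated isobaric automorphic representations of $\GL_{2n}(\A)$ are inequivalent; strong multiplicity one for isobaric representations (Jacquet--Shalika) then guarantees that $\Sigma_v \not\cong \Sigma'_v$ as representations of $\WD_{\F_v} \times \SL_2(\C)$ at infinitely many unramified places $v$. At such a place, the Satake parameter of the unique unramified element of $\Pi_{\Sigma_v}^0$ (determined by $\phi_{\Sigma_v}$ via the construction of \S \ref{localA}) differs from that of the unique unramified element of $\Pi_{\Sigma'_v}^0$, so the unramified constituents of $\VV_\Sigma$ and $\VV_{\Sigma'}$ at $v$ are non-isomorphic. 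Consequently $\VV_\Sigma$ and $\VV_{\Sigma'}$ lie in distinct $\epsilon$-near equivalence classes, completing the bijection. The only non-routine step is this last Satake-parameter comparison, but it reduces to the tautology that the Satake parameter of the unramified Langlands quotient coincides (up to the standard $\SL_2(\C)$-twist) with $\Sigma_v$ itself, hence determines $\Sigma_v$.
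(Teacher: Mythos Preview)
Your proposal is correct and follows essentially the same route as the paper's proof: both arguments hinge on (i) the uniqueness of the unramified member of a local $A$-packet at almost all places, and (ii) Jacquet--Shalika strong multiplicity one for the associated isobaric representations of $\GL_{2n}(\A)$. The paper phrases injectivity directly (if $[\sigma_0] \sim_{\text{ne}} [\sigma'_0]$ then $\Sigma = \Sigma'$, invoking M{\oe}glin's results \cite[\S4.2, \S4.4]{M} for the local recovery step), whereas you run the contrapositive; the content is the same.

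Two small points of care. First, Corollary~\ref{Aunram} is stated for $\Oo(V_{2n})$; for $\SO(V_{2n})$ you should instead cite Proposition~\ref{propSO}~(4) or M{\oe}glin's result \cite[p.~18, Proposition]{M} directly. Second, your final clause ``hence determines $\Sigma_v$'' slightly overstates what you need and what is true: the Satake parameter of the unramified representation determines $\phi_{\Sigma_v}$, not $\Sigma_v$ itself. But this is exactly what Jacquet--Shalika requires (the local components of the isobaric sum are governed by $\phi_{\Sigma_v}$), so your argument goes through without that stronger claim.
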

\begin{proof}
Suppose that $[\sigma_0]$ and $[\sigma_0']$ appear in 
$\AA_2(\SO(V_{2n}))$.
Let $\Sigma$ and $\Sigma' \in \Psi_2(\SO(V_{2n}))/\sim_\epsilon$ be $A$-parameters 
associated to $[\sigma_0]$ and $[\sigma'_0]$, 
i.e., $[\sigma_0] \in \Pi_\Sigma^0$ and $[\sigma_0'] \in \Pi_{\Sigma'}^0$, respectively.
We claim that $[\sigma_0] \sim_{\text{ne}} [\sigma'_0]$ if and only if $\Sigma = \Sigma'$.
Suppose that $[\sigma_0],[\sigma_0'] \in \Pi_\Sigma^0$. 
Note that both of $\sigma_{0,v}$ and $\sigma'_{0,v}$ are unramified for almost all $v$.
By \cite[\S 4.4 Proposition]{M}, for such $v$,
we have
$[\sigma_{0,v}] = [\sigma'_{0,v}]$. 
Hence $[\sigma_0] \sim_{\text{ne}} [\sigma'_0]$.
\par

Conversely, suppose that $[\sigma_0] \sim_{\text{ne}} [\sigma'_0]$.
For each place $v$ of $\F$,
we decompose
\[
\Sigma_v = (\phi_{1,v} \boxtimes S_{d_1}) \oplus \dots \oplus (\phi_{l,v} \boxtimes S_{d_l}), 
\quad
\Sigma'_v = (\phi'_{1,v} \boxtimes S_{d'_1}) \oplus \dots \oplus (\phi'_{l,v} \boxtimes S_{d'_{l'}}), 
\]
where $\phi_{i,v}$ and $\phi'_{j,v}$ are representations of $\WD_{\F_v} = W_{\F_v} \times \SL_2(\C)$.
Note that for almost all $v$,  $\phi_{i,v}$ and $\phi'_{j,v}$ are trivial on $\SL_2(\C)$.
Hence by \cite[\S 4.2 Corollaire]{M}, we have $\Sigma_v \cong \Sigma'_v$ for almost all $v$.
As in \cite[\S 1.3]{Ar}, $\Sigma$ and $\Sigma'$ define 
isobaric sums of representations $\phi_\Sigma$ and $\phi_{\Sigma'}$ which
belong to the discrete spectrum of $\GL(2n)$.
It follows by the generalized strong multiplicity one theorem of Jacquet--Shalika 
([JS2, (4.4) Theorem]) that $\phi_\Sigma \cong \phi_{\Sigma'}$.
Since the map $\Sigma \mapsto \phi_\Sigma$ is injective, 
we conclude that $\Sigma = \Sigma'$.
\end{proof}

\begin{cor}\label{disjoint}
Let $\Sigma, \Sigma' \in \Psi_2(\SO(V_{2n}))/\sim_\epsilon$.
If $\Sigma  \not= \Sigma'$, then $\Pi_\Sigma^0 \cap \Pi_{\Sigma'}^0 = \emptyset$.
\end{cor}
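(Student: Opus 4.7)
The plan is to imitate the argument from Proposition \ref{nearSO}, noting that the crucial step there does not actually use that the representation appears in the automorphic discrete spectrum. Suppose for contradiction that $[\sigma_0] = [\otimes'_v \sigma_{0,v}] \in \Pi_\Sigma^0 \cap \Pi_{\Sigma'}^0$. For almost all places $v$, the following simultaneously hold: $\SO(V_{2n})(\F_v)$ is unramified, both $\Sigma_v$ and $\Sigma'_v$ are unramified local $A$-parameters, $[\sigma_{0,v}]$ is an unramified class, and $\iota_c([\sigma_{0,v}]) = \1$ with respect to both $\Pi^0_{\Sigma_v}$ and $\Pi^0_{\Sigma'_v}$.

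At such a place $v$, I would invoke the $\SO$-analogue of Corollary \ref{Aunram} (which itself goes back to M{\oe}glin's \cite[\S 4.4 Prop]{M}): the unique unramified class in each $A$-packet $\Pi^0_{\Sigma_v}$ lies in the embedded $L$-packet $\Pi^0_{\phi_{\Sigma_v}}$ and corresponds to the trivial character. Therefore its Satake parameter coincides with that of $\phi_{\Sigma_v}$ and also with that of $\phi_{\Sigma'_v}$. Combined with M{\oe}glin's uniqueness statement \cite[\S 4.2 Corollaire]{M}, this forces $\Sigma_v \cong \Sigma'_v$ as local $A$-parameters at almost all $v$.

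Next, as in the end of the proof of Proposition \ref{nearSO}, I would associate to $\Sigma$ and $\Sigma'$ the isobaric sums $\phi_\Sigma$ and $\phi_{\Sigma'}$ in the discrete spectrum of $\GL_{2n}(\A)$ as in \cite[\S 1.3]{Ar}. Local near-equivalence $\Sigma_v \cong \Sigma'_v$ at almost all $v$ translates into $(\phi_\Sigma)_v \cong (\phi_{\Sigma'})_v$ at almost all $v$. The generalized strong multiplicity one theorem of Jacquet--Shalika then gives $\phi_\Sigma \cong \phi_{\Sigma'}$, and the injectivity of $\Sigma \mapsto \phi_\Sigma$ yields $\Sigma = \Sigma'$, contradicting our assumption.

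The argument is essentially formal once the local uniqueness is in hand, so there is no real obstacle; the only subtle point is confirming that the unramified element in $\Pi^0_{\Sigma_v}$ alone determines the local $A$-parameter $\Sigma_v$ up to $\Oo(2n,\C)$-conjugacy at almost all $v$, which is the content of the M{\oe}glin-type result already invoked in Proposition \ref{nearSO}.
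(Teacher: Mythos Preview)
Your proposal is correct and follows essentially the same approach as the paper. The paper gives no separate proof, treating the statement as an immediate corollary of the converse direction in the proof of Proposition~\ref{nearSO}; you have made that implicit argument explicit, correctly observing that the hypothesis that $[\sigma_0]$ occur in $\AA_2(\SO(V_{2n}))$ is never used in that direction.
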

\par

This corollary together with
Arthur's multiplicity formula (Theorem \ref{AMSO}) and the argument in \S \ref{secAMSOO} 
gives the multiplicity in $\AA_2(\SO(V_{2n}))$.

\begin{cor}\label{free}
Assume Conjectures \ref{mult} and \ref{std}.
\begin{enumerate}
\item
For $[\sigma_0] \in \Pi_\Sigma^0$, we have
\[
\dim_\C\Hom_{\HH^\epsilon(\SO(V_{2n}))}([\sigma_0], \AA_2(\SO(V_{2n})))
=\left\{
\begin{aligned}
&m_\Sigma	\iif \iota_c([\sigma_0]) = \ep_\Sigma,\\
&0	\other.
\end{aligned}
\right.
\]
\item
For any irreducible $(\HH(\SO(V_{2n})), \epsilon)$-module $\sigma$, we have
\[
\dim_\C\Hom_{(\HH(\SO(V_{2n})), \epsilon)}(\sigma, \AA_2(\SO(V_{2n}))) \leq 1.
\]
In other words, 
$\AA_2(\SO(V_{2n}))$ is multiplicity-free as an $(\HH(\SO(V_{2n})), \epsilon)$-module. 
\end{enumerate}
In particular, the same properties unconditionally hold for  the tempered part of 
the automorphic discrete spectrum of $\SO(V_{2n})$
\[
\AA_{2,\temp}(\SO(V_{2n})) \coloneqq
\bigoplus_{\Sigma \in \Psi_{2,\temp}(\SO(V_{2n}))/\sim_\epsilon}
\bigoplus_{[\sigma_0] \in \Pi^0_\Sigma(\ep_\Sigma)} m_\Sigma [\sigma_0].
\]
\end{cor}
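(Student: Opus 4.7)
The plan is to combine Arthur's multiplicity formula (Theorem \ref{AMSO}) with the disjointness established in Corollary \ref{disjoint}, together with the finer information about extensions in Theorem \ref{AMSOO}, and to use the two running hypotheses (Conjectures \ref{mult} and \ref{std}) exactly to upgrade the multiset $\Pi_\Sigma^0$ to a multiplicity-free set of irreducible $\HH^\epsilon(\SO(V_{2n}))$-modules.

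For part (1), I would first invoke Corollary \ref{disjoint} to conclude that the $\epsilon$-equivalence class $[\sigma_0]\in\Pi_\Sigma^0$ does not occur in any other packet $\Pi_{\Sigma'}^0$ with $\Sigma'\ne\Sigma$, so that only the $\Sigma$-summand in Theorem \ref{AMSO} can contribute to the multiplicity. Conjecture \ref{mult} ensures $m_0([\sigma_0])=1$, so $[\sigma_0]$ occurs in the multiset $\Pi_\Sigma^0$ exactly once; Conjecture \ref{std} guarantees that the local induced representations defining $\Pi_{\Sigma_v}^0$ are irreducible and mutually inequivalent, so the global tensor product $[\sigma_0]$ is an irreducible $\HH^\epsilon(\SO(V_{2n}))$-module. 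Reading off Theorem \ref{AMSO}, the multiplicity is then $m_\Sigma$ when $\iota_c([\sigma_0])\circ\Delta=\ep_\Sigma|\Sc_\Sigma^+$ (equivalently, since $\iota_c([\sigma_0])$ is a character of $\Sc_\Sigma^+$, when $\iota_c([\sigma_0])=\ep_\Sigma$ after the natural identification) and is zero otherwise.

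For part (2), I would follow the three-case analysis in \S\ref{secAMSOO}. In case (A), where $\Sigma\in\Psi_2^\epsilon(\SO(V_{2n}))/\sim_\epsilon$ and hence $m_\Sigma=1$ and $\sigma_0^\epsilon\cong\sigma_0$, the key input is Theorem \ref{AMSOO}: of the two inequivalent $(\HH(\SO(V_{2n})),\epsilon)$-module extensions of $\sigma_0$, exactly the one whose character satisfies $\iota_c(\sigma)\circ\Delta=\ep_\Sigma$ occurs in $\AA_2(\SO(V_{2n}))$, and part (1) then forces its multiplicity to be one. In case (B), where $\sigma_0^\epsilon\not\cong\sigma_0$, the induced module $\Ind_{\HH(\SO(V_{2n}))}^{\HH(\SO(V_{2n}))\rtimes\langle\epsilon\rangle}(\sigma_0)\cong\sigma_0\oplus\sigma_0^\epsilon$ is an irreducible $(\HH,\epsilon)$-module, and Frobenius reciprocity together with part (1) shows it appears exactly once, consuming both copies counted by $m_\Sigma=2$. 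In case (C), where $\sigma_0^\epsilon\cong\sigma_0$ but $\Sigma\notin\Psi_2^\epsilon$, Theorem 4.2.2(a) of \cite{Ar} as recalled in \S\ref{secAMSOO} says that both extensions $\sigma_1$ and $\sigma_2$ occur; by part (1) each occurs with multiplicity one, and the two together account for the factor $m_\Sigma=2$.

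The main obstacle is the correct bookkeeping in case (A), where the $\HH(\SO(V_{2n}))$-multiplicity is $m_\Sigma=1$ but a priori either or both of the two $(\HH,\epsilon)$-extensions could realize it; ruling out that both occur is precisely what Theorem \ref{AMSOO} supplies via the sign condition $\iota_c(\sigma)\circ\Delta=\ep_\Sigma$. Once this case is handled, the inequality $\dim\Hom_{(\HH(\SO(V_{2n})),\epsilon)}(\sigma,\AA_2(\SO(V_{2n})))\le 1$ follows uniformly. Finally, the unconditional statement for the tempered part is obtained by observing that Proposition \ref{true} provides Conjecture \ref{mult} for tempered local $A$-parameters and Proposition \ref{heier} provides Conjecture \ref{std} for $\Sigma\in\Psi_{2,\temp}(\SO(V_{2n}))/\sim_\epsilon$, and that $\ep_\Sigma$ is trivial in the tempered case so the formula simplifies accordingly.
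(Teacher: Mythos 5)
Your proposal is correct and follows essentially the same route as the paper: part (1) is exactly the paper's argument (Theorem \ref{AMSO} plus the disjointness of Corollary \ref{disjoint}, with Conjectures \ref{mult} and \ref{std} used to make $\Pi_\Sigma^0$ a multiplicity-free set), and your three-case analysis for part (2), using the discussion of \S\ref{secAMSOO} and Theorem \ref{AMSOO} together with the multiplicity count from part (1), is precisely the elaboration intended by the paper's terse ``the proof of (2) is similar.''
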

\begin{proof}
By Arthur's multiplicity formula (Theorem \ref{AMSO}), we have
\[
\AA_2(\SO(V_{2n})) \cong 
\bigoplus_{\Sigma \in \Psi_2(\SO(V_{2n}))/\sim_\epsilon}
\bigoplus_{[\sigma_0] \in \Pi^0_\Sigma(\ep_\Sigma)} m_\Sigma [\sigma_0]
\]
as $\HH^\epsilon(\SO(V_{2n}))$-modules.
Since $\Pi_\Sigma^0 \cap \Pi_{\Sigma'}^0 = \emptyset$ for $\Sigma \not= \Sigma'$ 
by Corollary \ref{disjoint},
and $\Pi_\Sigma^0$ is a multiplicity-free set by Conjectures \ref{mult} and \ref{std}, 
we have
\[
\dim_\C\Hom_{\HH^\epsilon(\SO(V_{2n}))}([\sigma_0], \AA_2(\SO(V_{2n})))
=\left\{
\begin{aligned}
&m_\Sigma	\iif \iota_c([\sigma_0]) = \ep_\Sigma,\\
&0	\other.
\end{aligned}
\right.
\]
This is (1).
The proof of (2) is similar.
\end{proof}
\par

Recall that by Proposition \ref{surj}, there exists a surjective linear map
\[
\Res \colon \AA_2(\Oo(V_{2n})) \twoheadrightarrow \AA_2(\SO(V_{2n})).
\]
It is easy to see that $\Res$ is an $(\HH(\SO(V_{2n})), \epsilon)$-homomorphism.

\begin{prop}\label{near}
The map $\Res$ induces a bijection
\[
\begin{CD}
\{ \text{near equivalence classes in } \AA_2(\Oo(V_{2n}))\}\\
@VVrV\\
\{ \text{$\epsilon$-near equivalence classes in } \AA_2(\SO(V_{2n}))\}.
\end{CD}
\]
\end{prop}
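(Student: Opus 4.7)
The plan is to define a map $r$ by sending a near equivalence class $\VV \subset \AA_2(\Oo(V_{2n}))$ to the unique $\epsilon$-near equivalence class in $\AA_2(\SO(V_{2n}))$ containing $\Res(\VV)$, and then to verify that it is well-defined, surjective, and injective. Throughout I decompose $\AA_2(\Oo(V_{2n})) = \bigoplus_\beta \VV_\beta$ and $\AA_2(\SO(V_{2n})) = \bigoplus_\alpha \mathcal{W}_\alpha$ into near equivalence classes and $\epsilon$-near equivalence classes respectively, and work componentwise; Proposition \ref{surj} (surjectivity of $\Res$) and Lemma \ref{unram} (uniqueness of the unramified lift) are the two inputs that do the work.

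For well-definedness, suppose $\sigma = \otimes'_v \sigma_v$ and $\sigma' = \otimes'_v \sigma'_v$ are nearly equivalent irreducible $\HH(\Oo(V_{2n}))$-submodules, so that $\sigma_v \cong \sigma'_v$ for almost all $v$. Restricting any such local isomorphism to $\SO(V_{2n})(\F_v)$ carries an irreducible summand of $\sigma_v|\SO(V_{2n})(\F_v)$ to either itself or its $\epsilon_v$-conjugate, so $[\sigma_{0,v}] = [\sigma'_{0,v}]$ for almost all $v$, and every irreducible $\HH(\SO(V_{2n}))$-constituent of $\Res(\sigma)$ and of $\Res(\sigma')$ lies in a common $\epsilon$-near equivalence class. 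For surjectivity, the same observation gives $\Res(\VV_\beta) \subseteq \mathcal{W}_{r(\beta)}$ for each $\beta$; since $\Res$ is surjective by Proposition \ref{surj}, every $\mathcal{W}_\alpha$ must receive a nonzero contribution from some $\VV_\beta$ with $r(\beta) = \alpha$.

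For injectivity, suppose irreducible constituents $\sigma \subset \VV_\beta$ and $\sigma' \subset \VV_{\beta'}$ with $\beta \neq \beta'$ have $\Res$-images in the same $\epsilon$-near equivalence class, so that $[\sigma_{0,v}] = [\sigma'_{0,v}]$ for almost all $v$. For all but finitely many places, $\Oo(V_{2n})(\F_v)$ is unramified and the $K_v$-finite local components $\sigma_v, \sigma'_v$ have nonzero $K_v$-fixed vectors, hence their $\SO$-restrictions are unramified. Lemma \ref{unram} then asserts that the unramified irreducible representation of $\Oo(V_{2n})(\F_v)$ lying over a given $\epsilon$-equivalence class of unramified representations of $\SO(V_{2n})(\F_v)$ is unique, which forces $\sigma_v \cong \sigma'_v$ almost everywhere and contradicts $\beta \neq \beta'$. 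The main—essentially the only—obstacle is this uniqueness at unramified places; Lemma \ref{unram} supplies it, and no further input from the Arthur classification is needed beyond what has already been cited.
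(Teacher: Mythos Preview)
Your proof is correct and follows essentially the same approach as the paper: well-definedness via Clifford theory on restrictions, surjectivity from Proposition \ref{surj}, and injectivity from Lemma \ref{unram} at unramified places. The paper phrases the injectivity step as ``$\sigma'_v \cong \sigma_v$ or $\sigma_v \otimes \det$ for almost all $v$'' before invoking Lemma \ref{unram}, but this is exactly your argument in different words.
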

\begin{proof}
Let $\Phi$ be a near equivalence class in $\AA_2(\Oo(V_{2n}))$. 
Then $\Res(\Phi)$ will a priori meet several $\epsilon$-near equivalence classes. 
Suppose that for an irreducible $\HH(\Oo(V_{2n}))$-module $\sigma$ belonging to $\Phi$, 
$\Res(\sigma)$ contains an irreducible $\HH(\SO(V_{2n}))$-module $\sigma_0$
and $[\sigma_0]$ belongs to 
an $\epsilon$-near equivalence class $\Phi_0$. 
Then we claim that
\[
\Res(\Phi) \subset \Phi_0.
\]
Indeed, suppose that $\sigma' \in \Phi$, $\Res(\sigma') \supset \sigma_0'$, and 
$[\sigma'_0]$ belongs to an $\epsilon$-near equivalence class $\Phi'_0$. 
Then for almost all $v$, we have
$\sigma'_v \cong \sigma_{v}$ so that 
$\sigma'_{0,v} \cong \sigma_{0,v}$ or $\sigma_{0,v}^\epsilon$.
This means that $[\sigma_0] \sim_{\text{ne}} [\sigma'_0]$ so that $\Phi_0 = \Phi'_0$.
Therefore if we define $r(\Phi)=\Phi_0$, then $r$ is well-defined.
\par

For the injectivity, if $r(\Phi) = r(\Phi')$, 
then for $\sigma \in \Phi$ and $\sigma' \in \Phi'$, one has
$\sigma'_v \cong \sigma_v$ or $\sigma'_v \otimes \det$ for almost all $v$.
Since $\sigma_v$ and $\sigma'_v$ are unramified with respect to $K_v$ for almost all $v$, 
we must have $\sigma'_v \cong \sigma_v$ for almost all $v$
(see Lemma \ref{unram}).
Hence we have $\Phi = \Phi'$.
\par

For the surjectivity, we decompose
\[
\AA_2(\Oo(V_{2n})) \cong \bigoplus_\lam \sigma_\lam
\]
into a direct sum of irreducible $\HH(\Oo(V_{2n}))$-modules.
Then we may decompose 
\[
\sigma_\lam \cong \bigoplus_\kappa \sigma_{\lam, \kappa}
\]
into a direct sum of irreducible $\HH(\SO(V_{2n}))$-modules.
Since $\Res \colon \AA_2(\Oo(V_{2n})) \twoheadrightarrow \AA_2(\SO(V_{2n}))$
is a surjective $\HH(\SO(V_{2n}))$-homomorphism, 
any irreducible $\HH(\SO(V_{2n}))$-submodule $\sigma_0$ of $\AA_2(\SO(V_{2n}))$
is isomorphic to some $\sigma_{\lam,\kappa}$ via $\Res$.
Then we have $\Res(\sigma_\lam) \supset \sigma_0$.
This shows that
If $\Phi_0$ (\resp $\Phi_\lam$) is the $\epsilon$-near equivalence class of $\sigma_0$
(\resp the near equivalence class of $\sigma_\lam$), 
then $r(\Phi_\lam) = \Phi_0$.
\end{proof}

\subsection{Proof of Theorem \ref{main3}}
In this subsection, we will complete the proof of Theorem \ref{main3} and 
show Proposition \ref{mult1}.
\par

Recall that
$\Psi_2(\Oo(V_{2n})) = \Psi_2(\SO(V_{2n}))/\sim_\epsilon$.
By Propositions \ref{nearSO} and \ref{near}, 
we obtain a canonical bijection
\[
\{\text{near equivalence classes in $\AA_2(\Oo(V_{2n}))$}\}
\longleftrightarrow
\Psi_2(\Oo(V_{2n})).
\]
In other words, we obtain a decomposition
\[
\AA_2(\Oo(V_{2n})) = \bigoplus_{\Sigma \in \Psi_2(\Oo(V_{2n}))}\AA_{2,\Sigma}, 
\]
where $\AA_{2,\Sigma}$ is the direct sum over the near equivalence class 
corresponding to $\Sigma$.
Moreover, we have
\[
\AA_{2,\Sigma} = \bigoplus_{\sigma \in \Pi_\Sigma} m(\sigma) \sigma
\]
for some $m(\sigma) \in \Z_{\geq0}$.
We have to show that
\[
m(\sigma) = \left\{
\begin{aligned}
&1	\iif \iota_c(\sigma) \circ \Delta = \ep_\Sigma,\\
&0	\other.
\end{aligned}
\right.
\]
\par

Consider the restriction map 
$\Res \colon \AA_2(\Oo(V_{2n})) \rightarrow \AA_2(\SO(V_{2n}))$.
This is an $(\HH(\SO(V_{2n})), \epsilon)$-equivariant homomorphism.

\begin{lem}\label{resirr}
Assume Conjectures \ref{mult} and \ref{std}.
Let $\sigma$ be an irreducible $\HH(\Oo(V_{2n}))$-submodule of $\AA_2(\Oo(V_{2n}))$.
Then $\Res(\sigma)$ is nonzero and
irreducible as an $(\HH(\SO(V_{2n})), \epsilon)$-module.
\end{lem}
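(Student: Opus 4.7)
The plan is to prove the two assertions in turn, with the nonvanishing claim being a direct argument and the irreducibility requiring a comparison of decompositions.

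For the nonvanishing, I would argue by contradiction. If $\Res(\sigma)=0$, then every $\varphi\in\sigma$ vanishes on $\SO(V_{2n})(\A)$. Since $\sigma$ is stable under right-translation by $\Oo(V_{2n})(\A)$ through the $\HH(\Oo(V_{2n}))$-action, $R_g\varphi\in\sigma$ for every $g\in\Oo(V_{2n})(\A)$, and hence $\varphi(hg)=(R_g\varphi)(h)=0$ for every $h\in\SO(V_{2n})(\A)$. Letting $g$ range, the products $hg$ cover $\Oo(V_{2n})(\A)=\bigcup_g \SO(V_{2n})(\A)\cdot g$, forcing $\varphi\equiv 0$ and contradicting $\sigma\neq 0$.

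For the irreducibility, I would first verify that $\Res$ is $(\HH(\SO(V_{2n})),\epsilon)$-equivariant: since $\epsilon\in\Oo(V_{2n})(\F)$, the identity $\Res(R_\epsilon\varphi)(h)=\varphi(h\epsilon)=\varphi(\epsilon^{-1}h\epsilon)$ (using the left $\Oo(V_{2n})(\F)$-invariance of $\varphi$) matches the $\epsilon$-conjugation action on $\AA_2(\SO(V_{2n}))$. By Corollary \ref{free}(2), which uses Conjectures \ref{mult} and \ref{std}, the space $\AA_2(\SO(V_{2n}))$ is multiplicity-free as $(\HH(\SO(V_{2n})),\epsilon)$-module. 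Hence $\Res(\sigma)$ is a direct sum of pairwise non-isomorphic irreducible $(\HH(\SO(V_{2n})),\epsilon)$-submodules of $\AA_2(\SO(V_{2n}))$; it remains to show that there is exactly one summand.

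My plan for this is to pass to the $\Sigma$-isotypic components. By Proposition \ref{near}, $\sigma$ lies in the near-equivalence class of a unique $\Sigma\in\Psi_2(\Oo(V_{2n}))$, so $\Res(\sigma)$ sits inside the corresponding component of $\AA_2(\SO(V_{2n}))$. Combining the surjectivity of $\Res$ (Proposition \ref{surj}), the Arthur multiplicity formula for $\SO(V_{2n})$ (Theorem \ref{AMSO}), and the local structure $\sigma=\otimes'_v\sigma_v$ together with local Clifford theory, the goal is to set up a bijection between the irreducible $\HH(\Oo(V_{2n}))$-summands of the $\Sigma$-isotypic part of $\AA_2(\Oo(V_{2n}))$ and the irreducible $(\HH(\SO(V_{2n})),\epsilon)$-summands of the $\Sigma$-isotypic part of $\AA_2(\SO(V_{2n}))$, realized via $\Res$.

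The hard part will be this matching. Each $\sigma_v$ is irreducible as an $(\HH(\SO(V_{2n})(\F_v)),\epsilon_v)$-module, but globally only the diagonal $\epsilon=(\epsilon_v)_v\in\Oo(V_{2n})(\F)$ lies in the algebra $(\HH(\SO(V_{2n})),\epsilon)$, while the non-diagonal combinations $\epsilon_T=\prod_{v\in T}\epsilon_v$ for proper finite subsets $T$ do not; consequently, $\sigma$ as a $(\HH(\SO(V_{2n})),\epsilon)$-module can decompose into several irreducible summands (one per orbit of the global sign-flip on the local sign choices at those places $v$ where $\sigma_v$ restricts reducibly to $\SO(V_{2n})(\F_v)$). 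Showing that exactly one such summand has nonzero image under $\Res$ is the technical heart of the argument, and my plan is to extract it from the explicit extension construction in the proof of Proposition \ref{surj} together with the $\ep_\Sigma$-constraint of Arthur's formula, which together single out a unique distinguished summand compatible with the given realization of $\sigma$ in $\AA_2(\Oo(V_{2n}))$.
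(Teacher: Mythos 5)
Your nonvanishing argument is fine (and since $\epsilon_t \in K$, right translation by these elements does preserve the submodule, so the covering argument goes through), and your reduction is correct as far as it goes: $\Res$ is $(\HH(\SO(V_{2n})),\epsilon)$-equivariant, and by Corollary \ref{free} (2) its image is a multiplicity-free sum of irreducible $(\HH(\SO(V_{2n})),\epsilon)$-modules, so the whole content is to show there is exactly one summand. But at precisely this point you only offer a plan, and the plan points at the wrong tools. The constraint $\iota_c(\sigma)\circ\Delta=\ep_\Sigma$ from Theorem \ref{AMSOO} governs case (A) of \S\ref{secAMSOO} ($\Sigma\in\Psi_2^\epsilon$), but in that situation every $\sigma_v\otimes\det\not\cong\sigma_v$, so $\sigma$ is already irreducible as an $(\HH(\SO(V_{2n})),\epsilon)$-module and there is nothing to prove. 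The case that actually needs an argument is when $\sigma_v\otimes\det\cong\sigma_v$ for some $v$ (which, for $\sigma$ occurring in the discrete spectrum, puts you in case (B) with $m_\Sigma=2$), and there no analogue of Theorem \ref{AMSOO} singles out a ``distinguished'' extension; moreover Corollary \ref{free} (2) by itself cannot bound the number of summands, since two non-isomorphic $(\HH(\SO(V_{2n})),\epsilon)$-summands $\sigma_{\lambda_1}\not\cong\sigma_{\lambda_2}$ inside $\Res(\sigma)$ would not contradict multiplicity-freeness.

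The missing idea is a multiplicity count at the level of $\HH^\epsilon(\SO(V_{2n}))$, i.e.\ Corollary \ref{free} (1) rather than (2). In the nontrivial case, each irreducible $(\HH(\SO(V_{2n})),\epsilon)$-constituent $\sigma_\lambda$ of $\sigma$ restricts, as an $\HH^\epsilon(\SO(V_{2n}))$-module, to \emph{two} copies of one and the same irreducible class $[\sigma_0]$ (because at a place with $\sigma_v\otimes\det\cong\sigma_v$ the two $\SO$-constituents become isomorphic after passing to $\epsilon_v$-invariant Hecke functions). Hence $\Res(\sigma)\cong[\sigma_0]^{\oplus 2|\Lambda_0|}$ as $\HH^\epsilon(\SO(V_{2n}))$-modules, where $\Lambda_0$ indexes the surviving summands; since by Corollary \ref{free} (1) the multiplicity of $[\sigma_0]$ in $\AA_2(\SO(V_{2n}))$ is at most $m_\Sigma\le 2$, one gets $2|\Lambda_0|\le 2$, so $|\Lambda_0|=1$ and $\Res(\sigma)$ is irreducible. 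Without this (or an equivalent) counting step, your proposal does not establish the lemma; the appeal to the extension construction in Proposition \ref{surj} and to a hoped-for bijection of summands is not a substitute for it.
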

\begin{proof}
It is clear that $\Res(\sigma)$ is nonzero.
Decompose $\sigma \cong \otimes_v \sigma_v$.
If $\sigma_v \otimes \det \not\cong \sigma_v$ for any $v$, 
then $\sigma$ is irreducible as an $(\HH(\SO(V_{2n})), \epsilon)$-module.
Hence so is $\Res(\sigma)$.
\par

We may assume that $\sigma_v \otimes \det \cong \sigma_v$ for some $v$.
This is in the case (B) as in \S \ref{secAMSOO}.
We decompose
\[
\sigma \cong \bigoplus_{\lam \in \Lam}\sigma_\lam
\]
into a direct sum of irreducible $(\HH(\SO(V_{2n})), \epsilon)$-modules.
Here, $\Lam$ is an index set.
Then $\Res(\sigma) \cong \oplus_{\lam \in \Lam_0} \sigma_\lam$
for some non-empty subset $\Lam_0$ of $\Lam$.
As an $\HH^\epsilon(\SO(V_{2n}))$-module, each $\sigma_\lam$ is a direct sum of 
two copies of an irreducible $\HH^\epsilon(\SO(V_{2n}))$-module $[\sigma_0]$.
Hence $\Res(\sigma) \cong [\sigma_0]^{\oplus2|\Lam_0|}$ 
as $\HH^\epsilon(\SO(V_{2n}))$-modules.
By Corollary \ref{free} (which we have shown using Conjectures \ref{mult} and \ref{std}), 
we have
\[
\dim_\C \Hom_{\HH^\epsilon(\SO(V_{2n}))}([\sigma_0], \AA_2(\SO(V_{2n}))) \leq 2.
\]
This implies that $|\Lam_0| = 1$.
Hence $\Res(\sigma)$ is irreducible as an $(\HH(\SO(V_{2n})), \epsilon)$-module.
\end{proof}

By Arthur's multiplicity formulas for $\SO(V_{2n})$ and $\SO(V_{2n})(\A) \cdot \Oo(V_{2n})(\F)$
(Theorems \ref{AMSO} and \ref{AMSOO}), we see that
if $\sigma \in \Pi_\Sigma$ occurs in $\AA_2(\Oo(V_{2n}))$, 
then $\Res(\sigma) \not= 0$ so that $\iota_c(\sigma)\circ \Delta = \ep_\Sigma$. 
In other words, if $\sigma \in \Pi_\Sigma$ satisfies $\iota_c(\sigma)\circ \Delta \not= \ep_\Sigma$, 
then $\sigma$ does not occur in $\AA_2(\Oo(V_{2n}))$, i.e., $m(\sigma)=0$.
\par

Now we consider the case when $\iota_c(\sigma)\circ \Delta = \ep_\Sigma$.
\begin{prop}\label{m>0}
Assume Conjectures \ref{mult} and \ref{std}.
Let $\sigma = \otimes_v\sigma_v \in \Pi_\Sigma$ 
such that $\iota_c(\sigma)\circ \Delta = \ep_\Sigma$.
Then there exists an $\HH(\Oo(V_{2n}))$-subspace $\AA_\sigma$ of $\AA_2(\Oo(V_{2n}))$
on which $\HH(\Oo(V_{2n}))$ acts by $\sigma$.
\end{prop}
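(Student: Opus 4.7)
The plan is to produce $\AA_\sigma$ inside $\AA_2(\Oo(V_{2n}))$ by first descending to $\SO(V_{2n})$, invoking Arthur's multiplicity formula (Theorem \ref{AMSO}) there, and then lifting back via the surjectivity of the restriction map (Proposition \ref{surj}). The delicate point will be to guarantee that the resulting $\Oo(V_{2n})(\A)$-representation is isomorphic to our prescribed $\sigma$ and not to its twist $\sigma \otimes \det$; this is precisely what the refined formula of Theorem \ref{AMSOO} is designed to control.

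First I would descend to $\SO$. Let $\sigma_0 = \otimes'_v \sigma_{0,v}$ be an irreducible $\HH(\SO(V_{2n}))$-constituent of $\sigma|\HH(\SO(V_{2n}))$, so that $[\sigma_{0,v}] \in \Pi_{\Sigma_v}^0$ with $\iota_c([\sigma_{0,v}]) = \iota_c(\sigma_v)|\Sc_{\Sigma_v}^+$; this is the local compatibility provided by Desideratum \ref{desO} (8), extended to $A$-packets via Proposition \ref{LvsA} and the definition of the non-tempered local $A$-packets through standard modules. Taking products and composing with the diagonal $\Delta \colon \Sc_\Sigma^+ \to \prod_v \Sc_{\Sigma_v}^+$, the hypothesis $\iota_c(\sigma) \circ \Delta = \ep_\Sigma$ restricts to $\iota_c([\sigma_0]) \circ \Delta = \ep_\Sigma|\Sc_\Sigma^+$. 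Therefore $[\sigma_0] \in \Pi_\Sigma^0(\ep_\Sigma)$, and Theorem \ref{AMSO} (applicable under Conjectures \ref{mult} and \ref{std}) gives a realization $\VV_0 \subset \AA_2(\SO(V_{2n}))$ of $\sigma_0$.

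Next I would lift. By Proposition \ref{surj}, pick an $\HH(\SO)$-invariant subspace $\widetilde{\VV}_0 \subset \AA_2(\Oo(V_{2n}))$ with $\Res(\widetilde{\VV}_0) = \VV_0$, and let $\AA$ be the $\HH(\Oo(V_{2n}))$-submodule of $\AA_2(\Oo(V_{2n}))$ generated by $\widetilde{\VV}_0$. By Propositions \ref{nearSO} and \ref{near}, every irreducible $\HH(\Oo(V_{2n}))$-constituent of $\AA$ belongs to $\Pi_{\Sigma}$ (its near-equivalence class matches that of $\sigma_0$). Furthermore, by Lemma \ref{resirr}, the restriction of each such irreducible $\HH(\Oo)$-constituent is irreducible as an $(\HH(\SO), \epsilon)$-module, so in particular every such constituent already encodes a choice of $\epsilon$-structure on its $\HH(\SO)$-restriction.

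The hard part is to single out $\sigma$ among the irreducible $\HH(\Oo)$-constituents of $\AA$. I would split into cases. If $\Sigma \notin \Psi_2^\epsilon(\Oo(V_{2n}))$ then $\Sc_\Sigma = \Sc_\Sigma^+$ and $m_\Sigma = 2$; a direct dimension/Clifford-theory count, combined with the fact that $[\sigma_0]$ occurs in $\AA_2(\SO)$ with multiplicity $2$ and that either $\sigma$ is $\Ind_\SO^\Oo(\sigma_0)$ (case B) or one of the two $(\HH(\SO),\epsilon)$-extensions of $\sigma_0$ (case C, where both extensions occur in $\AA_2(\SO)$ by Theorem \ref{AMSOO} applied to its twin), forces our $\sigma$ to appear in $\AA$. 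The genuinely delicate situation is $\Sigma \in \Psi_2^\epsilon(\Oo(V_{2n}))$: then $m_\Sigma = 1$, every $\sigma_v$ satisfies $\sigma_v \otimes \det \not\cong \sigma_v$ (by Desideratum \ref{propO} (3) and the construction of $\Pi_{\Sigma_v}$ from $\Pi_{\phi_{\Sigma_v}}$), so $\sigma|\SO = \sigma_0$ and the two candidates for the $\HH(\Oo)$-extension are $\sigma$ and $\sigma \otimes \det$. Here the identification is governed precisely by Theorem \ref{AMSOO}: the unique $(\HH(\SO),\epsilon)$-extension $\sigma^\ast$ realized in $\AA_2(\SO(V_{2n}))$ is characterized by $\iota_c(\sigma^\ast) \circ \Delta = \ep_\Sigma$, which is exactly our hypothesis on $\sigma$, so $\sigma^\ast \cong \sigma$ under the canonical identification of $(\HH(\SO),\epsilon)$-modules with $\HH(\Oo)$-modules fixed in \S\ref{LLC O}. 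Taking $\AA_\sigma$ to be the isotypic component for $\sigma$ inside $\AA$ completes the construction.
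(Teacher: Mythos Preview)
Your broad strategy (descend to $\SO$, apply Arthur, lift via $\Res$) matches the paper's, and your use of Theorem \ref{AMSOO} in case (A) to deduce that $|S|$ must be even is exactly the right move. But there is a genuine gap at the very end of case (A), and it stems from a false identification.

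You write that $\sigma^\ast \cong \sigma$ ``under the canonical identification of $(\HH(\SO),\epsilon)$-modules with $\HH(\Oo)$-modules fixed in \S\ref{LLC O}''. There is no such global identification: \S\ref{LLC O} is purely local. Globally, $\Oo(V_{2n})(\A)$ is much larger than $\SO(V_{2n})(\A) \rtimes \langle \epsilon \rangle$, since it contains each $\epsilon_v$ separately, not just their product $\epsilon$. Concretely, in case (A) the irreducible $\HH(\Oo)$-modules $\sigma$ and $\sigma \otimes \det_S$ with $|S|$ even have \emph{isomorphic} restrictions as $(\HH(\SO),\epsilon)$-modules (since $\det_S|\SO(V_{2n})(\A) = 1$ and $\det_S(\epsilon) = (-1)^{|S|} = 1$), yet they are non-isomorphic as $\HH(\Oo)$-modules. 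So what Theorem \ref{AMSOO} actually gives you, after lifting, is that some $\sigma' \cong \sigma \otimes \det_S$ with $|S|$ even occurs in $\AA_2(\Oo(V_{2n}))$; it does not show that $\sigma$ itself lies in your $\AA$, and in general it need not.

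The missing step, which the paper supplies, is elementary but essential: when $|S|$ is even, $\det_S$ is trivial on $\Oo(V_{2n})(\F)$ (because $\det(\gamma)^{|S|} = 1$ for $\gamma \in \Oo(V_{2n})(\F)$), so multiplication $\varphi \mapsto \det_S \cdot \varphi$ preserves $\AA_2(\Oo(V_{2n}))$. Twisting the realization of $\sigma'$ by $\det_S$ then exhibits $\sigma \cong \sigma' \otimes \det_S$ inside $\AA_2(\Oo(V_{2n}))$. The same trick (after arranging $|S|$ even, or exploiting that both $(\HH(\SO),\epsilon)$-extensions occur) handles cases (B) and (C), which in your write-up are left to an unspecified Clifford-theory count.
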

\begin{proof}
Let $[\sigma_0] \in \Pi_\Sigma^0$ be an element satisfying 
$\sigma_{0,v} \subset \sigma_v|\SO(V_{2n})(\F_v)$ for each $v$.
By Arthur's multiplicity formula (Theorem \ref{AMSO}), 
we see that $[\sigma_0]$ occurs in $\AA_2(\SO(V_{2n}))$ 
as an $\HH^\epsilon(\SO(V_{2n}))$-module.
We may assume that $\sigma_0$ occurs in $\AA_2(\SO(V_{2n}))$
as an $\HH(\SO(V_{2n}))$-module.
Since $\Res \colon \AA_2(\Oo(V_{2n})) \rightarrow \AA_2(\SO(V_{2n}))$ is surjective 
(Proposition \ref{surj}),
and $\AA_2(\Oo(V_{2n}))$ is a direct sum of irreducible $\HH(\Oo(V_{2n}))$-modules,
we can find an irreducible $\HH(\Oo(V_{2n}))$-module $\sigma' = \otimes_v \sigma'_v$ 
such that $\sigma'$ occurs in $\AA_2(\Oo(V_{2n}))$ and $\Res(\sigma') \supset \sigma_0$.
By (the proof of) Proposition \ref{near}, we see that 
$\sigma' \in \Pi_\Sigma$ and 
\[
\sigma' \cong \sigma \otimes {\det}_S
\]
for some finite set $S$ of places of $\F$.
Here, ${\det}_S$ is the determinant for places in $S$ and trivial outside $S$.
We consider 3 cases (A), (B) and (C) as in \S \ref{secAMSOO} separately.
\par

We consider the case (A).
Suppose that $\Sigma \in \Psi_2^\epsilon(\Oo(V_{2n}))$.
Then $[\Sc_\Sigma : \Sc_\Sigma^+]=2$ and $[\Sc_{\Sigma,v} : \Sc_{\Sigma,v}^+]=2$
for any place $v$ of $\F$.
By Conjectures \ref{mult} and \ref{std}, we see that 
\[
\iota_c(\sigma') = \iota_{c}(\sigma) \otimes \left(\prod_{v \in S}\eta_{0,v}\right), 
\]
where $\eta_{0,v}$ is the non-trivial character of $\Sc_{\Sigma,v}/\Sc_{\Sigma,v}^+$.
Hence we have
\[
\iota_c(\sigma') \circ \Delta = (\iota_c(\sigma)\circ \Delta) \cdot \eta_0^{|S|}
= \epsilon_\Sigma \cdot \eta_0^{|S|}, 
\]
where $\eta_0$ is the non-trivial character of $\Sc_\Sigma/\Sc_\Sigma^+$.
Since $\Res(\sigma')$ occurs in $\AA_2(\SO(V_{2n}))$ as 
an $(\HH(\SO(V_{2n})), \epsilon)$-module, by Theorem \ref{AMSOO}, 
the number of $S$ must be even.
Hence if $\varphi' \in \AA_2(\Oo(V_{2n}))$, then
\[
\varphi(g) \coloneqq {\det}_S(g) \cdot \varphi'(g) \in \AA_2(\Oo(V_{2n})).
\]
This implies that $\sigma$ also occurs in $\AA_2(\Oo(V_{2n}))$.
\par

We consider the case (B).
Suppose that $\Sigma \not\in \Psi_2^\epsilon(\Oo(V_{2n}))$
and $\sigma_{0}^\epsilon \not\cong \sigma_{0}$.
Then $\sigma_{v} \otimes {\det}_v \cong \sigma_v$ for some $v$.
Hence we can take $S$ such that $|S|$ is even, and we see that 
$\sigma$ also occurs in $\AA_2(\Oo(V_{2n}))$.
\par

We consider the case (C).
Suppose that $\Sigma \not\in \Psi_2^\epsilon(\Oo(V_{2n}))$
and $\sigma_{0}^\epsilon \cong \sigma_{0}$.
Then there are two extension of $\sigma_0$ to 
irreducible $(\HH(\SO(V_{2n})), \epsilon)$-modules, 
and both of them occur in $\AA_2(\SO(V_{2n}))$.
Note that 
$\Res(\sigma') \cong \Res(\sigma' \otimes {\det}_S)$ as 
$(\HH(\SO(V_{2n})), \epsilon)$-modules
if and only if $|S|$ is even.
This implies that 
$\sigma \cong \sigma' \otimes {\det}_S$ occurs in $\AA_2(\Oo(V_{2n}))$ for any $S$.
This completes the proof.
\end{proof}

Under assuming Conjectures \ref{mult} and \ref{std}, 
by a similar argument to Corollary \ref{free}, we have
\[
m(\sigma) = 
\dim_\C\Hom_{\HH(\Oo(V_{2n}))}(\sigma, \AA_2(\Oo(V_{2n}))).
\]
Hence Proposition \ref{m>0} says that if $\iota_c(\sigma) \circ \Delta = \ep_\Sigma$, 
then $m(\sigma)>0$.
To prove $m(\sigma) = 1$ for such $\sigma$, it suffices to show Proposition \ref{mult1}.

\begin{proof}[Proof of Proposition \ref{mult1}]
Let $\sigma$ be an irreducible $\HH(\Oo(V_{2n}))$-module.
We have to show that 
\[
\dim_\C\Hom_{\HH(\Oo(V_{2n}))}(\sigma, \AA_2(\Oo(V_{2n}))) \leq 1.
\]
First we assume that $\sigma$ remains irreducible as an $(\HH(\SO(V_{2n})), \epsilon)$-module.
Then by Lemma \ref{resirr}, the map
\[
\Hom_{\HH(\Oo(V_{2n}))}(\sigma, \AA_2(\Oo(V_{2n})))
\rightarrow
\Hom_{(\HH(\SO(V_{2n})), \epsilon)}(\sigma, \AA_2(\SO(V_{2n}))), 
\ f \mapsto \Res \circ f
\]
is injective.
Hence by Corollary \ref{free}, we have
\[
\dim_\C\Hom_{\HH(\Oo(V_{2n}))}(\sigma, \AA_2(\Oo(V_{2n})))
\leq
\dim_\C\Hom_{(\HH(\SO(V_{2n})), \epsilon)}(\sigma, \AA_2(\SO(V_{2n})))
\leq 1.
\]
\par

Now suppose for the sake of contradiction that
$\sigma$ is an irreducible $\HH(\Oo(V_{2n}))$-module such that
\[
\dim_\C\Hom_{\HH(\Oo(V_{2n}))}(\sigma, \AA_2(\Oo(V_{2n}))) \geq 2.
\]
We know that as an $(\HH(\SO(V_{2n})), \epsilon)$-module,  
$\sigma$ is a multiplicity-free sum 
\[
\sigma = \bigoplus_{\lam} \sigma_{\lam}
\]
of irreducible $(\HH(\SO(V_{2n})), \epsilon)$-modules $\sigma_\lam$.
For a fixed nonzero $f_1$ in the above $\Hom$ space, 
we have shown that $\Res(f_1(\sigma))$ is irreducible (Lemma \ref{resirr}), so say 
$\Res(f_1(\sigma))  = \sigma_{\lambda_1}$.
Consider the natural map
\[
\Hom_{\HH(\Oo(V_{2n}))}(\sigma, \AA_2(\Oo(V_{2n}))) \rightarrow 
\Hom_{(\HH(\SO(V_{2n})), \epsilon)} (\sigma_{\lambda_1}, \AA_2(\SO(V_{2n})))
\]
given by 
\[
f \mapsto \Res \circ f \circ \iota_1,
\]
where $\iota_1 \colon \sigma_{\lambda_1} \hookrightarrow \sigma$ is a fixed inclusion.
We have shown that the right hand side has dimension one (Corollary \ref{free}), 
so this map has nonzero kernel, i.e., there is $f_2$ in the left hand side such that 
$0 \not= \Res(f_2(\sigma))  = \sigma_{\lambda_2}$, with $\lambda_1 \ne \lambda_2$.
This shows that $\AA_2(\SO(V_{2n}))$ contains $\sigma_{\lambda_1} \oplus \sigma_{\lambda_2}$ 
as an $(\HH(\SO(V_{2n})), \epsilon)$-module, 
which implies that $\AA_2(\SO(V_{2n}))$ contains $4 [\sigma_0]$ 
as an $\HH^{\epsilon}(\SO(V_{2n}))$-module.
This contradicts Corollary \ref{free} (1).
This completes the proof.
\end{proof}

By Propositions \ref{mult1} and \ref{m>0}, 
we see that if $\iota_c(\sigma) \circ \Delta = \ep_\Sigma$, then $m(\sigma)>0$.
This completes the proof of Theorem \ref{main3}.


\end{document}